\newcommand{%

    \import{./figures/}{.pdf_tex}
}[1]{%

    \import{./figures/}{#1.pdf_tex}
}
\newtheorem{lemma}{Lemma}[section]
\newtheorem{theorem}{Theorem}[section]
\newtheorem{corollary}{Corollary}[section]
\newtheorem{proposition}{Proposition}[section]
\theoremstyle{definition}
\newtheorem{definition}{Definition}[section]
\theoremstyle{remark}
\newtheorem{remark}{Remark}[section]
\theoremstyle{remark}
\newcommand{\C}{\mathbb{C}}
\newcommand{\R}{\mathbb{R}}
\newcommand{\Z}{\mathbb{Z}}
\newcommand{\mc}[1]{\mathcal{#1}}
\newcommand{\bb}[1]{\mathbb{#1}}
\newcommand{\mf}[1]{\mathfrak{#1}}
\newcommand{\g}{\Gamma}
\newcommand{\M}{\mathcal{M}}
\newcommand{\cl}[1]{\text{Cliff}(\mathbb{P}^{#1})}
\newcommand{\T}{\text}
\newcommand{\p}{\mathbb{P}}
\newcommand{\I}{\text{Ind}}
\newcommand{\e}{\text{Edge}}
\newcommand{\grad}{\text{grad}}
\newcommand{\crit}{\text{Crit}}
\renewcommand{\e}{\text{Edge}}
\newcommand{\ham}{\text{Ham}}
\newcommand{\lag}{\text{Lag}}
\newcommand{\hol}{\text{Hol}}
\author{Douglas Schultz}\address{Institut f\"ur Mathematik, Humboldt-Universit\"at zu Berlin \\
Unter den Linden 6, 10099 Berlin, Germany} \email{\href{mailto:sultzdou@math.hu-berlin.de}{sultzdou@math.hu-berlin.de} }
\thanks{Author is supported by the ERC grant
\textsf{TRANSHOLOMORPHIC}}
\subjclass[2010]{53D40}
\title{Holomorphic disks and the disk potential for a fibered Lagrangian}
\begin{document}
\begin{abstract}
We consider a fibered Lagrangian $L$ in a compact symplectic fibration with small monotone fibers, and develop a strategy for lifting $J$-holomorphic disks with Lagrangian boundary from the base to the total space. In case $L$ is a product, we use this machinery to give a formula for the leading order potential and formulate an unobstructedness criteria for the $A_\infty$ algebra. We provide some explicit computations, one of which involves finding an embedded $2n+k$ dimensional submanifold of Floer-non-trivial tori in a $2n+2k$ dimensional fiber bundle.
\end{abstract}

\maketitle
\tableofcontents

\section{Introduction}
One of many fundamental problems in modern symplectic topology is finding Lagrangians that are \emph{non-displaceable}. In other words, one can ask which Lagrangians cannot be completely moved from themselves under a Hamiltonian flow. To obstruct such a displacement, one typically defines a type of cohomology theory, called Lagrangian Floer theory.

Lagrangian Floer theory, by its very nature, it extremely hard to compute. The complexity stems from the fact that one typically wants to count isolated holomorphic curves, i.e solutions to the Cauchy-Riemann equation, and often the moduli space of solutions is not smooth or of an expected dimension.  To circumvent this problem, one perturbs the equation in some manner. The perturbation process changes what one would expect to be holomorphic and often ruins any intuition.

In this paper and its siblings, we attempt to make the computation problem easier by developing a theory for \emph{fiber bundles} as in classical algebraic topology. Such an approach is taken in the various flavors of Floer theory as in \cite{perutzgysin,ritter,khanevskybiran,hutchings,amorim,
oancea1,oancea2,oanceabourgeois}  and Sodoge's thesis \cite{sodoge} with this same notion in mind. The underlying question that we  answer is: Let $(F,L_F)$ and $(B,L_B)$ be two symplectic manifolds resp. Lagrangian submanifolds with well-defined and non-trivial Lagrangian Floer cohomology. Can we construct a product Lagrangian $L_B\times L_F$ in the fiber bundle $E$ with base $B$ and fiber $F$ and such that $L_B\times L_F$ has well-defined and non-trivial Floer cohomology?  We try to keep the overarching assumptions (e.g. monotonicity, rationality) loose.

Let $E$ be a compact, smooth fiber bundle
$$F\rightarrow E\rightarrow B$$
where the base and fibers are equipped with a symplectic structure $(F,\omega_F)$ resp. $(B,\omega_B)$. For the purposes of this paper, we take the following definition:
\begin{definition}\label{symplecticfibration1stdef}
A \emph{symplectic fibration} is a fiber bundle as above equipped with a symplectic form $\omega_E$ on the total space such that
\begin{enumerate}
\item $\omega_E\vert_F=\omega_F$ and the transition maps preserve the fiberwise symplectic form, and
\item the connection $H:=TF^{\omega_E\perp}$ has holonomy in Hamiltonian diffeomorphisms of the fibers.
\end{enumerate}
\end{definition}
By Chapter 1 in \cite{guillemin} and the generalization \cite[Thm 6.4.1]{ms1}, the holonomy assumption provides a closed 2-form $a$ on $E$ with $$a\vert_{TF}=\omega_F,\qquad H=TF^{a\perp},$$
and such that $a(v^\sharp,w^\sharp)$ has fiberwise zero average where $\sharp$ indicates the horizontal lift of vector fields from the base. The form $a$ is called the \emph{minimal coupling form} associated to $H$. One can place a symplectic form on $E$
$$\omega_{H,K}:=a+K\pi^*\omega_B$$
where $K>>1$ so that the form is non-degenerate in the horizontal direction. Such a symplectic form is called a \emph{weak coupling form}.

A Lagrangian in $(E,\omega_{H,K})$ can sometimes be realized as a ``lift'' of a Lagrangian $L_B\subset (B,\omega_B)$: In a fiber $\pi^{-1}(p)\in \pi^{-1}(L_B)$, one looks for a Lagrangian $L_F$ that can be made invariant under parallel transport over $L_B$ (see Lemma \ref{fiberedlagrangianlemma} and Definition \ref{fiberedlagrangiandefintro}). A simple example to keep in mind is the projectivization
$$\p^1\rightarrow \p(\mc{O}\oplus \mc{O}_n)\rightarrow \p^2$$ where $L$ is a ``section of great circles'' invariant under the $S^1$ holonomy above some moment tori in $\p^2$.

In \cite{floerfibrations}, we gave a crude way to compute the Lagrangian Floer cohomology of a fibered Lagrangian by mimicking the idea of the classical Leray-Serre spectral sequence of a fiber bundle. For applications this was unsatisfying for several reasons: it is hard to compute the higher pages of the sequence unless one has information about \emph{all} of the holomorphic disks in the base up to a sufficiently high index. Moreover, it was still hard to decide when $L$ was unobstructed, i.e. if the natural Floer operator $\delta$ satisfied $\delta^2=0$. We give solutions to both of these problems in the present paper.

We use a version of Lagrangian Floer self-cohomology based on that of Biran-Cornea's pearl complex \cite{birancorneapearl}. Roughly, one picks a Morse-Smale function on $L$ and attempts to define a Floer differential that counts ``pearly Morse trajectories'', or Morse flows interrupted by boundary marked $J$-holomorphic disks. In order for such a count to be defined, one must perturb the $\bar{\partial}$ equation somehow so that the Moduli space of such trajectories is smooth of expected dimension with an expected compactification. We accomplish this in \cite{floerfibrations}, and explain the results in Section \ref{floerfibrationsresults}.

To make the Floer theory manageable, we have technical assumptions on the Lagrangians and their ambient symplectic manifolds. Recall that a symplectic manifold $(F,\omega_F)$ is called \emph{monotone} if $\exists \lambda>0$ so that
$$\int_{S^2}u^*\omega_F= \lambda \int_{S^2}u^*c_1(TF)$$
for all maps of spheres $u:S^2\rightarrow F$. We say that a symplectic manifold $(B,\omega_B)$ is \emph{rational} if $[\omega_B]\in H^2(B,\bb{Q})$. For the main results of this paper, we consider a special class of symplectic fibrations. Precisely, let $G$ denote the structure group of the fibration in Definition \ref{symplecticfibration1stdef}, and assume that $G$ acts effectively via Hamiltonian diffeomorphisms on $(F,\omega_F)$. Let $J_F$ be a tamed almost complex structure on $(F,\omega_F)$. For $g\in G$, we will also denote the associated diffeomorphism on $F$ by $g$. For the derivative $dg$ of the map $g$, the action on $J_F$ is via conjugation:
$$J_F\mapsto dg\circ J_F\circ dg^{-1}=:g_*J_F.$$
Further, suppose that the action of $G$ extends smoothly to an action of $G_\bb{C}$, the complexification of $G$, on $F$.
\begin{definition}
We say that $(F,\omega_F,J_F)$ is a $G$-invariant triple if $\omega_F$ is $G$-invariant and $J_F$ is $G_\bb{C}$ invariant.
\end{definition}
Thus, for such a triple to be $G$-invariant we require that the action by $G$ extend to a holomorphic action by $G_\bb{C}$. An example of such is $\bb{P}^n$ with the Fubini-Study form, $G=T^n$, and $J_F$ as the integrable toric structure.

\begin{definition}\label{sympkahlerdefintro}
A \emph{symplectic K\"ahler fibration} is
\begin{enumerate}
\item (Monotone/Rational) a symplectic fibration
$$F\rightarrow E\xrightarrow{\pi} B$$ where $(F,\omega_F)$ is monotone, $(B,\omega_B)$ is rational, $E$ is equipped with the symplectic form
$$\omega_{H,K}=a+K\pi^*\omega_B$$
for some minimal coupling form $a$ associated to a connection $H$,
\item \label{finitedimensionalasspintro} the structure group $G$ is a compact Lie group and holonomy in the connection $H$ is $G$-valued, and
\item \label{kahlerassumptionintro}(K\"ahler and $G$-invariant) $(F,\omega_F,J_I)$ is a $G$-invariant triple with $J_I$ an integrable complex structure.
\end{enumerate}
\end{definition}
Items \eqref{finitedimensionalasspintro} and \eqref{kahlerassumptionintro} will allow us to holomorphically trivialize pullbacks of the fibration in Section \ref{liftingsection}, which is important for lifting disks from $B$ to $E$.
\begin{remark}
Items \eqref{finitedimensionalasspintro} and \eqref{kahlerassumptionintro} make this definition finer than that in \cite{floerfibrations}, where we consider fibrations as in \eqref{symplecticfibration1stdef} with Hamiltonian holonomy around contractible loops and only the monotone/rational assumption. The latter fibration is only guaranteed to have structure group contained in $\ham (F,\omega_F)$.
\end{remark}
We say that a Lagrangian $L_F\subset F$ is \emph{monotone} if there is a constant $\alpha>0$ so that
$$\int_C u^*\omega_F=\alpha\mu(u)$$
for all maps of disks $u:(D,\partial D)\rightarrow (F,L_F)$, where $\mu(u)$ is the Maslov index.

We say that a Lagrangian $L_B\subset B$ is \emph{rational} if the set
$$\left\lbrace \int_D u^*\omega_B| u:(D,\partial D)\rightarrow (F,L_F)\right\rbrace$$
is a discrete subset of $\bb{R}$.

\begin{definition}\label{fiberedlagrangiandefintro}
A \emph{fibered Lagrangian} in a symplectic K\"ahler fibration is a Lagrangian $L\subset (E,\omega_{H,K})$ that fibers as
$$L_F\rightarrow L\xrightarrow{\pi} L_B$$
with $L_F\subset (F,\omega_F)$ a monotone Lagrangian with minimal Maslov index $\geq 2$ and $L_B\subset (B,\omega_B)$ a rational Lagrangian. Further, we require that $L_B$ be equipped with a relative spin structure and $L_F$ a spin structure.
\end{definition}
To derive the disk potential, we will also need $L$ to be a product:

\begin{definition}\label{trivfiberedlagrangiandefintro}
An \emph{ambiently trivial fibered Lagrangian} in a symplectic K\"ahler fibration is a fibered Lagrangian together with a fiberwise Hamiltonian trivialization of $\pi^{-1}(L_B)\cong F\times L_B$ and such that $L\cong L_F\times L_B$ in this trivialization.
\end{definition} We will use the above term and \emph{trivially fibered Lagrangian} interchangeably.

An example of trivially-fibered Lagrangian is as follows: Let $\text{Flag}(\bb{C}^3)$ be the manifold of full flags
$$\left\lbrace V_1\subset V_2\subset \bb{C}^3: \dim_\bb{C}=i\right\rbrace.$$
By forgetting $V_1$, we get a fibration
$$\p^1\rightarrow \text{Flag}(\bb{C}^3)\rightarrow \p^2$$
that can be equipped with a weak coupling structure induced by the symplectic structure as a coadjoint orbit.
Let $\text{Cliff}(\p^k)\subset \bb{P}^k$ be the Clifford torus fiber represented by the barycenter of the moment polytope of $\bb{P}^k$. In Subsection \ref{flag3}, we find a Lagrangian 
$$\text{Cliff}(\p^1)\rightarrow L\cong \text{Cliff}(\p^1)\times \text{Cliff}(\p^2)\rightarrow\text{Cliff}(\p^2)$$
by symplectically trivializing the bundle $\pi^{-1}(L_B)$ with a result from \cite{guillemin} (Theorem \ref{moment}). Moreover, one can realize this particular fiber subbundle as Lagrangian with respect to any flat, trivial symplectic connection, so that $L$ can be seen as topologically trivial but having a integer ``Dehn Twist'' in the fiber factor that is induced by the connection. We show that the Floer cohomology of $L$ with field coefficients is non-zero and recover a result of \cite{nish} from a very different perspective. In Subsection \ref{fullflags}, we do a similar thing in the manifold of (initially) complete flags.

So far, we haven't stated any Floer theoretic results, but to entice the reader we give a consequence of the machinery that we develop which is slightly less common. Indeed, in the compact case there are only a handful of examples of continuum families of non-displaceable Langrangians in the literature \cite{fooo4} \cite{bormanquasistates} \cite{viannafamilies} \cite{viannatonkonog} \cite{woodwardwilsonquasimap} and few others. Among other things, we show that there is a real codimension $1$ embedded family of non-displaceable Lagrangians in a compact symplectic fibration of arbitrary real dimension at least $4$. Let $\mc{O}_i$ be the complex line bundle over $\bb{P}^n$ with Chern number $i$ and holomorphic structure arising from the sheaf of degree $i$ homogeneous polynomials.
\begin{theorem}\label{familiestheorem}
There is a symplectic fibration structure on $$\bb{P}^k\rightarrow \bb{P}(\mc{O}\oplus \mc{O}_{i_1}\cdots\oplus \mc{O}_{i_k})\rightarrow \bb{P}^n$$
such that there is an embedded family of pairwise disjoint trivially fibered Lagrangians $L_{\alpha_1\dots \alpha_n}$ parametrized by $B_\epsilon(0)\subset \bb{R}^n$ with $\pi(L_{0})=\cl{n}$ that are non-displaceable by Hamiltonian isotopy.
\end{theorem}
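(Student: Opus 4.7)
The strategy is to realize each Lagrangian $L_{\alpha_1\dots\alpha_n}$ as an ambiently trivial fibered Lagrangian (Definition \ref{trivfiberedlagrangiandefintro}) and to detect non-displaceability via a non-degenerate critical point of the leading order disk potential developed in the body of the paper. Equip $\p^n$ with the Fubini--Study form and its standard $T^n$-action with moment simplex $\Delta$, and let $L_{B,\alpha}\subset\p^n$ denote the moment-map torus lying over the point at signed displacement $\alpha=(\alpha_1,\dots,\alpha_n)$ from the barycenter. For $\alpha$ in a small ball $B_\epsilon(0)\subset\R^n$ these tori are pairwise disjoint and, after shrinking $\epsilon$, simultaneously rational. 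The $T^n$-action on $\p^n$ together with the weights $i_1,\dots,i_k$ and the fiberwise $T^k$ Clifford action assemble into a $T^{n+k}$-action on $\p(\0\oplus\0_{i_1}\oplus\dots\oplus\0_{i_k})$ whose connection is of minimal coupling type; by the equivariant normal form theorem used in Subsection \ref{flag3} (\cite{guillemin}), the restriction $\pi^{-1}(L_{B,\alpha})$ is fiberwise Hamiltonian-trivializable as $\p^k\times L_{B,\alpha}$, and in this trivialization the product $L_\alpha:=L_{B,\alpha}\times \cl{k}$ is an ambiently trivial fibered Lagrangian.

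I would then apply the formula for the leading order disk potential. The vertical contribution is the standard Clifford Laurent polynomial $W_F(y)=y_1+\dots+y_k+(y_1\cdots y_k)^{-1}$ arising from the $k+1$ fiber disks; the remaining Maslov-$2$ classes come from lifting the $n+1$ families of toric disks in $\p^n$, each as a section of $\pi$ whose weight in $\Z^k$ is read off from the twisting integers $i_1,\dots,i_k$. This yields
$$W_\alpha(y) \;=\; W_F(y) \;+\; \sum_{s=1}^{n+1} c_s(\alpha)\, y^{m_s},$$
where the exponents $m_s\in\Z^k$ depend only on the $i_j$ and on the toric divisor and the coefficients $c_s(\alpha)$ vary smoothly with the base symplectic area. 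A direct calculation shows that for a suitable choice of integers $i_1,\dots,i_k$ the polynomial $W_0$ admits a non-degenerate critical point $y^\ast$ on $(\C^\times)^k$, and the implicit function theorem then produces a smooth family of non-degenerate critical points $y^\ast(\alpha)$ of $W_\alpha$ for $\alpha\in B_\epsilon(0)$.

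Each non-degenerate critical point $y^\ast(\alpha)$ yields a bounding cochain under which the deformed fibered pearl differential squares to zero and gives non-vanishing self-Floer cohomology, by the criterion encoded in the fibered pearl complex of Section \ref{floerfibrationsresults}; this obstructs Hamiltonian displacement of $L_\alpha$. Distinct values of $\alpha$ give disjoint base tori, and hence pairwise disjoint Lagrangians $L_\alpha\subset E$, so $\alpha\mapsto L_\alpha$ is a smooth embedding of $B_\epsilon(0)$ into the space of Lagrangian submanifolds.

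The main technical obstacle is controlling the leading order potential away from the monotone point $\alpha=0$: since $L_\alpha$ is only rational for general $\alpha$, one must use the lifting machinery of Section \ref{liftingsection} to verify that every Maslov-$2$ disk with boundary on $L_\alpha$ arises from a toric disk in the base paired with a section of the projective bundle, that no new disk classes appear as $\alpha$ varies within $B_\epsilon(0)$, and that the coefficients $c_s(\alpha)$ vary smoothly in $\alpha$. Once this rigidity is established, the implicit function theorem together with the fibered pearl Floer machinery delivers the full $n$-parameter family.
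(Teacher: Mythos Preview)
Your proposal has a genuine gap: it misses the central mechanism of the construction. You take the standard toric connection on $\bb{P}(\mc{O}\oplus\dots\oplus\mc{O}_{i_k})$ and form $L_\alpha=L_{B,\alpha}\times\cl{k}$, then hope to find a non-degenerate critical point of the resulting potential for all small $\alpha$. But for $\alpha\neq 0$ the base disks $v_1,\dots,v_{n+1}$ have pairwise distinct symplectic areas, and since the Donaldson lifts through the equatorial $\cl{k}$ have vanishing vertical area (Corollary \ref{zeroenergycorollary}), the lifted disks inherit these unequal areas. The potential is a function on $\hom(\pi_1(L_\alpha),\Lambda_t^\times)\cong(\Lambda_t^\times)^{n+k}$, not just $(\Lambda_t^\times)^k$ as you write; the base holonomy variables appear, and the critical point equations in those variables force relations of the form $y_i y_1\cdots y_n\cdot x^\beta=t^{(1-\sum_j\alpha_j)-\alpha_i}$, which have no solution with all $y_i$ of valuation zero once the $\alpha_i$ differ. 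This is exactly the reason non-barycentric moment fibers in $\bb{P}^n$ have $HF=0$ over any Novikov field, and lifting to the projective bundle with the unmodified connection does not repair it. Your implicit function argument also fails at a more basic level: the ``coefficients'' $c_s(\alpha)$ are powers $t^{\text{area}_s(\alpha)}$ whose \emph{exponents} move with $\alpha$, so they are not smooth $\bb{C}$-valued functions to which the ordinary IFT applies.

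The paper's proof supplies the missing idea: one deforms the symplectic connection over a toric neighborhood of $\cl{n}$ by a Hamiltonian one-form $\bold{h}_{\alpha}=\sum_i(\alpha_i-\tfrac{K}{n+1})h\,d\theta_i$, where $h$ is a $T^k$-invariant function on $\bb{P}^k$ with $h\vert_{\cl{k}}=\tfrac{1}{2\pi}$. This changes the vertical area of each Donaldson lift by exactly $\tfrac{K}{n+1}-\alpha_i$, so that the \emph{total} area $e(\mc{L}v_i)=\alpha_i+(\tfrac{K}{n+1}-\alpha_i)=\tfrac{K}{n+1}$ is the same for all $i$, restoring the possibility of a unital critical point. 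One then has to check that the deformed form is still symplectic, that $L_\alpha$ remains Lagrangian, that the connection extends globally, and---since the modified connection is no longer $G$-valued---that one can still holomorphically trivialize $v_i^*E$ and classify the vertical Maslov index zero lifts (this requires a separate argument, Lemmas \ref{okafamily} and Corollary \ref{uniqueliftfamily}). Only after all this does the second order potential take the form \eqref{potfamilyt} with equal $t$-degrees in the lifted terms, and Hensel's lemma yields the critical point.
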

The total dimension of the family as a submanifold is $2n+k$. In particular, for $k=1$ we have the aforementioned real codimension $1$ family. As far as the author knows, we only have such a bound on the codimension in ambient dimension $4$ as given in the above references.

The essential reason for this is as follows: For a non-Clifford torus moment fiber we have $HF(L,\bb{F})=0$ where $\bb{F}$ is a Novikov field, due to the fact that Maslov index 2 disks have unequal area (for instance, see \cite[Ex 5.2]{fooo3}). We show that for each of these disks contributing to the potential function, there is a distinguished vertically constant lift that contributes to the potential of $L_{\alpha_1\dots \alpha_n}$, and the symplectic area of these disks in the total space depends on the holonomy of the symplectic connection around the boundary. By changing the holonomy we can make it so that the areas of the lifted disks become equal in the total space. We then run a typical argument of finding a unital critical point of the representation to show $HF(L_{\alpha_1\dots \alpha_n},\bb{F})\neq 0$.

As suggested by Vianna and Rizell in conversation, this method of deformation is likely a special case of the ``dual'' of bulk deformations as developed in \cite{fooo4}. Thus, it is likely that one can achieve a similar result with the bulk machinery. It still appears to be open as to whether one can find an open set of non-displaceable Lagrangians in the compact smooth case (c.f. the closed orbifold case in \cite[Ex 4.11]{woodwardwilsonquasimap}).
\subsection{Floer theoretic results}
We show that the leading order terms in the potential defined in the context of Fukaya-Oh-Ono-Ohta's $A_\infty$ algebra \cite{fooo} can be written as a sum of the potentials from the base and fiber. To describe the result, let us work with Biran-Cornea's pearl complex \cite{birancorneapearlsurvey} defined over a Novikov ring with $\bb{C}$ coefficients. Let $t$ be a formal variable and define the ring
\begin{equation}\label{uninovikovdef} \Lambda_t:=\left\lbrace \sum_{i} c_{i} t^{\alpha_i} \vert c_{i}\in \bb{C},\,\alpha_i\in \bb{R}_{\geq 0}, \#\lbrace c_{i}\neq 0, \alpha_i\leq N\rbrace <\infty \right\rbrace
\end{equation}

Let $f$ be a Morse-Smale function on $L$ that is adapted to the fibration structure, i.e. it is a sum of functions $\pi^*b+g$ with $b$ Morse on the base and $g\vert_{\pi^{-1}(\crit(b))}$ Morse. We can label the critical points $y^i_j$ where $x_j:=\pi(y_j^i)$ is critical and fixing $j$ describes all of the critical points in the fiber. Define the Floer chains to be
$$CF(L,\Lambda_t):=\bigoplus_{x\in \crit f}\Lambda_t \cdot x.$$

Let $\M_\g(L,D,y_0,\dots, y_n)_0$ denote the moduli space of pearly Morse trees on $L$ of index $0$ with limits $y_1,\dots, y_n$ along the leaves and $y_0$ along the root (see figure \ref{treediskpicture}). In Section \ref{floerfibrationsresults} we discuss why one can find a comeager set of domain dependent almost complex structures adapted to the fibration that make this into a smooth manifold of expected (zero) dimension and realized as a part of the compactification of a higher dimensional moduli space.

Choose a group homomorphism $\rho\in \text{Hom}(\pi_1(L),\Lambda_t^\times)$ and for a disk class $[u]\in H_2(L,\bb{Z})$ let $\mathrm{Hol}_\rho(u)$ denote the evaluation of $\rho$ on $u\vert_{\partial D}$, and 
$$e(u)=\int u^*\omega$$
the symplectic area. The $A_\infty$ maps for a Lagrangian $L$ are defined as 
\begin{align}
&\nu^n_{L,\rho}:CF(L,\Lambda_t)^{\otimes n}\rightarrow CF(L,\Lambda_t),\\
&\nu^n_{L,\rho}(y_1\otimes\cdots y_n)\\
\notag &\quad =
\sum_{y_0, [u]\in\M_\g(L,D,\underline{y})_0} (\sigma(u)!)^{-1}(-1)^{\sum_ii\vert y_i\vert}\varepsilon(u)\mathrm{Hol}_\rho( u)t^{e(u)}y_0.
\end{align}
where $\underline{y}$ is shorthand as above, $\vert\cdot\vert$ denotes degree in a relative grading, $\varepsilon(u)=\pm 1$ depending on the orientation of the moduli space, and $\sigma (u)$ is the number of interior marked points on $u$ required to map to a symplectic divisor as in \cite{CW2}\cite{CM}. The fact that these actually satisfy the $A_\infty$ axioms is a standard result due to the compactness result \ref{compactnessthm} and we discuss this in Section \ref{invariantssection}.

%
The \emph{potential of $L$} is defined as $\nu_{\rho}^0$ and viewed as a function of $\rho$. It lives at the heart of Floer cohomology in the sense that it is both an obstruction to $(\nu^1)^2=0$ as well as a means of computation of the cohomology in the unobstructed case \cite{fooo3}. In the later situation one can sometimes show that the Floer cohomology is isomorphic to the Morse cohomology of $L$ at a critical point of $\nu^0_\rho$.

The lowest degree terms are called the \emph{leading order potential} and we will denote it
$$\mc{W}_L^1(\rho):=\left[ \nu^0_\rho\right]_{\deg_t=\mc{J}_x}$$
where
$$\mc{J}_x= \bigg\lbrace e(u): e(u)=\min \lbrace e(v):v\in \mc{M}(E,L,x)_0\rbrace \bigg\rbrace$$
i.e. the minimal energy terms for each output critical point. 

In this article, we only consider the leading order potential. The rationale behind this is that at a non-degenerate critical point of $\mc{W}_L^1(\rho)$, one can use the non-degeneracy and the adic filtration to solve inductively for a critical point $\nu_\rho^0$. This idea is demonstrated in \cite[Lemma 10.16]{fooo3}, and we apply this reasoning in Theorem \ref{crit}.

In the fibration context, a critical point of the leading order potential may be degenerate for the following reason: the size of the fibers can be varied and the disks contained in a fiber become the lowest order terms as $K$ grows, leaving out information from the base. Thus it is natural to consider first and second order terms:

\begin{definition}
The \emph{second order potential} for a symplectic K\"ahler fibration is
\begin{equation}
\mathcal{W}_L^2(\rho):=
\sum_{\substack{u\in\mathcal{I}_x\\x\in\T{crit} (f)}}(\sigma(u)!)^{-1}\varepsilon(u)\T{Hol}_\rho(u)t^{e(u)}x
\end{equation}
where for each $x$
\begin{align*}\mathcal{I}_x &= \bigg\lbrace u\in \mathcal{M}(E,L,x)_0\,\vert e(\pi\circ u)=0\bigg\rbrace \\
&\quad \bigcup\left\{ u\,\vert e(u)=\min_{v\in \mathcal{M}(E,L,x)_0}\left\{ e(v):e(\pi\circ v)\neq 0\right\}\right\}.
\end{align*}
\end{definition}

Such a definition counts the holomorphic disks contained in a single fiber, in addition to those with minimal total energy among the homology classes with non-zero base energy.

To compute the second order potential in the trivially fibered case, we develop a lifting operator $\mc{L}$ that lifts pearly Morse trajectories from $(B,L_B)$ to configurations in $(E,L)$. In particular the lifted configurations are covariant constant with respect to some symplectic connection and have output as the ``top of the fiber'', i.e. the unique maximal index critical point for the vertical (negative) Morse flow.

To this end, let 
\begin{equation}
\nu^0_{L_B,\rho}= \sum_{x_i, [u]\in\M_\g(B,L_B,x_i)_0} (\sigma(u)!)^{-1}\varepsilon(u)\mathrm{Hol}_{\rho}(u)t^{e_B(u)}x_i
\end{equation}
be the potential for $L_B\subset B$ where $e_B(u):=\int_C Ku^*\omega_B$. The $A_\infty$ algebra for $L_B$ is well defined by the rationality assumption as in \cite{CW2}, as this allows the use of a stabilizing divisor and domain dependent perturbations to achieve transversality and compactness. For each critical point $x_i$, let $x_i^M$ be the fiberwise maximal index critical point for $-g\vert_{\pi^{-1}(x_i)}$. The \emph{lifted potential} is
\begin{equation}
\mc{L}\circ\nu^0_{L_B,\rho}= \sum_{x_i, [u]\in\M_\g(B,L_B,x_i)_0}  (\sigma(u)!)^{-1}\varepsilon(u)\mathrm{Hol}_{L_B}( \mc{L}u)t^{e_B(u)+e_v( \mc{L}u)}x_i^M
\end{equation}
where $e_v(\mc{L}u):=\int_C\mc{L}u^*a$ is the \emph{vertical symplectic area} and $\mc{L}$ is a vertically constant lift of $u$ with output $x^M_i$. We show the existence, uniqueness, and regularity of such a lift in Theorem \ref{liftconfigthm}, as well as establish the fact that the lifts live in a moduli space of expected dimension $0$. Moreover, we show in Proposition \ref{coherentorientations} that one can choose an orientation on $L_F$ so that $\mc{L}$ is orientation preserving. Let 

\begin{equation}\nu^0_{L_F,\rho} =\sum_{x_i, [u]\in\M_\g(F,L_F,x^M)_0} \varepsilon(u) \mathrm{Hol}_{\rho}( u)t^{e_v(u)}x^M
\end{equation}
denote the $0^{th}$ order structure map for $L_F\subset F$, which is a multiple of the maximal index critical point $x^M$ by monotonicity (e.g. see \cite{birancorneapearl,fooo3}, etc.).

Let $i_{M}:F\rightarrow E$ denote the inclusion of $F$ as a fiber above the maximal index base critical point $x_M$, and take the included potential to be
\begin{equation}
i_{M*}\circ \nu^0_{L_F,\rho} =\sum_{[u]\in\M_\g(F,L_F,x^M)_0} \varepsilon(u)\mathrm{Hol}_{\rho}( u)t^{e_v(u)}x_M^M.
\end{equation}

We prove the following recipe for computing the second order potential

\begin{theorem}\label{maintheoremintro}
Let $E$ be a compact symplectic K\"ahler fibration and $L$ a trivially fibered Lagrangian. Let $(P_\g)_\gamma$ be a choice of regular, coherent, $C^\infty$ perturbation datum as per Theorem \ref{transversality} and Theorem \ref{liftconfigthm}. For $K$ large enough in the weak coupling form, the only terms appearing in the second order potential for $L$ are the vertically constant configurations with output a fiberwise maximum and horizontally constant configurations in the maximal critical fiber
\begin{equation}\label{2ndorderpotentialintro}
\mathcal{W}_{L}^2(\rho)=\bigg[\mathcal{L}\circ \nu^0_{L_B,\rho}\bigg]_{\deg_{t}=\mc{K}_x}+i_{x_M*}\circ\nu^0_{L_F,\iota^*\rho}
\end{equation}
where for each generating critical point $x$, we have
$$\mc{K}_x=\min \bigg\lbrace e(u):[u]\in \M(E,L,x)_0,\pi\circ u\neq const.\bigg\rbrace$$
with $e(u)=\int_C u^*(a+K\pi^*\omega_B)$.
\end{theorem}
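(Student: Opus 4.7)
I would split $\mc{I}_x$ into the vertical part $\mc{I}_x^{v} = \{u \in \mc{M}(E,L,x)_0 : e(\pi \circ u) = 0\}$ and the mixed minimal part $\mc{I}_x^{m} = \{u : e(\pi \circ u) \neq 0 \text{ and } e(u) = \mc{K}_x\}$, and establish a sign-preserving bijection between each part and one of the two summands. For the vertical part, $e(\pi \circ u) = 0$ forces every disk to be contained in a single fiber, and with the adapted Morse function $f = \pi^* b + g$ the projection of the pearly tree to $B$ is a single negative gradient trajectory of $b$ from an unfixed point $p \in L_B$ to $\pi(x)$. For the total moduli space to have expected dimension zero the unstable manifold $W^u(\pi(x))$ must be a point, forcing $\pi(x) = x_M$ and $p = x_M$; the configuration is then a pearly tree in $(F_{x_M}, L_F)$ and contributes to $i_{x_M *} \circ \nu^0_{L_F, \iota^* \rho}$, where $\iota : \pi_1(L_F) \to \pi_1(L)$ appears because the $L_B$ factor of the boundary is contracted to a point.

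For the mixed part I would invoke Theorem \ref{liftconfigthm}, which provides, for each base pearly tree $u_B$ contributing to $\nu^0_{L_B, \rho}$, a unique regular vertically constant lift $\mc{L} u_B \in \mc{M}(E, L, x_i^M)_0$ of total energy $K e_B(u_B) + e_v(\mc{L} u_B)$. The key estimate is that for $K$ sufficiently large the decomposition $e(u) = e_v(u) + K e_B(\pi \circ u)$, together with the positive gap $e_B^{\min} > 0$ furnished by rationality of $L_B$, forces any $u$ with $e(\pi \circ u) \neq 0$ to satisfy $e(u) \geq K e_B^{\min}$, while lifts of minimum-area base trees achieve $e(\mc{L} u_B^{\min}) = K e_B^{\min} + O(1)$; hence $\mc{K}_x$ is realized exactly by lifts of minimum base trees. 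Conversely, any $u \in \mc{I}_x^{m}$ projects to such a minimum base pearly tree and, by the uniqueness clause of Theorem \ref{liftconfigthm}, coincides with its vertically constant lift. Orientations match by Proposition \ref{coherentorientations}, and $\mathrm{Hol}_\rho(\mc{L} u_B) = \mathrm{Hol}_{L_B}(\mc{L} u_B)$ because the $L_F$ factor of the boundary is constant; summing produces $[\mc{L} \circ \nu^0_{L_B, \rho}]_{\deg_t = \mc{K}_x}$, with the truncation discarding lifts of higher-energy base trees.

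The main obstacle is ruling out hybrid minimal configurations: lifts of minimum base trees augmented by fiber disk bubbles, or non-vertically-constant lifts of the same base tree. Fiber bubbles are excluded by strict positivity of their symplectic area, bounded below by the monotonicity constant $\alpha$ of $L_F$, so any such configuration has $e(u) \geq \mc{K}_x + \alpha$. Non-vertically-constant lifts carry strictly larger vertical area than the constant section in the same homotopy class, which uses in an essential way items (2) and (3) of Definition \ref{sympkahlerdefintro}: the holomorphic triviality of $u_B^* E$ afforded by the K\"ahler and $G$-invariance hypotheses is what allows the constant section to be singled out as the unique minimizer of $\int a$ within its homotopy class. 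This minimization is built into the uniqueness and regularity clauses of Theorem \ref{liftconfigthm}, and once it is invoked the argument closes.
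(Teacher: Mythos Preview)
Your treatment of the vertical part $\mc{I}_x^v$ is correct and matches the paper. The problem is in the mixed part, where your argument is energy-based while the paper's is index-based, and the energy route as you have written it has real gaps.

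First, you never show that a horizontally non-constant $u\in\mc{M}(E,L,x)_0$ must output at a fiberwise maximum $x=x^M$, nor that its projection $\pi\circ u$ is a \emph{regular index-zero} configuration in $(B,L_B)$ (so that it actually appears in $\nu^0_{L_B}$). Both facts are essential to the formula and neither follows from an energy comparison. The paper obtains them from Proposition~\ref{indexdecprop} and the index formula \eqref{projectedindexformula}: since $\I(\g,x)=0$ and $\I(\Upsilon\pi_*\g,\pi(x))\ge 0$ by regularity of the projection (Lemma~\ref{regular proj}), the non-negative terms $\sum_i I_F(u_i)$, $\#(\text{forgotten nodes})$, and $\dim W^+_{X_g}(x)$ in the difference must all vanish. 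Non-negativity of $I_F$ here is Proposition~\ref{verticalmaslovprop}, which you never invoke; it is precisely where the trivially-fibered and monotone-fiber hypotheses enter.

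Second, your appeal to ``the uniqueness clause of Theorem~\ref{liftconfigthm}'' is a misapplication: that theorem gives uniqueness \emph{among vertically constant lifts} through a given point, not uniqueness among all holomorphic lifts. A non-vertically-constant lift of the same base tree is not in the same relative class (it has $I_F>0$), so your ``same homotopy class'' minimization clause in the final paragraph does not apply to it. What actually pins things down is Lemma~\ref{vertconst0maslemma} (equivalently Lemma~\ref{onlyrepinclasslemma}): once the index argument forces $I_F(u_i)=0$ on every component, any $J^G$-holomorphic representative of that class is the Donaldson lift. Finally, your estimate $e(u)\ge K e_B^{\min}$ presumes $e_v(u)\ge 0$, which is false in general (Corollary~\ref{zeroenergycorollary} shows even the Donaldson lift can have $e_v<0$); so the energy comparison as stated does not rule out lifts of higher-base-energy disks with compensating negative vertical area. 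The paper avoids all of this by proving the stronger statement that \emph{every} horizontally non-constant index-zero configuration is a Donaldson lift, from which the minimal-energy characterization is immediate.
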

In the general situation, the most important consequence of this theorem is an unobstructedness criterion. We say that the $A_\infty$ algebra of $L$ is \emph{weakly unobstructed} if $\nu^0_L$ is a multiple of the maximal index critical point. By monotonicity, it follows that $L_F\subset F$ is always weakly unobstructed, and thus Theorem \ref{maintheoremintro} implies that $L$ is weakly unobstructed if $L_B$ is. To justify the name of this definition, in Section \ref{divisorequation} we show a result that is similar to that in the literature (c.f. \cite[Thm 4.10]{fooo3} \cite[Prop 4.36]{CW2}):

\begin{theorem}\label{unobstructednessthmintro}
If $\nu^0_{L_B}$ is a multiple of the unique index zero critical point of $b$ on $L_B$ and $(B,L_B)$ has no holomorphic disks of Maslov index $<2$, then there is deformation $(\nu^n_{L,w})$ of the $A_\infty$ algebra of $L$ such that $(\nu^1_{L,w})^2=0$. Moreover, if $\rho$ is a non-degenerate critical point of \eqref{2ndorderpotentialintro} and $H^*(L,\Lambda_t)$ is\linebreak generated by degree $1$ elements, then there is a representation $\xi\in\linebreak \text{Hom}(\pi_1(L),\Lambda_t^\times)$ such that
$$H(CF(L,\Lambda_t),\nu^1_{\xi,w})\cong H^*(L,\Lambda_t).$$
\end{theorem}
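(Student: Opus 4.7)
The plan is to split the statement into (i) construction of a bounding cochain $w$ yielding weak unobstructedness, and (ii) identification of the deformed Floer cohomology at a non-degenerate critical point.

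For (i), I would first apply Theorem \ref{maintheoremintro} to read off the output critical points appearing in $\mc{W}^2_L(\rho)$. Since $\nu^0_{L_B}$ is by assumption supported at the unique index zero critical point $x_M$ of $b$ (the unit of the pearl complex for $L_B$), the first summand of \eqref{2ndorderpotentialintro} is supported at $x_M^M$, the fiberwise maximal index critical point above $x_M$, which is the unit of the pearl complex for $L$. The second summand $i_{x_M*}\circ\nu^0_{L_F,\iota^*\rho}$ is a multiple of $x_M^M$ by monotonicity of $L_F$. Hence $\mc{W}^2_L(\rho)=W(\rho)\cdot x_M^M$ for a scalar-valued function $W$ of $\rho$.

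To upgrade this from the leading order potential to honest weak unobstructedness, I would build $w=\sum_k w_k$ recursively in the Novikov filtration. The hypothesis that $(B,L_B)$ has no Maslov index $<2$ disks, combined with the minimal Maslov $\geq 2$ condition on $L_F$ built into the definition of a fibered Lagrangian, forces the obstruction at each stage to lie (modulo coboundaries and modulo $x_M^M$) in degree $1$, where by the divisor equation of Section \ref{divisorequation} it can be absorbed either into a twisting of $\rho$ or into the next term $w_k$. Convergence in the $t$-adic topology is automatic on the Novikov-complete pearl complex, and the resulting identity $\sum_k \nu^k_L(w^{\otimes k})=\lambda\cdot x_M^M$ gives $(\nu^1_{L,w})^2=0$ by the curved $A_\infty$ relations together with unitality of $x_M^M$.

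For (ii), at a non-degenerate critical point $\rho$ of $W$, set $\xi:=\rho$. The critical point equations $\partial_i W(\rho)=0$ translate, via the divisor equation, into vanishing of $\nu^1_{L,w,\xi}$ on a degree-$1$ basis of $H^*(L,\Lambda_t)$, and the hypothesis of degree-$1$ generation then propagates this vanishing to every cohomology class through the $\nu^2_{L,w,\xi}$-Leibniz rule. Running the Novikov spectral sequence associated to the $t$-filtration on $CF(L,\Lambda_t)$, one obtains Morse cohomology on $E_1$; non-degeneracy of the Hessian of $W$ rules out the first potentially nontrivial higher differential, and the degree-$1$ generation again propagates this through every subsequent page, so the spectral sequence collapses at $E_\infty=H^*(L,\Lambda_t)$.

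The hardest step is the propagation of higher-differential vanishing in (ii): non-degeneracy of the Hessian only rules out the first nontrivial page directly, and extending the vanishing to every subsequent page requires repeated use of the divisor equation together with the degree-$1$ generation hypothesis, much in the spirit of the Cho-Oh/FOOO arguments for toric fibers. The Maurer-Cartan iteration in (i) is comparatively routine once the leading-order structure of $\nu^0_L$ from Theorem \ref{maintheoremintro} is in hand, though care is needed to preserve coherence of the perturbation data across the infinite iteration.
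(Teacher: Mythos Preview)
Your proposal has two genuine gaps, one in each part.

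\textbf{Part (i).} You assume that only the second-order potential $\mc{W}^2_L$ is a multiple of $x_M^M$, and then attempt a recursive Maurer--Cartan iteration to handle higher-order terms. But the index arguments underlying Theorem~\ref{maintheoremintro} (Proposition~\ref{indexdecprop} together with formula~\eqref{projectedindexformula}) already show that the \emph{full} $\nu^0_L$ is a multiple of $x_M^M$: any index-zero configuration in $(E,L)$ projects to an index-zero configuration in $(B,L_B)$ with output $x_M$, and the term $\dim W^+_{X_g}(x)$ in the index comparison forces the output in $E$ to be the fiberwise maximum. This is Corollary~\ref{unobstructed}. Once that is known, the paper does not iterate at all: it enlarges the complex by homotopy-unit generators $x^\triangledown, x^\blacktriangledown, x^\vee$ (Section~\ref{maurercartansection}) and observes that under the Maslov~$\geq 2$ hypothesis one has $\nu^1(x^\vee)=x^\triangledown-x^\blacktriangledown$ exactly, so $w=\mc{W}x^\vee$ solves the weak Maurer--Cartan equation in one stroke (Lemma~\ref{mclemma}). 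Your claim that the obstruction at each stage ``lies in degree~1'' is not justified and appears to conflate the role of the divisor equation; in any case it is unnecessary.

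\textbf{Part (ii).} You set $\xi:=\rho$. But $\rho$ is only a critical point of the second-order potential $\mc{W}^2_L$, so the divisor equation $-\nu^1_\rho(\gamma)=\partial_\gamma\nu^0_\rho$ only kills the \emph{leading $t$-term} of $\nu^1$ on degree-1 classes, not $\nu^1$ itself. The non-degeneracy of the Hessian is not used to ``rule out the first nontrivial higher differential'' of the spectral sequence; it is used (as in \cite[Thm~10.4]{fooo3}) to run an implicit-function-theorem argument that corrects $\rho$ order-by-order to a genuine critical point $\xi$ of the full $\nu^0_L$. Only at $\xi$ does the divisor equation give exact vanishing of $\nu^1_{\xi,w}$ on degree-1 cocycles; the induction on energy and degree via the $A_\infty$ relations (the lemma inside the proof of Theorem~\ref{crit}) then propagates this to all cocycles, and the spectral sequence collapses at $E_1$. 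Your version skips the correction step and therefore does not close.
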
			

By a ``deformation'' of an algebra $\nu^n$ we mean the $A_\infty$ algebra defined by
\begin{align}
&\nu^n_w(x_1\otimes\cdots x_n)\\
\notag &\quad :=\sum_{i_0,\dots, i_{n}\in \bb{Z}_{\geq 0}}\nu^{n+i_0+\cdots i_{n}}(w^{\otimes i_0}\otimes x_1\otimes w^{\otimes i_2}\otimes\cdots x_n\otimes w^{\otimes i_{n}})
\end{align}
and in the context of Thm.~\ref{unobstructednessthmintro} we show that we can deform $(CF(L,\Lambda_t),\nu^n)$ to an $A_\infty$ algebra-with-strict-unit as in \cite[Ch 3]{fooo} and that there is a solution $w$ to the weak Maurer-Cartan equation.

Theorem \ref{maintheoremintro} and its corollary answer the aforementioned shortcomings of the spectral sequence from \cite{floerfibrations}.

We close the introduction with some technical comments.
The reason for the trivially fibered assumption in Definition \ref{trivfiberedlagrangiandefintro} is evident in a single Proposition \ref{verticalmaslovprop}.  Any disk $u:(D,\partial D)\rightarrow (E,L)$ induces a section $\tilde{u}$ of $(\pi\circ u)^* E$ (assuming $\pi\circ u\neq const$), and a symplectic trivialization of $\pi\circ u^* E$ gives a ``loop of $L_F$'s'' over $\partial D$ as in \cite{salamonakveld}. The \emph{vertical Maslov index} is defined as the Maslov index of $\tilde{u}$ as a map into $F$. We show non-negativity of this index in the trivial case, where the proof also leans heavily on the fact that the fibers are monotone. The non-negativity allows us to deduce that the projections of configurations from the potential of $L$ appear in the potential of $L_B$. On the other hand, $\tilde{u}$ is equivalent to a map into $F$ which satisfies the perturbed $\bar{\partial}$-problem with moving boundary conditions
\begin{gather}\label{hamiltonianperturbedintro}
\bar{\partial}_{J_F,j}\tilde{u}+X_\delta^{0,1}(\tilde{u})=0\\
\tilde{u}(e^{i\theta})\in L\cap F_{u(e^{i\theta})} 
\end{gather}
where $X_\delta$ is one form that is Hamiltonian vector field-valued which prescribes the parallel transport on the bundle $(\pi\circ u)^*E$ [Section \ref{hamperturbsection}, c.f. \cite{gromov, salamonakveld}]. In general, solutions to this can have negative Maslov index, in which case $u$ will have negative vertical Maslov index.

Of course, transversality is extremely crucial in Floer theory. We show in \cite{floerfibrations} and summarize in Section \ref{floerfibrationsresults} that one can achieve transversality for the Moduli spaces of disks in the total space by considering the pullback of a stabilizing divisor $\pi^{-1}(D_B)\subset E$ and using (lifts of) domain dependent almost complex structures based on \cite{CW1,CW2,CM}. For configurations which might be contained in a single fiber, we rely heavily on the fact that the fibers are monotone.

To lift a disk $v:(D,\partial D)\!\rightarrow\! (B,L_B)$, we use an idea by Donaldson \cite{sdh} that applies heat flow and flattens the connection over $v^*E$. The transformation over the boundary of a disk is guaranteed to be $G$ valued by Donaldson, so boundary conditions are preserved. Once the connection is flat, we pick a covariant-constant section (hence the vertically constant terminology) and show that it is holomorphic in $E$. Moreover, we show transversality for such a section for any fiber almost complex structure; this argument is base upon the argument for transversality of constant disks. As a corollary, we get an existence result for the moving boundary problem \eqref{hamiltonianperturbedintro} in case the Hamiltonian connection is $G$-valued, where the covariant-constant condition says that we've found a periodic orbit of $\delta$ that is contractible along a family of periodic orbits. Such a solution is most likely non-constant if the loop of Lagrangians $\lbrace L\cap F_{v(e^{i\theta})}\rbrace$ is not isotopic to the constant loop (and we note that the lifting result in Section \ref{okasection} does not need the trivially fibered assumption).

\subsection{Outline}
Section \ref{sympfibsection} gives the background on symplectic fibrations and the construction of an adapted Morse flow on fiber bundle.

Section \ref{floerbackgroundsection} gives a precise definition of the domains for the pearly Morse trees and sketches the perturbation system from \cite{floerfibrations}. We summarize the transversality and compactness results that we need in Section \ref{floerfibrationsresults}.

Section \ref{liftingsection} is the beginning of the original content for this paper, where we develop a lifting operator and prove some results on the index.

We define the invariants in Section \ref{invariantssection} and prove Theorem \ref{maintheoremintro} in Section~\ref{trivfiberedsection}.

We compute the mentioned examples, including Theorem \ref{familiestheorem}, in Section~\ref{applicationsection}.

\subsection*{Acknowledgments}
Part of this paper is the second chapter of the author's thesis. As such, I want to thank Chris Woodward for suggesting such an interesting program and providing guidance along the way. My thanks go to Nick Sheridan for his help with many of the computations, the observation and relevance of Remark \ref{hirzebruchremark}, the proof of Theorem \ref{crit}, and his many other insightful comments. I thank Renato Vianna and Georgios Rizell for helpful discussions relating to Theorem \ref{familiestheorem} and bulk deformations. I thank Jean Gutt for a helpful discussion about solutions to the moving boundary problem \eqref{hamiltonianperturbedintro}. Finally, I would like to thank David Duncan and Denis Auroux for illuminating discussions about the flag examples.

\section{Symplectic fibrations}\label{sympfibsection}
To proceed, we review some of the background on the subject of symplectic fiber bundles. Let $(F,\omega_F)$ and $(B,\omega_B)$ be compact, connected symplectic manifolds, and let 
$$F\rightarrow E\xrightarrow{\pi} B$$
be a fiber bundle. A \emph{symplectic fibration} is such a space $E$ where the transition maps are symplectomorphisms of the fibers: 
\begin{align*}
&\Phi_i:\pi^{-1}(U_i)\rightarrow U_i\times F\\
&\Phi_j\circ \Phi_i^{-1}:U_i\cap U_j\times F\rightarrow U_i\cap U_j\times F\\
&(p,q)\mapsto (p,\phi_{ji}(q))
\end{align*}
where $\phi_{ji}:U_i\cap U_j\rightarrow \mathrm{Symp}(F,\omega_F)$ are {\u C}ech co-cycles.

Let $F_p$ denote the fiber at $p\in B$. Suppose that there is a closed $2$-form $a\in C^\infty (E,\wedge^2 T^*E)$ so that
$$a\vert_{F_p}=\omega_F$$
up to Hamiltonian symplectomorphism. The fiberwise non-degeneracy of the form $a$ defines a connection on $E$ via
$$TF^{\perp a}=:H_a$$
so that $TE\cong H_a\oplus TF$. By \cite[Thm 1.2.4]{guillemin}, closedness of $a$ implies that the connection is symplectic. We say that the connection $H_a$ is \emph{Hamiltonian} if the holonomy of around any contractible loop in the base is a Hamiltonian symplectomorphism of the fiber. Such a property guarantees the existence of a natural symplectic form.

\begin{theorem}\cite[Thm 6.21]{ms1}\label{coupling}
Let $H$ be a symplectic connection on a fibration $F\rightarrow E\rightarrow B$ with $\dim F=2n$. The following are equivalent:
\begin{enumerate}
\item The holonomy around any contractible loop in $B$ is Hamiltonian.
\item There is a unique closed connection form $a_H$ on $E$ such that $i^*a_H=\omega_F$ and
\begin{equation}\label{normalizationcondition}
\int_F a_H^{(n+1)} = 0 \in C^{\infty}(B,\Lambda^2(T^*B)),
\end{equation}
\end{enumerate}
where $\int_F a_H^{(n+1)}$ is integration along the fiber. The form $a_H$ is called \emph{the minimal coupling form} associated to the connection $H$.
\end{theorem}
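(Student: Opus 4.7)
The plan is to handle the two directions separately: (2)$\Rightarrow$(1) by a direct flux computation, and (1)$\Rightarrow$(2) by first producing any closed 2-form extending $\omega_F$ fiberwise and then normalizing to pin down $a_H$ uniquely.

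For (2)$\Rightarrow$(1), fix a contractible loop $\gamma = \partial D$ in $B$ and a smooth horizontal lift $\tilde D \subset E$ based at a point of $F_{\gamma(0)}$. The Weinstein flux of the holonomy $\phi_\gamma \in \mathrm{Symp}(F_{\gamma(0)})$ is represented in $H^1(F,\R)$ by the 1-form $V \mapsto \int_{\tilde\gamma} \iota_V a_H$ on the fiber. Using $da_H = 0$, $a_H|_{TF} = \omega_F$, and Stokes' theorem on $\tilde D$, one exhibits an explicit primitive for this 1-form (morally, the integral of $a_H$ over $\tilde D$ with vertical variation). The flux class vanishes, so $\phi_\gamma$ is Hamiltonian.

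For (1)$\Rightarrow$(2), I would define a preliminary 2-form $a_0$ on $E$ by $a_0|_{TF\otimes TF} = \omega_F$, $a_0(X^\sharp, V) = 0$ for $X$ horizontal and $V$ vertical, and $a_0(X^\sharp, Y^\sharp) = 0$; then $H = TF^{\perp a_0}$ automatically. The three homogeneous components of $da_0$ by vertical/horizontal type are handled separately: the triply vertical part equals $d_F\omega_F = 0$; the $(2,1)$ part vanishes because the connection is symplectic ($\mathcal{L}_{X^\sharp}\omega_F = 0$ fiberwise); and the residual $(1,2)$ part encodes the curvature $R_H \in \Omega^2(B,\mathcal{X}(F))$ contracted against $\omega_F$. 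Assumption (1) is equivalent to $R_H$ taking values in Hamiltonian vector fields, yielding $H_{X,Y} \in C^\infty(F_p)$ with $\iota_{R_H(X,Y)}\omega_F = dH_{X,Y}$, selected uniquely by the fiberwise-mean-zero normalization. Packaging the $H_{X,Y}$ into a horizontal 2-form $\alpha$ on $E$, the form $\tilde a := a_0 - \alpha$ is closed.

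To upgrade $\tilde a$ to the minimal coupling form, take $a_H := \tilde a + \pi^*\beta$, which preserves closedness, the fiberwise restriction, and the horizontal distribution. The normalization $\int_F a_H^{n+1} = 0$ reduces, since $\pi^*\beta$ is purely horizontal, to the pointwise linear equation $\int_F \tilde a^{n+1} + (n+1)\,\mathrm{vol}(F)\,\beta = 0$ in $\Omega^2(B)$, with a unique solution $\beta$. Uniqueness of $a_H$ follows because any two normalized candidates differ by $\pi^*\beta'$ with $\beta'$ closed on $B$, and the normalization forces $\beta' = 0$. The main obstacle is the existence step in (1)$\Rightarrow$(2): globalizing the Hamiltonian primitives of $R_H$ over all of $B$. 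This is precisely where assumption (1) enters, as it is the cohomological input ensuring the image of $R_H$ in $\Omega^2(B, H^1(F,\R))$ vanishes and thereby permitting global, canonically normalized potentials. I would model the detailed computation on the standard argument in \cite[Ch.~6]{ms1}.
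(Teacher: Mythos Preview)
Your proposal is correct and follows essentially the same construction as the paper's sketch: the paper only outlines (1)$\Rightarrow$(2), defining $a_H$ on $TF\wedge TF$ and $TF\wedge H$ by $\omega_F$ and orthogonality, and on $H\wedge H$ by the zero-average Hamiltonian of the curvature via the curvature identity \eqref{curvatureidentity}. Your extra step of adding $\pi^*\beta$ to normalize is redundant---once you select the fiberwise zero-average Hamiltonians $H_{X,Y}$, the form $\tilde a$ already satisfies $\int_F \tilde a^{\,n+1}=0$, so $\beta=0$---but this does not affect correctness, and your treatment of (2)$\Rightarrow$(1) via the flux homomorphism fills in the direction the paper omits.
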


Essentially, $a$ is already defined on $TF\wedge TF$ and $TF\wedge H$, so it suffices to define it on $H\wedge H$. One does this by using the \emph{curvature identity}. For a vector field $v$ on $B$, let $v^\sharp$ denote its horizontal lift to $H$. Denote by $\iota(v)a$ contraction of a vector field with a form. From \cite[Eq 1.12]{guillemin} we have
\begin{equation}\label{curvatureidentity}
-d\iota(v^\sharp)\iota( w^\sharp)a=\iota(\pi_{TF}[v^\sharp,w^\sharp])a \: (mod\, TB)
\end{equation}
for any closed connection form $a$, where $\pi_{TF}$ is the projection onto $TF$. Hence the \emph{curvature} $\pi_{TF}[v^\sharp,w^\sharp]$ is a Hamiltonian vector field on $F$. Thus, we assign to 
$$a(w^\sharp,v^\sharp)$$ the value of the unique zero-average Hamiltonian associated to $\pi_{TF}[v^\sharp,w^\sharp]$. Such an assignment gives a closed form that satisfies the normalization condition \eqref{normalizationcondition}.

For large $K$, we refer to any symplectic form 
$$\omega_{H,K}:=a_H+K\pi^*\omega_B$$ as a \emph{weak coupling form}. Such a form is unique up to symplectic isotopy in the \emph{weak coupling limit}:

\begin{theorem}\cite[Thm 1.6.3]{guillemin}\label{uniquecouplingthm}
Assume $F$ is simply connected. Then for two symplectic connections $H_i$, $i=1,2$, the corresponding forms $a_{H_i}+K\pi^*\omega_B$ are isotopic for large enough $K$.
\end{theorem}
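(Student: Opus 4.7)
The plan is to apply a Moser stability argument to the linear family
$$\omega_{t,K}:=ta_{H_1}+(1-t)a_{H_2}+K\pi^*\omega_B,\quad t\in[0,1],$$
after first showing that the two coupling classes agree, $[a_{H_1}]=[a_{H_2}]\in H^2(E;\mathbb{R})$. Write $a_i:=a_{H_i}$ and set $\dim F=2n$, $\dim B=2m$. The three ingredients are a cohomological equality that uses $\pi_1(F)=0$, a dominant-$K$ non-degeneracy estimate, and the Moser trick.

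For the cohomology step, each $a_i$ restricts to $\omega_F$ on every fibre, so $a_1-a_2$ restricts to zero in $H^2(F)$; that is, $[a_1-a_2]\in\ker(i^*:H^2(E)\to H^2(F))$. In the Leray--Serre spectral sequence for $F\to E\to B$ the hypothesis $\pi_1(F)=0$ gives $H^1(F)=0$, whence $E_2^{0,1}=E_2^{1,1}=0$; a filtration argument identifies this kernel with the image of $\pi^*:H^2(B)\to H^2(E)$, so $[a_1-a_2]=\pi^*[\beta]$ for some $[\beta]\in H^2(B)$. Applying fibre integration $\pi_*$ to the binomial expansion
$$[a_1]^{n+1}=([a_2]+\pi^*[\beta])^{n+1}=\sum_{k=0}^{n+1}\binom{n+1}{k}[a_2]^{n+1-k}(\pi^*[\beta])^k,$$
all terms with $k\geq 2$ vanish because $[a_2]^{n+1-k}$ has vertical degree $2(n+1-k)<2n$, so its fibre integral is zero. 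Combined with the normalisation $\pi_*[a_i]^{n+1}=0$ from Theorem~\ref{coupling}, we obtain
$$0=\pi_*[a_1]^{n+1}=\pi_*[a_2]^{n+1}+(n+1)\pi_*[a_2]^n\cdot[\beta]=C[\beta],$$
where $C=(n+1)\int_F\omega_F^n>0$. Hence $[\beta]=0$ and $[a_1]=[a_2]$.

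Non-degeneracy of $\omega_{t,K}$ for $K$ large is a pointwise estimate: at each $p\in E$, $\omega_{t,K}^{n+m}$ is a polynomial in $K$ of degree $m$ whose leading coefficient is $\binom{n+m}{m}\omega_F^n\wedge\pi^*\omega_B^m$, a nowhere-vanishing top form on $E$, while the lower-order coefficients depend continuously on $(t,p)\in[0,1]\times E$. Compactness of $[0,1]\times E$ then supplies a $K_0$ so that $\omega_{t,K}$ is symplectic for all $t\in[0,1]$ and $K\geq K_0$. The Moser trick now finishes the argument: pick a primitive $\eta\in\Omega^1(E)$ with $d\eta=a_1-a_2$ via Step~1, solve $\iota_{X_t}\omega_{t,K}=-\eta$ for the time-dependent vector field $X_t$, and integrate its (globally defined, by compactness of $E$) flow $\phi_t$ to produce $\phi_t^*\omega_{t,K}=\omega_{0,K}$. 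Then $\phi_1$ realises the desired isotopy from $\omega_{H_2,K}$ to $\omega_{H_1,K}$.

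The essential difficulty is the cohomological step, which is the only place where $\pi_1(F)=0$ is used. Without this assumption $E_2^{1,1}$ need not vanish, the coupling class is no longer pinned down by the bundle topology, and the conclusion of the theorem genuinely fails; any argument must exploit simple-connectedness of the fibre here. The remaining steps -- the $K$-dominant estimate and the Moser flow -- are routine once the cohomological equality is in hand.
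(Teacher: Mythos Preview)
Your proof is correct and follows precisely the approach the paper indicates: the paper's own ``proof'' is only the one-line remark that it ``uses a Moser-type argument and the fact that $[a_{H_1}]=[a_{H_2}]$,'' citing Guillemin. You have filled in exactly these two ingredients---the cohomological equality via the Leray--Serre filtration and the normalization $\pi_*[a_i]^{n+1}=0$, followed by the standard Moser flow on the linear interpolation---so there is nothing to add.
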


The proof of this theorem uses a Moser-type argument and the fact that $[a_{H_1}]=[a_{H_2}]$.

In Example \ref{flag3}, we used deformation and extension theorems in order to construct a fibered Lagrangian. For reference:

\begin{theorem}\cite[Thm 4.6.2]{guillemin}\label{connectionextensiontheorem}
Let $A\subset B$ be a compact set, $A\subset U$ an open neighbourhood, $H'$ a symplectic connection for $\pi^{-1}(U)$. Then there is an open subset $A\subset U'\subset U$ and connection $H$ on $E$ such that $H=H'$ over $U'$.
\end{theorem}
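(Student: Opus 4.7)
The plan is a partition-of-unity argument exploiting the affine structure on the space of symplectic connections on $E$. Two horizontal distributions (each complementary to the vertical bundle $V=\ker d\pi$) differ by a bundle map $\phi\colon \pi^{*}TB\to V$, via $v^{\sharp}_{H'}=v^{\sharp}_{H_{0}}+\phi(v)$, and are both symplectic precisely when $\phi(v)|_{F_{b}}$ is a symplectic vector field on each fiber (equivalently $\mathcal{L}_{\phi(v)}\omega_{F}=0$ on every $F_{b}$). This condition is linear in $\phi$ and preserved under affine combinations with coefficients summing to $1$. Moreover it is preserved under rescaling $\phi$ by any function $\rho$ pulled back from $B$: on each fiber $\rho$ is constant, so $\rho\phi(v)|_{F_{b}}$ is a scalar multiple of a symplectic vector field, hence symplectic.

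First I would produce a global reference symplectic connection $H_{0}$ on $E$. Choose a locally finite cover $\{V_{\alpha}\}$ of $B$ by symplectic trivializations $\pi^{-1}(V_{\alpha})\cong V_{\alpha}\times F$; on each piece the product horizontal distribution $H_{\alpha}$ is manifestly symplectic. Using a partition of unity $\{\rho_{\alpha}\}$ subordinate to $\{V_{\alpha}\}$, take the pointwise affine combination $H_{0}(q):=\sum_{\alpha:\,\pi(q)\in V_{\alpha}}\rho_{\alpha}(\pi(q))\,H_{\alpha}(q)$, which is independent of the choice of reference chart (because the weights sum to $1$) and symplectic by the affine observation above.

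With $H_{0}$ in hand, write $H'=H_{0}+\phi$ on $\pi^{-1}(U)$ for a section $\phi\in\Gamma(\pi^{-1}(U),\,\mathrm{Hom}(\pi^{*}TB,V^{\mathrm{symp}}))$, where $V^{\mathrm{symp}}\subset V$ is the subbundle of symplectic vertical vector fields. By smooth Urysohn choose a bump function $\rho\colon B\to[0,1]$ with $\rho\equiv 1$ on an open neighborhood $A\subset U'\subset U$ and $\mathrm{supp}(\rho)\Subset U$, and define $H:=H_{0}+\rho\phi$, extending $\rho\phi$ by zero outside $\pi^{-1}(\mathrm{supp}(\rho))$. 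Then $H$ is globally smooth, symplectic by the affine observation, and equals $H'$ on $\pi^{-1}(U')$ as required. The main technical step is the affine observation at the outset; once that is in hand, the existence of a global $H_{0}$ and the extension via the bump function are formal.
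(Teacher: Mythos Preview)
The paper does not supply its own proof of this statement; it is quoted as \cite[Thm 4.6.2]{guillemin} and used as a black box. Your partition-of-unity argument is the standard one and is correct: the key point, that symplectic connections form an affine space modeled on sections $\phi\in\Gamma(\text{Hom}(\pi^{*}TB,V))$ with each $\phi(v)|_{F_{b}}$ locally Hamiltonian (hence symplectic), and that this space is closed under multiplication by functions pulled back from $B$, is exactly what drives the extension. One notational quibble: writing ``$V^{\mathrm{symp}}\subset V$'' as a subbundle is misleading, since being symplectic is a property of a vertical vector \emph{field} on a fiber, not of a single tangent vector; the condition you want lives on sections of $\text{Hom}(\pi^{*}TB,V)$, not on a pointwise subbundle of $V$. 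With that adjustment the argument goes through, and it is essentially the same route taken in the cited reference.
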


Often it will be clear if one can construct a trivially fibered Lagrangian from a combination of Theorems \ref{connectionextensiontheorem} and \ref{uniquecouplingthm}. A useful criterion for actually checking that a fiber sub-bundle is Lagrangian is the following:

\begin{lemma}{(Fibered Lagrangian Construction Lemma)}(Lemma 2.1 from \cite{floerfibrations})\label{fiberedlagrangianlemma}
Let $L_F\rightarrow L\rightarrow L_B$ be a connected sub-bundle. Then $L$ is Lagrangian with respect to $a+K\pi^*\omega_B$ if and only if

\begin{enumerate}
\item\label{paralleltransinv} $L$ is invariant under parallel transport along $L_B$ and
\item\label{normalizing} there is a point $p\in L_B$ such that $a_p\vert_{TL_F\oplus H_L}=0$, where $H_L:=H_a\cap TL$ is the connection restricted to $L$.
\end{enumerate}

\end{lemma}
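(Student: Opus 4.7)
The plan is to exploit condition (1) to split $T_qL$ into vertical and horizontal parts along $L$, evaluate $\omega_{H,K}$ termwise on each piece, and use the curvature identity together with Lagrangianness of $L_F$ to propagate condition (2) from the single point $p$ to all of $L$.

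Since $\dim L = \dim L_F + \dim L_B = \tfrac{1}{2}\dim E$, Lagrangianness reduces to isotropy of $\omega_{H,K}|_{TL}$. Condition (1) implies that for any $v\in T_{\pi(q)}L_B$ the horizontal lift $v^\sharp$ lies in $T_qL$, giving the splitting $T_qL = T_qL_F \oplus (H_L)_q$ with $(H_L)_q$ the horizontal lift of $T_{\pi(q)}L_B$. Under this splitting the mixed pair $T_qL_F \times (H_L)_q$ kills $\omega_{H,K}$ automatically, since $a$ vanishes on $TF \times H_a$ by the defining relation $H_a = TF^{\perp a}$ and $\pi^*\omega_B$ annihilates vertical vectors; the vertical piece reduces to $\omega_F|_{L_F}$, and the horizontal piece to $a(v^\sharp, w^\sharp) + K\omega_B(v, w)$. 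Hence $L$ is Lagrangian iff $\omega_F|_{L_F}=0$ in every fiber over $L_B$ and $a(v^\sharp, w^\sharp) + K\omega_B(v, w)=0$ for all $v, w\in TL_B$ at every point.

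For the backward direction, condition (2) at $p$ yields $\omega_F|_{L\cap F_p}=0$, and this propagates to every fiber over $L_B$ by (1) plus the fact that parallel transport is symplectic on the fibers. The remaining task is to show $a(v^\sharp, w^\sharp)|_L\equiv 0$. I would consider the function $H_{v,w}(q) := a_q(v^\sharp_q, w^\sharp_q)$ for local vector-field extensions $v, w$ on $L_B$. By (1), $v^\sharp$ and $w^\sharp$ are tangent to $L$, so $[v^\sharp, w^\sharp]\in TL$ and its vertical part $X_{v,w} := \pi_{TF}[v^\sharp, w^\sharp]$ lies in $TL_F$. The curvature identity \eqref{curvatureidentity} then gives $dH_{v,w}|_{TF} = \omega_F(X_{v,w}, \cdot)|_{TF}$, which vanishes on $T_qL_F$ by Lagrangianness of $L_F$. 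Hence $H_{v,w}$ is fiberwise constant on $L$, so it descends to a function $\bar H_{v,w}$ on $L_B$. Expanding $0 = da(u^\sharp, v^\sharp, w^\sharp)$ by Cartan and using that the horizontal part of the bracket of horizontal lifts is itself a horizontal lift while the vertical part is $a$-orthogonal to $H_a$, one obtains a recursion that, together with condition (2) at $p$ and the (implicit) Lagrangianness of $L_B$, forces $\bar H_{v,w} + K\omega_B|_{L_B}\equiv 0$ on $L_B$.

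For the forward direction, $L$ Lagrangian immediately gives $\omega_F|_{L\cap F_p}=0$ by restriction. For (1), given $v\in T_{\pi(q)}L_B$ I pick any $\tilde v \in T_qL$ with $\pi_*\tilde v = v$ (possible since $\pi|_L$ is a submersion) and write $\tilde v = v^\sharp + \eta$ with $\eta\in T_qF$; for any $u\in T_qL_F$ one has $\omega_{H,K}(u, \tilde v) = \omega_F(u, \eta) = 0$ by Lagrangianness of $L$, and then $L_F$ Lagrangian forces $\eta\in T_qL_F^{\perp\omega_F} = T_qL_F$, so $v^\sharp = \tilde v - \eta \in T_qL$. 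Condition (2) then follows by restricting $\omega_{H,K}|_{TL}=0$ to $TL_F \oplus H_L$. The main obstacle is the horizontal spread of $\bar H_{v,w}$ from $p$ to all of $L_B$ in the backward direction: the vertical constancy follows cleanly from the curvature identity and Lagrangianness of $L_F$, but the horizontal step requires a careful combination of $da=0$ with the connection structure on $L\to L_B$ induced by $H_a$.
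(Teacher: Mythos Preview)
The paper does not prove this lemma here; it is quoted from \cite{floerfibrations}, and the only local remark is the sentence after the statement recording that, since $L_B$ is Lagrangian for $\omega_B$, one has $L$ Lagrangian iff $a|_L=0$. Your argument is considerably more detailed than anything in this paper, and both the forward direction and the fiberwise (vertical) portion of the backward direction are correct.

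The gap you flag at the end is, however, genuine and cannot be closed from the stated hypotheses. Your Cartan expansion of $0=da(u^\sharp,v^\sharp,w^\sharp)$, combined with the fiberwise constancy of $H_{v,w}$ that you established via the curvature identity, says precisely that the descended two-form $\bar a$ on $L_B$ defined by $\bar a(v,w):=a(v^\sharp,w^\sharp)|_L$ is \emph{closed}; the ``recursion'' you allude to is nothing more than $d\bar a=0$. A closed $2$-form on $L_B$ that vanishes at a single point need not vanish identically, so no rearrangement of that identity will propagate condition~(2) horizontally. Concretely: on $E=\R^4\times S^2$ with $\omega_F=dz\wedge d\theta$ and minimal coupling form $a=\omega_F-d(z\,\beta)$ for $\beta=x_1^2\,dx_2$, the sub-bundle $L=\{y_1=y_2=0\}\times\{z=c\}$ with $c\neq 0$ satisfies~(1) (horizontal lifts of $\partial_{x_i}$ differ from $\partial_{x_i}$ only by multiples of $\partial_\theta\in TL_F$) and satisfies~(2) at $p=0$, since $a(\partial_{x_1}^\sharp,\partial_{x_2}^\sharp)=-z\,d\beta(\partial_{x_1},\partial_{x_2})=-2c\,x_1$; yet $a|_{TL}\neq 0$ wherever $x_1\neq 0$, so $L$ is not Lagrangian. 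The single-point formulation of~(2) is therefore too weak in general; one really needs $a|_{TL_F\oplus H_L}=0$ over every point of $L_B$, which is in fact how the lemma is verified in all of the paper's examples.
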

Since $L$ is a lift of a Lagrangian for $\omega_B$ $a\vert_L$ must be zero.

\subsection{Adapted Morse functions and pseudo-gradients}\label{pseudogradients} 

An approach to Morse theory that is natural to use in the fibration setting is that of a pseudo-gradient, as in \cite[Sec 6.3]{hutchings} c.f. \cite[Sec 5]{oancea2}.
\begin{definition}
Let $f$ be a Morse function on a Riemannian manifold $(M,G)$. A \emph{pseudo-gradient} for $f$ is a vector field $X$ such that $X_p=0$ for $p\in \text{crit}(f)$ and
\begin{enumerate}
\item \label{par} $G(\grad_G(f),X)\leq 0$ and equality holds only at critical points of $f$

\item \label{local} In a Morse chart for $f$ centered at $p\in \text{Crit}(f)$, $\grad_ef=X$ where $e$ is the standard Euclidean metric.
\end{enumerate}
\end{definition}

Morse theory for pseudo-gradients is carried out in Chapter 2 \cite[Ch 2]{audin}.

To construct a pseudo-gradient on a fibered Lagrangian $L$, we use the connection $H_L:=TL\cap H$ (which is merely convenient: any connection would do). The Morse input for $L$ consists of the following:

\begin{enumerate}
\item A Morse function $b$ on $L_B$ and a Riemannian metric $G_B$ that is Euclidean in a neighbourhood of each $x\in \crit(b)$, and a pseudo-gradient $X_b$ for $b$,

\item for each critical fiber $\pi^{-1}(x)\cap L$ a choice of Morse function $g_x$ on $L_F$ and a Riemannian metric $G_{F_x}$ which is Euclidean in a neighbourhood of $\crit (g)$, and a pseudo-gradient $X_{g_x}$ on $TL_{F_x}$, and

\item an extension $X_g$ of the $\lbrace X_{g_x}\rbrace $ to all of $TF$ (by zero if necessary)
\end{enumerate}

One can show that the vector field 
$$X_f:=X_g\oplus X_b^\sharp$$ on $TL_F\oplus H_L$ forms a pseudo-gradient for the Morse function 

$$f:=\pi^*b+\epsilon\sum_{x\in \crit(b)}\pi^*\phi_x g_x$$
for $\epsilon <<1$ where $\phi_x$ is an appropriate cutoff function on $L_B$. 

Let $\gamma_X(t,x)$ denote the Morse flow of $X$ starting at $x\in L$. Define the \emph{stable} (+) resp. \emph{unstable} (-) manifold of $x_i$ as
$$W^\pm_X(x_i):=\left\{ x\vert \lim_{t\rightarrow \pm\infty}\gamma_X(t,x)=x_i \right\}.$$ 
Define the \emph{index} of a critical point
$$\I(x_i):=\dim W^-_X(x_i)$$
and the \emph{coindex}
$$co\I(x_i):=\dim W^+_X(x_i).$$

By perturbing the pseudo-gradient $X_f$ outside of a neighbourhood of the critical points, one can achieve the \emph{Smale condition} for $X_f$, which says that the unstable and stable manifold intersect transversely. Hence, $W^+_{X}(x_i)\cap W^-_X(x_j)$ is a smooth manifold of dimension
$$\I(x_j)-\I(x_i).$$

\section{Basic Floer theoretic notions in symplectic fibrations} \label{floerbackgroundsection}

Since the base manifold is rational, we adopt an approach similar to that of \cite{CM} and \cite{CW2} to achieve transversality using domain-dependent perturbations of the almost complex structure. The scheme uses complicated domains,  which are made even more complicated by the requirement that holomorphic curves have to be compatible with the fibration structure. The complete version is spelled out in the first part of the author's thesis \cite[Secs 3,4,5]{floerfibrations}, and we include an abridged version.
\subsection{Moduli of treed disks}\label{treeddisksection}

For a tree $T$, let $\text{Edge}(T)$ resp. $\text{Vert}(T)$ denote the set of edges resp. vertices. To define treed disks, we start with a metric tree $\mc{C}=(T,\ell)$ with $\ell\in[0,\infty]$. For a symplectic manifold $E$ and a Lagrangian $L$, we partition the vertices into two sets and label them as either spherical or disk vertices, and assign maps
\begin{gather*}
[\cdot]_d:\text{Vert}_d(\mc{C})\rightarrow H^{d}_2(E,L,\bb{Z})\\
[\cdot]_s:\text{Vert}_s(\mc{C})\rightarrow H^{s}_2(E,\bb{Z})
\end{gather*}
where $H^{d}_2(E,L,\bb{R})$ resp. $H^{s}_2(E,\bb{R})$ are the homology classes that can be represented by maps of disks resp. spheres. The edges correspond to either interior markings which map to the divisor, interior or disk nodes, or domains of Morse flows. We have the following chart from \cite{floerfibrations} that sums up the labelling, with an example schematic as Figure \ref{treediskpicture}:

\begin{center}
\begin{tabular}{||c | c ||}
\hline
$\text{Vert}_s(\mc{C})$ & spherical vertices (matched with a $\bb{P}^1_v$)\\
\hline
$\text{Vert}_d(\mc{C})$ & disk vertices (matched with a $D_v$)\\
\hline
$\text{Edge}_\rightarrow^\bullet (\mc{C})$ & interior markings \\
\hline
$\text{Edge}_-^\bullet(\mc{C})$ & interior nodes \\
\hline
$\text{Edge}_\rightarrow^\circ (\mc{C})$ & boundary markings \\
\hline
$\text{Edge}_-^\circ(\mc{C})$ & boundary nodes \\
\hline
$\ell:\text{Edge}_-^\circ (\mc{C})\rightarrow [0,\infty]$& boundary node length\\
\hline
$m:\text{Edge}_\rightarrow^\bullet(\mc{C})\rightarrow \bb{Z}_{\geq 0}$ & divisor intersection multiplicity\\
\hline
$[\cdot]_d:\text{Vert}_d(\mc{C})\rightarrow H^d_2(E,L)$ & relative disk/sphere classes\\
\hline
$[\cdot]_s:\text{Vert}_s(\mc{C})\rightarrow H^s_2(E)$ & sphere classes\\
\hline
\end{tabular}
\end{center}
We further require that there are no boundary edges $\text{Edge}^\circ(\mc{C})$ incident at spherical vertices. Let the notation $\text{Edge}_-^{\bullet,\ell}(\mc{C})$ resp. $\text{Edge}_-^{\circ,\ell}(\mc{C})$ denote the set of interior resp. boundary nodes of length $\ell$.

\begin{definition}\cite{floerfibrations} \label{treeddiskdef}A \emph{treed disk} $\mc{C}$ is a triple $(T,\mc{S}_T,\sim)$ consisting of
\begin{enumerate}
\item\label{tree} a labelled metric tree $(T,\ell)$,
\item for each vertex $v$ a class of a marked surface $(\mc{S}_v,\underline{x},\underline{z}) \in \mc{S}_T$ with interior resp. boundary markings denoted $\underline{z}$ resp. $\underline{x}$, where each is identified with a marked surface class $[\bb{P}^1,\underline{z}]/\sim$ or $[D\subset \bb{C},\underline{z},\underline{x}]/\sim$ up to biholomorphism fixing the markings, and
\item\label{specialpoints} a bijective identification of the interior special points $\underline{z}$ with\linebreak $\text{Edge}_\rightarrow^\bullet(v)\cup\text{Edge}_-^\bullet(v)$, together with
\item an identification between the boundary special points $\underline{x}$ and\linebreak $\text{Edge}_\rightarrow^\circ(v)\cup\text{Edge}_-^\circ (v)$ such that the natural relative clockwise ordering of the $\underline{x}$ around $\partial D$ agrees with the relative clockwise ordering of the identified edges in $\text{Edge}_\rightarrow^\circ(v)\cup\text{Edge}_-^\circ (v)$ around the vertex.
\end{enumerate}
\end{definition}
\begin{figure}[h]\label{treediskpicture}
\includegraphics[scale=1]{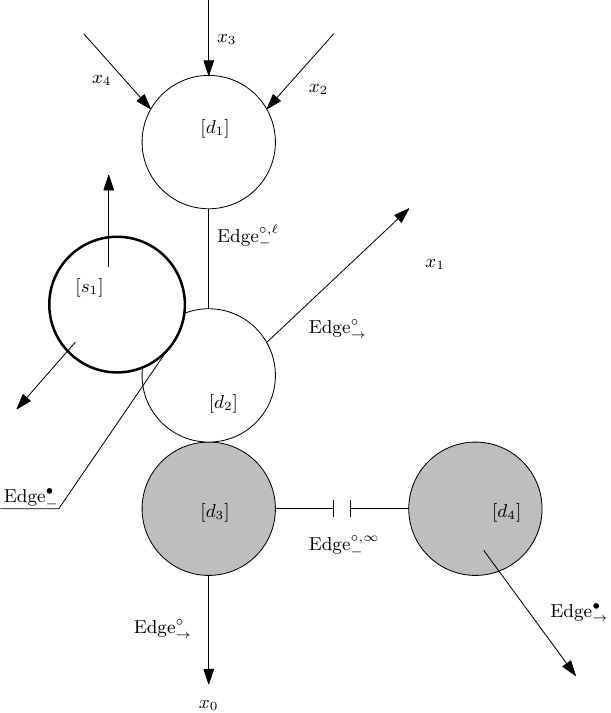}
\caption{The geometric realization of a (stable) treed disk. Shadings are the binary markings discussed in Section \ref{pistabilitysection}.}
\end{figure}
We call a treed disk \emph{broken} if some $e\in \text{Edge}^{\circ}_{-}(\mc{C})$ has infinite length. We identify a broken type with the union of two types glued at their extraneous ends $e_i\in \text{Edge}^{\circ}_\rightarrow(\mc{C}_i)$.
The treed disks with $n$ boundary markings and $m$ interior markings form a moduli space $\mf{M}_{n,m}$. Such a cell-complex is stratified by \emph{combinatorial type}, denoted $\g(\mc{C})$ or simply $\g$, which is the information contained in the above chart with reference to $\ell$ only if $\ell(e)=0,\infty$. We denote a layer of this stratification by $\mf{M}_\g$.

The \emph{geometric realization class} $U$ of $\mc{C}$ is the class of topological space given by replacing the vertices with their corresponding marked surface class and attaching the edges to the corresponding special points, together with the Riemann surface structure on each surface component. The \emph{universal treed disk} is the fiber bundle $\mc{U}_{m,n}\rightarrow\mf{M}_{m,n}$ whose fiber at a point $p$ is the geometric realization class of $p$. Similarly, denote the restriction of the universal treed disk to $\mf{M}_\g$ by $\mc{U}_\g$.

A treed disk is \emph{stable} if and only if each marked surface is stable. In general, we will not require such stability; rather, we later develop a finer notion of stability with the fibration structure in mind .

Since we have a pseudo-gradient, we can label combinatorial types with critical points. Suppose $\lbrace x_0,\dots, x_n\rbrace$ are the critical points of a pseudo-gradient $X$. We define a \emph{Morse labelling} of a combinatorial type $\g$ as a labelling
$$\underline{x}:\text{Edge}^\circ_\rightarrow (\g)\rightarrow\lbrace x_0,\dots, x_n\rbrace $$

Our convention will be that $x_0$ denotes the value of $\underline{x}$ on the root and $x_i$ $i\geq 1$ denotes the value on the leaves.

\subsubsection{Special considerations for the fibered setting}\label{pistabilitysection}

\begin{definition} A \emph{binary marking} $\varrho$ for $\g$ is a subset of the vertices and edges, denoted $m\text{Vert}(\g)$ and $m\text{Edge}(\g)$, for which any map $u: C_\g\rightarrow E$ is required to map the domain for $mv\in m\text{Vert}(\g)$ resp. $me\in m\text{Edge}(\g)$ to a constant under $\pi$. The set of unmarked vertices and edges will be denoted $u\text{Vert}(\g)$ resp. $u\text{Edge}(\g)$. 
\end{definition}

The marked vertices and edges are those that correspond to domains that are mapped to a constant under $\pi$. Such a process can be decomposed into two elementary operations, the first being:

\begin{definition}\cite{floerfibrations}
The combinatorial type $\pi_*\g$ is the combinatorial type $\g$ with the following two modifications:
\begin{enumerate}
\item The homology labelling $[\cdot]$ is replaced replaced by $\pi_*[\cdot]$ 
and,
\item $\ell_{\pi_*\g}(e)=0$ for any $e\in m\text{Edge}^{\circ}_-(\g)$
\end{enumerate}
\end{definition}
followed by:

\begin{definition}\cite{floerfibrations}
The \emph{$\pi$-stabilization map} $\g\mapsto\Upsilon(\g)$ is defined on combinatorial types by forgetting any unstable vertex $v_i$ for which $[v_i]=0$ and identifying edges as follows:
\begin{enumerate}
\item If $v_i$ has two incident unmarked edges, $e_i$ and $f_i$, with $f_i$ closer to the root, then $\text{Vert}(\Upsilon(\g))=\text{Vert}(\g)-\lbrace v_i\rbrace$ and we identify the edges $e_i$ and $f_i$: $$\text{Edge}(\Upsilon(\g))=\text{Edge}(\g)/\lbrace f_i \sim e_i\rbrace$$ and $$\ell(\Upsilon(f_i))=\ell(\Upsilon(e_i))=\ell(e_i)+\ell(f_i).$$
We set $\ell(\Upsilon(e_i))=\infty$ if either $e_i$ or $f_i$ is a semi-infinite edge (hence, so is $\Upsilon(e_i)$ under the broken tree identification).

\item If $v_i$ has one incident edge $e_i$, then $\Upsilon(\g)$ has vertices $$\text{Vert}(\Upsilon(\g))=\text{Vert}(\g)- \lbrace v_i\rbrace$$ and edges 
$$\text{Edge}(\g)-\lbrace e_i\rbrace$$
\end{enumerate}
\end{definition}

$\Upsilon\circ\pi_*$ forgets marked edges and vertices and stabilizes, so it is the combinatorial type of $\pi\circ u$ for $u:U_\g\rightarrow \g$. 

We get an induced map on moduli spaces resp. universal marked disks, denoted 
$$\Omega_\g:\mf{M}_\g\rightarrow \mf{M}_{\Upsilon(\pi_*\g)}$$
resp.
$$\Omega_\g:\mc{U}_\g\rightarrow \mc{U}_{\Upsilon(\pi_*\g)}$$

\begin{definition}
A combinatorial type $\g$ is called \emph{$\pi$-stable} if $\Upsilon(\pi_*\g)$ is stable.
\end{definition}

\subsection{Perturbation data}\label{perturbationsection}
In order to precisely define a Floer configuration, we discuss the types of perturbations that we use. For a rational symplectic manifold and Lagrangian $(B,\omega_B,L_B)$, let $D_B$ denote a symplectic hypersurface in the complement of $L_B$.

\begin{definition}\label{stabilizingdivisordefinition} 
\begin{enumerate}
\item A tamed almost complex structure $J_{D_B}\in \mc{J}(B,\omega_B)$ is \emph{adapted} to $D_B$ if $D_B$ is an almost complex submanifold of $(B,J_B)$.

\item A divisor $D_B$ in the complement of $L_B$ with $[D_B]=k[\omega_B]$ is \emph{stabilizing} for $L_B$ if any map $u:D\rightarrow (B,L_B)$ with $\int_D u^*\omega_B> 0$ has as least one intersection with $D_B$.

\end{enumerate}
\end{definition}

Charest and Woodward develop a Lagrangian Floer theory for rational symplectic manifolds and Lagrangians, inspired by the stabilizing divisor approach of \cite{CM}. To summarize, we have

\begin{theorem}\cite[Sec 4]{CW1}
There exists a divisor $D_B \subset B-L_B$ that is stabilizing for L representing $k[\omega_B]$ for some integer $k>>1$.
\end{theorem}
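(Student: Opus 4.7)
The plan is to follow the asymptotically-holomorphic-sections construction of Donaldson, refined by Auroux--Gayet--Mohsen to handle a Lagrangian constraint, which is the scheme adopted in \cite{CW1}. Since $[\omega_B]$ is rational, choose an integer $k_0$ with $k_0[\omega_B]\in H^2(B,\bb{Z})$, pick a complex line bundle $\mc{L}\to B$ with $c_1(\mc{L})=k_0[\omega_B]$, and equip it with a Hermitian metric and compatible connection whose curvature equals $-2\pi i k_0\omega_B$. Fix a tamed almost complex structure $J_B$ and the associated $\bar{\partial}$-operator on sections of $\mc{L}^{\otimes N}$ for $N$ large.

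The core step is to produce, for all sufficiently large $N$, an asymptotically $J_B$-holomorphic section $s_N$ of $\mc{L}^{\otimes N}$ that is uniformly transverse to zero and satisfies the additional bound $|s_N|\geq \eta>0$ along $L_B$. Donaldson's theorem gives uniform transversality of an asymptotically holomorphic $s_N$ over all of $B$, and the Auroux--Gayet--Mohsen refinement inputs the rationality of $L_B$ to construct an initial section bounded below on $L_B$ (using the rational period lattice to obtain a non-vanishing model section of controlled norm on $\mc{L}^{\otimes N}|_{L_B}$) and then propagates this bound through the inductive globalization argument. The zero locus $D_B:=s_N^{-1}(0)$ is then a smooth symplectic hypersurface in $B-L_B$ Poincar\'e dual to $Nk_0[\omega_B]$, so we take $k=Nk_0$.

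To verify the stabilizing property, pick a tamed $J_B$ compatible with $D_B$ and note that positivity of intersection with the almost complex submanifold $D_B$ forces any non-constant $J_B$-holomorphic disk $u:(D,\partial D)\to (B,L_B)$ with $u(D)\not\subset D_B$ to satisfy $u\cdot D_B\geq 0$, with equality if and only if $u(D)\cap D_B=\emptyset$; pairing $[D_B]=k[\omega_B]$ against $[u]$ then shows that if $u$ misses $D_B$ its symplectic area must vanish, contradicting the assumption $\int_D u^*\omega_B>0$. Equivalently, one trivializes $\mc{L}^{\otimes N}$ on $B-D_B$ to obtain a primitive $\alpha$ of $k\omega_B$ there, applies Stokes, and uses the control on $\alpha|_{L_B}$ inherited from the $|s_N|\geq\eta$ bound to conclude.

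The main obstacle is the Auroux--Gayet--Mohsen refinement: Donaldson's original argument produces a symplectic hypersurface but gives no control over its location, and to keep $D_B$ disjoint from $L_B$ one must preserve the lower bound $|s_N|\geq\eta$ on $L_B$ through every perturbation step of the inductive construction while simultaneously achieving uniform transversality on $B$. Quantitative management of these competing estimates, which is exactly the content carried out in \cite{CW1}, is what we would invoke here.
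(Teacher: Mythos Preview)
The paper does not give its own proof of this statement; it simply cites \cite[Sec.~4]{CW1} as the source. Your sketch correctly identifies the Donaldson asymptotically-holomorphic-sections construction together with the Auroux--Gayet--Mohsen refinement used in \cite{CW1}, so your approach is exactly in line with what the paper invokes.
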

When there is no confusing with the notation for ``disk'', we will denote $D:=\pi^{-1}(D_B)$ as the pullback of a stabilizing divisor.
\subsection{Hamiltonian perturbations}\label{hamperturbsection}
Let $\ham(F,B):=\Lambda^1 (B,C^\infty_0(F_p,\bb{R}))$ denote the space of one forms $\sigma$ on $B$ with values in $C^\infty (E,\bb{R})$ such that
$$\int_{F_p} \iota(v)\sigma \omega_{F_p}^{\dim F/2}=0$$
for every fiber $F_b$ of $E$ and $v\in T_p B$ (i.e. Hamiltonians with zero fiber-wise average). In \cite{floerfibrations} we achieve transversality for \emph{non-vertical disks} (disks not contained in a single fiber) by the theory of Hamiltonian perturbation. That is, to every Hamiltonian one-form $\sigma\in \ham (F,B)$, we get another connection form
$$a_\sigma:=a-d\sigma$$
which is a minimal coupling form by the zero-average condition. The new connection $H_\sigma:=TF^{\perp a_\sigma}$ has holonomy that differs infinitesimally from $H_0$ by $\sigma$.

We have a fiber bundle $\mc{J}^{\text{vert}}\rightarrow B$ where the fiber at $p$ is the space of (sufficiently differentiable) tamed almost complex structures on $(F_p,\omega_{F_p})$. Let $\mc{J}^{l,\text{vert}}(B,F)$ be the space of $C^l$ sections of this fiber bundle. To make sense of holomorphic lifts, we have the following lemma:

\begin{lemma}\cite[eq 8.2.8]{ms2}\label{connectionacs}
To a connection $H_\sigma$ and an assignment of vertical almost complex structures $J_F\in\mc{J}^{\text{vert}}(B,F)$ there is a unique almost complex structure $J_\sigma$ that:
\begin{enumerate}
\item makes $\pi:E\rightarrow B$ holomorphic,
\item preserves $H_\sigma$, and
\item agrees with the section $J_F$.
\end{enumerate}
\end{lemma}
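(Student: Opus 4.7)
The plan is to exploit the splitting $TE = H_\sigma \oplus TF$ induced by the connection and construct $J_\sigma$ summand-by-summand; both uniqueness and existence will fall out of the same formula. Condition (3) fixes $J_\sigma|_{TF} = J_F$ outright. Fixing once and for all an almost complex structure $J_B$ on the base (which is implicit in what it means for $\pi$ to be holomorphic in condition (1)), I would then show that conditions (1) and (2) together force $J_\sigma|_{H_\sigma}$ uniquely. Indeed, condition (2) requires $J_\sigma$ to preserve $H_\sigma$, and because $d\pi\colon H_\sigma\to TB$ is a fiberwise linear isomorphism by the definition of a connection complement to the vertical bundle, condition (1) then translates into the single formula
\[ J_\sigma v^\sharp = (J_B v)^\sharp, \qquad v \in TB, \]
where $v^\sharp$ denotes the horizontal lift with respect to $H_\sigma$. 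This gives uniqueness.

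For existence, I would \emph{define} $J_\sigma$ on $TE = H_\sigma \oplus TF$ by the two prescriptions above and check the almost complex structure identity $J_\sigma^2 = -\mathrm{Id}$ on each summand separately: on the vertical part this reduces to $J_F^2 = -\mathrm{Id}$, and on the horizontal part to $(J_B^2 v)^\sharp = -v^\sharp$. Smoothness of $J_\sigma$ is inherited from smoothness of the section $J_F \in \mc{J}^{\text{vert}}(B,F)$, of $J_B$, and of the connection $H_\sigma$ (equivalently, of $\sigma$).

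There is essentially no analytic obstacle beyond bookkeeping, but one should flag two conceptual points. First, the statement is implicitly relative to a fixed $J_B$ on the base, which enters through condition (1) and should be stated explicitly when the lemma is invoked. Second, although the lemma only asserts existence of a genuine almost complex structure, in applications one will want $J_\sigma$ to be tamed by a weak coupling form $a_\sigma + K\pi^*\omega_B$. The hardest part, if one includes the tameness check, is handling the cross-term between horizontal and vertical components in the pairing $\omega_{H,K}(X, J_\sigma X)$: one verifies that taming of $J_F$ by $\omega_F$ contributes positively on the vertical factor, taming of $J_B$ by $\omega_B$ contributes a positive term scaling with $K$ on the horizontal factor, and the cross-term (which $a_\sigma$ can contribute but $\pi^*\omega_B$ cannot) is dominated for $K$ sufficiently large. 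This matches the setup in which the weak coupling form is used throughout the rest of the paper.
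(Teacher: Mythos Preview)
Your argument is correct and is the standard proof: work in the splitting $TE = TF \oplus H_\sigma$, where the three conditions force $J_\sigma$ to be block diagonal with blocks $J_F$ and the horizontal lift of $J_B$. The paper, however, does not actually prove the lemma; it cites \cite[eq 8.2.8]{ms2} and instead records the explicit matrix of $J_\sigma$ in the \emph{different} splitting $TF \oplus H_0$ (the unperturbed connection), where it becomes upper triangular with off-diagonal block $J_F\circ X_\sigma - X_\sigma\circ J_B$. Your block-diagonal description and the paper's upper-triangular one are of course related by the change of frame $H_0 \to H_\sigma$ given by $w \mapsto w - X_{\sigma_w}$. Your presentation is cleaner for the existence/uniqueness statement itself, while the paper's explicit form in the $H_0$ frame is what gets used repeatedly downstream (e.g.\ in the Hamiltonian-perturbation interpretation of sections, Remark~\ref{hamiltonianperturbremark}, and in the splitting \eqref{splitting} of the linearized operator). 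Your side remarks on the implicit choice of $J_B$ and on taming by the weak coupling form are accurate and match how the paper uses the lemma, though neither is part of the lemma as stated.
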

Such a structure in the $TF\oplus H_0$ splitting is given by
\begin{equation}\label{acsontotalspace}
J_\sigma=
\begin{bmatrix}
J_F & J_F\circ X_\sigma-X_\sigma\circ J_B\\
0 & J_B
\end{bmatrix}
\end{equation}
where $v\mapsto X_{\sigma_v}$ is the one-form with values in fiberwise Hamiltonian vector fields, prescribed by $d\sigma_v=\iota_{X_{\sigma_v}}\omega_F$.

For each critical fiber $F_i$ of the Morse function $\pi^*b$, choose a tamed almost complex structure $J_F^i$, and let 
$$\mc{J}^{l,vert}(J_F^1,\dots,J_F^k):=\left\{J_F\in \mc{J}^{l,\text{vert}}(B,F): J_F\vert_{F_i}=J_F^i\right\}$$ be the space of sections of $\mc{J}^{l,\text{vert}}\rightarrow B$ that agree with the chosen $J_F^i$. Evaluation of the a.c.s. at each fiber is a regular map, so such a space is a Banach manifold. To save space take the notation $\underline{J_F}$ to denote the choice of tamed almost complex structures on the critical fibers.
Let 
\begin{align}\label{totalspaceperturbationdata}
\mc{JH}^l(E,\omega_{H,K}):= \bigg\lbrace& (J_\sigma,\sigma)\in \mc{J}^{l,vert}(\underline{J_F})\times \mc{J}^l(B,\omega_B) \times \ham (F,B):\\ 
&J_\sigma \text{ tames }a_\sigma+K\pi^*\omega_B\bigg\rbrace .
\end{align}
The latter is an open condition about $\sigma=0$ and tamed almost complex structures $J_B$ of the base assuming that $K>>1$ by an elementary estimate \cite[Lem 5.1, Rem 5.11]{floerfibrations}. Thus, $\mc{JH}^l$ is a Banach manifold.
Choose neighbourhoods $\mc{S}_\g^o\subset \Upsilon\pi_*\mc{U}_\g$ resp. $\mc{T}_\g^o\subset \mc{U}_\g$ not containing the special points and boundary on each surface component resp. not containing $\infty$ on each edge.

\begin{definition}\cite{floerfibrations}\label{perturbationdatadef} Fix a stabilizing $J_{D_B}$ for $L_B$. Let $(\g,\underline{x})$ be a $\pi$-stable combinatorial type of treed disk with a Morse labelling. A class $C^l$ \emph{fibered perturbation datum} for the type $\g$ is a choice of $\underline{J_F}$ together with piecewise $C^l$ maps 
\begin{gather*}
(J_\sigma,\sigma):\Upsilon\pi_*\mathcal{U}_\g\rightarrow \mathcal{JH}^l(E,\omega_{H,K}),\\
X:\mc{T}_\g\rightarrow \text{Vect}^l(TL_F)\oplus \text{Vect}^l(H_L)
\end{gather*}
denoted $P_\g$, that satisfy the following properties:
\begin{enumerate}
\item $\sigma\equiv 0$ and $J_B\equiv J_{D_B}$ on the neighbourhoods $\mc{S}_\g - \mc{S}_\g^o$,

\item $\sigma\equiv 0$, $J_B\equiv J_{D_B}$ on $\Upsilon\pi_*\mc{T}_\g$, and

\item $X\equiv X_f$ in the neighbourhood $\mc{T}_\g-\mc{T}_\g^o$ of $\infty$.
\end{enumerate}

Let $\mc{P}^l_\g(E,D)$ denote the Banach manifold of all class $C^l$ fibered perturbation data (for a fixed $J_{D_B}$).
\end{definition}

Any perturbation datum $P_\g$ lifts to the universal curve via pullback $\tilde{P}_\g:=(\Upsilon\pi_*)^* P_\g$. As such, we will only distinguish between $\tilde{P}_\g$ and $P_\g$ when necessary.

\begin{definition}
A \emph{perturbation data} for a collection of combinatorial types $\gamma$ is a family $\underline{P}:=(P_\g)_{\g\in\gamma}$
\end{definition}

\subsection{Holomorphic treed disks}
\begin{definition}\cite{floerfibrations} \label{holotreeddisk} Given a fibered perturbation datum $P_\g$, a $P_\g$-\linebreak \emph{holomorphic configuration} of type $(\g,\underline{x})$ in $E$ with boundary in $L$ consists of a choice of representative $C=S\cup T$ of a fiber of the universal treed disk $\mathcal{U}_\g$, together continuous map $u:C\rightarrow E$ such that
\begin{enumerate}
\item $u(\partial S\cup T)\subset L$.
\item On the surface part $S$ of $C$ the map $u$ is $J_\sigma$-holomorphic for the given perturbation datum: If $j$ denotes the complex structure on $S$, then
\begin{equation*}
J_{\sigma}\mathrm{d}u|_S = \mathrm{d}u|_S j.
\end{equation*}
\item On the tree part $T\subset C$ the map $u$ is a collection of flows for the perturbation vector field:
\begin{equation*}
\frac{d}{ds}u|_T = X(u|_T)
\end{equation*}
where $s$ is a local coordinate with unit speed so that for every $e\in\text{Edge}^\circ(\g)$, we have $e\cong [0,\ell(e)]$, $e\cong [0,\infty)$, or $e_0\cong (-\infty,0]$ via $s$.
\item If $e_i$ $i\geq 1$ denote the leaves of $\g$ and $e_0$ the root,
\begin{gather*}
u(e_i)\in W^-_X(x_i)\\
u(e_0)\in W^+_X(x_0).
\end{gather*}

\item \label{multintdef} The map $u$ takes the prescribed multiplicity of intersection $m_{z_e}(u)$ to $D$ for $z_e$ corresponding to $e\in \text{Edge}^\bullet_\rightarrow (\g)$. That is, let $\gamma$ be a small loop around $z_e$: As in \cite{CW1} $m_{z_e}(u)$ is the winding number of $u(\gamma)$ in the complement of the zero section in a tubular neighbourhood of $D$ and~$u$.

\end{enumerate}
\end{definition}

Proposition 7.1 from \cite{CM} shows that \eqref{multintdef} is related to the ``order of tangency'': In a coordinate neighbourhood that sends $D$ to $\bb{C}^{n-1}\subset \bb{C}^n$ define the order of tangency of $u$ to $D$ at $z_e$ to be
\begin{equation}\label{orderoftangencydef}
tan_{z_e}(u)=\max_l d^l_{z_e} u\in T\bb{C}^{n-1}.
\end{equation}
By the aforementioned proposition, we have $tan_{z_e}(u)+1=m_{z_e}(u)$ for a holomorphic map. In particular, $m_{z_e}(u)\geq 0$.

\section{Transversality and compactness results}\label{floerfibrationsresults}
In this section, we review the basic compactness and transversality results that were shown in \cite{floerfibrations} for the fibered setting expanding upon results of \cite{CM} \cite{CW1} \cite{CW2}. These results are essential to defining the $A_\infty$ algebra and computing the potential later on.

\subsection{Transversality}
In order to achieve transversality for the aforementioned configurations, we allowed domain dependent Hamiltonian perturbations and almost complex structures in our definition of perturbation datum, as in \cite{CM}, \cite{CW1}, \cite{CW2}. However, in order to achieve compactness we need to select our perturbation data in a way that is compatible with certain actions on treed disks.

Actions on graphs $\tilde{\Pi}:\g\rightarrow \g'$, such as cutting edges, collapsing edges and identifying vertices, or changing the length of an edge away from $0$ or $\infty$ gives rise to maps on the moduli of combinatorial types $\Pi:\mf{M}_\g\rightarrow \mf{M}_{\g'}$ and also their universal disks $\Pi:\mc{U}_{\g}\rightarrow \mc{U}_{\g'}$. With the exception of the last axiom, the following is from \cite[Def 2.11]{CW2}.

\begin{definition}\label{coherentdefinition}
Let $\underline{P}$ be a family of fibered perturbation data. We say that $\underline{P}$ is \emph{coherent} if it is compatible with morphisms on the universal treed disk in the following sense:
\begin{enumerate}
\item\emph{(Cutting edges)} If $\Pi:\mc{U}_\g\rightarrow \mc{U}_\g'$ cuts an edge of infinite length then $P_{\g'}=\Pi_*P_\g$.

\item\emph{(Products)} Let $\g=\g_1\cup \g_2$ via cutting an infinite edge of $\g$ so that $\mf{M}_\g\cong \mf{M}_{\g_1}\times\mf{M}_{\g_2}$ with projections $\pi_i$ $i=1,2$. Then $P_\g$ agrees with the pullback of $P_{\g_i}$ on $\pi_i^* \mc{U}_{\g_i}$.

\item\emph{(Collapsing an edge or making edge length finite/non-zero)} If $\Pi:\mc{U}_\g\rightarrow \mc{U}_{\g'}$ collapses an edges or changes an edge length from $0$ or $\infty$, then $P_\g=\Pi^*P_{\g'}$.

\item\emph{(Ghost-marking independence)} If $\Pi:\mc{U}_\g\rightarrow \mc{U}_{\g'}$ forgets an interior\linebreak marking on a component with $[v]=0$, then $P_{\g}=\Pi^*P_{\g'}$.

\item\emph{($\pi$-stabilization)} Finally, if $\g$ is a $\pi$-stable type, then $P_\g=\Omega^*P_{\Upsilon\circ\pi_*\g}$.
\end{enumerate}

\end{definition}

There is an important notion of an adapted holomorphic configuration:
\begin{definition}\label{adapteddef}\cite{floerfibrations}
We say that a Floer trajectory $u:C_\g\rightarrow (E,L)$ is $\pi$-adapted to $D$ if
\begin{enumerate}
\item\emph{(Stable domain)} $\g$ is $\pi$-stable,

\item\emph{(Non-constant spheres)} each component of $C_\g$ that maps entirely to $D$ is constant, and

\item\emph{(Markings)} each interior marking $z_i$ maps to $D$ and each component $u^{-1}(D)$ contains an interior marking.
\end{enumerate}
\end{definition}

Let $\widetilde{\mc{M}}_\g(P_\g,\underline{x})$ be the moduli space of $P_\g$-holomorphic configurations on $(E,L)$ which are adapted to $D$ with limits $\underline{x}$, and
$$\mc{M}_\g(P_\g,\underline{x}):=\widetilde{\mc{M}}_\g(P_\g,\underline{x})/\sim$$
where $u\sim u'$ if there is a map $\phi:C_\g\rightarrow C_\g$ such that $u=u'\circ\phi$ that is comprised only of biholomorphisms of the surface components which fix the marked points.

We say that a perturbation datum $P_\g$ is \emph{regular} if $\mc{M}_\g(P_\g,\underline{x})$ can be given the structure of a smooth manifold of dimension
\begin{align}\label{indexdefinition}
&\I (\g,\underline{x}):= \mathrm{dim}W^+_{X}(x_0)-\sum_{i=1}^{n}\mathrm{dim}W^+_{X}(x_i) + \sum_{i=1}^{m} I(u_i)+n-2 \\
\label{indexdefinitionline2} &\quad-|\text{Edge}^{\circ,0}_-(\g)|-|\text{Edge}_-^{\circ,\infty}(\g)|-2|\text{Edge}^\bullet_-(\g)| -2\sum_{e\in \text{Edge}^{\bullet}_\rightarrow} m(e).
\end{align}

The $I(u_i)$ signifies the either the Maslov index or $2c_1(u_i^*TE)$ for a single disk/sphere component $u_i$. By the Riemann-Roch theorem, $I(u_i)+\dim L-3$ is the index of the linearized Cauchy-Riemann operator that describes the space of unparametrized holomorphic Maslov index $I(u_i)$ disks (or spheres) with boundary in $L$, see for example \cite[Thm C.1.1.10]{ms2}. The other terms arise from expected codimension counts given by the assumption that each condition is cut-out transversely.

Finally, we say that a combinatorial type $\g$ is \emph{uncrowded} if each vertex with $[v]=0$ has at most one interior marking.

We have the following theorem about the existence of regular perturbation data to partially legitimize the counting of holomorphic configurations.

\begin{theorem}[Transversality, \cite{floerfibrations}; Theorem 6.1]\label{transversality}
Let $E$ be a symplectic (K\"ahler) fibration and $L$ a fibered Lagrangian. Let $\gamma$ be a finite indexing set and suppose we have a finite collection of uncrowded, $\pi$-adapted, and possibly broken types $\lbrace\g\rbrace_{\g\in \gamma}$ with $$\I(\g,\underline{x})\leq 1.$$ 
Then there is a comeager subset of smooth regular data for each type $$\mc{P}^{\infty,reg}_\g(E,D)\subset\mc{P}^\infty_\g$$
and a selection in the product space 
\begin{equation*}(P_\g)_\gamma\in \underset{\g\in \gamma}{\Pi} \mc{P}^{\infty,reg}_\g
\end{equation*} that forms a regular, coherent datum. Moreover, we have the following results about tubular neighbourhoods and orientations:

\begin{enumerate}
\item\label{tubularneighborhoods}\emph{(Gluing)} If $\Pi:\g\rightarrow\g'$ collapses an edge or makes an edge finite/non-zero, then there is an embedding of a tubular neighbourhood of\linebreak $\mc{M}_\g (P_\g,\underline{x})$ into $\overline{\mc{M}}_{\g'}(P_{\g'},\underline{x})$, and
\item \emph{(Orientations)} if $\Pi:\g\rightarrow\g'$ is as in \eqref{tubularneighborhoods} and $L$ is relatively spin, then the inclusion $\mathcal{M}_{\g} (P_\g,\underline{x})\rightarrow \overline{\mathcal{M}}_{\g'} (P_{\g'},\underline{x})$ gives an orientation on $\mc{M}_\g$ after choosing an orientation for $\mc{M}_{\g'}$ and the outward normal direction on the boundary. 
\end{enumerate}

\end{theorem}

We give a short sketch of the proof, with the appropriate references: One shows that the \emph{universal moduli space} (the moduli space of all holomorphic configurations of type $\g$ for every perturbation data) is a smooth manifold when we allow domain dependent almost complex structures in the base and we are allowed to vary the connection over the interior of each surface component. Transversality for surface components that are contained in a single fiber is handled by the usual argument for monotone Lagrangians as in \cite{oh}. A Sard-Smale argument then shows that for a given configuration type $\g$ there is a comeager set of perturbations $J_\g$ for which the linearized $\bar{\partial}_{j,J_\g}$ operator is surjective, thus making the moduli space associated to such a perturbation datum into a smooth manifold of expected dimension. A selection from each comeager set is then pieced together to form a coherent system for low index types. 

The usual gluing argument is used to show \eqref{tubularneighborhoods}, where one constructs approximate solutions $\tilde{u}_\delta$ based on $\g'$ associated to a pre-gluing parameter $\delta$, and one shows that the linearized operator at $\tilde{u}_\delta$ is surjective given that it is surjective at an index $0$ curve $u$. Then, after showing some estimates as in \cite[Thm 4.1]{CW1} one applies the quantitative version of the Implicit Function Theorem \cite[Prop A3.2]{ms2} to obtain a unique holomorphic configuration $u_\delta$ for each $\delta$ which converges to $u$ in the sense of Gromov.

Finally, orientations on $\mc{M}_\g(P_\g,\underline{x})$ can be realized after choosing a relative spin structure as in \cite[Thm 8.1.1]{fooo}, and such an orientation only depends on the choice of spin structure on the orientation on $\mf{M}_\g$. As in the remark \cite[Rem 4.25]{CW2} orientations on $\mc{M}_\g(P_\g,\underline{x})$ for an index $0$ type can be realized in the following way: One chooses an orientation on each $W^\pm_X(x_i)$ so that we have an orientation preserving isomorphism
$$\det (T_{x_i}W^+_X(x_i)\oplus T_{x_i}W^-_f(x_i))\cong \det T_{x_i}L.$$ Moreover, one can orient $\mf{M}_{m,n}$ in a consistent way which induces orientation on the strata $\mf{M}_\g$. For the linearized operator $D_u$ at some holomorphic configuration $u$ we have an isomorphism of determinant lines
\begin{equation}\label{orientationiso}
\det D_u\cong \det \mf{M}_\g\otimes \det TL\otimes \det TW^+_X(x_0)\otimes \bigotimes_{i=1}^n \det TW^-_X(x_i)
\end{equation}
\cite[45]{CW2} after choosing a relative spin structure on $L$ and applying a trivialization argument (as in \cite[Thm 8.1.1]{fooo}). A coherent choice of orientations thus depends on a coherent choices on $\mf{M}_{\g}$ and $\mf{M}_{\g'}$. The former can be induced via a choice on $\mf{M}_{\g'}$ and the opposite choice on the stratum $\mf{M}_{\g''}$ where $\g''$ is obtained from $\g$ by the complimentary morphism to $\Pi$ (i.e. collapsing an edge or making an edge length non-zero).

\subsection{Compactness}
The transversality result together with the coherence conditions pave the way for a compactness result, which we review in this section.

We say that an almost complex structure $J$ of the form \ref{acsontotalspace} is \emph{adapted} to $D$ if $D_B$ is an almost complex submanifold with respect to $J_B$. ``Pulling back'' the definition from \cite{CW2}:
\begin{definition}\label{stabilizedbyDdef} For a $\pi$-stabilizing divisor $D=\pi^{-1}(D_B)$ we say that an adapted almost complex structure $J_{D_B}$
is \emph{$\varrho$-stabilized} by $D$ if
\begin{enumerate}
\item $D_B$ contains no non-constant $J_{D_B}$-holomorphic spheres of energy less than $\varrho$,
\item each non-constant $J_{D_B}$-holomorphic sphere $u: S^2\rightarrow B$ with energy less than $\varrho$ has $\# u^{-1}(D_B)\geq 3$
\item every non-constant $J_{D_B}$-holomorphic disk $u:(D,\partial D)\rightarrow (B,L_B)$ with energy less than $\varrho$ has $\# u^{-1}(D_B)\geq 1$.
\end{enumerate}
\end{definition}

One can show \cite[Cor 8.14, Prop 8.12]{CM} that the space of $\varrho$-stabilized almost complex structures forms an open and dense set for each energy. Thus, we can require a family of perturbation data $(P_\gamma)$ to satisfy the conclusion of Theorem \ref{transversality} as well as to take values in such a set. In particular, $\pi_*\g$ has a well defined energy by homology class, so we can take $\varrho> e_B( [\pi_*\g])$. In general we will take $P_\g$ to be $n(\g)$-stabilized where $n(\g)>e_B( [\pi_*\g])$.

Define the energy of a map $u$ of type $\g$ based on a treed disk $C_\g$ as
$$e(u):=\int_{C_\g}u^*\omega_{H,K}.$$
By a general theorem, $e(u)$ only depends on $\g$ when $u$ is holomorphic.

With these notions, we have the following theorem:

\begin{theorem}[Compactness, \cite{floerfibrations}; Theorem 7.1]\label{compactnessthm}
Let $\g$ be an uncrowded, unbroken type with $\I(\g,\underline{x})\leq 1$ and let $\mc{P}=(P_\Xi)_{\Xi\in\gamma}$ be a collection of coherent, regular, $\pi$-stabilized fibered perturbation data that contains data for all types $\Xi$ from which $\g$ can be obtained by some combination of (collapsing an edge/making an edge finite or non-zero) or (forgetting a ghost component). Then the compactified moduli space $\overline{\mc{M}}_\g(P_\g,\underline{x})$ of adapted configurations contains only regular configurations $\gamma$ with broken edges and disk vertices. In particular, there are no sphere bubbles. Moreover, moduli corresponding to vertical disk bubbles come in pairs or can be given opposite orientations so that they cancel in an algebraic sense. 
\end{theorem}
The following is included from \cite{floerfibrations}:
\begin{proof}
Let $u_\nu\in\mc{M}_\g (P_\g,\underline{x})$ be a sequence of bounded energy adapted $P_\g$-holomorphic configurations each based on a treed disk $C_\nu$. By Gromov and Floer there is a convergent subsequence to a configuration $u:\tilde{C}\rightarrow (E,L)$. By forgetting unstable ghost components we obtain a curve $u:C\rightarrow (E,L)$ based on a type $\Xi$ that is related to $\g$ via the assumed operations on treed disks, and thus $u$ is $P_\Xi:=\Pi^* P_\g$-holomorphic. We will show that $\Xi$ is $\pi$-stable, that $u$ is $\pi$-adapted in the sense of Definition \ref{adapteddef}, and that $\Xi$ lacks sphere and vertical disk bubbles.

First, we have the (markings) property in Definition \ref{adapteddef}. Each interior marking on $\Xi$ maps to $D$ since this is a closed condition. On the other hand, the fact that every intersection contains a marking follows from topological invariance of intersection number; see the proof of \cite[Thm 4.27]{CW2}.

Next, we show that $\Xi$ is $\pi$-stable. Assume that there is an unstable vertex for $\Upsilon\pi_*\Xi$ with corresponding surface component $S$. It follows that $u_S$ is a bubble, is $J$-holomorphic for some $n(\g)$-stabilized almost complex structure, and has energy at most $n(\g)$. From the definition of $\pi$-stabilized data, the (markings) property, and the fact that $\Upsilon\pi_*$ does not forget edges that have positive intersection multiplicity with $D$, it follows that $S$ is actually stable. This is a contradiction.

To show the (non-constant spheres axiom), we notice that in the limit, any sphere bubble must have energy less than $n(\g)$. Since the almost complex structure takes values in stabilized data, it follows from item $(1)$ in Definition~\ref{stabilizedbyDdef} that any sphere component $S$ contained in $D$ must be contained in a single fiber. Thus, any sphere of $u$ mapping to $D$ is horizontally constant.

It follows that $\Xi$ is $\pi$-stable and that $u$ is $\pi$-adapted. Since $\g$ can be obtained from $\Xi$ by the assumed operations on treed disks, Theorem \ref{transversality} implies that (after restricting to a smaller set of perturbation data $P_\g$) the pullback perturbation data $P_\Xi$ is regular for $\Xi$. It follows that $u:C\rightarrow (E,L)$ lives in a smooth moduli space $\mc{M}_\Xi$ of expected dimension.

Now, we show the absence of sphere bubbles and vertical disk bubbles. Any non-constant sphere bubble gives a configuration of expected dimension two less than $\g$. By regularity of the type $\Xi$, this contradicts the index assumption $\I(\g,\underline{x})\leq 1$.

Suppose there is a marked disk bubble $\underline{D}$ attached to a component $C$. It suffices to check the case when $\I (\Xi,\underline{x})=0$, since otherwise $u$ will be contained in an open set in a moduli space of dimension $1$ and will not effect the properties of the $A_\infty$-algebra. We have that $\mu(u_{\underline{D}})\geq 2$ by assumption. Suppose $[u_C]\neq 0$. Then the configuration $\tilde{u}$ of type $\tilde{\Xi}$ with $\underline{D}$ removed is $\pi$-stable, $\pi$-adapted, and whose moduli space can be given the same perturbation data by the assumption that data is defined on $\Upsilon\pi_*\mc{U}_\g$. Thus, $\tilde{u}$ lives in a smooth moduli space of expected dimension. On the other hand the expected dimension is $\leq -1$, which shows that $\tilde{u}$ is non-existent. 

Finally, we analyze a vertical disk bubble when $C$ is a ghost component. If $u_C$ is not mapped to a critical point, then the configuration without $D$ is regular and of index $\leq -1$. By the same argument as in the above paragraph, this is a contradiction. Thus, $u_C$ must be mapped to a critical point $x_0$, and $u_{\underline{D}}$ is contained in a critical fiber. Since $u_C(\bar{z})$ is also holomorphic, we can give $C$ the opposite complex structure from $\bb{C}$ and form a distinct configuration. This gives the opposite orientation of the determinant line bundle of the linearized operator over $C$, which gives the opposite orientation of the moduli space $\mc{M}_\Xi$. Thus such configurations always cancel in pairs.
\end{proof}

\section{Lifting holomorphic trajectories}\label{liftingsection}
Here begins some of the main technical ideas leading to the main result. Notably, we give existence and some characteristics of lifts of $J_B$-holomorphic disks $v:(D,\partial)\rightarrow (B,L_B)$ to the total space $(E,L)$. 

\subsection{Hamiltonian holonomy and covariant energy}
In this section, we give background following \cite[Sec 1.4]{gromov} \cite{salamonakveld} \cite[Ch 8]{ms2} to state a known important formula that expresses the $L^2$ norm of the covariant derivative of a holomorphic section in terms of a semi-topological quantity.

The pullback bundle $v^*E$ over a (holomorphic) disk is topologically trivial and has an induced non-trivial connection via the pullback coupling form $v^*a_\sigma$. Rather than keep track of the connection for $a$ \emph{and} $\sigma$ we absorb this information into a single Hamiltonian connection form $\delta\in \Lambda^1(D,C^\infty_0(F,\bb{R}))$ so that
\begin{equation}\label{deltadefinition}
\omega_F-d\delta=v^*(a_\sigma).
\end{equation}
where the connection form is exact by the Hamiltonian assumption and by abuse of notation we mean $\omega_F:=\pi_{F}^*\omega_F$.

We get an associated connection
$$Tv^*E\cong TF\oplus TF^{\perp v^*a_\sigma}=:TF\oplus H_\delta$$
along with a pullback almost complex structure $$J_\delta:=v^*J_\sigma=\begin{bmatrix}
J_F & J_F\circ X_\delta-X_\delta \circ j \\
0 & j
\end{bmatrix}$$
in the $TF\oplus TD$ coordinates. Note that this a.c.s. satisfies the uniqueness criterion from Section \ref{hamperturbsection}.

By Lemma \ref{fiberedlagrangianlemma} and the vanishing of the a.c.s. perturbation $\sigma$ \eqref{perturbationdatadef} on $\partial D$, the \emph{Lagrangian boundary} $$v^*L:= v\vert_{\partial D}^*L\rightarrow \partial D$$ can be viewed as a path of Lagrangians $L_t$ parametrized by $[0,2\pi]$ that becomes a loop if we mod out by $\mathrm{Diff}(L_t)$ (see \cite{salamonakveld} for a complete viewpoint of this).

We consider $J_\delta$-holomorphic sections $u$ of $v^*E$ with the boundary condition
$$u(\partial D)\subset v^*L.$$
There is a clear correspondence between such sections and $J_\sigma$-holomorphic lifts of $v$ with values in $(E,L)$.

\begin{remark}\label{hamiltonianperturbremark}There is a further correspondence between $J_\delta$-holomorphic sections of $v^* E$ with Lagrangian boundary and smooth maps $u:D\rightarrow (F)$ that satisfy
\begin{equation}\label{perturbeddbarequation}
\bar{\partial}_{J_F,j}u+X_\delta^{0,1}(u)=0
\end{equation}
and the \emph{moving Lagrangian boundary} condition 
\begin{equation}\label{perturbedboundarycond}
u(e^{i\theta})\in L_{\theta}.
\end{equation}

To see the correspondence, let $\tilde{u}=z\times u(z):D\rightarrow D\times F$ be the $J_\delta$-holomorphic section. We have
$$d\tilde{u}\circ j=j\oplus du\circ j=j\oplus [J_F\circ du+J_F\circ X_\delta-X_\delta\circ j]=J_\delta\circ d\tilde{u}.$$
\end{remark}
\subsubsection{Curvature}
There is a corresponding energy identity for solutions to the Hamiltonian perturbed $\bar{\partial}$ equation \eqref{perturbeddbarequation} that is crucial to our argument. The identity's formulation involves the notion of the curvature of $H_\delta$:
 
We have that a horizontal lift of $w\in TD$ to $H_\delta$ has the form 
$$-X_{\delta_w}\oplus w\in TF\oplus TD$$
Let $w^\sharp$ and $x^\sharp$ be two horizontal lifts of tangent vectors in $TD$, and define the curvature two form $\kappa\in \Lambda^2(D,TF)$ to be 
$$\kappa(w,x):=  [w^\sharp, x^\sharp]^{vert}$$
where $vert:TF\oplus H\rightarrow TF$.

Following \cite{ms2}, we write the curvature as 
\begin{equation}\label{curvatureasafunction}
R_\delta  d vol
\end{equation}
where by the curvature identity \eqref{curvatureidentity} $R_\delta:D\times F\rightarrow \bb{R}$ in each fiber is given by the zero average Hamiltonian associated to $\frac{1}{d vol(v,w)}[v^\sharp,w^\sharp]^{vert}$ (whenever $v$ is not a multiple of $w$).

\begin{definition}
For a $J_\delta$ holomorphic section $u$ of $v^*E$, the \emph{covariant derivative} is
\begin{equation}\label{covariantderivative}
\nabla^\delta u:=\partial_{J_F,j} (u)+X^{1,0}_\delta(u),
\end{equation}
i.e., the $(1,0)$ part of $du+X_\delta(u)$ as an element of $\Lambda^{1,0}_{J_F,j}(D,TF)$. 
\end{definition}
Choose coordinates $(s,t)$ on $D$ so that $\delta=f ds+ g dt$ and $X_\delta=X_fds+X_gdt$, and \eqref{covariantderivative} becomes 
$$\nabla^\delta u=(\partial_s u+X_f)ds+(\partial_tu+X_g)dt$$
for $u$ satisfying the Hamiltonian perturbed equation \eqref{perturbeddbarequation}.

Finally, let $\vert\cdot \vert_{J_F}=\omega_F(\cdot,J_F \cdot)$. The relevant notion of \emph{vertical energy} of a $J_\delta$-holomorphic section is given by 
\begin{equation}\label{verticalenergy}
e_\delta(u):=\frac{1}{2}\int_D \vert du+X_\delta\vert_{J_F}^2\text{d} vol=\frac{1}{2}\int_D \vert \nabla^\delta u\vert_{J_F}^2 \text{d} vol.
\end{equation}
It follows that $u$ has $0$ energy precisely when it is a covariant constant section, which motivates the following definition:

\begin{definition}\label{vertconst}
A $J_\delta$-holomorphic section $u$ is \emph{vertically constant} with respect to $H_\delta$ if $e_\delta(u)=0$.
\end{definition}
Also in the $(s,t)$ coordinates we have
\begin{equation}\label{coordexpforddelta}
d\delta=\text{d}' f\wedge d s+\text{d}' g\wedge d t+(\partial_s g-\partial_t f)d s\wedge d t
\end{equation}
where $\text{d}'$ is the exterior differential on the fibers.
By the curvature identity \eqref{curvatureidentity} one can write
$$\kappa=\big(\partial_sg-\partial_t f+\lbrace f,g\rbrace_{\omega_F}\big) d s\wedge d t. $$
We use this in the following lemma:
\begin{lemma}\cite[Lem 5.2]{salamonakveld}{The Covariant Energy Lemma.}\label{energylemma}
Let $u$ be a $J_\delta$-holomorphic section of $v^*E$. We have
\begin{equation}\label{energyequation}e_\delta(u)=\int_D u^*\omega_F + \int_D R_\delta \text{d} vol - \int_{\partial D} u^*\delta.
\end{equation}
\end{lemma}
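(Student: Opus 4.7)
The strategy is the standard local coordinate computation of McDuff--Salamon/Salamon--Akveld type, organized into three steps: (1) rewrite the integrand of $e_\delta(u)$ using the holomorphic equation, (2) extract $u^*\omega_F$ plus curvature plus an exact piece, and (3) apply Stokes.

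First, I will fix local coordinates $(s,t)$ on $D$ and write $\delta = f\,ds + g\,dt$, $X_\delta = X_f\,ds + X_g\,dt$, so that the Hamiltonian-perturbed Cauchy--Riemann equation \eqref{perturbeddbarequation} for $u$ reads
\begin{equation*}
\partial_t u + X_g = J_F\bigl(\partial_s u + X_f\bigr).
\end{equation*}
Since $|J_F w|_{J_F} = |w|_{J_F}$ for the $J_F$-compatible norm, the definition of the covariant derivative \eqref{covariantderivative} and the energy \eqref{verticalenergy} combine to give, pointwise,
\begin{equation*}
\tfrac{1}{2}|\nabla^\delta u|^2_{J_F}\,ds\wedge dt = \omega_F\bigl(\partial_s u + X_f,\; \partial_t u + X_g\bigr)\,ds\wedge dt.
\end{equation*}

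Second, I will expand this bilinear expression using the Hamiltonian convention $\omega_F(X_h, \cdot) = dh(\cdot)$ (from Lemma~\ref{connectionacs} and the setup below it) together with $\{f,g\} = \omega_F(X_f,X_g)$. Grouping terms:
\begin{equation*}
\omega_F(\partial_s u + X_f, \partial_t u + X_g) = \omega_F(\partial_s u,\partial_t u) + df(\partial_t u) - dg(\partial_s u) + \{f,g\}.
\end{equation*}
The first summand is exactly $u^*\omega_F$ evaluated on $(\partial_s,\partial_t)$. For the middle cross terms I compute $d(u^*\delta)$ directly from $u^*\delta = (f\circ u)\,ds + (g\circ u)\,dt$, which yields
\begin{equation*}
d(u^*\delta) = \bigl(-f_t - df(\partial_t u) + g_s + dg(\partial_s u)\bigr)\,ds\wedge dt,
\end{equation*}
and hence
\begin{equation*}
\bigl(df(\partial_t u) - dg(\partial_s u)\bigr)\,ds\wedge dt = -d(u^*\delta) + (g_s - f_t)\,ds\wedge dt.
\end{equation*}
Combining with the coordinate formula $R_\delta\,d\mathrm{vol} = (g_s - f_t + \{f,g\})\,ds\wedge dt$ for the curvature obtained from \eqref{coordexpforddelta} and \eqref{curvatureasafunction} yields the pointwise identity
\begin{equation*}
\tfrac{1}{2}|\nabla^\delta u|^2_{J_F}\,ds\wedge dt = u^*\omega_F + R_\delta\,d\mathrm{vol} - d(u^*\delta).
\end{equation*}

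Third, I will integrate over $D$ and apply Stokes' theorem to the exact piece to obtain the claimed identity \eqref{energyequation}. The main subtle point, and what I expect to be the only real obstacle, is a convention check on the curvature Hamiltonian $R_\delta$: the excerpt defines $R_\delta$ as the \emph{fiberwise zero-average} Hamiltonian associated to the raw expression $g_s - f_t + \{f,g\}$, so I must verify that the normalization constant (a function of $(s,t)$ independent of $u$) cancels correctly, which follows from the zero-average normalization condition \eqref{normalizationcondition} built into the minimal coupling form and the identity $\omega_F - d\delta = v^*a_\sigma$. A global (coordinate-free) rephrasing is available by decomposing $du$ along the vertical/horizontal splitting and using the curvature identity \eqref{curvatureidentity}, but the coordinate version is the shortest route.
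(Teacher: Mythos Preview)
Your proposal is correct and follows essentially the same coordinate computation as the paper: expand $\omega_F(\partial_s u + X_f,\partial_t u + X_g)$, recognize $u^*\omega_F$, the curvature term $R_\delta\,d\mathrm{vol}=(\partial_s g-\partial_t f+\{f,g\})\,ds\wedge dt$, and an exact piece, then apply Stokes. Your worry about a normalization constant is unnecessary: since $f$ and $g$ already have zero fiberwise average by the definition of $\delta$, so do $\partial_s g$, $\partial_t f$, and $\{f,g\}$, hence the coordinate expression \emph{is} $R_\delta$ on the nose.
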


\begin{proof}
Writing everything in coordinates:
\begin{align*}
e_\delta(u) &=\frac{1}{2} \int_D \omega_F( \partial_s u + X_f,\partial_t u +X_g)-\omega_F (\partial_s u + X_f,\partial_s u +X_f)\\
&\qquad -\omega_F(\partial_t u+X_g,\partial_s u+ X_f)+\omega_F(\partial_s u+X_f,\partial_t u + X_g)d s\wedge d t\\
&=\int_D \omega_F(\partial_s u + X_f,\partial_t u +X_g)d s\wedge d t \\
&= \int_D \big[ \omega_F(\partial_s u,\partial_t u) +\text{d}' f(\partial_t u)-\text{d}' g(\partial_s u)+\lbrace f,g\rbrace_{\omega_F}\big] d s \wedge d t
\end{align*}

From \eqref{coordexpforddelta} we have
$$d\delta\big( \partial_s u\oplus \partial s,\partial_t u\oplus \partial t\big)=\partial_s g-\partial_t f -\text{d}'f(\partial_t u)+\text{d}'g(\partial_s u).$$

Thus, the main calculation continues as 
\begin{align*}
\cdots &= \int_D u^*\omega_F -\int_D u^*d\delta +\int_D\kappa
\end{align*}
by Stokes' theorem and the identification of $\kappa$ with \eqref{curvatureasafunction}, 
\begin{equation*}
=\int_D u^*\omega_F -\int_{\partial D}u^*\delta+\int_D R_\delta d vol
\end{equation*}
\end{proof}

\subsection{A relative h-principle}\label{okasection}
One thing decidedly lacking from the above Hamiltonian point of view is that it is not clear if there is a covariant constant section of $v^*E$, or any holomorphic section at all. To solve such a problem, we use heat flow following Donaldson \cite{sdh} to flatten the $G$-connection via holomorphic gauge transformation.

We reiterate assumptions needed for this section: $E$ has a compact structure group $G$ and $F$ has an integrable $G_\bb{C}$-invariant complex structure $J_I$. Parallel transport in the inherent symplectic connection, denoted $H_G$, is $G$-valued.

\begin{definition}
A \emph{fibered Lagrangian} in a symplectic K\"ahler fibration is a Lagrangian $L\subset (E,\omega_{H,K})$ that fibers as
$$L_F\rightarrow L\xrightarrow{\pi} L_B$$
with $L_F\subset (F,\omega_F)$ a monotone Lagrangian with minimal Maslov index $\geq 2$ and $L_B\subset (B,\omega_B)$ a rational Lagrangian. 
\end{definition}
Any bundle of the form $$\p(V)\rightarrow \p^n$$ where $V$ is equipped with a Hermitian metric is a example of a symplectic K{\"a}hler fibration in which one can find fibered Lagrangians. We also have the flag manifold \eqref{fullflags} which has structure group $SU(n-k+1)$ as a fiber bundle.

Associated to the pull-back connection $v^*H_G$, we can instead require that $J_\delta$ fiberwise agrees with $J_I$, so that it has the following form in the trivial splitting $TF\oplus TD$:
$$J^G:=\begin{bmatrix}
J_I & J_I\circ X_\delta-X_\delta\circ j\\
0 & j
\end{bmatrix}$$
where $\delta$ in this case describes the $G$-valued parallel transport. 

Since the action of $G$ on $F$ is effective, $E$ has an associated principal $G$-bundle. Let $P_{v^*E}$ be the associated bundle to $v^*E$. $H_G$ defines a connection on $P_{v^*E}$, which we also denote by $H_G$.

In this section we draw out the following:

\begin{theorem}\label{oka} There is a gauge transformation $\mathcal{G}\in C^\infty(D,G_\C)$ that preserves $J_I$ in the fibers, is $G$-valued along $\partial D$, and such that $\mathcal{G}(H_G)=:H_0$ is a flat connection on $v^*E$.
\end{theorem}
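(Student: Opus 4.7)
The crucial first observation is that preservation of $J_I$ in the fibers is automatic: since $J_I$ is assumed $G_\C$-invariant and $\mathcal{G}$ takes values in $G_\C$, the fiberwise action of $\mathcal{G}(p)$ on $F_p$ commutes with $J_I$ by construction. Thus the only substantive requirements are (a) flatness of $\mathcal{G}(H_G)$ and (b) $\mathcal{G}|_{\partial D} \in G$. Since $D$ is contractible, $v^*E$ is smoothly trivial, and $H_G$ can be represented by a connection one-form $A \in \Omega^1(D,\mf{g})$ in a fixed smooth $G$-trivialization; in this framing the initial connection is already $G$-valued everywhere, so the flow-based plan below starts with boundary data already satisfying (b).

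My plan is to follow the heat flow method of Donaldson \cite{sdh}. I evolve $\mathcal{G}_t : D \to G_\C$ starting from $\mathcal{G}_0 = \mathrm{id}$ by the negative gradient of the Yang--Mills functional $\tfrac{1}{2}\int_D |F_{A_t}|^2$ restricted to the complex gauge orbit of $A$, where $A_t := \mathcal{G}_t(A)$. Concretely this is the flow
\[
\mathcal{G}_t^{-1}\dot{\mathcal{G}}_t \;=\; -i\,\Lambda_{\omega_D} F_{A_t},
\]
which on the two-dimensional disk moves $\mathcal{G}_t$ in the $i\mf{g}$-directions inside $\mf{g}_\C = \mf{g} \oplus i\mf{g}$. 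To retain condition (b) I impose the boundary constraint $\dot{\mathcal{G}}_t|_{\partial D} \in \mf{g}$, which together with $\mathcal{G}_0|_{\partial D} = \mathrm{id}$ keeps $\mathcal{G}_t|_{\partial D}$ in $G$ for all $t \geq 0$. This boundary constraint is compatible with the flow precisely because the disk is 2-dimensional: the Hodge-star of the curvature is a $\mf{g}_\C$-valued function whose boundary values can be adjusted through the allowed $G$-gauge freedom on $\partial D$.

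For long-time existence and convergence, I would rely on parabolic theory on Riemann surfaces with boundary, together with the non-positive sectional curvature of the symmetric space $G_\C / G$, which controls the pointwise distance $d(\mathcal{G}_t, G)$ and prevents blow-up. Since $\|F_{A_t}\|_{L^2}$ is monotonically decreasing and bounded below, a limit $\mathcal{G}_\infty$ exists and satisfies $F_{\mathcal{G}_\infty(A)} = 0$; by construction $\mathcal{G}_\infty \in G_\C$-valued, $G$-valued on $\partial D$, and fiberwise $J_I$-preserving. The hardest step — and the reason the statement is nontrivial — is the boundary regularity of the parabolic flow: enforcing $\mathcal{G}_t|_{\partial D} \in G$ imposes a nonlinear boundary condition on a flow into a noncompact symmetric space, and one must establish uniform regularity up to $\partial D$ as well as rule out degeneration of the complex gauge factor at the boundary. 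This is precisely the technical core of Donaldson's argument in \cite{sdh}, which in the present setting only needs to be adapted to the particular $G$-connection $v^*H_G$ on the disk.
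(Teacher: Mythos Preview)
Your approach is essentially the same as the paper's: both invoke Donaldson's Hermitian Yang--Mills heat flow \cite{sdh} on the disk to flatten the connection within its complex gauge orbit. The paper's execution differs in one organizational respect: rather than running the flow directly on the principal $G$-bundle, it chooses a faithful unitary representation $\rho: G \hookrightarrow U(k)$, forms the associated Hermitian vector bundle $\mc{V}$, and applies Donaldson's theorem (Theorem~\ref{heat}) as a black box to obtain a flat Hermitian metric $h_\infty$ with $h_\infty|_{\partial D} = h_0|_{\partial D}$. The gauge transformation $\mc{G}$ is then read off from the relation $\mc{G}^*h_0 = h_\infty$, and the boundary condition $\mc{G}|_{\partial D} \in G$ follows immediately from $h_\infty = h_0$ there. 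This packaging avoids re-establishing the parabolic estimates you allude to; conversely, your direct formulation on the $G$-bundle sidesteps the (implicit) check that the gauge transformation on $\mc{V}$ actually lands in $G_\C \subset GL_k(\C)$.

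Your boundary-condition discussion is muddled, however. The flow you write moves $\mc{G}_t$ in the $i\mf{g}$-direction, so imposing ``$\dot{\mc{G}}_t|_{\partial D} \in \mf{g}$'' is inconsistent with the evolution equation unless both sides vanish. The correct condition is simply the Dirichlet one: fix $\mc{G}_t|_{\partial D} = \mathrm{id}$ (equivalently, fix $h_t|_{\partial D} = h_0|_{\partial D}$), which is precisely Donaldson's setup and trivially gives $\mc{G}_\infty|_{\partial D} = \mathrm{id} \in G$. Your remark about ``adjusting boundary values through the allowed $G$-gauge freedom'' is unnecessary and obscures what is in fact a clean Dirichlet problem.
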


We will see that this theorem is a corollary of:

\begin{theorem}\cite[Thm 1]{sdh}\label{heat} Let $V\rightarrow \bar{Z}$ be a holomorphic vector bundle over a compact K\"ahler manifold $(\bar{Z},\omega)$ with non-empty boundary $\partial Z$. For any Hermitian metric $f$ on the restriction of $V$ to $\partial Z$ there is a unique Hermitian metric $h$ on $V$ such that:\begin{enumerate}\item[1)] $h\vert_{\partial Z} = f$\item[2)] $i\Lambda F_h = 0$ in $Z$ \end{enumerate}
\end{theorem}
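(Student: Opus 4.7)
The plan is to reduce the construction of $\mathcal{G}$ to an application of Theorem \ref{heat} on a suitably chosen associated holomorphic vector bundle, and then verify that each of the three requirements on $\mathcal{G}$---that it take values in $G_\C$ and act biholomorphically on fibers, that it be $G$-valued on $\partial D$, and that it flatten $H_G$---corresponds to a distinct feature of this setup.

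First I would set up the complexified picture at the principal bundle level. Because the $G$-action on $F$ is effective, there is a principal $G$-bundle $P\to D$ with $v^*E\cong P\times_G F$; let $A$ be the $G$-connection on $P$ corresponding to $H_G$. Complexify to $P^\C:=P\times_G G_\C$, so that $v^*E\cong P^\C\times_{G_\C}(F,J_I)$. The $(0,1)$-part of $A$ defines a holomorphic structure $\bar\partial_A$ on $P^\C$, integrable automatically since $\dim_\C D = 1$; with respect to this structure the $G$-reduction $P\subset P^\C$ is the Hermitian structure whose Chern connection is $A$. The hypothesis that $J_I$ is $G_\C$-invariant already guarantees that any $G_\C$-valued gauge transformation acts biholomorphically on each fiber, disposing of the $J_I$-preservation condition automatically.

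Next I would pass to vector bundles via a faithful unitary representation. Because $G$ is compact, there exists a faithful unitary representation $\rho:G\hookrightarrow U(n)$ with complexification $\rho_\C:G_\C\hookrightarrow GL(n,\C)$. Form the associated holomorphic vector bundle $V:=P^\C\times_{\rho_\C}\C^n$ on $D$, with Hermitian metric $f_0$ coming from the $G$-reduction. Applying Theorem \ref{heat} with boundary data $f_0|_{\partial D}$ yields a unique Hermitian metric $h$ on $V$ with $h|_{\partial D}=f_0|_{\partial D}$ and $i\Lambda F_h=0$. On a Riemann surface the curvature $F_h$ is automatically of type $(1,1)$, so $i\Lambda F_h = 0$ is equivalent to $F_h\equiv 0$, i.e., the new Chern connection is flat.

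Finally I would read the theorem off the data $(f_0, h)$. These two metrics differ by the positive Hermitian endomorphism $s := f_0^{-1}h$, a section of $\mathrm{End}(V)$ which I claim lies in the image of $\rho_\C$ applied to a smooth $G_\C$-valued function; after trivializing $P^\C$ over the contractible disk $D$, this function is a smooth $\mathcal{G}:D\to G_\C$ with $h=\mathcal{G}^*f_0\mathcal{G}$. The boundary condition $h|_{\partial D}=f_0|_{\partial D}$ translates into $\mathcal{G}|_{\partial D}\in G$, and flatness of the new Chern connection translates into flatness of $\mathcal{G}(A)$, and hence of the induced connection $\mathcal{G}(H_G)=H_0$ on $v^*E$. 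The main technical subtlety in this plan---and the one point where the plan needs more than a black-box invocation of Theorem \ref{heat}---is exactly the claim that $s$ lifts to $G_\C$ rather than merely into $GL(n,\C)$; this relies on showing that Donaldson's heat flow preserves the set of Hermitian metrics coming from $G$-reductions, which in turn reduces to the fact that the flow generator $\partial_t\log h = -i\Lambda F_h$ lies in the image of $\rho_\C(\mathfrak{g}_\C)\subset\mathrm{End}(V)$ throughout the flow (a consequence of $\rho_\C$ being $G_\C$-equivariant). Alternatively one could bypass this issue by appealing to the principal-bundle analogue of Theorem \ref{heat} available for compact structure groups.
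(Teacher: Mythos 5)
You have proved the wrong statement. The theorem you were asked to address, Theorem \ref{heat}, is Donaldson's result \cite[Thm 1]{sdh} on the existence and uniqueness of Hermitian--Yang--Mills metrics with prescribed boundary values on a compact K\"ahler manifold with boundary; the paper cites it verbatim and offers no proof, since it is an external black box. What you have written is instead a proof of Theorem \ref{oka} (the assertion that a $G_\C$-valued gauge transformation, $G$-valued on $\partial D$ and commuting with $J_I$, flattens $H_G$), \emph{using} Theorem \ref{heat} as the key ingredient. In other words, you proved the corollary rather than the theorem.

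For what it is worth, as a proof of Theorem \ref{oka} your argument does essentially track the paper's own: both pass to the principal $G$-bundle, complexify, push forward along a faithful unitary representation to an associated holomorphic vector bundle, apply Theorem \ref{heat}, and then read off boundary behavior and flatness. You do flag one genuine subtlety that the paper skips past rather quickly: the Hermitian endomorphism $s=f_0^{-1}h$ must be shown to lie in the image of $\rho_\C(G_\C)$, not merely in $GL(n,\C)$, before one can conclude $\mathcal{G}$ is $G_\C$-valued. Your suggestion---that the heat-flow generator $-i\Lambda F_h$ remains in $\rho_\C(\mathfrak g_\C)$ by equivariance, so the flow preserves the $G$-reduction---is the right fix and would sharpen the paper's argument. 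But none of this is a proof of Theorem \ref{heat} itself; that would require Donaldson's a priori $C^0$ estimates, the evolution of the determinant along the flow, and the maximum-principle uniqueness argument, none of which appear in the proposal.
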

Here, $F_h$ is the curvature of the Chern connection associated to $h$, and $$\Lambda:\Omega^{1,1}(\bar{Z})\otimes V\rightarrow \Omega^0(\bar{Z})$$ is the K\"ahler component in the decomposition $\Omega^{1,1}(\bar{Z)}\otimes V\cong \Omega^{1,1}_{\omega\perp}(\bar{Z})\otimes V \oplus \Omega^{0}(\bar{Z})\omega\otimes V$, where $\Omega^{1,1}_{\omega\perp}$ are the $1,1$ forms defined on the dual of the kernel of $\omega$.

One calls a solution to Theorem \ref{heat} a Hermitian Yang-Mills metric. When $\bar{Z}=\bar{D}$ the associated connection is merely flat, so we can trivialize via parallel transport.

\subsubsection{Gauge transformation background} The space of diffeomorphisms of $P_{v^*E}$ covering the identity forms an infinite dimensional Lie group, called the group of gauge transformations $\mathcal{G}(P_{v^*E})$. Let $\phi\in \mathcal{G}(P_{v^*E})$, and let $g_p$ be the unique element in $G$ such that $\phi(p)=p\cdot g_p$. We require that $\phi$ preserve the $G$-structure of $P_{v^*E}$: $\phi(p\cdot g)=\phi(p)\cdot g$. From this we get
$$p\cdot g\cdot g_{p\cdot g}=p\cdot g_{p}\cdot g$$
so that $g_{p\cdot g}=g^{-1}g_pg$. Thus $\mathcal{G}(P_{v^*E})\cong C^{\infty}(P_{v^*E},G)^{G}$, where the superscript denotes the equivariance $f(p\cdot g)=g^{-1}f(p)g$. When the domain of $v$ is a disk, we have that $P_{v^*E}\cong D\times G$ as a smooth $G$-bundle. As a gauge transformation is completely determined by how it acts on the identity section, we have $\mc{G}(P_{v^*E})\cong C^\infty(D,G)$.

In a similar manner, we denote the complexified gauge group as $\mathcal{G}_\C:=C^\infty (D,G_\C)$. The complexified gauge group acts on $G$-connections via $H\mapsto \mathcal{G}(H)$. 

\begin{proof}[Proof of \ref{oka}]
By assumption the connection $H_G$ on $v^*E$ is induced from a connection on $P_{v^*E}$, and in turn $H_G$ induces a unique complex connection on the complexification $P^\bb{C}_{v^*E}$ by the invariance property of connections under right multiplication. Since $G$ is compact it admits a faithful representation as a matrix group, which induces a faithful matrix representation $\rho:G_\bb{C}\rightarrow GL_\bb{C}(V)$ for a complex vector space $V$. Let $\mc{V}:=V\times_\rho P_{v^*E}^\bb{C}$ be the associated vector bundle, which admits a corresponding connection denoted $\nabla^{H_G}$. This is indeed the Chern connection associated to a Hermitian metric $h_0$ and some fiberwise complex structure $J_\mc{V}$ due to the fact that we can realize $\rho(G)\subset U(k)$ with $k=\dim V$.

By Theorem \ref{heat} there is a flat Hermitian metric $h_\infty$ that agrees with $h_0$ on $\partial D$. Thus, there is a map $\mathcal{G}\in C^\infty(D,G_\C)$ so that $\mathcal{G}^*h_0=h_\infty$.

We have the following, where we use the same notation for the three situations:
\begin{enumerate}
\item $h_0=h_\infty$ on $\partial D$, so we can assume $\mc{G}$ to be $G$ valued on the boundary.
\item By the $G_\bb{C}$-triviality of $P^\bb{C}_{v^*E}$, there is an induced complex gauge transformation (still denoted $\mc{G}$) such that $H_0:=\mc{G}(H_G)$ is a flat connection.
\item $\mc{G}$ induces a transformation $\mc{G}$ of $v^*E$ that gives a flat connection $H_0$
\item By $G_\C$ invariance $\mathcal{G}_*J_I=J_I\mc{G}_*$.\vspace{-2em}
\end{enumerate}
\end{proof}

\begin{remark} 
$v^*E$ can now be holomorphically trivialized by parallel transport along $H_0$, denoted by
\begin{equation}\label{triv}
\Phi:v^*E\xrightarrow{\cong}D\times F.
\end{equation}
This map corresponds to an isomorphism $\mathcal{V}\xrightarrow{\cong}D\times V$ given by an element $\mathcal{G}_\Phi\in C^\infty (D,G_\bb{C})$, which is also $G$ valued on $\partial D$.
\end{remark}

\subsection{Existence of lifts} The prime candidate for a lift of the disk $v$ is a constant section $p:D\rightarrow D\times (F,L_F)$ and the associated section $u_p:=(\mc{G}_\Phi\circ \mc{G})^{-1}p$ of $v^*E$. We ask a few questions about such a section:

\begin{enumerate}
\item\label{vertconst?} Is $u_p$ covariant constant w.r.t. $H_G$?
\item Is $u_p$ $J^G$-holomorphic?
\item Does $u_p$ satisfy the Lagrangian boundary condition?
\item \label{regular?} Is $J^G$ regular for $u_p$ and part of a coherent datum in the sense of Theorem \ref{transversality}?
\end{enumerate}

First we answer \eqref{vertconst?}. Let $\mc{F}=\mc{G}^{-1}$ from Theorem \ref{oka} and $H_0$ the flat connection. The covariant derivative  transforms as $$\nabla^{\mathcal{F}(H_0)}s=\mathcal{F}\nabla^{H_0}(\mathcal{F}^{-1}s)$$ \cite[2.1.7]{dk}. If $s$ is a covariant constant section with respect to $\nabla^{H_0}$, then 
\begin{align*}
\nabla^{H_G}\mc{F}s &=\nabla^{\mc{F}(H_0)}\mc{F}S  \\
&=\mc{F}\nabla^{H_0} \mc{F}^{-1}\mc{F}s\\
& = 0.
\end{align*}
Hence, the definition of vertically constant \ref{vertconst} is independent of the choice of representative of the gauge orbit.
\begin{proposition}\label{hologaugeprop}
$\mc{G}$ from Theorem \ref{oka} preserves holomorphic sections. That is, $\mc{G}:(v^*E,J^G)\rightarrow (v^*E,J_0)$ is a biholomorphism, where $J_0=\Phi^* (J_I\oplus j)$ is pulled back from the trivialization \eqref{triv}.
\end{proposition}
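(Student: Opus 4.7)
The plan is to prove that $\mathcal{G}$ intertwines the two almost complex structures pointwise, $\mathcal{G}_* \circ J^G = J_0 \circ \mathcal{G}_*$, as bundle endomorphisms of $T(v^*E)$. Once this is established, for any $J^G$-holomorphic section $u$ the chain rule gives
\[d(\mathcal{G} \circ u) \circ j = \mathcal{G}_* \circ du \circ j = \mathcal{G}_* \circ J^G \circ du = J_0 \circ \mathcal{G}_* \circ du = J_0 \circ d(\mathcal{G} \circ u),\]
so $\mathcal{G} \circ u$ is $J_0$-holomorphic; since $\mathcal{G}$ is a smooth bundle automorphism with smooth inverse (also $G_\mathbb{C}$-valued), it is a biholomorphism.

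To check the intertwining, I would decompose $T(v^*E) = TF \oplus H_G$ at the source and $T(v^*E) = TF \oplus H_0$ at the target; the vertical subbundle $TF$ is intrinsic because $\mathcal{G}$ covers the identity on $D$, so $\mathcal{G}_*$ preserves it. On vertical vectors, both $J^G$ and $J_0$ restrict to the fiberwise integrable structure $J_I$ by Lemma \ref{connectionacs}. The $G_\mathbb{C}$-invariance of $J_I$ recorded as item (4) in the proof of Theorem \ref{oka} translates at the level of the associated bundle to $\mathcal{G}_* \circ J_I = J_I \circ \mathcal{G}_*$ on $TF$, which handles the vertical part.

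For a horizontal vector $w \in H_G$ at a point $p$, the relation $\mathcal{G}(H_G) = H_0$ from Theorem \ref{oka} means $\mathcal{G}_* w \in H_0$ at $\mathcal{G}(p)$. By Lemma \ref{connectionacs}, $J^G$ preserves $H_G$ and $J_0$ preserves $H_0$, and both project under $\pi_*$ to $j$ on $TD$. Since $\pi \circ \mathcal{G} = \pi$, we have $\pi_* \circ \mathcal{G}_* = \pi_*$, and thus
\[\pi_*(\mathcal{G}_* J^G w) = \pi_*(J^G w) = j\, \pi_* w = j\, \pi_*(\mathcal{G}_* w) = \pi_*(J_0 \mathcal{G}_* w).\]
As both $\mathcal{G}_* J^G w$ and $J_0 \mathcal{G}_* w$ lie in $H_0$ and $\pi_* \colon H_0 \to TD$ is a fiberwise isomorphism, the two vectors agree. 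Combined with the vertical case, this yields the claimed intertwining.

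The main point that requires care is propagating the statement $\mathcal{G}(H_G) = H_0$ from the Chern connection on the associated vector bundle $\mathcal{V}$, where Theorem \ref{heat} was applied in the proof of Theorem \ref{oka}, back to the associated $F$-bundle $v^*E$. The bridge is the faithful representation $\rho \colon G_\mathbb{C} \to \mathrm{GL}_\mathbb{C}(V)$: the element $\mathcal{G} \in C^\infty(D, G_\mathbb{C})$ acts simultaneously on $P^\mathbb{C}_{v^*E}$, on $\mathcal{V}$, and on $v^*E$, and transforms the induced connections on all associated bundles compatibly because the $G_\mathbb{C}$-action on $F$ is by $J_I$-biholomorphisms. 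Once this is noted the intertwining is a formal check, so I expect the proof to be short.
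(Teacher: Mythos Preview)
Your proof is correct and follows essentially the same approach as the paper: decompose into vertical and horizontal parts, use $G_{\mathbb{C}}$-invariance of $J_I$ on $TF$, and on the horizontal distribution use that $\mathcal{G}_*$ carries $H_G$ to $H_0$ together with the fact that both almost complex structures project to $j$ on $TD$. The paper phrases the horizontal step by tracking explicit horizontal lifts $(-X_y,y)\mapsto(-X_{jy},jy)$, while you argue via the isomorphism $\pi_*\colon H_0\to TD$; these are the same argument in slightly different notation.
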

\begin{proof}
$d\mc{G}$ restricts to isomorphisms
\begin{gather*}
d\mc{G}_v:TF\rightarrow TF\\
d\mc{G}_h:H_G\rightarrow H_0
\end{gather*}
and we have $d\mc{G}_v\circ J_I=J_I\circ d\mc{G}_v$ on $TF$ by $G_\bb{C}$ invariance. Thus, it suffices to check that
$$d\mc{G}_h\circ J^G=j\circ d\mc{G}_h.$$

As demonstrated previously, $J^G$ acts on $H_G$ as
$$(-X_y,y)\mapsto (-X_{jy},jy)$$
for $y\in TD$. As $\mc{G}$ is a bundle isomorphism and covers the identity on $D$, it must preserve horizontal lifts of base vector fields. Since $jy$ lifts to $jy$ in $H_0$ and $(-X_{jv},jv)$ in $H_G$, we have
$$\mc{G}_h(-X_{jv},jv)=(0,jv).$$
Hence, 
\begin{equation}\label{holomorphicityofgaugetransformation}
\bar{\partial}_{J_0,J^G}\mc{G}=0.
\end{equation} The fact that $\mc{G}$ preserves holomorphic sections follows.
\end{proof}
Finally, the main application is the following:
\begin{corollary}\label{liftdiskcor}
Given $p\in \pi^{-1}(1)\cap L_F$ and a Hamiltonian $G$-connection $H_G$, there exists a unique, vertically constant section $u$ of $(v^*E,v^*L)$ with $u(1)=p$ which is $J^G$-holomorphic.
\end{corollary}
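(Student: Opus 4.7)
The plan is to construct $u$ explicitly by trivializing $v^*E$ via Theorem \ref{oka} and taking a fiber-constant section. Apply Theorem \ref{oka} to obtain a complex gauge transformation $\mc{G}$ that flattens $H_G$ to $H_0 = \mc{G}(H_G)$, is $G$-valued on $\partial D$, and preserves $J_I$ fiberwise, together with the associated holomorphic trivialization $\Phi:v^*E\xrightarrow{\cong} D\times F$ of \eqref{triv}. By Proposition \ref{hologaugeprop} combined with the definition $J_0 = \Phi^*(J_I\oplus j)$, the composite $\Phi\circ\mc{G}:(v^*E,J^G)\to(D\times F,J_I\oplus j)$ is a biholomorphism of almost complex manifolds.

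Set $q := (\Phi\circ\mc{G})(p) \in F$ and define the fiber-constant section $\tilde{u}:D\to D\times F$ by $\tilde{u}(z) = (z,q)$. Then $\tilde{u}$ is tautologically $(J_I\oplus j)$-holomorphic and covariant constant with respect to the trivial horizontal connection on $D\times F$. Setting $u := \mc{G}^{-1}\circ\Phi^{-1}\circ\tilde{u}$, the map $u$ is $J^G$-holomorphic and satisfies $u(1) = p$. The gauge-invariance identity
\[
\nabla^{\mc{F}(H_0)} s = \mc{F}\,\nabla^{H_0}(\mc{F}^{-1}s)
\]
recorded just before Proposition \ref{hologaugeprop} shows that covariant constancy is preserved under gauge transformations, so $u$ is covariant constant with respect to $H_G$, i.e., vertically constant in the sense of Definition \ref{vertconst}.

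The main remaining point is the Lagrangian boundary condition, which is the only step not handled purely by the gauge-theoretic machinery. Since $u$ is $H_G$-covariant constant, its boundary value $u(e^{i\theta})$ equals the $H_G$-parallel transport of $u(1) = p \in L_F$ along the arc $\tau\mapsto e^{i\tau}$, $\tau\in[0,\theta]$. This path is mapped by $v$ into $L_B$, and by Lemma \ref{fiberedlagrangianlemma}, property \eqref{paralleltransinv}, the Lagrangian $L$ is invariant under parallel transport along curves in $L_B$. Hence $u(\partial D) \subset v^*L$. Uniqueness follows because any vertically constant $J^G$-holomorphic section of $(v^*E, v^*L)$ pulls back under $\Phi\circ\mc{G}$ to a fiber-constant holomorphic section of $D\times F$ (by the same gauge-invariance of covariant constancy), and the normalization $u(1) = p$ pins down the constant to be $q$.
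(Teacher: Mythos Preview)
Your proof is correct and follows essentially the same approach as the paper: construct $u$ by pulling back a constant section through the holomorphic trivialization $\Phi\circ\mc{G}$, then verify $J^G$-holomorphicity via Proposition~\ref{hologaugeprop}, vertical constancy via the gauge-invariance identity preceding that proposition, and the boundary condition via Lemma~\ref{fiberedlagrangianlemma}. Your boundary argument is in fact slightly more direct than the paper's --- you use that $u$ is $H_G$-parallel and that $L$ itself is $H_G$-parallel-invariant along $L_B$, whereas the paper passes to the gauge-transformed picture and checks that $\mc{G}(L)$ is $H_0$-parallel-invariant --- but the two are equivalent.
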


\begin{proof}
Define a section $\mc{G}u$ of $\mc{G}v^*E$ by parallel transport over $D$ starting at $\mc{G}p\in \pi^{-1}(1)$, which is well defined by the flatness of the connection. Uniqueness for constant maps in the trivialization gives us that $u$ is the unique covariant-constant section starting at $p$. 

It follows from Proposition \ref{hologaugeprop} that $u$ is $J^G$-holomorphic. We check that the boundary condition $u(\partial D)\subset L$ is satisfied: Since $\mc{G}$ is $G$-valued on $\partial D$, it follows that $\mc{G}(L)$ is also Lagrangian with respect to $\mc{G}^*a\vert_{\partial D}$. By Lemma \ref{fiberedlagrangianlemma}, $\mc{G}(L)$ is invariant under parallel transport, so the disk $\mc{G}u$ has $\mc{G}(L)$ boundary conditions.
\end{proof}
We hereafter refer to such a vertically constant section as a \emph{Donaldson lift}.
\subsection{The vertical Maslov index}
To show regularity for these lifts, we require a short discussion on the vertical Maslov index. The Maslov index of a relative class with boundary values in a fibered Lagrangian splits as a sum of horizontal and vertical terms, both of which are topological invariants. It will be crucial to the argument of our main theorem that the ``vertical'' Maslov index is non-negative.

Let $u:D\rightarrow (E,L)$ be a $J_\delta$-holomorphic lift of a holomorphic disk in the base. The Maslov index $I(u)\in 2\bb{Z}$ is defined as the boundary Maslov index of the pair $I(u^*TE,u^*TL)$. It follows that
$$I(u^*TE,u^*TL)=I(u^*TF,u^*TL_F)+I(u^*H,u^*H_L)$$
By the isomorphism $d\pi:(H,H_L)\rightarrow (TB,TL_B)$ we have
$$I(u)=I(u^*TF,TL_F)+I(\pi\circ u^*TB,\pi\circ u^*TL_B)=:I_F(u)+I_B(u).$$
In particular, we have a well defined topological quantity:

\begin{definition}\label{vertmaslovdef}
The \emph{vertical Maslov index} $I_F(u)\in 2\bb{Z}$ for a relative disk class is the vertical part of the Maslov index from the splitting $(TF,TL_F)\oplus (H_\delta,H_L)$. If $u$ is a sphere, we set $I_F(u)=\langle 2c_1(u^*TF),[S^2]\rangle$. 
\end{definition}
\begin{proposition}{Non-negativity of the vertical Maslov index.}\label{verticalmaslovprop}
Let $u:\linebreak (D,\partial D)\rightarrow (E,L)$ be a $J_\delta$-holomorphic disk with $L$ a trivially-fibered Lagrangian. If $u$ is not a Donaldson lift, then there is an open selection of Hamiltonian perturbation data so that $I_F(u)\geq 0$. Moreover if $u$ is a Donaldson lift as in Theorem \ref{liftconfigthm}, $I_F(u)=0$
\end{proposition}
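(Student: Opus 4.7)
The plan is to use Theorem \ref{oka} to gauge the pullback bundle $v^{*}E$ (with $v=\pi\circ u$) to a flat bundle, thereby reducing the computation of $I_F(u)$ to the ordinary Maslov index of a genuinely $J_I$-holomorphic disk in the fiber with boundary in a $G$-translate of $L_F$; monotonicity of $L_F$ then closes the argument.

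Explicitly, I would first treat $\delta=0$, so that $u$ is $J^G$-holomorphic. Theorem \ref{oka} provides a gauge $\mc{G}:D\to G_{\bb{C}}$, $G$-valued on $\partial D$, such that $H_0:=\mc{G}\cdot H_G$ is flat; by Proposition \ref{hologaugeprop}, $\mc{G}$ intertwines $J^G$ with $J_0=\Phi^{*}(J_I\oplus j)$ in the trivialization $\Phi:v^{*}E\cong D\times F$ given by $H_0$-parallel transport from $1\in\partial D$. Because $L$ is trivially fibered, Lemma \ref{fiberedlagrangianlemma} shows $v^{*}L$ is invariant under $H_G$-parallel transport along $\partial D$, so $\mc{G}(v^{*}L)$ is invariant under the (trivial) $H_0$-parallel transport; in $\Phi$-coordinates the boundary Lagrangian is therefore the constant loop $\partial D\times L'$ with $L':=\mc{G}(1)L_F$, a monotone Lagrangian in $F$ of minimal Maslov index $\geq 2$ (since $\mc{G}(1)\in G$ acts symplectically and preserves the Maslov data of $L_F$).

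In this trivialization, $\tilde{u}$ corresponds to a genuinely $J_I$-holomorphic disk $u_F:(D,\partial D)\to(F,L')$. Since the vertical Maslov index is twice the relative first Chern class of the complex bundle $\tilde{u}^{*}TF$ with totally real boundary subbundle $\tilde{u}^{*}TL|_{\partial D}$, it is invariant under any complex linear bundle isomorphism over $D$ preserving that boundary subbundle. The gauge $\mc{G}$ induces such an isomorphism via the fiberwise differential $d\mc{G}$, which is complex linear throughout $D$ by $G_{\bb{C}}$-invariance of $J_I$ and symplectic on $\partial D$ where $\mc{G}$ takes values in $G$; thus $I_F(u)=I(u_F)$. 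Monotonicity of $L'$ with minimal Maslov $\geq 2$ then yields $I(u_F)\geq 0$, with equality iff $u_F$ is constant, and $u_F$ constant is precisely the condition that $\tilde{u}$ is covariant constant with respect to $H_0$, equivalently $H_G$, i.e.\ that $u$ is a Donaldson lift in the sense of Corollary \ref{liftdiskcor}. This handles both assertions of the proposition for $\delta=0$.

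For small $\delta\neq 0$, the same reasoning applies with $H_G$ replaced by the effective connection encoded by the Hamiltonian perturbation. The flattening gauge $\mc{G}_\delta$ depends continuously on $\delta$ by the implicit function theorem applied to the Hermitian Yang-Mills equation of Theorem \ref{heat}, and its boundary values remain $G$-valued by stability; hence the gauged Lagrangian $L'_\delta$ stays a monotone $G$-translate of $L_F$ with minimal Maslov $\geq 2$, and the monotonicity step goes through unchanged. The set of such $\delta$ in $\mc{JH}$ is the open selection appearing in the statement. The main obstacle I expect is verifying this continuous dependence $\delta\mapsto\mc{G}_\delta$ together with the $G$-boundary condition, which is what makes the selection open; this is a standard elliptic/implicit-function application to Theorem \ref{heat} but needs to be spelled out carefully. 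All remaining steps are topological.
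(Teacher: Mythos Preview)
Your approach is genuinely different from the paper's, and the $\sigma=0$ case is handled correctly and rather elegantly: pushing $\tilde u$ through the complex gauge $\mc G$ of Theorem \ref{oka} and the trivialization $\Phi$ really does produce an honest $J_I$-holomorphic disk $u_F:(D,\partial D)\to (F,L')$ with $L'=\mc G(1)\cdot L_F$ a $G$-translate of $L_F$, and monotonicity then gives $I_F(u)=I(u_F)\ge 0$ with equality precisely for Donaldson lifts. Note that this step does not even use the ambiently-trivial hypothesis---only that $L$ is $H_G$-parallel along $L_B$.

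The gap is in the passage to nonzero Hamiltonian perturbation. Once $\sigma\neq 0$, the effective connection on $v^*E$ is a general Hamiltonian connection, not a $G$-connection; the correspondence with a Hermitian vector bundle underlying Theorem \ref{heat} is lost, so there is no $G_{\bb C}$-valued flattening gauge $\mc G_\delta$ to speak of, and the ``implicit function theorem on the Hermitian Yang--Mills equation'' does not apply. This is not a regularity issue but a structural one: the heat-flow argument lives in the finite-dimensional group $G_{\bb C}$, whereas the perturbation takes values in $\ham(F)$. Consequently your openness claim has no footing as written.

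The paper handles the open selection by an entirely different mechanism. It uses the covariant energy identity (Lemma \ref{energylemma}) for $J_\delta$-holomorphic sections,
\[
0\le e_\delta(u)=\int_D u^*\omega_F+\int_D R_\delta\,dvol-\int_{\partial D}u^*\delta,
\]
and then invokes the \emph{trivially fibered} hypothesis to kill the boundary term: the ambient Hamiltonian trivialization of $\pi^{-1}(L_B)$ gives a fiberwise Hamiltonian $\phi$ on $\partial D$ making the boundary holonomy trivial, and this is extended over the disk by the Akveld--Salamon lemma. After this, the Hofer norm of the curvature is an open smallness condition on the perturbation, and monotonicity of $L_F$ forces $I_F(u)\ge 0$. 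So the paper's argument is an energy estimate rather than a holomorphic trivialization, and the trivially-fibered assumption enters exactly where your argument breaks down. A hybrid is possible---apply your $\mc G$ for $H_G$ first, then treat the residual small Hamiltonian term by the energy identity---but that is essentially the paper's method in disguise.
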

\begin{remark}
Without this proposition, the argument for the main theorem fails due to the fact that the index of a no-input configuration may increase under projection. Hence, there is no way to tell if a configuration counted in the potential for $L$ can be projected to a counted configuration for $L_B$. 
One will also see how the monotonicity of $L_F$ is crucial to our argument.
\end{remark}
\begin{proof}
First, we argue for the vertically constant case, and then in case $u$ is not vertically constant. In the vertically constant case, we argue in the proof of corollary \ref{liftdiskcor} that $u$ is given by a constant map after applying the appropriate trivializing gauge transformation. This provides a trivialization of the pair $(u^*TF,u^*TL_F)\cong (D\times TF,D\times TL_F)$. Thus, the vertical Maslov index is zero in this case.

Let $v:=\pi\circ u$ and without confusion let $u$ also denote the induced section of $v^*E$. Let $H_\delta$ denote the induced connection on $v^*E$. In the case $u$ is not a vertically constant section, we are free to use a more general perturbation of the connection: First we show the proposition for when the holonomy above $L_B$ vanishes. For $S\subset D$ and $f$ a zero fiberwise-average function, let
$$\Vert f\Vert_S:=\int_S \max_{p\in F}f(z,p)-\min_{p\in F}f(z,p) d vol$$
denote the Hofer norm. From Lemma \ref{energylemma}, we have
$$0\leq e_\delta(u)\leq \int_D u^*\omega_F+\Vert R_{\delta}\Vert_D+\Vert \delta \Vert_{\partial D}=\int_D u^*\omega_F+\Vert R_{\delta}\Vert_D$$
where the last equality holds since the holonomy of the connection vanishes over $L_B$ (the trivial assumption).

Hence,
$$-\Vert R_{\delta}\Vert_D\leq \int_D u^*\omega_F.$$
Let $\lambda>0$ be so that $\int_D u^*\omega_F=\lambda I_F(u)$ for all relative disk classes $u\in H^{disk}_2(F,L_F,\bb{Z})$, and take 
$$\Sigma_{L_F}=\min\left\{ I_F(u)>0|u\in H^{disk}_2(F,L_F,\bb{Z})\right\}$$
to be the minimum Maslov number for $L_F$ (we have $\Sigma_{L_F}\geq 2$ by assumption). To show the first statement, we can choose the $\delta$ close enough to $0$ in our perturbation data over the domain of $u$ so that 
$$\Vert R_{\delta}\Vert_D < \lambda \Sigma_{L_F}$$
and it follows that $I_F(u)\geq 0$. The second statement follows from the same choice of perturbation data.

For a general holonomy, we use the triviality of $L$ followed by a lemma from \cite{salamonakveld}. As in the previous sections, let $v^*L\subset S_1\times F$ denote the fiber subbundle with fiber $L_F$. By the ambiently trivial assumption, there is a fiberwise Hamiltonian diffeomorphism $\phi:S^1\times F\rightarrow S^1\times F$ that takes the pulled-back connection to the trivial one. In particular $\phi(v^*L)=S^1\times L_F$.

Following the proof of \cite[Lem 3.2(ii)]{salamonakveld} we can extend this Hamiltonian diffeomorphism to a fiberwise Hamiltonian diffeomorphism over the disk
$$\Phi:v^*E\rightarrow v^*E.$$
On the target connection $\Phi_*H_\delta$, define $$\Phi^*J_\delta:=d\Phi\circ J_\delta \circ d\Phi^{-1}.$$
Evidently, Lemma \ref{connectionacs} tells us that $\Phi^*J_\delta$ is the unique almost complex structure on $v^*E$ that fiberwise agrees with $\Phi^*J_F$ and preserves $\Phi_*H_\delta$. Moreover, $\Phi\circ u$ is $\Phi^*J_\delta$-holomorphic. The energy identity \eqref{energyequation} for such a disk tells us that
$$\int\vert d\Phi\circ u\vert^2_{\Phi^* J_\delta}=\int_D (\Phi\circ u)^*\omega_F + \int_D R_{(\Phi^{-1})^*\delta} \text{d} vol$$
which does not include the integral around the boundary since $\Phi_*H_\delta$ has trivial holonomy here. By \cite[Lem 3.2(iii)]{salamonakveld} we have $(\Phi^{-1})^*R_{\delta}=R_{(\Phi^{-1})^*\delta}$. In the construction in \cite{salamonakveld}, $\Phi$ only depends on the value of $\delta$ over the boundary. Therefore, as in the vanishing holonomy case, the vertical Maslov index of $\Phi\circ u$ is non-negative for $\delta$ small enough over the interior of the disk. Since $\Phi\circ u$ is isotopic to $u$, the proposition follows.
\end{proof}

\begin{corollary}\label{zeroenergycorollary}
Let $L_F\rightarrow L\rightarrow L_B$ be a trivially fibered Lagrangian in a symplectic K\"ahler fibration. Let $v:D\rightarrow (B,L_B)$ be a $J_B$-holomorphic disk and $\mc{L}v:D\rightarrow (E,L)$ be a Donaldson lift as in Theorem \ref{liftconfigthm} in the given $G$-connection. Then the vertical symplectic area of $\mc{L}v$ is equal to (minus) the integral of the holonomy Hamiltonian around the boundary:
\begin{equation}
e_v(\mc{L}v):=\int_D \mc{L}v^*a=-\int_{\partial D} \mc{L}v^*\delta.
\end{equation}
\end{corollary}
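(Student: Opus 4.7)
The plan is to apply Stokes' theorem to the defining relation $v^*a=\omega_F-d\delta$ from \eqref{deltadefinition}, then show the remaining $\omega_F$-integral vanishes by combining Proposition \ref{verticalmaslovprop} with the monotonicity of $L_F$. This converts the desired identity, which a priori mixes fiberwise and horizontal data, into a purely topological vanishing.

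Viewing $\mc{L}v$ as a section $\tilde u$ of the pullback bundle $v^*E\to D$, one has $\mc{L}v^*a=\tilde u^*(v^*a)$. In the Donaldson setting the Hamiltonian perturbation $\sigma$ vanishes, so \eqref{deltadefinition} gives $v^*a=\omega_F-d\delta$, and Stokes' theorem on $D$ yields
\[
\int_D \mc{L}v^* a \;=\; \int_D \tilde u^*\omega_F \;-\; \int_{\partial D} \tilde u^*\delta.
\]
In a $G$-trivialization of $v^*E$ over the contractible base $D$, the fiber component of $\tilde u$ defines a disk $u_F:(D,\partial D)\to (F,L_F)$; the trivially fibered assumption is what ensures $u_F|_{\partial D}$ actually lies in $L_F$. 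By Definition \ref{vertmaslovdef} the vertical Maslov index $I_F(\mc{L}v)$ equals the Maslov index $\mu(u_F)$, and by the concluding statement of Proposition \ref{verticalmaslovprop} this index vanishes for any Donaldson lift. Monotonicity of $L_F$ with constant $\lambda>0$ then gives $\int_D \tilde u^*\omega_F=\lambda\,\mu(u_F)=0$, and substituting into the displayed equation completes the proof.

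The step requiring the most care is the identification of $\int_D \tilde u^*\omega_F$ with the symplectic area of a genuine relative class in $\pi_2(F,L_F)$, so that monotonicity can be invoked. This hinges on the trivially fibered hypothesis providing a fiberwise trivialization over $L_B$ in which the boundary of $\tilde u$ manifestly sits in $\partial D\times L_F$, together with the fact that the resulting relative class is independent of the $G$-trivialization used to extend over $D$ (different choices differ by a gauge transformation with values in the structure group, which preserves $\omega_F$ fiberwise).
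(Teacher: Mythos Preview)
Your proof is correct and follows essentially the same route as the paper: apply Stokes' theorem to the relation $v^*a=\omega_F-d\delta$ to reduce the vertical area to $\int_D \tilde u^*\omega_F-\int_{\partial D}\tilde u^*\delta$, then kill the $\omega_F$-term by combining the vanishing of $I_F(\mc{L}v)$ from Proposition~\ref{verticalmaslovprop} with monotonicity of $(F,L_F)$. The paper's proof is terser on the identification step you flag at the end, but the argument is the same.
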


\begin{proof}
From the definition of $a$ and Stokes' theorem, we have
$$\int_D \mc{L}v^*a=\int_D \mc{L}v^*\omega_F-\int_D \mc{L}v^*d\delta=\int_D \mc{L}v^*\omega_F-\int_{\partial D} \mc{L}v^*\delta.$$
By the final statement in Proposition \ref{verticalmaslovprop} and monotonicity of $(F,L_F)$,
$$\int_D \mc{L}v^*\omega_F=\lambda I_F(\mc{L}v)=0$$
from which the corollary follows.
\end{proof}

\subsection{Existence of regular lifts}

We show we can do the same as Corollary \ref{liftdiskcor} for more general configurations, as well as achieve transversality for such sections:

\begin{theorem}\label{liftconfigthm}
Let $v:C\rightarrow B$ be a regular $J_B$-holomorphic configuration of type $(\g,q)$ with output $v(x_0)=\tilde{q}$, no sphere components, and no broken edges, and let $H_G$ be a $G$-connection on $v^*E$. Then for any $\tilde{p}\in \pi^{-1}(\tilde{q})\cap L_F$, there is a unique lift $\mc{L}_{\tilde{p}}v:C\rightarrow E$ with $\mc{L}_{\tilde{p}}v(x_0)=\tilde{p}$ that is vertically constant with respect to $H_G$ on disk components and $J^G$-holomorphic. Moreover, if $\I(\g,q)=0$, then $\mc{L}_{\tilde{p}}v$ lives in a smooth moduli space of expected dimension $\dim W^+_{X_g}(p)$ where $p$ is the limit of the gradient flow starting at~$\tilde{p}$.
\end{theorem}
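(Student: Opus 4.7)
The plan is to build $\mc L_{\tilde p} v$ piece by piece by traversing the combinatorial type $\g$ outward from the root, and then to analyze the linearized problem along the vertical/horizontal decomposition $TE\cong TF\oplus H_G$ to produce the dimension count.

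\textbf{Existence and uniqueness.} First I would pin down the value $\tilde p$ at the output marked point and propagate outward. On each Morse-flow edge, solving the pseudo-gradient ODE $\dot u = X_f(u)$ in $L$ with initial value supplied by the previous step gives a unique lift of $v\vert_{\text{edge}}$; because $X_f = X_g \oplus X_b^{\sharp}$ projects to $X_b$ and $X_b^{\sharp}$ is the $H_L$-lift of $X_b$, this flow automatically covers $v\vert_{\text{edge}}$. When the edge terminates at a disk vertex $v_i$, the terminal value provides a point on $\partial v_i^*E$ compatible with $v_i^* L$, and Corollary \ref{liftdiskcor} produces the unique vertically-constant $J^G$-holomorphic section of $v_i^*E$ through that value; the remaining boundary evaluations on $v_i$ then determine the initial data for each outgoing edge. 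Iterating this across $\g$ terminates at the leaves, where standard Morse-flow convergence on $L$ implies the lift approaches a critical point of $X_f$, and by the product structure of the pseudo-gradient near each critical fiber (Section \ref{pseudogradients}) such a critical point is precisely a lift of the base critical point to a fiberwise critical point of $g$. At every step the building block is unique, so $\mc L_{\tilde p} v$ is unique. The Lagrangian boundary condition is automatic: Corollary \ref{liftdiskcor} handles the disk pieces, and $X_f$ is by construction tangent to $L$ on the edges.

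\textbf{Regularity.} Under the hypothesis $\I(\g,q)=0$, I would study the linearized operator $D_{\mc L v}$ by decomposing its domain using $TE\cong TF\oplus H_G$ on disk components and the analogous splitting on edges. The horizontal part of $D_{\mc L v}$ is identified with $D_v$ via $d\pi$ and is surjective by the assumed regularity of $v$. For the vertical part on each disk component, Proposition \ref{hologaugeprop} together with the gauge transformation of Theorem \ref{oka} identifies the vertical $\bar\partial_{J^G}$-operator at the Donaldson lift with the $\bar\partial_{J_I}$-operator at a genuinely constant map $D\to F$ having boundary in $L_F$. For a constant map this is the standard operator on $T_pF$-valued functions with totally real boundary condition $T_pL_F$, for which surjectivity is immediate and the kernel is exactly the constants $T_pL_F$; Proposition \ref{verticalmaslovprop} (vertical Maslov index zero) rules out the only negative-index scenario that could obstruct surjectivity after gluing the pieces. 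The Morse-flow linearization on edges is a surjective first-order ODE problem in the sense of \cite{audin}.

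\textbf{Dimension and main obstacle.} Assembling the per-component kernels along the tree, the matching conditions at the interior and boundary nodes eliminate $\dim L_F$ vertical directions per node, and the rigidity of the horizontal problem (guaranteed by $\I(\g,q)=0$) leaves the moduli space parameterized only by the freedom of the initial value $\tilde p$ within the fiberwise stable manifold $W^+_{X_g}(p)$ of the output vertical critical point $p$; this yields the expected dimension $\dim W^+_{X_g}(p)$. I expect the main obstacle to be the bookkeeping in this kernel/cokernel gluing — specifically verifying that the matching conditions at the disk-edge and disk-disk nodes cut out a transverse intersection with no leftover cokernel. This is precisely where the ambient triviality of $v^*E$ after the flattening gauge transformation, together with the monotonicity and minimal Maslov index $\geq 2$ of $L_F$ entering through Proposition \ref{verticalmaslovprop}, is essential.
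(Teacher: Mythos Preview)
Your proposal is correct and follows essentially the same route as the paper: construct the lift by traversing $\g$ from the root using Corollary~\ref{liftdiskcor} on disk components and the pseudo-gradient flow on edges, then prove regularity by splitting the linearized operator along $TF\oplus H_G$, handling the vertical part via the gauge trivialization of Theorem~\ref{oka}/Proposition~\ref{hologaugeprop} (reducing to the $\bar\partial$-operator at a constant map) and the horizontal part via the assumed regularity of $v$. The only cosmetic difference is that the paper obtains the dimension count by a direct application of the index formula~\eqref{indexdefinition} (packaged as Lemma~\ref{indexofliftlemma}, using $I_F(\mc L v_i)=0$ and $\dim W^+_X(p)=\dim W^+_{X_b}(q)+\dim W^+_{X_g}(p)$) rather than by your kernel/cokernel gluing, which is cleaner and sidesteps the node-by-node bookkeeping you flag as the main obstacle.
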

Notation: Given such a type $(\g,q)$, we denote by $(\pi^*\g,p)$ its lift through $p$, also called a \emph{Donaldson lift}.

\begin{proof}
We construct a lift by matching the chain of boundary conditions, and then we prove transversality. Let $v_0$ be the restriction of $v$ to the disk component $D_0$ closest to the output, let $E_{01}$ denote the edge between $D_0$ and some adjacent component $D_1$, and let $v_1$ denote the restriction of $u$ to $D_1$ that meets $E_{01}$ at $x_2$. By Lemma \ref{liftdiskcor}, there is a unique vertically constant lift $\hat{v}_0$ with $\hat{v}_0(x_0)=p$. Let $x_1$ be the boundary point corresponding to where $E_{01}$ connects to $D_0$. The projection of the flow of $X$ starting at $\hat{v}(x_1)$ agrees with the flow of $X_b$, so flowing $X$ for time $\ell (E_{01})$ lands at a point $p_1\in\pi^{-1}(v(x_2))$. Take the unique lift of $\hat{v}_1$ of $v_1$ with $\hat{v}(x_2)=p_1$, and continue in this fashion until a lift of $v$ is constructed on every disk component. This gives a $J^G$-holomorphic Floer trajectory $\mc{L}_{\tilde{p}}v:C\rightarrow E$ with boundary in $L$.

To see the expected dimension of $\pi^*\g$, we have the following lemma:
\begin{lemma}\label{indexofliftlemma}
Let $(v,\g,q)$ be a regular configuration of index $0$ in $B$. Then the Donaldson lift has expected dimension
\begin{equation}
\I(\mc{L}_{\tilde{p}} v,\pi^*\g,p)=\I(v,\g,q)+\dim W^+_{X_g}(p).
\end{equation}

\end{lemma}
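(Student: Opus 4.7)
The plan is to compute both sides of the claimed identity using the expected-dimension formula \eqref{indexdefinition} and compare them term by term. Since the data $(v, \g, q)$ records only the output critical point, I will work as if no further Morse labels appear at the leaves, which is the case actually needed in the proof of Theorem \ref{liftconfigthm} and in Theorem \ref{maintheoremintro} for the potential $\nu^0$.

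First, the purely combinatorial contributions to \eqref{indexdefinition} — the quantities $n-2$, $|\e^{\circ, 0}_-(\g)|$, $|\e^{\circ, \infty}_-(\g)|$, $|\e^\bullet_-(\g)|$ and $\sum_e m(e)$ — depend only on the underlying tree, edge lengths, and markings, all of which are preserved by the pullback $\g \mapsto \pi^*\g$, so these terms agree. The hypothesis forbids sphere components in $v$ and the Donaldson construction in Theorem \ref{liftconfigthm} manufactures none, so the spherical Maslov sum vanishes on both sides.

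Next, for each disk vertex $v_i$ of $v$ with Donaldson lift $\hat v_i$, the splitting $TE \cong TF \oplus H$ along the Lagrangian boundary induces the decomposition $I(\hat v_i) = I_B(v_i) + I_F(\hat v_i)$ of the total Maslov into its base and vertical parts. Because $\hat v_i$ is vertically constant for the flat gauge-transformed connection (Corollary \ref{liftdiskcor}), the final clause of Proposition \ref{verticalmaslovprop} forces the vertical Maslov to be exactly zero, $I_F(\hat v_i) = 0$, and therefore $\sum_i I(\hat v_i) = \sum_i I(v_i)$.

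The only genuinely different term is the output coindex. Using the adapted data constructed in Section \ref{pseudogradients}, the pseudo-gradient decomposes as $X = X_g \oplus X_b^\sharp$ relative to $TL \cong TL_F \oplus H_L$, and the Hessian of $f = \pi^* b + \epsilon \sum \pi^* \phi_x\, g_x$ at $p \in \pi^{-1}(q)$ is block-diagonal with blocks governed by $\mathrm{Hess}(b)_q$ and $\epsilon \phi(q) \mathrm{Hess}(g_q)_p$. The stable-manifold theorem then realises $W^+_X(p)$ locally as a product of $W^+_{X_b}(q)$ with $W^+_{X_g}(p)$, whence
\begin{equation*}
\dim W^+_X(p) - \dim W^+_{X_b}(q) = \dim W^+_{X_g}(p).
\end{equation*}
Combining these three observations, the difference $\I(\pi^*\g, p) - \I(\g, q)$ collapses to $\dim W^+_{X_g}(p)$, which is the lemma. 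The main technical ingredient is the exact vanishing — rather than mere non-negativity — of the vertical Maslov on each lifted disk; this is where the global trivialization furnished by the complex gauge transformation $\mc{G}$ of Theorem \ref{oka} is essential, since it allows $(\hat v_i^* TF, \hat v_i^* TL_F)$ to be represented as a trivial bundle pair over $D$ and kills any potential Maslov twist.
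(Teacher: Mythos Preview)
Your proof is correct and follows essentially the same route as the paper: both compare the index formula \eqref{indexdefinition} term by term, invoke the vanishing of the vertical Maslov index from Proposition \ref{verticalmaslovprop}, and use the splitting $\dim W^+_X(p)=\dim W^+_{X_b}(q)+\dim W^+_{X_g}(p)$ coming from the adapted pseudo-gradient.

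One small point of emphasis: you treat the divisor multiplicities $m(e)$ as purely combinatorial labels preserved by $\g\mapsto\pi^*\g$, which is correct for the index formula itself. The paper is slightly more careful here, explicitly observing that the \emph{actual} tangency order of the lift agrees with that of $v$ (because the vertical derivative of $\mc{L}v_i$ vanishes to all orders at each interior marking). This geometric check is what confirms that $\mc{L}_{\tilde p}v$ genuinely lies in the moduli space of type $\pi^*\g$ rather than merely that the type $\pi^*\g$ has the stated index; you leave this implicit.
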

\begin{proof}[Proof of lemma]
Let $v_i$ denote a surface component. We have that $I_F(\mc{L} v_i)=0$ by Proposition \ref{verticalmaslovprop}. Moreover, $m_v(z_0)=m_{\mc{L}v}(z_0)$ for an interior marked point $z_0$ as the derivative of $d\mc{L}v_i(z_0)$ vanishes to all orders in the vertical direction. Since $$\dim W^+_{X}(p)=\dim W^+_{X_b^\sharp}(p) +\dim W^+_{X_g}(p)=\dim W^+_{X_b}(q) +\dim W^+_{X_g}(p),$$
the lemma follows.
\end{proof}

Finally, we show the transversality statement, and to start we restrict to a single disk component $v:D\rightarrow (B,L_B)$. Since the divisor is stabilizing for $L_B$, we can assume that $D$ has an interior marked point and the domain is stable as a marked disk. Choose a metric $g$ on the vertical sub bundle $TF$ which makes $TL_F\vert_{\partial D}$ totally geodesic. First, we achieve transversality for the disk $\mc{L}_pv$ as a section of $v^*E$, and then explain how transversality follows for the disk in the total space. Let $p\in L_F$, and for $\ell, k$ integers with $k\ell> \dim_{\bb{R}} F$, $k\geq 1$, take $\text{Map}^{k,\ell}_{sec}(D,u^*E,u^*L,p)_{[\mc{L}_pv]}$ to be the space of (continuous) sections $u$ of $v^*E$ with boundary values in $v^*L$ in the class $[\mc{L}_pv]$, that have $k$ weak derivatives which are $\ell$ integrable and such that $u(1)=p$. Such a space is a Banach manifold that is locally modelled on $W^{k,\ell}(D,u^*TF,u^*TL_F)_1$, vector fields of Sobolev class $(k,\ell)$ that vanish at $1$, via geodesic exponentiation in the metric g.

This space admits a vector bundle $\mc{E}_{J^G,j}^{k-1}$ whose fibers are given by
$$(\mc{E}_{J_I,j}^{k-1})_{u}=\Lambda^{0,1}_{J_I,j}(D,u^*TF)_{k-1,\ell}$$
where $j$ is the standard complex structure on $D$. We have a section
\begin{equation}\label{dbaroperator}\bar{\partial}_{X_\delta,j}u:=\pi^{X_\delta}_{TF}[du+J^G\circ du\circ j]
\end{equation}
where $\pi_{TF}$ is the projection onto $TF$ in the splitting $TF\oplus H_\delta$.

Let $D_u\bar{\partial}_{X_\delta,j}$ be the linearization of \eqref{dbaroperator} at $u$. In coordinates \eqref{dbaroperator} has the form 
$$du+J_I\circ du\circ j + X^{0,1}_\delta(u)$$
so the linearization is a real linear Cauchy-Riemann operator and hence it is Fredholm (see \cite[Prop 3.1.4]{ms2} for an explicit formula). We show that the linearization is surjective at the Donaldson lift $\mc{L}_pv$.

Let $\mc{G}$ be the trivializing gauge transformation from Theorem \ref{hologaugeprop}. By Proposition \ref{hologaugeprop},  $\mc{G}$ induces an isomorphism 
\begin{equation}\label{gaugeiso}
\begin{tikzcd}
\mc{E}_{J_I,j}^{k-1} 
\arrow[d] \arrow[r,"\mc{G}_*"]& \mc{E}_{J_I,j}^{k-1}
\arrow[d]\\
\text{Map}^{k,\ell}_{sec,[\mc{L}_pv]}
\arrow[r,"\mc{G}"]& \text{Map}^{k,\ell}_{sec,\mc{G}_*[\mc{L}_pv]}
\end{tikzcd}
\end{equation}
Moreover, we have that 
$$\bar{\partial}_{0,j}\mc{G}\circ u=\mc{G}_*\bar{\partial}_{X_\delta,j}u$$
by the following three facts: $\mc{G}\circ \pi^{X_\delta}_{TF}=\pi^{0}_{TF}\circ \mc{G}$ since the connection on the target is $\mc{G}(H_\delta)$, the equation \eqref{holomorphicityofgaugetransformation}, and the chain rule for complex derivatives.  From this one gets that the linearizations commute with $\text{d}\mc{G}$, so it suffices to show that $D_{\mc{G}\mc{L}_pv}\bar{\partial}_{0,j}$ is surjective. Since $J_I$ is integrable, the latter is given by
$$\xi\mapsto (\nabla\xi)^{0,1} $$
(see \cite[Rem 3.1.2]{ms2}) where $\nabla$ is some Chern connection induced by $J_I$, which is the standard $\bar{\partial}$-operator associated with the trivial vector bundle $(\mc{G}\mc{L}_pv)^*TF$. By construction $\mc{G}(\mc{L}_pv)$ is given as a constant section, and we have an isomorphism of tangent spaces that starts at the right side of the above diagram
\begin{equation}\label{tangiso}
\begin{tikzcd}
\Lambda^{0,1}_{J_I,j}(D,\mc{G}\mc{L}_p v^*TF)_{k-1,p} 
\arrow[d] \arrow[r,"\cong"] & \Lambda^{0,1}_{J_I,j}(D,w^*TF)_{k-1,\ell}
\arrow[d]\\
W^{k,\ell}(D,\mc{G}\mc{L}_p v^*TF,\mc{G}\mc{L}_p v^*TL_F)_1
\arrow[r,"\cong"]& W^{k,\ell}(D,w^*TF,w^*TL_F)_1
\end{tikzcd}
\end{equation}
where $w:D\rightarrow L_F$ is a constant map. $w$ extends to a map $\underline{w}:\bb{P}^1\rightarrow F$ and by Schwarz reflection along the boundary conditions and gives an injection $W^{k,\ell}(D,w^*TF,w^*TL_F)_1\hookrightarrow W^{k,\ell}(\bb{P}^1,\underline{w}^*TF)_1$ that preserves holomorphic vector fields. However, if we ignore the vanishing condition for a moment, the only holomorphic vector fields in the latter are constant since it is the trivial bundle, so 
$$\dim_\bb{R} \ker D_w\bar{\partial}_{0, j}=\dim_\bb{R}F.$$ By the Riemann-Roch theorem, the index of $D_w\bar{\partial}_{0, j}$ (without the vanishing condition) is also $\dim_\bb{R}F$, so it follows that it is surjective. From the above isomorphisms it follows that $D_{\mc{L}_p v}\bar{\partial}_{X_\delta,j}$ is surjective. 

Next, we need to show that $D_u$ is surjective at any holomorphic section in the class $[\mc{L}_pv]$. We have the following rigidity statement:
\begin{lemma}\label{onlyrepinclasslemma}
$\mc{L}_pv$ is the only $J^G$-holomorphic section in the class $[\mc{L}_pv]$ with $u(1)=p$.
\end{lemma}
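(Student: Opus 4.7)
The plan is to show that any such $u$ must itself be vertically constant with respect to $H_G$, after which the uniqueness clause of Corollary \ref{liftdiskcor} forces $u=\mc{L}_pv$. Let $\delta$ denote the Hamiltonian $1$-form associated to $H_G$ via \eqref{deltadefinition}, so that $\omega_F - d\delta = v^*a$ on $v^*E$.

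The first step is to apply the Covariant Energy Lemma \ref{energylemma} to the $J^G$-holomorphic section $u$. Substituting $u^*\omega_F = u^*(v^*a) + d(u^*\delta)$ and applying Stokes' theorem cancels the boundary contribution $\int_{\partial D} u^*\delta$, producing
\begin{equation*}
e_\delta(u) \;=\; \int_D u^*(v^*a) \;+\; \int_D R_\delta\,dvol,
\end{equation*}
together with an identical expression for $e_\delta(\mc{L}_pv)$, which equals $0$ because $\mc{L}_pv$ is vertically constant by construction.

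The crux is to argue that the remaining area term $\int_D u^*(v^*a)$ is a topological invariant of the relative homotopy class of $u$ in $(v^*E,v^*L)$. For this we need two inputs: $a$ is closed on $E$ (Theorem \ref{coupling}), and $a|_{TL} = 0$. The latter follows from $L$ being Lagrangian for $\omega_{H,K} = a + K\pi^*\omega_B$ together with the vanishing of $\pi^*\omega_B$ on $TL$ (which holds because $L$ fibers over the Lagrangian $L_B$). Consequently $v^*a$ is a closed form on $v^*E$ whose restriction to $v^*L$ vanishes, so applying Stokes to any relative homotopy between $u$ and $\mc{L}_pv$ yields $\int_D u^*(v^*a) = \int_D \mc{L}_pv^*(v^*a)$.

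Putting the two identities together forces $e_\delta(u) = e_\delta(\mc{L}_pv) = 0$, so $u$ is vertically constant with respect to $H_G$; since $u(1) = p$, Corollary \ref{liftdiskcor} then gives $u = \mc{L}_pv$. The only point one must guard against is a possible discrepancy in the boundary contribution $\int_{\partial D} u^*\delta$ between $u$ and $\mc{L}_pv$. The calculation above shows that this contribution is absorbed by the $d\delta$-part of $\omega_F$, so the rigidity is ultimately controlled by the purely topological invariance of $\int u^*(v^*a)$ and does not require any delicate appeal to monotonicity of $L_F$ under moving Lagrangian boundary conditions.
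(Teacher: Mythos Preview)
Your argument has a real gap: the curvature integral $\int_D R_\delta\,dvol$ appearing in the Covariant Energy Lemma is \emph{section-dependent}. Recall from \eqref{curvatureasafunction} that $R_\delta:D\times F\to\bb{R}$, so the term in Lemma~\ref{energylemma} is really $\int_D R_\delta(z,u(z))\,dvol(z)$; this is precisely why Proposition~\ref{verticalmaslovprop} has to bound it by the Hofer norm $\Vert R_\delta\Vert_D$ rather than treat it as a constant. Your substitution is correct and yields $e_\delta(u)=\int_D u^*(v^*a)+\int_D R_\delta\vert_u\,dvol$, and your topological step correctly gives $\int_D u^*(v^*a)=\int_D(\mc{L}_pv)^*(v^*a)$. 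But subtracting the two identities then leaves only
\[
e_\delta(u)\;=\;\int_D\bigl[R_\delta(z,u(z))-R_\delta(z,\mc{L}_pv(z))\bigr]\,dvol,
\]
which has no reason to vanish for a non-flat $G$-connection. So you cannot conclude that $u$ is vertically constant by this route; you have guarded against the boundary discrepancy but overlooked the curvature one.

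The paper sidesteps this obstacle by first applying the holomorphic trivializing gauge transformation $\mc{G}$ of Theorem~\ref{oka} and Proposition~\ref{hologaugeprop}. In the flat gauge the curvature term disappears entirely: a $J^G$-holomorphic section becomes a genuine $J_I$-holomorphic disk $\tilde u:D\to(F,L_F)$, and the ordinary energy identity gives $\int_D\tilde u^*\omega_F>0$ unless $\tilde u$ is constant. Since the constant section $\mc{G}(\mc{L}_pv)$ has $\int\omega_F=0$ and this integral is a class invariant, any $\tilde u$ in $\mc{G}_*[\mc{L}_pv]$ with $\tilde u(1)=\mc{G}p$ must be constant. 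The flattening step is exactly what eliminates the section-dependent curvature contribution that blocks your direct energy comparison.
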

\begin{proof}[Proof of lemma]
Let $u$ be some $J^G$-holomorphic section in the class $[\mc{L}_pv]$. Via the trivializing gauge transformation and Proposition \ref{hologaugeprop}, $u$ induces a section $\tilde{u}$ of $\mc{G}(v^*E)$ that is $J_I\times j$-holomorphic. Sections of $\mc{G}(v^* E)$ are in one-to-one correspondence with $J_I$-holomorphic disks $D\rightarrow (F,L_F)$. By the energy identity in $F$ with norm induced by $J_I$, it follows that if $\tilde{u}$ is not a constant section then
$$\int_D \tilde{u}^*\omega_F>0.$$
Clearly $\int_D \mc{G}(\mc{L}_pv)^*\omega_F=0$, so it follows that $\tilde{u}$ cannot be in the class $\mc{G}_*[\mc{L}_pv]$. Since we are prescribing the condition $u(1)=p$, the lemma follows.
\end{proof}

By elliptic regularity every element in the kernel of $\bar{\partial}_{X_\delta,j}$ is smooth for smooth $J^G$. By the implicit function theorem, the moduli 
$$\mc{M}_{J^G,j,[\mc{L}_pv]}^{sec}(v^*E,v^*L)_0:=\bar{\partial}_{X_\delta,j}^{-1}(0)$$ of smooth holomorphic sections with $u(1)=p$ in the class $[\mc{L}_pv]$ is smooth of expected dimension.

Next we explain why we can achieve transversality for such a lift when considered as a class in the total space, given the fact that we have transversality for the map in the base.   Again, we focus on a single disk component: We have a Banach vector bundle
$$\tilde{\mc{E}}_{J^G,j}^{k-1}\rightarrow \text{Map}^{k,\ell}_{\pi^*\g}(D,u^*E,u^*L,p)$$
whose base is the space of $(k,\ell)$ treed disks of type $\pi^*\g$ with output $p$ modelled on $W^{k,\ell}(D,u^*TE,u^*TL)_1$ and whose fiber at a map $u$ is 
$$\Lambda^{0,1}_{J^G,j}(D,u^*TE)_{k-1,\ell},$$
 where $J^G$ is now an almost complex structure on the total space of the form~\eqref{acsontotalspace}.

The bundle mentioned above depends on the $J_B$ we choose on the base. In fact, there is an extension of this bundle to a base $$\mc{B}(E,L)_{k,\ell,l}:=\text{Map}^{k,\ell}(E,L)\times \mc{P}^l_{\pi^*\g}(E,D)_{J_I,H_G}$$
with $\mc{P}^l_{\pi^*\g}(E,D)_{J_I,H_G}$ the space of domain dependent perturbation data as in definition \ref{perturbationdatadef} that fix the vertical complex structure and Hamiltonian connection. From now on we denote an element of this space by $J$. We have a $\bar{\partial}$ operator as a section
\begin{gather*}
\bar{\partial}_{J,j}:\mc{B}(E,L)_{k,\ell,l}\rightarrow \tilde{\mc{E}}_{J,j}^{k-1}\\
\bar{\partial}_{J,j}=du+J\circ du\circ j.
\end{gather*}
Since each $J$ preserves $H_G$, we have a splitting above surface components that does not depend on $J$:
\begin{equation}\label{splitting}
\Lambda^{0,1}_{J,j}(D,u^*TE)_{k-1,\ell}\cong \Lambda^{0,1}_{J_I,j}(D,u^*TF)_{k-1,\ell}\oplus \Lambda^{0,1}_{J,j}(D,u^*H_G)_{k-1,\ell}.
\end{equation}
$\Lambda^{0,1}_{J,j}(D,u^*H_G)_{k-1,\ell}$ is isomorphic to $\Lambda^{0,1}_{J_B,j}(D,\pi\circ u^*TB)_{k-1,\ell}$ by a $(J,J_B)$ equivariant isomorphism given by ``horizontal lifting'', e.g.
\begin{gather}\label{projectioniso}
\Lambda^{0,1}_{J_B,j}(D,\pi\circ u^*TB)_{k-1,\ell}\rightarrow \Lambda^{0,1}_{J_\sigma,j}(D,u^*H_G)_{k-1,\ell}\\
\eta\mapsto (-X^{0,1}_\sigma\circ\eta,\eta)
\end{gather}
in a trivialization.

The linearized operator can be written as
\begin{equation}\label{linearizedoperatortotalspace}
D_{u,J}\bar{\partial}_{J,j}=D_{u}\bar{\partial}_{J,j}+\mf{J}\circ du\circ j,
\end{equation} where we explain the form of $\mf{J}$: The space $\mc{P}^l_{\pi^*\g}(E,D)_{J_I,H_G}$ admits a pushforward $\pi_*\mc{P}^l_{\pi^*\g}(E,D)_{J_I,H_G}$ by Lemma \ref{connectionacs}. In words, this is the space of domain dependent complex structures with values in $\mc{J}^l(B,\omega_B)$ satisfying some additional conditions according to Definition \ref{perturbationdatadef}. For $K_B\in T\pi_*\mc{P}^l_{\pi^*\g}(E,D)_{J_I,H_G}$, we can construct a canonical $\mf{J}$ which looks like
\begin{equation}\label{basevariation}K_B\mapsto \mf{J}=\begin{bmatrix}
0 & X_\sigma\circ K_B\\
0 & K_B
\end{bmatrix}
\end{equation}
in some coordinate chart. The linearized operator splits as well:
\begin{align*}
&D_{u}^{TF}\oplus D_{u,J}^{H_G}: W^{k,\ell}(D,u^*TF,u^*TL_F)\\
&\qquad \oplus W^{k,\ell}(D,\pi\circ u^*TB,\pi\circ u^*L_B)\times T\mc{P}^l_{\pi^*\g}\\
&\rightarrow \Lambda^{0,1}_{J_I,j}(D,u^*TF)_{k-1,\ell}\oplus \Lambda^{0,1}_{J,j}(D,\pi\circ u^*TB)_{k-1,\ell}.
\end{align*}
Namely, the $D_u$ part splits and the summand depending on $\mf{J}$ lands in the second factor.

From the above argument for transversality of holomorphic sections in the class $[\mc{L}_pv]$ it follows that the linearized operator is surjective onto the first summand.

For the second factor in the splitting, it follows from \cite[Thm 2.22]{CW2} that variations of the $K_B$ \eqref{basevariation} are sufficient to achieve surjectivity of $D^{H_G}$ onto the second factor.

By the implicit function theorem it follows that $(\bar{\partial}_{J,j})^{-1}(0)$ is a smooth manifold. By a Sard-Smale argument, there is a comeager set of $J$ at which the linearization of the projection $\Pi:(\bar{\partial}_{J,j})^{-1}(0)\rightarrow \mc{P}^l_{\pi^*\g}(E,D)_{J_I,H_G}$ is surjective. However, the cokernel of $\Pi$ at such a $J$ is isomorphic to the cokernel of $D_{u,J}\bar{\partial}_{J^G,j}$ at the same $J$. By the implicit function theorem, it follows that moduli space of disks
$$\mc{M}_{J,j,[\mc{L}_p v]}(E,L)$$
is smooth and of expected dimension for the comeager set of $J$. By assumption and the splitting of the linearized operator, our chosen $J$ lies in this set.

Transversality along the edges and at the matching conditions is a standard argument and done in \cite[Thm 6.1]{floerfibrations}. Transversality at the intersections with the divisor is done in detail \cite[Lem 6.6]{CM}.
\end{proof}

Regularity of the projection $E\rightarrow B$ follows from the proof of Theorem~\ref{liftconfigthm}. We will need this result in the proof of the main theorem:
\begin{lemma}\label{regular proj}
Let $u_\g:U_\g\rightarrow E$ be a regular $P_\g$-holomorphic configuration $P_\g$ as in Theorem \ref{transversality}. Then $\pi\circ u_\g$ is a regular $J_B$-holomorphic configuration. 
\end{lemma}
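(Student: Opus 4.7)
The plan is to exploit the fact that the almost complex structure $J_\sigma$ takes the block form \eqref{acsontotalspace} with respect to the splitting $TE\cong TF\oplus H_G$, so $\pi\colon (E,J_\sigma)\to (B,J_B)$ is holomorphic and the linearized $\bar\partial$-operator splits along this decomposition exactly as displayed in \eqref{linearizedoperatortotalspace}. By assumption, $D_{u_\g}\bar\partial_{J,j}$ (together with the matching/evaluation conditions, the divisor conditions, and the stable/unstable matching along Morse flows) is surjective. I want to deduce surjectivity of the analogous Fredholm operator for $\pi\circ u_\g$ in $(B,L_B)$.

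First, restrict to a single surface component $u\colon D\to E$ with projection $v:=\pi\circ u$. The linearized operator $D_u\bar\partial_{J,j}$ decomposes as $D_u^{TF}\oplus D_{u,J}^{H_G}$ onto the splitting \eqref{splitting}, and the horizontal lift isomorphism from \eqref{projectioniso} identifies $D_{u,J}^{H_G}$ with the base linearization $D_v\bar\partial_{J_B,j}$ (allowing simultaneous variation of the perturbation data $K_B\in T\pi_*\mc{P}^l$ via \eqref{basevariation}). Surjectivity of $D_{u_\g}\bar\partial$ onto $\Lambda^{0,1}_{J,j}(D,u^*TE)$ therefore implies surjectivity onto the second summand $\Lambda^{0,1}_{J,j}(D,u^*H_G)$; under $d\pi$ this is precisely surjectivity of $D_v\bar\partial_{J_B,j}+\mf J\circ dv\circ j$. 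So regularity of the $\bar\partial$-equation for $\pi\circ u_\g$ on each non-vertical surface component follows. Components of $u_\g$ contained in a single fiber project to constant maps in $B$, which are automatically regular in the base (their linearization is the trivial $\bar\partial$ on a trivial bundle).

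For the tree part, recall that the pseudo-gradient was constructed as $X_f=X_g\oplus X_b^\sharp$, so $d\pi\circ X_f = X_b$; hence edges of $u_\g$ project to $X_b$-flow lines and stable/unstable manifolds $W^{\pm}_{X_f}$ project to $W^{\pm}_{X_b}$ (via the projection $W^{\pm}_{X_f}(y_i)\to W^{\pm}_{X_b}(\pi(y_i))$ which is a submersion with fiber $W^{\pm}_{X_g}(y_i)$). Transverse intersection in $L$ of unstable and stable manifolds along an edge therefore pushes down to transverse intersection in $L_B$. For the divisor conditions, regularity at the intersection $u_\g^{-1}(D)$ on $E$ with the pulled-back divisor $D=\pi^{-1}(D_B)$ is equivalent, under $d\pi$, to transverse intersection of $\pi\circ u_\g$ with $D_B$, since $D$ is saturated under $\pi$ and $\pi$ is a submersion. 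Finally, coherence on $\pi\circ u_\g$ follows automatically from the $\pi$-stabilization axiom of the coherent datum $(P_\gamma)$, which enforces $P_\g=\Omega^*P_{\Upsilon(\pi_*\g)}$, so the pushed-forward perturbation datum on the base is precisely the datum to which $\pi\circ u_\g$ is holomorphic.

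The main subtlety (and the one place requiring a little care) is bookkeeping rather than analysis: $\pi\circ u_\g$ has combinatorial type $\Upsilon(\pi_*\g)$, not $\g$, so one must verify that collapsing ghost components, marked edges/vertices, and unstable fiber bubbles does not destroy surjectivity. This is immediate because each forgotten component maps to a point in $B$, its linearized contribution is the trivial $\bar\partial$ on a trivial bundle (hence surjective with kernel of the expected dimension), and the gluing/forgetting morphisms are compatible with $P_\gamma=\Omega^*P_{\Upsilon(\pi_*\gamma)}$. Thus the full Fredholm problem for $\pi\circ u_\g$ on the type $\Upsilon(\pi_*\g)$ is surjective, completing the proof.
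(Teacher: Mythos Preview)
Your proposal is correct and follows essentially the same approach as the paper: use the block form of $J_\sigma$ so that the linearized operator splits along $TF\oplus H_G$, then deduce surjectivity of $D_{\pi\circ u}\bar\partial_{J_B,j}$ from surjectivity of $D_u$ onto the horizontal summand via the isomorphism \eqref{projectioniso}, with the edge part handled by the product structure $X_f=X_g\oplus X_b^\sharp$ of the pseudo-gradient. The paper packages the surface-component argument into a single commutative diagram and is terser about the divisor and ghost-component bookkeeping you spell out; your inclusion of the variation $K_B$ is unnecessary here since $P_\g$ is already fixed and regular, but it does no harm.
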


\begin{proof}
The fact that $\pi\circ u$ is $J_B$ holomorphic is clear. Furthermore, by the choice of pseudo-gradient perturbation data and the discussion in Section \ref{pseudogradients}, we have that $\pi\circ u$ is a Morse flow on edges. For almost complex structures of the form \eqref{acsontotalspace} the splitting \eqref{splitting} holds we have the commutative diagram on surface components:
\begin{equation}\label{commutativediagram}
\begin{tikzcd}
\text{Map}^{k,\ell}_{[u]}(E,L) 
\arrow[twoheadrightarrow]{dd}{\pi_*}\arrow[twoheadrightarrow]{rd}{D_{u,J}} & \\
& \Lambda_{j,J}^{0,1}(D,u^*TF\oplus H)_{k-1,\ell} 
\arrow[two heads]{d}{d\pi_*}\\
\text{Map}^{k,\ell}_{[\pi\circ u]}(B,L_B)
\arrow[twoheadrightarrow]{r}{D_{\pi\circ u,J_B}} &\Lambda_{j,J_B}^{0,1}(D,\pi\circ u^*TB)_{k-1,\ell}&
\end{tikzcd}
\end{equation}
By assumption, $D_{\tilde{v},J^G}$ is surjective, and thus must be surjective onto the second part of the splitting \eqref{splitting}. By the isomorphism \eqref{projectioniso}, it follows that $d\pi_*$ is surjective, so it follows from the diagram that $D_{\pi\circ u,J_B}$ is surjective.

There is a similar splitting and diagram along the edges, from which regularity follows.
\end{proof}

\subsection{The lifting operator}\label{liftingopsection}

With Theorem \ref{liftconfigthm} in place, we can finally define a map on moduli spaces. We begin to follow the notation from the introduction, with $x$ some critical point in $L_B$ and $x^i$ a lift of $x$ to a critical point in $L$. Given a combinatorial type $(\g,x)$ in $B$ with index $0$ and an $x^i\in \crit(\pi^{-1}(x))$, let
\begin{equation}\label{donaldsonmodulispace}
 \mathcal{M}_{\pi^*\g} (E,L,J^G,x^i)
 \end{equation}
be the moduli space of holomorphic configurations of the same combinatorial type as a Donaldson lift as in Theorem \ref{liftconfigthm}. We showed that this is regular and of expected dimension, and by Lemma \ref{onlyrepinclasslemma} it contains only Donaldson lifts.

\begin{definition}
Choose a combinatorial type $(\g,x)$ for $(B,L_B)$ of expected dimension $0$ and a lift $x^i$ of $x$. For each $u_j\in \mathcal{M}_{\g} (B,L_B,J_B,x)$, choose a $q_j\in \pi^{-1}(u(1))\cap W^+_X(x^i)$ and denote by $\mathbf{q}= (q_1,\dots)$ the set of choices across all such $u$. For a coherent regular perturbation datum $(P_\g)$ that uses $J^G$ for all types with $I_F(\g)=0$, a \emph{lifting operator} is defined as
\begin{equation}\label{liftingop}
\mathcal{L}_{\bf{q}}^{\g}:\mathcal{M}_{\g} (B,L_B,J_B,x)\rightarrow \mathcal{M}_{\pi^*\g} (E,L,J^G,x^i)
\end{equation}
that lifts a configuration to the unique vertically constant configuration through $q_j$.
\end{definition}

We thus have a collection of lifting operators for each choice $\bold{q}$ and we get a surjection when we consider a collection $\lbrace  \mathcal{L}_{\bf{q}}^{\g} \rbrace$ for which each $q_j$ hits all points in $\pi^{-1}(u_j(1))\cap W^+_X(x^i)$.

Note that the choice of $\bold{q}$ is unique when $x^i=x^M$, the unique maximal index critical point for the fiber. In such a case we will simply denote the lifting operator as $\mc{L}^\g_{x^M}$.

\begin{remark}
The Donaldson moduli space \ref{donaldsonmodulispace} is contained in the larger moduli space
$$\mc{M}_{I_F=0}(E,L,J^G,x^i)$$
consisting of all $J^G$-holomorphic aspherical types with $0$ vertical Maslov index on each disk component. However, Lemma \ref{onlyrepinclasslemma} can be rephrased as
\begin{lemma}\label{vertconst0maslemma}
$u$ is vertically constant for $u\in \mc{M}_{I_F=0}(E,L,J^G,x^i)$.
\end{lemma}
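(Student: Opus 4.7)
The plan is to reduce to a single disk component and then argue via the trivializing gauge transformation combined with monotonicity of $L_F$. Edges in a configuration are by definition Morse flows, so the content of ``vertically constant'' rests on each disk component $u: D \to E$ separately. The assumption is that $I_F(u) = 0$ on each such component.

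For a fixed disk component, I would pass to the pullback bundle $(\pi \circ u)^* E$ and apply Theorem \ref{oka} to obtain the trivializing gauge transformation $\mathcal{G}$. By Proposition \ref{hologaugeprop}, $\mathcal{G}$ is a biholomorphism from $((\pi \circ u)^*E, J^G)$ to the trivialization with product structure $J_I \times j$. Thus the transformed section $\mathcal{G} \circ u$ is equivalent to a genuine $J_I$-holomorphic disk $\tilde{u}: (D,\partial D) \to (F, L_F)$, because $\mathcal{G}$ is $G$-valued on $\partial D$ and carries $L$ to $L_F$ over the boundary. Since $\mathcal{G}$ acts fiberwise on the vertical subbundle, the vertical Maslov index is preserved: $I_F(\tilde{u}) = I_F(u) = 0$.

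The key input is monotonicity. By the monotonicity constant $\lambda > 0$ for $L_F$,
\begin{equation*}
\int_D \tilde{u}^* \omega_F \;=\; \lambda \cdot I_F(\tilde{u}) \;=\; 0.
\end{equation*}
Because $(F, \omega_F, J_I)$ is K\"ahler, the energy of a $J_I$-holomorphic map equals its symplectic area, so $\tfrac{1}{2}\int_D |d\tilde{u}|^2_{J_I}\, dvol = 0$, forcing $\tilde{u}$ to be constant. Hence $\mathcal{G} \circ u$ is a constant section of $\mathcal{G}((\pi \circ u)^*E) \cong D \times F$, which is in particular covariantly constant with respect to the flat connection $H_0 = \mathcal{G}(H_G)$.

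Finally I would invoke the gauge invariance of the vertically constant condition noted just before Proposition \ref{hologaugeprop}, namely $\nabla^{\mathcal{F}(H_0)} s = \mathcal{F}\, \nabla^{H_0}(\mathcal{F}^{-1} s)$: a section is covariantly constant for $H_G$ if and only if its image under $\mathcal{G}$ is covariantly constant for $H_0$. It follows that $u$ is vertically constant with respect to $H_G$, as desired. I expect no serious obstacle here since this is essentially the same argument as in Lemma \ref{onlyrepinclasslemma}; the only delicate point is the verification that $I_F$ is preserved by $\mathcal{G}$, which is immediate from the fiberwise nature of the gauge action and the topological definition of $I_F$ in Definition \ref{vertmaslovdef}.
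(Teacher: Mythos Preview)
Your proposal is correct and follows essentially the same route as the paper, which simply notes that this lemma is a rephrasing of Lemma~\ref{onlyrepinclasslemma}: apply the trivializing gauge transformation, use monotonicity to convert $I_F=0$ into $\int \tilde u^*\omega_F=0$, and conclude $\tilde u$ is constant. One minor point: you invoke that $(F,\omega_F,J_I)$ is K\"ahler to equate energy with symplectic area, but the paper only assumes $J_I$ is tamed by $\omega_F$; nonetheless the weaker fact you actually need---that a $J_I$-holomorphic disk with vanishing symplectic area is constant---holds under tameness alone, so the argument is unaffected.
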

which shows that these two moduli are actually the same.
\end{remark}

\subsubsection{Coherence perturbation data}
The next technical detail is to realize the almost complex structure $J^G$ as a part of a coherent system as in Theorem \ref{transversality}, which allows the compactness argument \ref{compactnessthm} to run. In general, the answer is no since a vertically constant configuration (say $\g_1$) may be a piece of a broken configuration $\g_1\cup \g_2$ at the boundary of the one dimensional moduli space. Let $\g$ denote the configuration obtained by contracting an edge between $\g_1$ and $\g_2$ and $S$ denote the newly formed surface component: Then $\g$ is index $1$ and we need to allow a more general perturbation datum on $S$ to rule out sphere and vertical disk bubbling in line with Theorem \ref{compactnessthm}. However, it would not be possible for $J^G$ to be given by pullback as per the \emph{(contracting an edge)} and \emph{(products)} axioms, since $J^G$ would not be regular for spheres or vertical disks that may bubble off of $S$. However, it suffices to use $J^G$ to compute holomorphic representatives of index $0$ vertically constant types by using the notion of a regular homotopy and showing that the moduli space $\mathcal{M}_{\pi^*\g} (E,L,J^G,p)$ is in bijection with one defined using a perturbation datum from Theorem \ref{transversality}. In this section, we briefly outline why this is true, but the reader may skip ahead.

\begin{definition}
For a combinatorial type $\g$ with $\I (\g,\bar{x})\leq 1$, and two regular fibered almost complex structures $J_i$ $i=0,1$ as in Definition \ref{perturbationdatadef} for a fixed (domain dependent) $J_B$, define a \emph{regular smooth homotopy fixing $J_B$}, denoted $J_t\in\mathcal{J}^{reg}_{\g,J_B}(J_0,J_1)$, as
\begin{align*}
J_t:[0,1]&\rightarrow \mc{P}^l_\g(E,D)\\
&J_0=J_0\\
&J_1=J_1
\end{align*}
with each $J_t$ making $\pi$ a holomorphic map with respect to $J_B$ on the base, and such that the linearized operator
$$D_{u,J_t}+\frac{\partial}{\partial t}J_t\circ du\circ j$$
is surjective at each $t\in [0,1]$ and $u\in \mathcal{M}_\g(L,J_t)$ (here, the first summand is defined as in \eqref{linearizedoperatortotalspace}).
\end{definition}

Let $\mathcal{J}_{\g,J_B}(J_0,J_1)$ be the Banach manifold of (not necessarily regular) smooth homotopies fixing $J_B$. We use the following adaptation of \cite[Thm 8.4.1]{ms2}:
\begin{theorem}\label{homotopy}
For a combinatorial type $\g$ with $\I(\g,\bar{x})\leq 1$ and a regular $J_B$, let $J_i$, $i=0,1$ be regular upper triangular almost complex structures for type $(\g,\bar{x})$ such that $\I(\g,\bar{x})\leq 1$. Then there is a Baire set of smooth homotopies $\mathcal{J}_{\g,J_B}^{reg}(J_0,J_1)\subset \mathcal{J}_{\g,J_B}(J_0,J_1)$ such that if $J_t\in \mathcal{J}_{\g,J_B}^{reg}(J_0,J_1)$, then there is a parametrized moduli space $\mathcal{W}_{\g,J_B} (J_t)$ that is a smooth oriented manifold with boundary
$$\partial\mathcal{W}_\g (J_t)=\mathcal{M}_\g (L,J_0)^-\sqcup \mathcal{M}_\g(L,J_1)$$
so that these two moduli spaces are oriented compact cobordant.
\end{theorem}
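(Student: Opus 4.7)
The plan is to mimic the classical parametric transversality and cobordism argument of \cite[Thm 8.4.1]{ms2}, adapted so that only the ``off-diagonal'' and fiber components of the almost complex structure are allowed to vary, while $J_B$ is held fixed. First I would set up the universal parametric moduli space
\[
\mathcal{W}^{univ}_{\g,J_B} := \bigl\{ (t,u,J_t) : J_t \in \mathcal{J}_{\g,J_B}(J_0,J_1),\ t\in[0,1],\ u\in\mathcal{M}_\g(L,J_t)\bigr\}
\]
as the zero set of the section $\bar{\partial}_{J_t,j}$ of the bundle $\tilde{\mathcal{E}}^{k-1}_{J_t,j}$ of Theorem \ref{liftconfigthm}, pulled back over $[0,1]\times \mathcal{J}_{\g,J_B}(J_0,J_1)$. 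The linearization at $(t,u,J_t)$ is
\[
D_{u,J_t}\bar{\partial}_{J_t,j} + \tfrac{\partial}{\partial t}J_t\circ du\circ j + \mathfrak{J}\circ du\circ j,
\]
where the first summand is the already-analyzed operator from \eqref{linearizedoperatortotalspace} and $\mathfrak{J}$ records the variation of $J_t$ inside $\mathcal{J}_{\g,J_B}$.

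Next I would prove that $\mathcal{W}^{univ}_{\g,J_B}$ is a smooth Banach manifold. Since $J_B$ is fixed along the homotopy, all allowed variations $\mathfrak{J}$ have the form \eqref{basevariation} with the $TB$ block zero, so they land in the $\Lambda^{0,1}_{J_I,j}(D,u^*TF)$ summand of the splitting \eqref{splitting}. The $H_G$-component of the linearization at the endpoints $t=0,1$ is already surjective onto $\Lambda^{0,1}_{J_B,j}(D,\pi\circ u^*TB)$ by the regularity of $J_0,J_1$ and the commutative diagram \eqref{commutativediagram}; an easy open-and-closed argument in $t$ together with the isomorphism \eqref{projectioniso} shows that this surjectivity persists along any homotopy in $\mathcal{J}_{\g,J_B}$. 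Hence surjectivity of the full parametric operator reduces to surjectivity on the $TF$-summand, where the argument is identical to the proof of Theorem \ref{liftconfigthm}: the variations in $\mathfrak{J}$ are, up to the trivializing gauge transformation of Proposition \ref{hologaugeprop}, genuine Hamiltonian/fiber a.c.s.\ perturbations, which are known to exhaust the vertical cokernel.

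Having established the universal moduli as a Banach manifold, I would apply the Sard--Smale theorem to the projection $\Pi:\mathcal{W}^{univ}_{\g,J_B}\rightarrow \mathcal{J}_{\g,J_B}(J_0,J_1)$; the Baire set of regular values is $\mathcal{J}^{reg}_{\g,J_B}(J_0,J_1)$, and for such $J_t$ the fiber $\mathcal{W}_{\g,J_B}(J_t)=\Pi^{-1}(J_t)$ is a smooth manifold of dimension $\I(\g,\bar{x})+1\le 2$. Compactness of $\mathcal{W}_{\g,J_B}(J_t)$ (with boundary) follows from Theorem \ref{compactnessthm} applied uniformly in $t\in[0,1]$: the assumptions on $J_0,J_1$ and the choice of a coherent, $\varrho$-stabilized homotopy rule out sphere and vertical disk bubbling, and strata with $\I<0$ are empty for generic $t$. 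The boundary is exactly $\mathcal{M}_\g(L,J_0)\sqcup \mathcal{M}_\g(L,J_1)$, and orientations are induced by the determinant-line isomorphism \eqref{orientationiso} applied to the parametric operator, with the standard sign convention that makes the $t=0$ end appear with reversed orientation.

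The main obstacle is the transversality step under the constraint that $J_B$ is fixed: one must be sure that the allowed Hamiltonian and vertical variations in $\mathcal{J}_{\g,J_B}$ span enough of the cokernel of $D_u\bar{\partial}_{J_t,j}$ to make $\Pi$ a submersion. This is where the splitting \eqref{splitting} combined with the argument of Theorem \ref{liftconfigthm}, and the fact that the horizontal surjectivity is inherited from the fixed regular $J_B$, does the essential work; once that is in place, compactness and the oriented cobordism conclusion are routine.
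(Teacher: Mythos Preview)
Your proposal is correct and follows essentially the same route as the paper's proof: set up the universal parametric moduli space, use the splitting \eqref{splitting} and the fixed regular $J_B$ to reduce transversality to the vertical summand, apply Sard--Smale to the projection, and invoke the compactness argument of Theorem \ref{compactnessthm}. One small simplification: the ``open-and-closed argument in $t$'' for horizontal surjectivity is unnecessary, since $J_B$ is fixed along the homotopy and the $H_G$-component of $D_u$ depends only on $J_B$ via \eqref{projectioniso} and \eqref{commutativediagram}, so horizontal surjectivity holds for every $t$ directly (as you correctly note in your final paragraph).
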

\begin{proof}
The proof is the same as an argument of Theorems \ref{transversality} and \ref{compactnessthm}. The projection from the universal moduli space $\mathcal{M}^{univ}_\g(L,J_t)\rightarrow \mathcal{J}_{\g,J_B}(J_0,J_1)$ has the same Fredholm index and cokernel dimension as the linearized $\bar{\partial}$ operator, and the points where the projection is surjective are precisely the regular homotopies. After showing that the universal space is a $C^l$ Banach manifold, one then uses the Sard-Smale theorem to find a Baire set where the projection is submersive and applies an argument due to Taubes to show a comeager set of smooth homotopies.
\end{proof}

Let $0<\epsilon<\epsilon_0<1$. The above theorem tells us that $\mathcal{M}_\g (L,J_0)$ and $\mathcal{M}_\g (L,J_\epsilon)$ compact cobordant, and thus are diffeomorphic for $\epsilon<<1$. Indeed, any regular homotopy induces a cobordism $\mathcal{W}_\g^\vee (J_t)\rightarrow [0,1]$ that is a submersion at $0$. Since the property of being a submersion is an open condition, this must be a submersion in a neighbourhood $[0,\epsilon)$. It follows $\mathcal{M}_{\pi^*\g} (E,L,J^G,p)$ is diffeomorphic to $\mathcal{M}_{\pi^*\g} (E,L,J_\delta,p)$ for $J_\delta$ from Theorem \ref{transversality} close enough to $J^G$. 

\subsection{Spin and relative spin structures on $L$}
We want to have coherent orientations on the moduli of holomorphic disks as in Theorem \ref{transversality} so that the lifting operator \eqref{liftingop} is orientation preserving. Thus we must discuss these structures for trivially fibered $L$. Note that since $L$ is topologically trivial, the existence of a spin structure on any two of $L$, $L_F$, and $L_B$ gives the existence of such on the third by naturality and the product formula for the Stiefel-Whitney class $w(L)$. In the converse direction, a choice of spin structures on $L_B$ and $L_F$ gives one on $L$.

More generally, we have the following notion in the literature (c.f.\linebreak \cite[Def 8.1.2]{fooo}):
\begin{definition}
A \emph{relative spin structure} on an oriented Lagrangian $L$ is a choice of an oriented real vector bundle $V$ on $M$ with $w_2(V\vert_L)=w_2(TL)$.
\end{definition}
Given a relative spin structure $V$ for $L$, we can determine one on $L_F$ as follows: We have $w_2(TL_F\oplus TL_B\oplus V)=0$ so that for $\iota_p:L_F\rightarrow F_p$ the inclusion we get $w_2(TL_{F_p}\oplus \iota^*_p V)=0$ since $\iota^*_p TL_B$ is the trivial bundle.

On the other hand, there is no clear way to induce a relative spin structure for $L$ given that of $L_B$ and $L_F$ since it would require extending a vector bundle from a fiber to the total space. Thus, we consider only trivially fibered Lagrangians such that $L_B$ is relatively spin and $L_F$ is spin. Thus, a relative spin structure $V$ for $L_B$ induces one for $L$ by pullback $\pi^*V$ after choosing one for $L_F$. We call the relative spin structure induced on $L$ this way a \emph{relative spin-spin} structure.
\begin{proposition}\label{coherentorientations}
Equip $L$ with a relative spin-spin structure. Given a collection of types with $\I(g,y)\leq 1$ as in Theorem \ref{transversality} containing Donaldson lifts, there exists a system of coherent orientations on the associated moduli spaces so that $\mc{L}_{x^M}^\g$ \eqref{liftingop} is orientation preserving.
\end{proposition}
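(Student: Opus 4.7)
The strategy is to factor the coherent orientations on the lifted moduli through the product structure $L \cong L_F \times L_B$. First, the relative spin-spin structure $\pi^* V$ on $L$ orients the determinant line of the linearized $\bar\partial$-operator at every holomorphic configuration with boundary in $L$, via the trivialization procedure of \cite[Thm 8.1.1]{fooo} and the isomorphism \eqref{orientationiso}. Coherence of the resulting system with the cutting, collapsing, and gluing morphisms in Theorem \ref{transversality} follows directly from that reference. In parallel, $V$ orients the corresponding moduli in $(B, L_B)$, and the spin structure on $L_F$ orients those in $(F, L_F)$.

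For a Donaldson lift $u = \mc{L}^\g_{x^M} v$, the linearized operator splits as $D_u \cong D_u^{TF} \oplus D_u^{H}$ as established in the proof of Theorem \ref{liftconfigthm}, and the horizontal summand is canonically identified with the base operator $D_v$ via the isomorphism \eqref{projectioniso}. After applying the trivializing gauge transformation $\mc{G}$ of Theorem \ref{oka}, the vertical summand $D_u^{TF}$ becomes the standard $\bar\partial$-operator on the trivialized bundle $D \times TF$ with constant Lagrangian boundary $D \times L_F$, whose determinant line is canonically $\det TL_F$. Because $\mc{G}$ is $G$-valued along $\partial D$ and $G$ acts on $L_F$ preserving its spin structure (via the chosen faithful representation), this identification is compatible with the spin orientation produced by \cite[Thm 8.1.1]{fooo}. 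Hence the $\pi^*V$-orientation on $\det D_u$ factors as the tensor product of the $V$-orientation on $\det D_v$ with the spin orientation on $\det TL_F$.

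On the Morse-theoretic side, the choice of output $x^M$ as a fiberwise maximum gives $W^+_X(x^M_j) \cong W^+_{X_b}(x_j)$ with trivial vertical factor, while any input leaves lift with $W^-$ factors that likewise split as products of base and fiber parts. Applying the determinant line isomorphism \eqref{orientationiso} to both $u$ and $v$, and combining with the splittings $\det TL \cong \det TL_F \otimes \det TL_B$ and $\mf{M}_{\pi^*\g} \cong \mf{M}_\g$ (the $\pi$-stabilization map acts as a bijection on combinatorial moduli of Donaldson lifts, since such lifts introduce no extra stable components), the coherent orientation on $\mc{M}_{\pi^*\g}(E,L,J^G,x^M)$ decomposes as the product of the orientation on $\mc{M}_\g(B,L_B,J_B,x)$ with a fixed sign from the global spin orientation on $L_F$. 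Fixing the spin trivialization on $L_F$ so that this sign is $+1$ makes $\mc{L}^\g_{x^M}$ orientation preserving.

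The main obstacle is the compatibility claim in the second paragraph: one must verify that the spin trivialization of the boundary loop of Lagrangians $v^*L \to \partial D$ used by \cite[Thm 8.1.1]{fooo} agrees with the pullback of the spin structure on $L_F$ under the $G$-valued boundary gauge transformation. This reduces to observing that the loop $\{L \cap F_{v(e^{i\theta})}\}$ becomes the constant loop at $L_F$ after applying $\mc{G}|_{\partial D}$, and that the two resulting spin trivializations differ only by a spin-preserving gauge element, which acts trivially on the orientation of $\det D_u^{TF}$.
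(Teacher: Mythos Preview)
Your proof is correct and follows essentially the same approach as the paper: split the determinant line via the vertical/horizontal decomposition of $D_u$, invoke the orientation isomorphism \eqref{orientationiso} with $\mf{M}_{\pi^*\g}\cong\mf{M}_\g$ and $W^+_X(x^M)\cong W^+_{X_b}(x)$, identify the vertical contribution as a fixed factor $\det TL_F$, and normalize the orientation on $L_F$ so that the resulting sign is $+1$.

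The only difference is one of emphasis. You reach the identification $\det D_u^{TF}\cong\det TL_F$ by pushing the lift through the trivializing gauge transformation $\mc{G}$ and arguing that the boundary loop becomes constant, then invoking spin compatibility. The paper's route is more direct: since $I_F(\mc{L}_{x^M}^\g v)=0$ (Proposition \ref{verticalmaslovprop}), the FOOO trivialization procedure on the vertical bundle pair depends only on the fiber $\det TL_F\vert_{x^M}$, and the triviality of $L$ makes this independent of which $x^M$ and which $u$ one chooses. Your gauge-theoretic version is more explicit but requires the extra justification you flag in your final paragraph, namely that the $G$-valued boundary gauge preserves the spin structure on $L_F$; the paper's version sidesteps this by using the Maslov-zero fact directly. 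Both land at the same conclusion. One small remark: the lifting operator is defined for no-input configurations, so your discussion of input leaves is superfluous here.
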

\begin{proof}
The existence of coherent orientations for no-leaf configurations in the base follows from \cite[Thm 4.21]{CW2}. Choose orientations on the Lagrangians so that $\det L_F\otimes \det L_B\cong \det L$. We have that $W^+_{X_b}(x)$ is naturally diffeomorphic to $W^+_X(x^M)$, since they are in a neighbourhood of their respective critical points by construction and we can extend such a local diffeomorphism via the flow. Thus choose orientations on $W^\pm_X(x^i_j)$ so that this diffeomorphism is orientation preserving in addition to satisfying $\det (T_{x^i_j}W^+_X(x^i_j)\oplus T_{x^i_j}W^-_X(x^i_j))\cong \det T_{x^i_j}L$. Moreover, $\mf{M}_\g\cong \mf{M}_{\pi^*\g}$ since we only change relative homology classes when we lift. From the orientation isomorphism \eqref{orientationiso} we have
$$\det D_{\mc{L}_{x^M}^\g u}\cong \det\mf{M}_\g\otimes \det TL_F\otimes\det TL_B\otimes \det T W^+_{X_b}(x).$$
To see the orientation preserving claim, it suffices to check that the $\det TL_F$ factor does not depend on the map $u$. We may assume that $\mc{L}_{x^M}^\g u(1)=x^M$. For fixed $u$, an orientation of $L_F$ at $x_M$ determines the above isomorphism with $\det D_{\mc{L}_{x^M}^\g u}$ uniquely by the spin property of $L_F$. On the other hand, each $\mc{L}_{x^M}^\g u$ has vertical Maslov index $0$, so the isomorphism only depends on $\det TL_F\vert_{x^M}$. Since $\det L_F\vert_{x_M}\cong \det L_F\vert_{x_M'}$ by triviality of $L$, the lift induces the same sign across all $x^M$ and $u$. Finally, one can begin with an orientation on $L_F$ so that this sign is positive.
\end{proof}

\section{Floer invariants}\label{invariantssection}
\subsection{An $A_\infty$-algebra}

While the disk potential is the main focus of this paper, we define the $A_\infty$ algebra of a fibered Lagrangian to actually use the power of the potential.

Let $E$ be a symplectic K\"ahler fibration, and $L$ a fibered Lagrangian. For a relative homology class $u\in H^{disk}(E,L)$, let 
$$e_v(u):=\int_D u^*a$$
denote the \emph{vertical symplectic area}. To avoid mentioning ``$K$'' too many times, we let
$$e(\pi\circ u):=\int_D K \pi\circ u^*\omega_B.$$ 

Define the ring
\begin{align*}
\Lambda^2:= \biggl\lbrace& \sum_{i,j} c_{ij} q^{\rho_i} r^{\eta_j} \vert c_{ij}\in \bb{C},\,\rho_i\geq 0, (1-\epsilon)\rho_i+\eta_j\geq 0\\
&\#\lbrace c_{ij}\neq 0, \rho_i+\eta_j\leq N\rbrace <\infty \biggr\rbrace.
\end{align*}
The epsilon can be arbitrarily small, and is to ensure that in a certain degree we have a lower bound on the negativity of the power of $r$. We include it partially to be consistent with \cite{floerfibrations}.

\subsubsection{Grading}
Let $\mc{L}(B)\rightarrow B$ be the fiber bundle with fiber $\mc{L}(B)_b=\text{Lag}(T_b B,\omega)$ the Lagrangian subspaces in $T_b B$. Following Seidel \cite{seidelgraded} an $N$-fold \emph{Maslov covering} of $B$ is the fiber bundle
$$\mc{L}(B)^N\rightarrow E$$
with fibers the unique $N$-fold Maslov cover $\lag^N(T_b B,\omega)$ of $\text{Lag}(T_b B,\omega)$. Such a cover is unique up to isomorphism if $H_1(B)=0$ \cite[Lem 2.6]{seidelgraded}. For an orientable Lagrangian $L_B$, there is a canonical section $s:L_B\rightarrow \mc{L}(B)\vert_L$. A $\bb{Z}_N$-\emph{grading} of $L_B$ is a lift of $s$ to a section 
$$s^N:L_B\rightarrow \mc{L}^N(B)\vert_{L_B}.$$

Hence, we fix a grading on $L$ as a fibered Lagrangian via 
$$\vert x\vert:=\vert \pi(x) \vert_{s^N} +\dim W^+_{X_g}(x) \quad (mod\,\gcd(N,\Sigma))$$
where $\Sigma$ is the minimal Maslov index for $(F,L_F)$.

The \emph{$A_\infty$-algebra} of a fibered $L$ is the family $$A(L)=(CF(L,\Lambda^2),\mu^n)$$ with 
\begin{align}\label{ainfntymaps}
&\mu^n:CF(L,\Lambda^2)^{\otimes n}\rightarrow CF(L,\Lambda^2)\\
&\mu^n(x_1\otimes\cdots x_n)\\
\notag &\quad =
\sum_{x_0, [u]\in\M_\g(L,D,\underline{x})_0} (\sigma(u)!)^{-1}\varepsilon(u)(-1)^{\sum_i i\vert x_i\vert} \mathrm{Hol}_L( u)r^{e_v(u)}q^{e(\pi\circ u)}x_0
\end{align}
where $\varepsilon(u)$ is $\pm 1$ depending on the orientation of the moduli space, $\sigma(\g)$ is the number of interior marked points on $u$, and $\underline{x}:=x_0\otimes x_1\otimes\cdots x_n$ is shorthand. By Theorems \ref{transversality} and \ref{compactnessthm}, there is a comeager selection of fibered perturbation data so that the sum is well defined with coefficients in $\Lambda^2$.

As a typical result, we will prove that the multiplication maps satisfy the $A_\infty$ axioms
\begin{align}\label{ainftyrelations}
0=\sum_{\substack{n,m\geq 0 \\ n+m\leq d}} &(-1)^{n+\sum_{i=1}^n\vert a_i\vert}\mu^{d-m+1}\\
\notag &(a_1\otimes\cdots a_n\otimes\mu^m(a_{n+1}\otimes\cdots a_{n+m})\otimes a_{n+m+1}\otimes \cdots a_d)
\end{align}

\begin{theorem}\label{ainftythm}
For a coherent, regular, stabilized fibered perturbation system, the products in \eqref{ainfntymaps} satisfy the axioms of a $A_\infty$-algebra.
\end{theorem}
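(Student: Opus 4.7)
The plan is to follow the standard pearl-complex strategy of Biran-Cornea, viewing the $A_\infty$ relations as the vanishing of a signed boundary count on one-dimensional moduli spaces of pearly treed disks. Concretely, for a fixed Morse labeling $\underline{x} = (x_0, x_1, \ldots, x_d)$, I would form the union
$$\overline{\mc{M}}(L, D, \underline{x})_1 := \bigsqcup_{\g: \I(\g, \underline{x}) = 1} \overline{\mc{M}}_\g(L, D, P_\g)$$
over combinatorial types of expected dimension one. By Theorem \ref{transversality}, each top stratum is a smooth oriented $1$-manifold of expected dimension, and the tubular-neighborhood statement \eqref{tubularneighborhoods} together with the compactness result of Theorem \ref{compactnessthm} promotes the union to a compact oriented $1$-manifold with boundary.

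The next step is to identify the boundary. By Theorem \ref{compactnessthm}, sphere bubbling is absent and vertical disk bubbles in configurations with one input and one output cancel in oriented pairs, so the only degenerations that appear in the compactification come from (collapsing an edge), (making an edge length finite/non-zero), or (breaking an edge to length $\infty$). The first two types glue to adjacent top strata via \eqref{tubularneighborhoods}, producing interior points of the $1$-manifold and contributing zero to the signed boundary count. Only broken configurations $\g = \g_1 \cup_e \g_2$ form genuine boundary points. By the (cutting edges) and (products) axioms of coherence (Definition \ref{coherentdefinition}), the $P_\g$-holomorphic count over such a stratum equals exactly
$$\mu^{d-m+1}\big(a_1 \otimes \cdots \otimes a_n \otimes \mu^m(a_{n+1} \otimes \cdots \otimes a_{n+m}) \otimes a_{n+m+1} \otimes \cdots \otimes a_d\big),$$
summed over the fiberwise critical point $y$ labeling the broken edge. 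The resulting identity $0 = \sum (\cdots)$ is precisely \eqref{ainftyrelations} once signs are accounted for.

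The sign verification uses the orientation isomorphism \eqref{orientationiso} together with the coherent orientation system of Theorem \ref{transversality}. The outward normal to a broken stratum points in the direction of increasing length on the breaking edge, and relative to the chosen isomorphism $\det D_u \cong \det \mf{M}_\g \otimes \det TL \otimes \bigotimes \det TW^\pm_X$, this outward normal contributes the Koszul sign $(-1)^{n + \sum_{i=1}^n |a_i|}$ coming from the position of $\mu^m$ in the tensor product and from the grading convention $|x|$. This is the identical computation as in \cite{CW2} and \cite[Ch.~3]{fooo}, adapted to the pearly setting, so there is no new work in the sign check.

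The main obstacle is fibration-specific and is encoded in the compactness theorem rather than resurfacing here: namely, that no phantom boundary contributions arise from vertical disk bubbles or from degenerations of $\pi$-unstable components, either of which would spoil the $A_\infty$ equation. The last clause of Theorem \ref{compactnessthm}, together with the (ghost-marking independence) and ($\pi$-stabilization) axioms of Definition \ref{coherentdefinition}, ensures that such degenerations either do not occur in index one or cancel with opposite orientations, so the boundary of $\overline{\mc{M}}(L, D, \underline{x})_1$ is exactly the set of broken trajectories and the argument reduces to the standard one.
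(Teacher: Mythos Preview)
Your proposal is correct and follows essentially the same route as the paper's proof: both compactify the one-dimensional moduli, invoke Theorems \ref{transversality} and \ref{compactnessthm} to rule out sphere/vertical-disk bubbling, identify the boundary with once-broken configurations via the (cutting edges) and (products) coherence axioms, and defer the sign check to \cite{CW2}. The one bookkeeping point the paper makes explicit that you glossed over is the combinatorics of the interior markings: when a boundary point $[u]=[u_1]\times[u_2]$ has $m_i=\sigma(u_i)$ markings on each piece, there are $\binom{m}{m_1}$ ways to redistribute the $m$ markings, which is exactly what converts the weight $(m!)^{-1}$ on the unbroken side into the product $(m_1!)^{-1}(m_2!)^{-1}$ appearing in the composition $\mu^{d-m+1}(\cdots\mu^m(\cdots)\cdots)$.
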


\begin{proof}
We show the proof modulo signs and up to reordering the interior marked points, with the subject of orientations taken up precisely in \cite[Thm 4.33]{CW2}. For bounded energy, Theorems \ref{transversality} and \ref{compactnessthm} say that the compactification of the 1-dimensional component of the moduli space 
$$\mathcal{M}(L,D,\underline{x},e(u)\leq k)_1$$
 is a compact 1-manifold with boundary. Thus,
\begin{equation}\label{boundrelation}
0=\sum_{\g\in \mathfrak{M}_{m,n}}\sum_{[u]\in\partial\M_\g(L,D,\underline{x},e(u)\leq k)_1} (-1)^\dagger(\sigma(u)!)^{-1}\mathrm{Hol}_L( u)r^{e_v(u)}q^{e(\pi\circ u)}
\end{equation}
Each boundary combinatorial type is obtained by gluing two types $\g_1$, $\g_2$ along a broken edge that is a root for $\g_1$ resp. leaf for $\g_2$. Since our perturbation is coherent with respect to cutting an edge, we have that
\begin{align*}
\partial \M_\g(L,D,\underline{x})_1 &\cong \bigcup_{y,\g_1,\g_2}\M_{\g_2}(L,D,x_0,\dots,x_{i-1},y,x_{i+1+k}\dots x_n)_0\\
&\quad \times\M_{\g_1}(L,D,y,x_i,\dots,x_{i+k})_0
\end{align*}
Thus, for each boundary $[u]=[u_1]\times[u_2]$, we have that
\begin{align*}
&\mathrm{Hol}_L(u)=\mathrm{Hol}_L(u_1)\mathrm{Hol}_L(u_2)\\
&e_v(u)=e_v(u_1)+e_v(u_2)\\
&e(\pi\circ u)=e(\pi\circ u_1)+e(\pi\circ u_2)
\end{align*}
as well as the analogous statement for orientation signs. Let $m_i=\sigma(u_i)$. Then for each $[u_1]\times [u_2]$ of combinatorial type $\g_1\times\g_2$, there are $m\choose m_1$ ways to distribute the interior markings to each unbroken piece. These observations with \eqref{boundrelation} give us the formula
\begin{multline}
0=\sum_{y,\g_1,\g_2}\sum_{\substack{[u_1]\in \M_{\g_1}(L,D,y,x_i,\dots,x_{i+k})_0 \\ [u_2]\in \M_{\g_2}(L,D,x_0,\dots,x_{i-1},y,x_{i+1+k},\dots, x_n)_0}} (-1)^{\dagger}(m!)^{-1}{m \choose m_1}\\
\times \mathrm{Hol}_L(u_1)\mathrm{Hol}_L(u_2)r^{e_v(u_1)}r^{e_v(u_2)}q^{e(\pi\circ u_1)} q^{e(\pi\circ u_2)}
\end{multline}
which shows \eqref{ainftyrelations} up to signs.
\end{proof}

\subsection{The disk potential}
\label{diskpotentialsection}
We use our lifting operator to explore the $0^{th}$ structure map in \eqref{ainfntymaps}. One can often prove Hamiltonian non-displaceability of a Lagrangian by simply finding critical points of $\mu^0$. We prove a formula that expresses the low energy terms of $\mu^0$ as a sum of terms coming from the base and fiber. This section is a set-up for the main ideas involved in the statement of the main theorem.

\begin{definition}\label{2ndpotdef}
The \emph{second order potential} for a fibered Lagrangian in a symplectic K\"ahler fibration is
\begin{equation}\label{2ndpot}
\mc{W}_L^2[q,r](\rho):=
\sum_{\substack{u\in\mathcal{I}_x\\x\in\T{crit} (f)}} (\sigma(u)!)^{-1}\varepsilon(u)\T{Hol}_\rho(u)q^{e(\pi\circ u)}r^{e_v(u)}x
\end{equation}
where for each $x$
\begin{align*}\mathcal{I}_x &= \bigg\lbrace u\in \mathcal{M}(E,L,x)_0\,\vert e(\pi\circ u)=0\bigg\rbrace \\
&\quad \bigcup\left\{ u\in\mathcal{M}(E,L,x)_0\,\vert e(u)=\min_{v\in \mathcal{M}(E,L,x)_0}\left\{ e(v):e(\pi\circ v)\neq 0\right\}\right\}.
\end{align*}
\end{definition}In words: the second order potential is a sub-sum of $\mu^0$ that counts isolated holomorphic disks contained in a single fiber, along with the holomorphic disks of minimal energy in the total space that project to non-constants. 

For a monotone Lagrangian, one can write down the entirety of $\mu^0$ (i.e. see \cite{oh} or \cite{fooo3} for the toric case). Following notation in the introduction, let $x^M$ be the unique maximal critical point of $g\vert_{L_F}$. Since $L_F$ is monotone and has minimal Maslov number at least $2$, we have that
$$\mu^0_{L_F}(\theta)[r]=\sum_{u\in\mathcal{M}(F,L_F,x_F)_0}\varepsilon(u)\T{Hol}_\theta(u)r^{2\lambda} x^M$$
where $\lambda\mu_F(u)=\int u^*\omega_F$ and the sum is finite (or empty if the minimal Maslov number is greater than $2$). 

For dimensional reasons, the only terms in $\mu_L^0$ that project to constants appear as coefficients of $x^M_M$, the unique maximal index critical point of $f$.

One can define an arbitrary order potential 
$$\mc{W}_L^k [t]:=
\sum_{\substack{u\in\mathcal{I}^k_x\\x\in\T{crit} (f)}} (\sigma(u)!)^{-1}\varepsilon(u)\T{Hol}_\rho(u)t^{e(u)}x$$
with 
\begin{align*}
\mathcal{I}_x^k = \Big\{& u\in \mathcal{M}(L,x)_0\,\vert \nexists \lbrace u_i\rbrace_{1\leq i\leq k}\in \mathcal{M}(L,x)_0:\\
& 0<e(u_1)<\cdots <e(u_k)<e(u) \Big\}.
\end{align*}
This definition will be used to find a Floer-non-trivial Lagrangian in the complete flag manifold of arbitrary dimension \eqref{fullflags}.

Following \cite{CW2}, one can define the $0^{th}$ multiplication map for a rational $L_B$, which fits into the $A_\infty$-algebra for said Lagrangian:
\begin{definition}
For generating critical points $x\in CF(L_B,\Lambda_q)$ and a coherent, stabilized, regular perturbation system of domain dependent data $P_B$, the potential for a rational Lagrangian in a rational symplectic manifold is
\begin{equation}\label{basepot}
\mu^0_{L_B}(\theta)[q]=
\sum_{\substack{x\in \crit b\\u\in \mathcal{M}(B,L_B,y, P_B)_0}}(\sigma(u)!)^{-1}\varepsilon(u)\T{Hol}_\theta(u)q^{e(u)}x
\end{equation}
\end{definition}

Given a representation $\rho \in \text{Hom}(\pi_1(L),(\Lambda^2)^\times)$ and a critical point $x^j$ corresponding to a point in $\pi^{-1}(u(1))$, define a representation $$u\mapsto\T{Hol}_\rho(\mathcal{L}_{x^j}u)$$ for elements in $u\in\mathcal{M}_{\g} (B,L_B,J_B,x)$.

Enumerate the critical points resp. fibers $x_i$ resp. $F_i$, and let $x_i^M$ be the unique index zero critical point of $g\vert_{L_{F_i}}$ in the fiber above $x_i$. Using the lifting operator from Section \ref{liftingopsection}, we perform a transformation on the base potential:
\begin{definition}\label{liftpot}
The \emph{lifted potential} for $L_F\rightarrow L\rightarrow L_B$ is
\begin{equation*}
\mathcal{L}\circ \mu^0_{L_B}(\theta)[q,r] := \sum_{i,\, u\in \mathcal{M}(B,L_B,x_i,J_B)_0}(\sigma(u)!)^{-1}\varepsilon(u)\T{Hol}_{\rho}(\mathcal{L}_{x^M_i} u)q^{e(u)}r^{e_v(\mc{L}u)}x_i^M.
\end{equation*}
\end{definition}

In words, we take each configuration $u\in \mathcal{M}(B,L_B,y_i)$ and compute its Donaldson lift through the stable manifold of $x_i^M$.

\begin{remark}
In the presence of orientations and more general coefficients, there may be cancellations between configurations in \eqref{basepot} that may not occur in the lifted potential. We interpret the lifted potential as first looking at the Moduli spaces, taking lifts, and then performing any algebraic operations. In this sense, it is not an algebraic operation on $\mu^0_{L_B}$.
\end{remark}

\section{Statement and proof of the main theorem}\label{trivfiberedsection}
To relate all of these concepts from Section \ref{diskpotentialsection}, we want to understand how the index changes when we project a single-output configuration, and when we lift such a configuration. First, we prove a short proposition:

\begin{proposition}\label{indexdecprop}
$$0\leq \I (\Upsilon\circ\pi_*\g, \pi(x))\leq \I (\g,x)$$ for a regular type $\g$ for which $\mc{M}_\g\neq \emptyset$.
\end{proposition}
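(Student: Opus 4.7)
The plan is to prove the two inequalities separately. The lower bound is essentially immediate: by Lemma \ref{regular proj}, the projection $\pi\circ u$ of any regular $P_\g$-holomorphic $u\in\mc{M}_\g$ is a regular $J_B$-holomorphic configuration of type $\Upsilon\circ\pi_*\g$. Since $\mc{M}_\g\neq\emptyset$ by hypothesis, $\mc{M}_{\Upsilon\circ\pi_*\g}$ is a non-empty smooth manifold of its expected dimension $\I(\Upsilon\circ\pi_*\g,\pi(x))$, so this expected dimension is non-negative.

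For the upper bound I would compute the difference $\I(\g,x)-\I(\Upsilon\circ\pi_*\g,\pi(x))$ term by term using formula \eqref{indexdefinition}. Three splittings organize the calculation: the pseudo-gradient $X=X_g\oplus X_b^\sharp$ of Section \ref{pseudogradients} yields
\[
\dim W^+_X(y)=\dim W^+_{X_g}(y)+\dim W^+_{X_b}(\pi(y))
\]
on critical points; the Maslov index splits as $I(u_j)=I_F(u_j)+I_B(u_j)$ on each surface component, with $I_B(u_j)=I(\pi\circ u_j)$ when $u_j$ projects non-trivially and $I_B(u_j)=0$ otherwise; and the interior/boundary edges are preserved under $\pi_*$ before $\Upsilon$-stabilization. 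Non-negativity of the vertical Maslov index (Proposition \ref{verticalmaslovprop}) gives $I_F(u_j)\geq 0$, while monotonicity of $L_F$ with minimal Maslov number at least $2$ forces $I_F(u_j)\geq 2$ for any component that $\Upsilon$ forgets (i.e.\ any $u_j$ with $\pi_*[u_j]=0$). I would then collect the manifestly non-negative pieces $\dim W^+_{X_g}(x_0)$ and $\sum_j I_F(u_j)$ against the potentially negative pieces $-\sum_{i\geq 1}\dim W^+_{X_g}(x_i)$ together with the changes to the $n-2$, $|\e^{\circ,0}_-|$, $|\e^{\circ,\infty}_-|$, $|\e^{\bullet}_-|$ and divisor-multiplicity terms of \eqref{indexdefinitionline2} coming from components removed by $\Upsilon$, and verify that each individual $\Upsilon$-move (contracting a bivalent vertex or removing a univalent vertex with $\pi_*[v]=0$) costs at most $2$ to the index and is exactly paid for by the $I_F\geq 2$ of the forgotten disk.

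The hard part will be this last bookkeeping step. One must treat the two $\Upsilon$-sub-cases separately, keep track of ghost interior markings (which are harmless for the perturbation datum by the (ghost-marking independence) axiom of Definition \ref{coherentdefinition}) along with their $-2m(e)$ penalties in \eqref{indexdefinitionline2}, and argue that an input-leaf contribution $-\dim W^+_{X_g}(x_i)$ is absorbed either by the extra fiber-Maslov of a non-vertical component attached to that leaf or by the stabilization of a chain of vertical components terminating at it. I would proceed by induction on the number of vertical vertices forgotten by $\Upsilon$, reducing the global inequality to a single-move check, and using the regularity hypothesis $\mc{M}_\g\neq\emptyset$ at the base case to exclude pathological types (for instance, a lone vertical vertex can only appear attached to the root by Lemma \ref{onlyrepinclasslemma} and the non-negativity of $I_F$).
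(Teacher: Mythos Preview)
Your overall strategy matches the paper's: the lower bound via Lemma~\ref{regular proj} is identical, and the upper bound is obtained by comparing the terms of \eqref{indexdefinition} and \eqref{indexdefinitionline2} for $\g$ and $\Upsilon\pi_*\g$, using the splitting of the stable manifolds, the Maslov splitting $I=I_F+I_B$, Proposition~\ref{verticalmaslovprop}, and the fact that forgotten components carry $I_F\geq \Sigma_{L_F}\geq 2$.

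There is, however, a misreading that inflates your ``hard part'' considerably. The type $(\g,x)$ in this proposition has a \emph{single} output $x$ and \emph{no inputs}: it is the class of configurations appearing in the potential $\mu^0$, which is why the paper records immediately after the proof that formula~\eqref{projectedindexformula} fails once inputs are present. Thus the terms $-\sum_{i\geq 1}\dim W^+_{X_g}(x_i)$ you worry about absorbing simply do not occur, and your induction on $\Upsilon$-moves, the discussion of input-leaf absorption, and the appeal to Lemma~\ref{onlyrepinclasslemma} (which concerns uniqueness of holomorphic sections, not index accounting) are all unnecessary.

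Once you drop those, the paper's argument is more direct than your proposed induction: it writes down the explicit difference formula
\[
\I(\Upsilon\pi_*\g,\pi(x))=\I(\g,x)+\bigl[\#\text{forgotten nodes}-\textstyle\sum_i I_F(u_i)\bigr]+\textstyle\sum_{z_e}\bigl[m_{z_e}(u)-m_{z_e}(\pi\circ u)\bigr]-\dim W^+_{X_g}(x)
\]
and checks that each bracket is nonpositive. The middle bracket vanishes because of a short lemma you do not isolate, Lemma~\ref{multiplicityincreaselemma}: since $D=\pi^{-1}(D_B)$, the tangency order to the divisor is computed entirely in the base direction, so $m_{z_e}(u)=m_{z_e}(\pi\circ u)$. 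The first bracket is $\leq 0$ because each forgotten node is adjacent to at least one forgotten (hence vertical, hence $I_F\geq 2$) component, and the last term is visibly $\leq 0$.
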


\begin{proof} We compare terms in formula \eqref{indexdefinition}. By Proposition \ref{regular proj}, we know that $0\leq \I[\Upsilon\pi_*\g]$. For the other inequality, we check each component of the index formula: By construction of the pseudo-gradient we have that flow lines project to flow lines, hence $$\dim W^+_{X_g\oplus X_b^\sharp}(x)=\dim W^+_{X_g}(x)+\dim W^+_{X_b}(\pi(x)).$$

Recall the definition of $m_{z_0}(u)$ as the intersection multiplicity of $u$ at $z$ with the divisor. We show the following lemma:
\begin{lemma}\label{multiplicityincreaselemma}
For an interior marked point $z_0\in C_i$ for $u$, we have $m_{z_0}(u) = m_{z_0}(\pi\circ u)$.
\end{lemma}
\begin{proof}[Proof of lemma]
We use the equivalence with the Cieliebak-Mohnke definition \eqref{orderoftangencydef}. Let $U_B$ be a neighbourhood of $\pi\circ u(z_0)$ and $U_F$ a fiber neighbourhood in $F$ of $u(z_0)$ so that $\phi:U_B\times U_F\rightarrow \bb{C}^k\times \bb{C}^l$ is holomorphic and sends $(D_B\cap U_B)\times U_F$ to $\bb{C}^{k-1}\times \bb{C}^l$. In an open set $V$ around $z_0$, we have $u\vert_V=u_B\times u_F$ in these coordinates, and the multiplicity of $u$ at $z_0$ is 
$$m_{z_0}(u)=j+1:\min_{j\geq 0}d^{(j)}_{z_0}u_B\oplus d^{(j)}_{z_0} u_F\in \bb{C}^{k-1}\times \bb{C}^l$$
while the degree of multiplicity of $\pi\circ u$ is
$$m_{z_0}(\pi\circ u)=j+1:\min_{j\geq 0}d^{(j)}_{z_0}u_B\in \bb{C}^{k-1}$$
but $d_{z_0}^{(j)}u_F\in \bb{C}^l$ by definition, so clearly $m_{z_0}(u)= m_{z_0}(\pi\circ u))$.
\end{proof}

To compare indices directly, we have:

\begin{lemma}
For an unbroken and aspherical configuration $\g$, the expected dimension of the moduli space $\mc{M}_{\Upsilon\pi_*\g}(B,L_B,\pi(x))$ for the projected configuration is:
\begin{align}\label{projectedindexformula}
\I(\Upsilon\pi_*\g,\pi(x)) &= \I(\g,x) + [\# \text{forgotten nodes}-\sum_i I_F(u_i)]\\
 \label{projectedindexformulaline2} &\quad +\sum_{z_e\in \text{Edge}^\bullet_\rightarrow}[m_{z_e}(u)-m_{z_e}(\pi\circ u)]-\dim W^+_{X_g}(x).
\end{align}
\end{lemma}
\begin{proof}[Proof of lemma]
This is the result the Riemann-Roch Theorem applied to the $\bar{\partial}$ problem on $\pi\circ u_i^*(TB,TL_B)$ on each surface component, together with assuming that the nodal and tangency conditions are cut out transversely and counting their codimension. Indeed, the index of the linearized operator at $\pi\circ u_i$ is given by $I_B(\pi\circ u_i)+\dim L_B-3$, which is the expected dimension of the space of solutions assuming surjectivity. The requirement to evaluate to $W^+_{X_b}(\pi(x))$ is codimension $\dim W^-_{X_b}(\pi(x))$. On the other hand, the index of the linearized operator at $u_i$ is $I(u_i)+\dim L-3$ where evaluation is codimension $W^-_{X}(x)$. We have $$I_B(\pi\circ u_i)+\dim W^+_{X_b}(\pi(x))=I(u_i)+\dim W^+_X(x)-I_F(u_i)-\dim W^+_{X_g}(x)$$ by definition of the vertical Maslov index and the adapted pseudo gradient, and this accounts for the last terms in lines \eqref{projectedindexformula} and \eqref{projectedindexformulaline2}.

The presence of a boundary node on $\g$ with zero length edge is a codimension $1$ restriction, so forgetting any such nodes after stabilizing with $\Upsilon$ removes this restriction. This accounts for $\#\text{forgotten nodes}$.

Each tangency to the divisor is a codimension $m_u (z_e)$ condition, which accounts for the sum in line \eqref{projectedindexformulaline2}. Since each component with an interior marked point is already stable, it follows that $\Upsilon$ forgets none of these.

Finally, by assumption the middle two terms in \eqref{indexdefinitionline2} are $0$ for both configurations, so they do not appear here.
\end{proof}
Each forgotten node must have at least one associated forgotten surface component. By the fact that $\Sigma_{L_F}\geq 2$ and non-negativity of $\mu_F$ for any $u_i$ (Proposition \ref{verticalmaslovprop}) we have
$$\# \text{forgotten nodes}-\sum_i\mu_F(u_i)\leq 0.$$

Thus, by Lemmas \ref{multiplicityincreaselemma}, \ref{projectedindexformula}, and non-negativity of $W^+_{X_g}(x)$, the proposition follows.
\end{proof}
\begin{remark}
Note that \eqref{projectedindexformula} does not hold as soon as we include any input on our configuration.
\end{remark}

We are now ready to state and prove the main theorem. If $x_M$ is the unique maximal index critical point for $b$ on $L_B$, let $i_{x_M*}:CF(L_F,\Lambda[r])\rightarrow CF(L,\Lambda^2)$ be the map induced from the inclusion $i_{x_B}:L_F\rightarrow L$ of $L_F$ as the fiber of $x_M$.

Let $i^*:\T{Hom}(\pi_1(L),(\Lambda^2)^\times)\rightarrow \T{Hom}(\pi_1(L_F),(\Lambda^2)^\times)$ be the map on representations induced from $i_{x_M*}$.

\begin{theorem}[Main Theorem \ref{maintheoremintro}]\label{potthm}
Let $E$ be a compact symplectic K\"ahler fibration and $L$ a trivially fibered, relatively spin-spin Lagrangian. Let $(P_\g)_\gamma$ be a choice of regular, coherent, stabilized, smooth perturbation datum as per Theorems \ref{transversality}, \ref{compactnessthm}, and \ref{liftconfigthm}. The only terms appearing in the second order potential for $L$ are the vertically constant lifts of index $0$ configurations with output a fiberwise maximum and horizontally constant configurations in the maximal critical fiber
\begin{equation}\label{peq}
\mathcal{W}_{L}^2[q,r](\rho)=\bigg[\mathcal{L}\circ \mu^0_{L_B}(\rho)\bigg]_{\deg_{qr}=\mc{K}_x}+i_{x_B*}\circ\mu^0_{L_F}(\iota^*\rho)
\end{equation}
where for each generating critical point $x$, we have
$$\mc{K}_x=\min \bigg\lbrace e(u):[u]\in \M(E,L,x)_0,\pi\circ u\neq const.\bigg\rbrace.$$
\end{theorem}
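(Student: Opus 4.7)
I split the sum defining $\mc{W}_L^2(\rho)$ according to the two branches of $\mc{I}_x$ and identify each branch with one term on the right-hand side of \eqref{peq}. Throughout, let $u\in\M(E,L,x)_0$ be a configuration of combinatorial type $\g$ with $\I(\g,x)=0$ counted in $\mc{W}_L^2$.

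\emph{Horizontally constant case.} If $e(\pi\circ u)=0$, then every surface component and every Morse trajectory lies in a single fiber $F_{x_i}$ for some $x_i\in\crit(b)$, and $u$ becomes a configuration for $L_F\subset F$. The index formula \eqref{indexdefinition} applied to this fiber configuration, combined with monotonicity of $L_F$ and the minimal-Maslov bound $\geq 2$, gives $I(u_i)\geq 2$ on every non-constant disk and forces $\dim W^+_X(x)=0$; consequently $x$ is the maximal-index critical point of $f$, so $x_i=x_M$ and $x=x_M^M$. Since $\pi_1(L_F)\to\pi_1(L)$ factors $\rho$ through $\iota^*\rho$, summing these contributions gives exactly $i_{x_M*}\circ\mu^0_{L_F}(\iota^*\rho)$.

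\emph{Non-constant projection case.} If $\pi\circ u$ is non-constant, Lemma \ref{regular proj} gives that $\pi\circ u$ is a regular $J_B$-holomorphic configuration in $(B,L_B)$, so its expected dimension is non-negative. Sandwiching with Proposition \ref{indexdecprop} and $\I(\g,x)=0$ yields $\I(\Upsilon\pi_*\g,\pi(x))=0$, together with equality throughout the chain of inequalities in its proof. Reading the equality case of \eqref{projectedindexformula}--\eqref{projectedindexformulaline2}, I conclude: $\dim W^+_{X_g}(x)=0$, so $x$ is the fiberwise maximum over $\pi(x)$; no disk bubbles are forgotten under $\Upsilon\pi_*$ and every surface component satisfies $I_F(u_i)=0$, using non-negativity of $I_F$ (Proposition \ref{verticalmaslovprop}) and the minimal-Maslov bound on any non-constant vertical disk; and $m_{z_e}(u)=m_{z_e}(\pi\circ u)$ at every interior marking. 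With $I_F(u_i)=0$ on each disk component, Lemma \ref{vertconst0maslemma} forces $u$ to be vertically constant, so $u$ is a Donaldson lift of $\pi\circ u$ through the fiberwise maximum over $\pi(x)$.

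\emph{Bijection and matching of coefficients.} Conversely, Theorem \ref{liftconfigthm} with Lemma \ref{indexofliftlemma} provides, for every index-$0$ counted base configuration $v$ with output $x_i$, a unique Donaldson lift through $x_i^M$ living in an index-$0$ regular moduli space. This sets up a bijection between the non-vertical configurations counted in $\mc{W}_L^2$ and the base configurations of minimum total energy $\mc{K}_{x_i^M}=Ke(v)+e_v(\mc{L}v)$. Orientation signs match by Proposition \ref{coherentorientations}, holonomies by the definition of $\T{Hol}_\rho(\mc{L}v)$, and the energy split $e(u)=Ke(v)+e_v(\mc{L}v)$ supplies exactly the $q$- and $r$-degrees appearing in $\mc{L}\circ\mu^0_{L_B}$; truncating to $\deg_{qr}=\mc{K}_x$ retains precisely the minimum-energy Donaldson lifts, yielding the first summand of \eqref{peq}. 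The main obstacle is the equality analysis in Proposition \ref{indexdecprop}, where ruling out forgotten disk bubbles requires combining the minimal-Maslov bound on vertical disks with non-negativity of $I_F$; a secondary subtlety is that vertically constant types use the almost complex structure $J^G$ rather than a generic perturbation, but compatibility with the coherent datum is provided by the regular-homotopy argument of Section \ref{liftingopsection}.
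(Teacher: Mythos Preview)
Your proposal is correct and follows essentially the same approach as the paper: split into horizontally constant and non-constant cases, use Proposition \ref{indexdecprop} with Lemma \ref{regular proj} to force $\I(\Upsilon\pi_*\g,\pi(x))=0$, read off the equality case of \eqref{projectedindexformula}--\eqref{projectedindexformulaline2} to get $I_F(u_i)=0$ and $\dim W^+_{X_g}(x)=0$, and conclude via Lemma \ref{vertconst0maslemma} that $u$ is a Donaldson lift; orientations and the $J^G$-versus-coherent-datum issue are handled by Proposition \ref{coherentorientations} and Theorem \ref{homotopy} just as you say. If anything your write-up is more explicit than the paper's, which compresses the horizontally constant case to the phrase ``for index reasons'' and packages the equality analysis into a short sub-lemma.
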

It follows that
\begin{corollary}\label{unobstructed}
If $\mu^0_{L_B}(\theta)$ is a multiple of $x_M$ on $L_B$, then $\mu^0_{L}(\rho)$ is a multiple of $x_M^M$ for an appropriate Morse function $g+\pi^*b$ on $L$.
\end{corollary}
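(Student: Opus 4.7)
The plan is to apply the index and Maslov bookkeeping developed in the proof of Theorem \ref{potthm} to \emph{every} index-zero configuration contributing to $\mu^0_L$, not just the minimal-energy ones captured by the second-order potential $\mathcal{W}_L^2$. First I would recall that for the adapted pseudo-gradient $X_f = X_g \oplus X_b^\sharp$, the critical set of $f = \pi^* b + \epsilon \sum_x \phi_x g_x$ on $L$ is $\{x_p^y : p \in \crit b,\, y \in \crit(g\vert_{L_{F_p}})\}$, and by the local product structure of the flow near critical fibers the coindex splits as
\begin{equation*}
\dim W^+_X(x_p^y) \;=\; \dim W^+_{X_b}(p) + \dim W^+_{X_g}(y).
\end{equation*}
In particular this dimension vanishes if and only if $p = x_M$ and $y$ is the fiberwise maximum $x^M$, i.e.\ $x_p^y = x_M^M$.

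Next, I would fix a regular index-zero configuration $u:C_\g \to (E,L)$ with no inputs and output $x_p^y$ contributing to $\mu^0_L$, and dichotomize on whether $\pi\circ u$ is constant. If $\pi\circ u$ is constant, then $X_b^\sharp \equiv 0$ along every tree edge, forcing the entire configuration to lie in a single critical fiber $F_p$, and every non-ghost disk component satisfies $I(u_i) = I_F(u_i) \geq \Sigma_{L_F} \geq 2$ by monotonicity of $L_F$. The index formula \eqref{indexdefinition} together with the coindex splitting above then yields
\begin{equation*}
0 \;=\; \I(\g, x_p^y) \;\geq\; \dim W^+_{X_b}(p) + \dim W^+_{X_g}(y),
\end{equation*}
which forces $p = x_M$ and $y = x^M$, so $x_p^y = x_M^M$.

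If instead $\pi\circ u$ is non-constant, Proposition \ref{indexdecprop} says that $\Upsilon\pi_*\g$ is a regular $J_B$-holomorphic index-zero no-input configuration in $(B, L_B)$ with output $p$, and hence contributes a term to $\mu^0_{L_B}$. By hypothesis $\mu^0_{L_B}$ is a multiple of $x_M$, so $p = x_M$. Reading the equality case of the index comparison \eqref{projectedindexformula}, combined with non-negativity of the vertical Maslov index (Proposition \ref{verticalmaslovprop}), Lemma \ref{multiplicityincreaselemma}, and $\Sigma_{L_F} \geq 2$, the vanishing $\I(\Upsilon\pi_*\g, p) = \I(\g, x_p^y) = 0$ then forces the remaining correction term $\dim W^+_{X_g}(y) = 0$, so $y = x^M$ and once more $x_p^y = x_M^M$.

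Combining the two cases, every nonzero term of $\mu^0_L$ has output $x_M^M$, proving the corollary. The main subtlety, shared by both cases, is that on a multi-component treed configuration the fiber-direction slacks (vertical Maslov indices on individual disk components, the fiberwise coindex at the output, and any forgotten-node contributions) are independently non-negative, so the equality $\I(\g, x_p^y) = 0$ really must force each of them to vanish separately; the adaptedness $X_f = X_g \oplus X_b^\sharp$ and the minimal Maslov bound $\Sigma_{L_F} \geq 2$ are what supply this strata-wise rigidity.
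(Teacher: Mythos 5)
Your proof is correct and takes essentially the same route the paper intends: the paper writes only ``It follows that'' for this corollary, but the dichotomy you use (horizontally constant configurations are pinned to $x_M^M$ by fiber monotonicity and the coindex splitting; horizontally non-constant ones project by Proposition~\ref{indexdecprop} and Lemma~\ref{regular proj} to index-zero outputs in $\mu^0_{L_B}$, and the equality case of \eqref{projectedindexformula} then kills $\dim W^+_{X_g}(y)$) is exactly the bookkeeping that the proof of Theorem~\ref{potthm} performs for the second-order potential, extended here to every term of $\mu^0_L$. You spell out the ``for dimensional reasons'' claim from Section~\ref{diskpotentialsection} in case 1 and reuse the projection/lifting index comparison in case 2, which is the right content. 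One small caveat worth keeping in mind (present in the paper's own phrasing as well, cf.\ the ``before cancellation'' remark in the proof of Theorem~\ref{potthm}): the hypothesis ``$\mu^0_{L_B}$ is a multiple of $x_M$'' is algebraic, while your case-2 argument uses it as if every index-zero no-input configuration in $B$ has output $x_M$; that is how the paper uses it too, and in the intended monotone/toric applications the two coincide, but strictly speaking cancellation in the base must be tracked through the orientation-preserving lifting operator (Proposition~\ref{coherentorientations}) to transfer to $\mu^0_L$.
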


\begin{remark}\label{unitremark}
The maximal index critical point $x_M^M\in CF(L,\Lambda^2)$ often functions like a unit in the $A_\infty$ algebra of $L$, and the above corollary is the first step towards showing that one has $(\mu^1_L)^2=0$. To actually achieve this with the domain dependent perturbation system, one has to introduce additional copies of $x_M^M$ to the $A_\infty$ algebra of $L$ and restrict to perturbation data that is coherent with respect to forgetting incoming edges with labels among these copies. We explain this in more detail in Subsection \ref{maurercartansection} and give a simple criterion for Floer cohomology to be unobstructed at the end of Section \ref{divisorequation}.
\end{remark}

\begin{proof}[Proof of theorem]
If we can realize every term in the second order potential \eqref{2ndpot} as either a Donaldson lift of an index $0$ configuration or horizontally constant, then we will be done due to the fact that the second summand on the right hand side of \eqref{peq} always appears in \eqref{2ndpot} and are the only horizontally constant terms appearing for index reasons. The strategy is to take a non-horizontally constant term appearing in \eqref{2ndpot}, project it, lift it, and show that we get the same configuration back.

Let $u$ correspond to horizontally non-constant configuration appearing in \eqref{2ndpot}. By Proposition \ref{indexdecprop} and Lemma \ref{regular proj}, $\pi\circ u$ appears in the potential for $L_B$ \eqref{basepot} before cancellation. By Lemma \ref{indexofliftlemma}, we have that $\mc{L}_{x^i}(\pi\circ u)$ is counted in $\mu^0_{L}$ precisely when $x^i$ is the maximum of a critical fiber (more precisely, when the evaluation of $\mc{L}_{x^i}(\pi\circ u)$ at the output lies in the stable manifold of such a critical point). Thus, $\I(\g,x^M)=\I(\Upsilon\pi_*\g,\pi(x^M))=0$.

\begin{lemma} $u=\mc{L}_{x^M}(\pi\circ u)$ for such a horizontally non-constant $u$ counted in the second order potential \eqref{2ndpotdef}.
\end{lemma}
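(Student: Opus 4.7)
The plan is to compare $u$ with the Donaldson lift $\mc{L}_{x^M}(\pi \circ u)$ and show they share the same base projection, have vanishing vertical Maslov index on each disk component, and meet the same output endpoint. Set $v := \pi \circ u$. By Lemma \ref{regular proj} and Proposition \ref{indexdecprop}, $v$ is a regular index-zero $J_B$-holomorphic configuration; by Theorem \ref{liftconfigthm} together with Lemma \ref{indexofliftlemma}, its Donaldson lift $\mc{L}_{x^M}(v)$ lies in $\mc{M}_{\pi^*\g}(E,L,J^G,x^M)$ with expected dimension zero. Since $v$ is non-constant, $\mc{L}_{x^M}(v)$ is a competitor in the minimum-energy set $\mc{I}_{x^M}$ among configurations with non-constant projection, so the hypothesis $u \in \mc{I}_{x^M}$ yields $e(u) \leq e(\mc{L}_{x^M}(v))$; equality of base energies then gives $e_v(u) \leq e_v(\mc{L}_{x^M}(v))$.

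The core step is the reverse inequality $e_v(u) - e_v(\mc{L}_{x^M}(v)) \geq \lambda \, I_F(u)$, where $\lambda$ is the monotonicity constant of $(F,L_F)$. On a single disk component with projection $v_i:D\to (B,L_B)$, I would combine the gauge transformation of Theorem \ref{oka} (flattening the pullback connection on $v_i^*E$) with the boundary-trivializing $\Phi$-extension from the proof of Proposition \ref{verticalmaslovprop}, which uses the ambiently trivial structure to kill the boundary holonomy. In this combined gauge, $\mc{L}_{x^M}(v)|_D$ becomes a constant section of a trivialized bundle while $u|_D$ becomes a bona fide $J_I$-holomorphic disk $\tilde u:(D,\partial D)\to (F,L_F)$; since the transformation preserves $a$, the difference in vertical energies on this component equals $\int \tilde u^*\omega_F = \lambda I_F(u|_D) \geq 0$ by monotonicity and the non-negativity statement in Proposition \ref{verticalmaslovprop}. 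Summing over disk components and combining with the inequality from the first paragraph forces $I_F(u|_D) = 0$ on every component.

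Lemma \ref{vertconst0maslemma} then promotes $u$ to a vertically constant configuration. Because $x^M$ is a fiberwise maximum and therefore of coindex zero, $W^+_X(x^M) = \{x^M\}$, which pins the output boundary value of $u$ to the same fiber point as that of $\mc{L}_{x^M}(v)$. A downward induction from the root along the tree then matches boundary values across nodes using the adapted pseudo-gradient flow, and the uniqueness of vertically constant lifts through a specified basepoint from Corollary \ref{liftdiskcor} forces $u = \mc{L}_{x^M}(v)$ componentwise.

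The hard part will be the energy comparison in the middle paragraph. Corollary \ref{zeroenergycorollary} expresses $e_v(\mc{L}_{x^M}(v))$ cleanly as a boundary holonomy integral, but the analogous computation for $u$ contains a term $\int_{\partial D} u^*\delta$ that depends on the actual fiber values of $u$ along $\partial D$ and not merely on $v$, so the difference with the Donaldson case is not transparent. It is precisely here that the ambient triviality of $L$ is indispensable: it permits one to gauge the boundary holonomy away via the $\phi$-extension of Proposition \ref{verticalmaslovprop}, reducing the comparison to a statement about genuine $J_I$-holomorphic disks in $F$ with constant $L_F$ boundary, where the monotonicity of $(F,L_F)$ applies directly.
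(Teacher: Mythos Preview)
Your route is genuinely different from the paper's, and the paper's is both shorter and stronger.

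The paper never touches energy.  Once it is established (just before the lemma) that $\I(\g,x^M)=\I(\Upsilon\pi_*\g,\pi(x^M))=0$, the index comparison formula \eqref{projectedindexformula} does all the work: every correction term on the right hand side is non-positive (Proposition~\ref{verticalmaslovprop}, Lemma~\ref{multiplicityincreaselemma}, $\dim W^+_{X_g}(x^M)=0$), so each must vanish.  In particular there are no marked components and $I_F(u_i)=0$ on every disk.  One then invokes regularity (Theorems~\ref{liftconfigthm},~\ref{homotopy}) to justify computing in $J^G$, and Lemma~\ref{vertconst0maslemma} forces $u$ to be the Donaldson lift.  The minimality hypothesis from Definition~\ref{2ndpotdef} is never used, so the paper actually proves the stronger statement that \emph{every} regular index-zero horizontally non-constant configuration with output a fiberwise maximum is a Donaldson lift.

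Your energy approach can be made to work, but the justification you give for the key identity $e_v(u)-e_v(\mc{L}_{x^M}v)=\lambda\, I_F(u)$ is not correct.  Neither transformation ``preserves $a$'': the gauge $\mc{G}$ of Theorem~\ref{oka} is $G_{\bb{C}}$-valued in the interior and hence not fiberwise symplectic, while the $\Phi$ of Proposition~\ref{verticalmaslovprop} is Hamiltonian but changes the connection (and hence $a$).  What actually makes the boundary integrals match is that $a|_L=0$ (Lemma~\ref{fiberedlagrangianlemma}) together with the ambient triviality of $v^*L\cong\partial D\times L_F$ forces $\delta(\partial_\theta)$ to be constant along each $L_F$-fiber, so $\int_{\partial D}u^*\delta$ is independent of which section one takes.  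Alternatively one can argue homologically: $[u]-[\mc{L}v]$ is a pure fiber class in $H_2(v^*E,v^*L)$, and $a$ restricts to $\omega_F$ there.  Either way, more care is needed than ``the transformation preserves $a$'', and the paper's index argument sidesteps the issue entirely.
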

\begin{proof}[Proof of lemma] First, we rule out that $u$ has any marked (horizontally constant) components. If $u$ has non-constant marked components, then by formula \eqref{projectedindexformula} and the fiber monotonicity, $\pi\circ u$ is of type $\Upsilon\circ \pi_*\g$ with negative index. This contradicts Lemma \ref{regular proj}. Thus, $u$ does not have any marked components. It follows from the non-negativity of all the terms in the RHS of \eqref{projectedindexformula} that they all must be $0$ for $u$, and in particular $I_F(u)=0$. 

It follows from this vertical Maslov index observation, Lemma \ref{vertconst0maslemma}, and regularity from Theorems \ref{liftconfigthm}, \ref{homotopy} that we can use a perturbation data of the form $J^G$ for the combinatorial type $\g$. However, this forces $u$ to be a Donaldson lift and we are done.
\end{proof}
From Proposition \ref{coherentorientations}, it follows that every lifted disk contributes $(+1)$ or $(-1)$ depending on the orientation we choose for $L_F$. We assume that the contribution is $(+1)$.
\end{proof}

The main point of the potential is to allow us to calculate Floer cohomology in simple way. The general theorem that one aims to prove is along the lines of \cite[Thm 4.10]{fooo3}:  If the assumptions in Corollary \ref{unobstructed} hold and we have a critical point of this potential at a particular representation, then the differential at the higher pages of the Morse-to-Floer spectral sequence vanishes and the Floer cohomology is isomorphic to the Morse cohomology. This is detailed in the next subsection.

\section{Unobstructedness and non-displaceability criteria}\label{divisorequation}
We explain how to use corollary \ref{unobstructed} to provide a criteria for the Floer cohomology of $L$ to be defined, and use an idea due to \cite{fooo3}\cite{birancornearigitity} to show when the Floer cohomology is isomorphic to the singular cohomology as a module.

\subsection{The Maurer-Cartan equation and strict units}\label{maurercartansection}
Given an $A_\infty$ algebra $(A,\mu^n)$, a \emph{strict unit}  is an element $e$ such that
\begin{gather*}
(-1)^{\vert x\vert}\mu^2(x,e)=\mu^2(e,x)=x\quad \forall x\in A\\
\mu^n(x_1\otimes\cdots e\otimes\cdots x_n)=0 \quad for\; n\neq 2.
\end{gather*}
By the $A_\infty$ relations, we have that $(\mu^1(x))^2=(-1)^{1+\vert x\vert}\mu^2(x,\mu^0)+\mu^2(\mu^0,x)$, so if $\mu(1)=e$ then $(\mu^1)^2=0$. One can also take an element $b$ of positive $q$ valuation and produce a deformed $A_\infty$ algebra $(A,\mu^n_b)$:
\begin{align}\label{ainftyperturbedrelations}
&\mu^n_b(x_1\otimes\cdots x_n)\\
\notag&\quad :=\sum_{i_0,\dots, i_{n}\in \bb{Z}_{\geq 0}}\mu^{n+i_0+\cdots i_{n}}(b^{\otimes i_0}\otimes x_1\otimes b^{\otimes i_2}\otimes\cdots x_n\otimes b^{\otimes i_{n}}).
\end{align}

Consider the \emph{weak Maurer-Cartan} equation
\begin{equation}\label{mcequation}
\mu(b):=\sum_{n\geq 0}\mu^n(b\otimes\cdots b)=0 \qquad mod \, e.
\end{equation}
For a solution we have $(\mu^1_b)^2=0$.

Expanding upon Remark \ref{unitremark}, we give an overview of the treatment of units in Floer theory from \cite{CW2}. For the sake of simplicity, we work with the more traditional products:
\begin{definition}\label{singlevardef}
For $e(u)=\int_C u^*\omega_{H,K}$, define the \emph{single variable $A_\infty$-maps} $\nu^n:CF(L,\Lambda_t)\rightarrow CF(L,\Lambda_t)$ as
\begin{align}\label{ainftymap1}
&\nu_\rho^n(x_1\otimes\cdots x_n)\\
\notag &\qquad =\sum_{x_0, [u]\in\M_\g(L,D,\underline{x},P_\g)_0} (\sigma(u)!)^{-1} \varepsilon (u) (-1)^{\sum_i i\vert x_i\vert}\mathrm{Hol}_\rho(u)t^{e(u)}x_0.
\end{align}

\end{definition}
We denote the single variable $A_\infty$ maps of a Lagrangian $L$ in the form \eqref{ainftyperturbedrelations} by $\nu^n_{\rho,b}$ or simply $\nu^n_b$ when there is no confusion. Note that the ring homomorphism
\begin{equation}\label{2to1variablemap}
\mathfrak{\Lambda}:\Lambda^2\rightarrow \Lambda_t
\end{equation}
given by
$$\sum_{i,j\geq 0}c_{ij} q^{\alpha_i}r^{\eta_j}\mapsto \sum_{i,j\geq 0}c_{ij}t^{\alpha_i+\eta_j}$$
makes $\Lambda_t$ into a $\Lambda^2$-algebra and induces a linear map $\mf{f}:CF(L,\Lambda^2)\rightarrow\linebreak CF(L,\Lambda_t)$ such that $\mf{f}\circ\mu^n=\nu^n\circ\mf{f}$.

We will call $x_M^M$ the \emph{geometric unit} and define $CF^u (L,\Lambda_q)$ to be the module generated by $(\crit{f}-x_M^M)\cup \lbrace x^\triangledown,x^\blacktriangledown,x^\vee\rbrace$ where
\begin{gather*}
\I (x^\triangledown)=\I(x^\blacktriangledown)=0\\
\I(x^\vee)=-1
\end{gather*}
and we refer to these collectively as the algebraic maxima, and called the associated labelled edges forgettable resp. unforgettable resp. weighted, and take the degree of $x^\vee$ to be the (mod $N$ reduction of) $-1$.  We consider treed disks as in Section \ref{treeddisksection} but with weights $\rho\in [0,\infty]$ attached to edges labelled with $x^\vee$ (in this way, the expected dimension of the moduli space is still \eqref{indexdefinition}). The limit along the edges labelled with one of the algebraic maxima is required to be $x_M$, and are only allowed to be output edges according to the following axiom from \cite{CW2}:

\begin{enumerate}
\item The outgoing edge $e_0$ is weighted with label $x^\vee$ only if there are two leaves $e_1$, $e_2$ exactly one of which is weighted with the same weight, and the other is labelled $x^\triangledown$, and there is a single disk S with no interior markings.

\item The outgoing edge $e_0$ can only be forgettable ($x^\triangledown$) if
either
\begin{enumerate}
\item there are two forgettable leaves, or
\item there is a single leaf $e_1$ that is weighted and the configuration has no interior markings.
\end{enumerate}
\end{enumerate}

It follows that in the context of Corollary \ref{unobstructed} that the potential must be a multiple of $x^\blacktriangledown$ in $CF^u(L,\Lambda_t)$ in case is it a multiple of the geometric unit in $L_B$.

The most important aspect of this setup is that perturbation data is required to be coherent with respect to forgetting incoming edges labelled with $x^\triangledown$. This assumption and an expected dimension argument as in \cite[Thm 4.33]{CW2} shows that $x^\triangledown$ is a strict unit for the $A_\infty$ algebra of $L$ \eqref{ainfntymaps}.

\begin{remark}
An orientation for $W^\pm_X(x^\vee)$ is assigned to agree with that of $W^\pm_X(x_M)\times \bb{R}$, where the second factor is oriented according to the weight parameter on the underlying space of weighted treed disks. In addition, as in \cite[Rem 4.25]{CW2}, constant trajectories connecting $x^\vee$ with $x^\triangledown$ resp. $x^\blacktriangledown$ are assigned orientations that agree resp. disagree with the standard orientation on $W^+_X(x_M)$.
\end{remark}

The addition of the weighted maximum provides a solution for the Maurer-Cartan equation in some cases. We have the following:

\begin{lemma}\cite[Lem 4.43]{CW2} \label{mclemma} Suppose that $\nu_\rho^0\in \Lambda_t x^\blacktriangledown$ and every non-constant disk in $(E,L)$ has positive Maslov index. Then there is a solution to the Maurer-Cartan equation \eqref{mcequation} for the $A_\infty$ algebra of $L$.
\end{lemma}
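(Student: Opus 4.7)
The plan is to construct a solution $b$ to the weak Maurer-Cartan equation by taking an ansatz concentrated on the weighted generator $x^\vee$ and reducing the problem to a single scalar equation over $\Lambda_t$. Specifically, I would try $b = w \cdot x^\vee$ for an unknown $w \in \Lambda_t$ of positive $t$-valuation, and analyze $\nu(b) := \sum_{n \geq 0} w^n \nu^n(x^\vee \otimes \cdots \otimes x^\vee)$. The axioms governing configurations with $x^\vee$-labeled edges are quite restrictive: a weighted outgoing edge is only allowed when there is exactly one weighted leaf paired with one forgettable leaf, and a forgettable outgoing edge arises only from either two forgettable leaves or from a single weighted leaf with no interior markings. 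Combined with the strict-unit property of $x^\triangledown$ and the coherence of the perturbation system under forgetting of forgettable edges, this confines all outputs of $\nu^n(x^\vee, \dots, x^\vee)$ modulo $x^\triangledown$ to multiples of $x^\blacktriangledown$.

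Consequently, working modulo $x^\triangledown$ and using the hypothesis $\nu^0_\rho \in \Lambda_t \cdot x^\blacktriangledown$, the Maurer-Cartan equation collapses to a single identity $F(w) \cdot x^\blacktriangledown \equiv 0 \pmod{x^\triangledown}$, where $F(w) \in \Lambda_t[[w]]$ has constant term equal to the coefficient of $x^\blacktriangledown$ in $\nu^0_\rho$ and linear term determined by the weighted one-input moduli contribution $\nu^1(x^\vee)$. The latter is computed from the constant Morse trajectory at $x_M$ with weight $\rho \in [0,\infty]$; the boundary of this weighted moduli at $\rho = 0$ gives the strict unit and at $\rho = \infty$ gives $x^\blacktriangledown$, so the coefficient of $w$ in $F$ is an invertible element of $\Lambda_t$ after identification with $x^\blacktriangledown$.

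I would then solve $F(w) = 0$ inductively in the $t$-adic topology. Here the positive Maslov index assumption is essential: every non-constant $J$-holomorphic disk in $(E, L)$ contributes a term of strictly positive $t$-valuation, so $F(0)$ has positive valuation and each successive correction $w_{k+1}$ chosen to kill the residual obstruction at valuation level $k$ can be taken with strictly larger $t$-valuation than $w_k$. This is a standard formal implicit function argument in $\Lambda_t$, with convergence guaranteed by the discreteness of the exponent set in the Novikov ring and the invertibility of the linear coefficient of $F$. The main obstacle will be rigorously verifying the claimed reduction of $\nu(b) \pmod{x^\triangledown}$ to a scalar equation: one must trace through the allowed configurations with multiple $x^\vee$ inputs and confirm, via the forgetting-coherence axiom, that all contributions not of the form $(\text{scalar}) \cdot x^\blacktriangledown$ either cancel or land in $\Lambda_t \cdot x^\triangledown$. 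Once this case analysis is completed, the existence of $b$ follows by the iterative solution outlined above.
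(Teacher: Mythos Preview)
Your ansatz $b = w\,x^\vee$ is exactly right, and the overall shape of the argument is sound, but you are working much harder than necessary and you have misidentified where the positive Maslov index hypothesis enters. The paper's proof is a one-line computation: using the Maslov assumption one shows that $\nu^1(x^\vee) = x^\triangledown - x^\blacktriangledown$ \emph{exactly} (no higher-$t$ corrections) and that $\nu^n(x^\vee,\dots,x^\vee)=0$ for all $n\ge 2$. Writing $\nu^0 = \mathcal{W}\,x^\blacktriangledown$, one then simply takes $w=\mathcal{W}$ and checks
\[
\nu(\mathcal{W}\,x^\vee)=\mathcal{W}\,x^\blacktriangledown+\mathcal{W}(x^\triangledown-x^\blacktriangledown)=\mathcal{W}\,x^\triangledown.
\]
No $t$-adic iteration is needed because the Maurer-Cartan equation in this ansatz is literally linear in $w$.

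There is also a gap in your justification of the reduction to a scalar equation. You appeal to the outgoing-edge axioms for $x^\vee$ and $x^\triangledown$, but those only constrain when the \emph{output} can be $x^\vee$ or $x^\triangledown$; they say nothing about preventing the output from being some other geometric critical point $x$ of the same (mod $N$) degree. The reason such terms do not appear is precisely the positive Maslov index hypothesis combined with the index formula~\eqref{indexdefinition}: any non-constant disk in a configuration contributing to $\nu^n(x^\vee,\dots,x^\vee)$ forces the expected dimension to be negative, so only the constant weighted trajectory survives in $\nu^1(x^\vee)$ and nothing survives for $n\ge 2$. You invoke the Maslov hypothesis only to say that $F(0)$ has positive valuation, but that already follows from positivity of symplectic area; the hypothesis is really doing the structural work of killing all higher-order and off-diagonal contributions.
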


\begin{proof}[Sketch of proof]
Let $\nu^0(1)=\mc{W}x^\blacktriangledown$. In general we have the formula
\begin{equation}
\nu^1(x^\vee)=x^\triangledown-x^\blacktriangledown+\sum_{\substack{x, [u]\in \mc{M}_\g(L,D,x^\vee,x)_0\\e(u)>0}}\pm t^{E(u)}x
\end{equation}
(\cite[49]{CW2}, c.f. \cite[eq 3.3.5.2]{fooo}). By the Maslov index assumption there are no $t$ terms, and moreover $\nu^n(x^\vee\otimes\cdots x^\vee)=0$. We have
\begin{gather*}
\nu(\mc{W}x^\vee)=\nu^0(1)+\nu^1(\mc{W}x^\vee)=\mc{W}x^\blacktriangledown+\mc{W}(x^\triangledown-x^\blacktriangledown)=\mc{W}x^\triangledown\in \Lambda_t x^\triangledown
\end{gather*}
which proves the lemma.
\end{proof}

\subsection{The weak divisor equation}

We use the critical points of the potential as a tool for computing $HF(L,\Lambda_t)$.

Certain moduli spaces defined via divisorial perturbations satisfy a weak form of the \emph{divisor equation} from Gromov-Witten theory. With this principle one can use some intersection theory to count the number of holomorphic disks in a certain moduli, which allows us to get a relation between $\mu^1$ and the derivative of $\mu^0$.

The relation involves forgetting the incoming edge from a Morse 1-cocycle on configurations counted by $\mu^1$. First, we must check that there is perturbation data which is coherent with respect to this action: Let $\gamma=\sum_{i=1}^k x^k$ be a Morse $1$-cocycle. Given a non-Morse configuration counted in $\mu^1(\gamma)$, one can forget the incoming edge and obtain a configuration counted by $\mu^0$ for \emph{some} perturbation datum. Thus, we could ask if it is possible to assume that there is regular perturbation datum which is coherent with respect to forgetting the single incoming edge from Morse 1-cocycles. In the rational case, this is summed up in \cite[Lem 2.35]{CW2}, which we now adapt to our case. First, let $\g$ be the above combinatorial type and $f(\g)$ be the type obtained by forgetting the incoming edge and stabilizing in such a way that $f(\g)$ is $\pi$-stable. Note that this process would only involve possibly forgetting ghost components on $\pi_*\g$ by the assumption that $L_B$ is rational and $D_B$ is stabilizing.

Given a perturbation datum on $f(\g)$, we obtain one on $\g$ via pullback:
$$P_\g=f^*P_{f(\g)}.$$
We have the following adaptation of \cite[Lem 2.35]{CW2} to our case
\begin{lemma}\label{forgetfulperturbationlemma}
Let $\g$ be a combinatorial type of $\pi$-adapted pseudoholomorphic treed disk with a single input of expected dimension at most one and suppose $J_{D_B}\in \mc{J} (B,\omega_B)$ is such that all $J_{D_B}$-holomorphic disks $u$ in $B$ with boundary in $L_B$ have positive Maslov index. For a comeager set of perturbations $\mc{P}_{f(\g)}^{reg}$, if $P_{f(\g)}\in \mc{P}_{f(\g)}^{reg}$ then the moduli space $\mc{M}_\g(E,L,f^*P_{f(\g)})$ is smooth of expected dimension.
\end{lemma}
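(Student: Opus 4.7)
The plan is to mimic the argument of \cite[Lem 2.35]{CW2}, realizing $\mc{M}_\g(E,L,f^*P_{f(\g)})$ as a fiber product of $\mc{M}_{f(\g)}(E,L,P_{f(\g)})$ with a boundary evaluation condition, and then invoking Sard--Smale to make the fiber product transverse for a comeager set of base data.

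First I would write down the forgetful relation explicitly. The type $\g$ carries a single input leaf into a critical point $x_{\text{in}}$; removing this leaf and $\pi$-stabilizing produces $f(\g)$. Because $f^*P_{f(\g)}$ is by definition independent of the forgotten edge and of any newly destabilized ghost components, a configuration $(u,C_\g)\in\mc{M}_\g(E,L,f^*P_{f(\g)})$ corresponds uniquely to a configuration $u'\in\mc{M}_{f(\g)}(E,L,P_{f(\g)})$ together with a choice of boundary marking $z\in\partial C_{f(\g)}$ on the incident disk component, subject to the matching condition $u'(z)\in W^-_X(x_{\text{in}})$ (the edge length being recovered from the Morse flow starting at $x_{\text{in}}$ and ending at $u'(z)$). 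Thus
\begin{equation*}
\mc{M}_\g(E,L,f^*P_{f(\g)})\;\cong\;\bigl\{(u',z)\in\mc{M}_{f(\g)}\times\partial C_{f(\g)}\;\bigl|\;u'(z)\in W^-_X(x_{\text{in}})\bigr\}.
\end{equation*}

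Second, I would run a universal-moduli Sard--Smale argument on the right-hand side. On the universal moduli $\mc{M}^{univ}_{f(\g)}$ parametrized by $P_{f(\g)}\in\mc{P}_{f(\g)}$, which is a Banach manifold by Theorem \ref{transversality}, the boundary evaluation $ev:\mc{M}^{univ}_{f(\g)}\times\partial C_{f(\g)}\to L$ at the prospective attachment point is a submersion: in a neighborhood of any such $(u',z)$ one may vary the Hamiltonian-perturbation and fiberwise almost-complex-structure components of $P_{f(\g)}$ to sweep out every direction of $T_{u'(z)}L$, exactly as in the transversality proof in \cite[Thm 6.1]{floerfibrations}. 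Standard Sard--Smale then yields a comeager subset $\mc{P}_{f(\g)}^{reg}\subset\mc{P}_{f(\g)}$ on which $ev(\cdot,z)$ is transverse to $W^-_X(x_{\text{in}})$ for every boundary component of $\partial C_{f(\g)}$ simultaneously, making the above fiber product smooth of the stated expected dimension.

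The main obstacle is verifying that the bijection above is not spoiled by bubbling that is invisible to $f$, in which case $\mc{M}_\g(E,L,f^*P_{f(\g)})$ would contain spurious low-dimensional strata. Here the hypotheses do the work: the positive Maslov index assumption on $J_{D_B}$-disks in $B$ rules out a horizontally non-constant ghost bubble being attached at the forgotten leaf (any such bubble would force the base configuration $\pi\circ u$ to have strictly larger expected dimension than $f(\pi_*\g)$), and the non-negativity of the vertical Maslov index (Proposition \ref{verticalmaslovprop}) together with the Donaldson-lift regularity of Theorem \ref{liftconfigthm} rules out a nontrivial vertical disk bubble at that marking, since any such component would contribute at least $\Sigma_{L_F}\geq 2$ to $\I(\g,x_{\text{in}})$ and violate the expected-dimension hypothesis $\I(\g,x_{\text{in}})\leq 1$. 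With these two bubbling possibilities excluded, the set-theoretic identification is valid and the transverse evaluation combined with regularity of $P_{f(\g)}$ endows $\mc{M}_\g(E,L,f^*P_{f(\g)})$ with a smooth manifold structure of the correct dimension, finishing the proof.
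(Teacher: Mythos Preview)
Your fiber-product identification and the Sard--Smale step are essentially right in the case where the incoming disk component $D_0$ survives in $f(\g)$---i.e.\ when $D_0$ is non-constant (it then carries divisor markings and remains stable after the leaf is deleted). The gap is in the complementary case, which is exactly the one the paper isolates: when $D_0$ is a \emph{constant} disk carrying only the input leaf and one boundary node. Then $f$ collapses $D_0$ together with the adjacent edge $e_1$, and a point of $\mc{M}_\g$ is \emph{not} recovered from $(u',z)$ with $z\in\partial C_{f(\g)}$ on a surviving disk; the extra datum sits on the merged Morse edge instead. The paper handles this by varying the \emph{domain-dependent Morse vector field} on that edge to obtain transversality of the evaluation into $W^-_X(x_{\text{in}})$. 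Your argument never invokes this: the perturbations you vary (Hamiltonian connection and fiber almost complex structure on $\mc{M}^{univ}_{f(\g)}$) do not see the ghost component or the merged edge at all, so surjectivity of $ev$ is not established there.

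Your third paragraph does not close this gap. The type $\g$ is fixed, so ``bubbling'' in the Gromov-limit sense is not the issue; the question is whether your set-theoretic identification holds for that fixed $\g$, and a constant incoming disk is a legitimate combinatorial possibility that the positive-Maslov hypothesis says nothing about (it constrains only non-constant disks). Separately, the dimension claim you make to exclude a vertical incoming disk---that $I(D_0)\geq\Sigma_{L_F}\geq 2$ forces $\I(\g,x_{\text{in}})>1$---is not valid as stated: the Maslov index enters \eqref{indexdefinition} only through $\sum_i I(u_i)$ and can be compensated by the other terms, so $I(D_0)\geq 2$ alone does not bound $\I(\g)$ from below. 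The paper's route is the case split (non-constant incoming disk: use the holomorphic perturbation; constant incoming disk: use the domain-dependent Morse function), and you need the second branch.
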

The proof is almost exactly the same as in \cite{CW2}. By Proposition \ref{verticalmaslovprop}, the positive Maslov assumption in the base implies the same in the total space. One argues by cases on the whether the incoming disk component is constant or not. If it is not constant, then $f(\g)$ does not forget any disk components and the perturbation data as in Definition \ref{perturbationdatadef} is sufficient to achieve transversality. If the incoming disk is constant and must be forgotten, then one argues that a domain dependent Morse function is sufficient to achieve transversality on the remaining edge.

The second step in the derivation of the divisor equation is to write $\partial_\rho\mu^0_\rho$ in terms of $\mu^1(\gamma)$. To this end, we define a smooth manifold structure on $\text{Hom}(\pi_1(L),\Lambda^{2\times})$. In order for the exponential sum to converge, we have to introduce a topology as in \cite{fooocyc}.
\begin{definition}
Let $$\textbf{x}_k=\sum_{i,j}c^k_{ij}q^{\rho_i}r^{\eta_j}\in\Lambda^2,\quad \textbf{x}=\sum_{i,j}c_{ij}q^{\rho_i}r^{\eta_j}$$ be in $\Lambda^2$. We say that $\textbf{x}_k$ converges to $\textbf{x}$ if each $c_{ij}^k$ converges to $c_{ij}$ in $\C$.

\end{definition}

\begin{lemma}
For $p\in\Lambda^2$, the sequence $\sum_{n=0}^N\frac{p^n}{n!}$ converges in the above topology.
\end{lemma}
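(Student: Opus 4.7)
The plan is to peel off the absolute constant $c_0 := [q^0 r^0]p \in \C$, writing $p = c_0 + p'$ so that every monomial $q^\rho r^\eta$ of $p'$ has strictly positive total exponent $\rho+\eta$. This positivity is forced by the defining constraints of $\Lambda^2$: when $\rho = 0$, the condition $(1-\epsilon)\rho + \eta \geq 0$ gives $\eta \geq 0$, hence $\eta > 0$ after removing the constant term; when $\rho > 0$, we have $\rho + \eta \geq \epsilon\rho > 0$. The finiteness axiom then implies that $\{\rho+\eta : [q^\rho r^\eta]p' \neq 0\}$ is a discrete subset of $(0,\infty)$, so I would set
\[
v_* := \min\{\rho+\eta : [q^\rho r^\eta] p' \neq 0\} > 0
\]
(the case $p' = 0$ being immediate, as $\sum_{n=0}^N c_0^n/n! \to e^{c_0}$ in $\C$).

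Next, I would expand via the binomial theorem in the commutative ring $\Lambda^2$:
\[
\sum_{n=0}^N \frac{p^n}{n!} = \sum_{\substack{a,b\geq 0\\a+b\leq N}} \frac{c_0^a\,p'^b}{a!\,b!}.
\]
For a fixed target monomial $q^\alpha r^\beta$, every monomial of $p'^b$ satisfies $\rho+\eta \geq b v_*$, so $[q^\alpha r^\beta]p'^b = 0$ once $b > (\alpha+\beta)/v_*$. Only finitely many $b$ contribute, and for each such $b$ the partial sum $\sum_{a=0}^{N-b} c_0^a/a!$ converges in $\C$ to $e^{c_0}$ as $N \to \infty$. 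Hence the coefficient of $q^\alpha r^\beta$ in $\sum_{n=0}^N p^n/n!$ converges in $\C$ to $e^{c_0}\sum_{b \leq (\alpha+\beta)/v_*}[q^\alpha r^\beta](p'^b)/b!$, which is exactly coefficient-wise convergence.

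I would then verify that the limit lies in $\Lambda^2$. For any $M > 0$, monomials with $\rho+\eta \leq M$ appearing in the limit can only come from $p'^b$ with $b \leq M/v_*$; each such $p'^b$ itself lies in $\Lambda^2$ and has only finitely many monomials with $\rho+\eta \leq M$, and the range of $b$ is finite, so the total is finite.

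The main obstacle is selecting the right valuation. Because $\Lambda^2$ permits negative $r$-exponents whenever $\rho > 0$, a purely $q$-adic filtration gives no lower bound on $\eta$ in $p'^b$, and one could worry that infinitely many $b$ would contribute to a given $q^\alpha r^\beta$. The constraint $(1-\epsilon)\rho + \eta \geq 0$ is what rescues this by forcing $\rho + \eta \geq \epsilon\rho$ in the positive-$\rho$ regime, so the \emph{total} valuation $\rho+\eta$, rather than $\rho$ alone, is bounded below in a way that controls the sum.
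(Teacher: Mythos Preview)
Your proposal is correct and follows essentially the same approach as the paper: both arguments hinge on the observation that the constraints $\rho\geq 0$ and $(1-\epsilon)\rho+\eta\geq 0$ force $\rho+\eta=0$ only when $\rho=\eta=0$, so after peeling off the constant term the remainder has strictly positive minimal total valuation, and powers of it march off to infinity in the $\rho+\eta$ filtration. Your version is in fact more careful than the paper's: the paper sketches the two cases (constant term zero vs.\ nonzero) and asserts that ``there are finitely many non-zero coefficients below any bounded degree,'' whereas you make the coefficient-wise convergence explicit via the binomial expansion $\sum_{a+b\leq N} c_0^a p'^b/(a!\,b!)$ and verify directly that only finitely many $b$ contribute to any fixed monomial. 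This extra detail is useful and does not deviate from the paper's line of argument.
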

\begin{proof}
Define $$\exp(p)=\sum_{n=0}^\infty\frac{p^n}{n!}.$$
If this is well defined in $\Lambda^2$, then it is clear that this is the limit of the sequence. Let $p=\sum_{i,j\geq 0}c_{ij} q^{\rho_i} r^{\eta_j}\in\Lambda^2$ with $\rho_0$ and $\eta_0$ such that $\rho_0+\eta_0= \min_{i,j} \rho_i+\eta_j$. Then $$\exp(p)=1+\sum_{n=0}^\infty c_{00}^n q^{n\rho_0}r^{n\eta_0}/n! +\T{terms with higher valuation}.$$ From $\rho_0+\eta_0\geq 0$ and $(1-\epsilon)\rho_0+\eta_0\geq 0$, we have that $\rho_0+\eta_0=0$ if and only if $\rho_0=0$ and $\eta_0=0$, so we take two cases. First let us assume that $\rho_0+\eta_0>0$. Then $n\rho_0+n\eta_0\rightarrow \infty$, so there are finitely many non-zero coefficients below any bounded degree.\\

If $p=c_{00}+\T{higher degree terms}$ with $c_{00}\neq 0$, then $$\exp(p)=1+\sum_{n=1}^\infty c_{00}^n/n!+\sum_{n=1}^\infty c_{00}^{n-1}q^{\rho_1}r^{\eta_1}/n!+\cdots$$ It is also clear that $\exp(p)\in\Lambda^2$ in this case.
\end{proof}

It follows that we can endow the space of representations 
$$\T{Hom}(\pi_1(L),\Lambda^{2\times})\cong H^1(L,\Lambda^{2\times})$$
 with the structure of a smooth manifold using $\exp$. Indeed, the exponential allows $\Lambda^{2\times}$ to inherit a $\Lambda^2$-module structure and we can form the tensor product
$$H^1(L,\Lambda^2)\otimes_{\Lambda^2}\Lambda^{2\times}.$$

If we assume that $H^1(L,\Lambda^2)$ is free, then 
$$H^1(L,\Lambda^2)\otimes_{\Lambda^2}\Lambda^{2\times}\cong H^1(L,\Lambda^{2\times}),$$
 and we have a surjective map given by $H^1(L,\Lambda^2)\rightarrow H^1(L,\Lambda^2)\otimes_{\Lambda^2}\Lambda^{2\times}$ that is locally an isomorphism. By universal coefficients followed by the Hurewicz homomorphism we have 
$$H^1(L,\Lambda^{2\times})\cong \T{Hom}(H_1(L),\Lambda^{2\times})\cong \T{Hom}(\pi_1(L),\Lambda^{2\times}).$$
 This gives us a tangent space at each point. It only remains to check that the transition maps are smooth, which we leave to the reader.

From the above discussion, both $\mu^0_L$ and the second order potential define smooth maps $H^1(L,\Lambda^{2\times})\rightarrow CF(L,\Lambda^2)$. The derivative of $\mu^0_\rho$ in the direction $\gamma\in H^1(L,\Lambda^2)$ becomes evaluation of the $1$-cycle $[\partial u]$ at $\gamma$, i.e.
\begin{equation}
\partial_\gamma \mu^0_\rho= \sum_{u\in \mathcal{M}(L,D,x)_0}(\sigma(u)!)^{-1}\varepsilon (u)\T{Hol}_\rho(u)\gamma([\partial u]) q^{e(\pi\circ u)}r^{e_v(u)}x.
\end{equation}

Turning to $\mu^1_\rho$, \cite[Prop 2.36]{CW2} explains that 
\begin{align}
-\mu^1_\rho(\gamma)=&\sum_{u\in \mathcal{M}(L,D,x)_0}(\sigma(u)!)^{-1}\varepsilon (u)\T{Hol}_\rho(u)\langle[\partial u]^{PD},\gamma\rangle q^{e(\pi\circ u)}r^{e_v(u)}x
\end{align}
which follows from using compatible perturbations as in Lemma \ref{forgetfulperturbationlemma}, and transversality of $\partial u$ to unstable manifolds of critical points. Thus, the \emph{weak divisor equation}
\begin{equation}
-\mu^1_\rho(\gamma)=\partial_\gamma \mu^0_\rho
\end{equation}
shows that at a critical point for $\mu_\rho^0$, the Floer differential vanishes on all Morse $1$-cocycles.

In practice, it is much easier to search for critical points when the coefficients come from the single variable Novikov ring $\Lambda_t$ \eqref{uninovikovdef}: Similar to the above discussion, the weak divisor equation
$$-\nu^1_\rho(\gamma)=\partial_\gamma \nu^0_\rho$$ holds for Morse 1-cocycles in \eqref{singlevardef}. However, in our setup we work with a non-trivial solution to the Maurer-Cartan equation (Lemma \ref{mclemma}). Thus, we need to check the following:

\begin{lemma}
Suppose all non-constant holomorphic disks have Maslov index at least $2$. Then $\nu^1_{\rho,\mc{W}x^\vee}(\gamma)=\nu^1_{\rho}(\gamma)$ on Morse 1-cocycles. Hence 
\begin{equation}\label{wdequationmc}
\nu^1_{\rho,\mc{W}x^\vee}(\gamma)=\partial_\gamma \nu^0_\rho.
\end{equation}
\end{lemma}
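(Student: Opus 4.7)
The plan is to expand the deformed differential according to \eqref{ainftyperturbedrelations}, show that every extra term beyond $\nu^1_\rho(\gamma)$ vanishes, and then conclude via the weak divisor equation established above. Writing
$$\nu^1_{\rho,\mc{W}x^\vee}(\gamma)=\nu^1_\rho(\gamma)+\sum_{\substack{i_0,i_1\geq 0\\i_0+i_1\geq 1}}\mc{W}^{i_0+i_1}\,\nu^{1+i_0+i_1}\bigl((x^\vee)^{\otimes i_0}\otimes\gamma\otimes(x^\vee)^{\otimes i_1}\bigr),$$
the reduction to $\nu^1_{\rho,\mc{W}x^\vee}(\gamma)=\nu^1_\rho(\gamma)$ amounts to showing each summand on the right vanishes. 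This is a parameterized variant of the identity $\nu^n((x^\vee)^{\otimes n})=0$ for $n\geq 2$ that is invoked in the proof of Lemma \ref{mclemma}, and I would deduce it by the same ingredients.

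More concretely, each weighted leaf labelled $x^\vee$ contributes a free weight parameter in $[0,\infty]$ to the underlying moduli of weighted treed disks; the expected dimension formula \eqref{indexdefinition} already accounts for these weights, but the axioms listed before Lemma \ref{mclemma} forbid the weight from being unconstrained unless it is partnered with a forgettable leaf $x^\triangledown$ of matching weight or a non-constant disk bubble. Under the hypothesis that every non-constant holomorphic disk has Maslov index at least $2$, such a bubble contributes a summand of size at least $\dim L-1$ to the expected dimension after the attaching-node constraint, which is strictly too large for a zero-dimensional count. Hence the only contributions to $\nu^{1+i_0+i_1}((x^\vee)^{\otimes i_0}\otimes\gamma\otimes(x^\vee)^{\otimes i_1})$ come from specialising each $x^\vee$ input to one of the two endpoints $x^\triangledown$ or $x^\blacktriangledown$, and the surviving configurations become purely Morse-theoretic.

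The endpoint specialisations then cancel in pairs. By the orientation convention stipulated in the paragraph before Lemma \ref{mclemma}, the constant trajectories $x^\vee\to x^\triangledown$ and $x^\vee\to x^\blacktriangledown$ carry opposite signs, so substituting $x^\vee\mapsto x^\triangledown-x^\blacktriangledown$ expresses each remaining term as a signed sum of Morse contributions; these in turn vanish by strict unitality of $x^\triangledown$ combined with the identification $\nu^2(x^\triangledown,\gamma)=\pm\,\nu^2(x^\blacktriangledown,\gamma)$ on purely Morse configurations. The principal obstacle I expect is bookkeeping the Koszul signs when $\gamma$ is interleaved with the $x^\vee$ insertions at various positions, which requires care to ensure the pairwise cancellation is exact; this amounts to a variant of the sign verification carried out in \cite[Thm 4.33]{CW2}. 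Once the reduction $\nu^1_{\rho,\mc{W}x^\vee}(\gamma)=\nu^1_\rho(\gamma)$ is in hand, equation \eqref{wdequationmc} follows immediately from the weak divisor equation for the undeformed differential (up to the sign convention adopted in the statement).
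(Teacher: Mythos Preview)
Your expansion of the deformed differential is correct, but the subsequent argument rests on a misreading of the axioms preceding Lemma~\ref{mclemma}. Those two axioms constrain when the \emph{outgoing} edge may be labeled $x^\vee$ or $x^\triangledown$; they say nothing about weights on \emph{incoming} $x^\vee$ edges. An incoming leaf labeled $x^\vee$ simply carries a free parameter $\rho\in[0,\infty]$, and this is already absorbed into the index formula via the convention $\I(x^\vee)=-1$. Consequently there is no mechanism that ``specialises each $x^\vee$ input to one of the two endpoints $x^\triangledown$ or $x^\blacktriangledown$'', and the pairwise cancellation you describe---substituting $x^\vee\mapsto x^\triangledown-x^\blacktriangledown$ inside a higher $\nu^n$---is not a valid manoeuvre. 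The identity $\nu^1(x^\vee)=x^\triangledown-x^\blacktriangledown$ concerns the \emph{output} of $\nu^1$, not a replacement rule for inputs, and the purported identification $\nu^2(x^\triangledown,\gamma)=\pm\nu^2(x^\blacktriangledown,\gamma)$ is not established anywhere.

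The paper's proof is much more direct and avoids all of this. Using transversality and the index formula~\eqref{indexdefinition}, the Maslov $\geq 2$ hypothesis (together with transverse divisor intersections) forces every extra term with $i_0+i_1\geq 1$ and a non-constant disk component to have strictly positive expected dimension, so no non-Morse configurations contribute. For purely Morse configurations the index formula allows only $i_0+i_1=1$, and in that case the output axioms (the ones you cited, now applied correctly to the \emph{root}) force the output to be $x^\vee$; but $x^\vee$ limits to the geometric unit $x_M^M$, and there are no gradient trajectories from a Morse $1$-cocycle $\gamma$ down to the maximum. This kills the remaining term without any cancellation argument.
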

\begin{proof}
Assume we have chosen appropriate perturbation data so that all holomorphic disks for index $0$ configurations intersect the divisor transversely or not at all. For $i_0+\cdots i_n\geq 1$ in the definition \eqref{ainftyperturbedrelations}, transversality and the index formula \eqref{indexdefinition} tells us that no non-Morse configurations contribute, with Morse configurations only possible for $i_0+\cdots i_n=1$. In the latter case, the only possibility is a configuration with output $x^\vee$, which is impossible since there are no flow lines from $\gamma$ to the geometric unit. The lemma follows.
\end{proof}

\subsubsection{Relation between Floer cohomologies for different rings}
We make sure that the Floer cohomology induced from $\mu^1$ can be related to that of $\nu^1$. For questions of displaceability, we only need to consider $HF(L,\Lambda_t^{(0)})$ where the coefficients come from the universal Novikov field:
\begin{equation}
\Lambda_t^{(0)}:=\left\lbrace \sum_{i} c_{i} t^{\alpha_i} \vert c_{i}\in \bb{C},\alpha_i\in \bb{R}, \#\lbrace c_{i}\neq 0, \alpha_i\leq N\rbrace <\infty \right\rbrace.
\end{equation}

We have
$$HF(L,\Lambda^2)\otimes_{\Lambda^2}\Lambda^{(0)}_t\cong HF(L,\Lambda_t^{(0)})$$
where $\Lambda_t^{(0)}$ is a $\Lambda^2$-algebra under the map $\exp\circ \mf{f}$.

Let 
$$\mathcal{W}_L^{2}[t]:=\mathcal{W}_L^2[q,r]\otimes 1\in CF(L,\Lambda_t)$$ denote the \emph{single variable second order potential} for $L$. We will use the following theorem extensively in our applications, c.f. \cite[Thm 4.10]{fooo3} \cite[Prop 2.37]{CW2}:

\begin{theorem}\label{crit}
Suppose that $\mu^0$ and hence $\nu^0$ is a multiple of the geometric unit, $\nu^0=\mc{W}x^M_M$. Suppose that for $\rho\in \T{Hom}(\pi_1(L),\Lambda_t^\times)$, $D_\rho W_L^{2}[t]=0$, the Hessian $D^2_{\rho}W_L^{2}[t]$ is surjective, and further that $H^*(L,\Lambda_t)$ is generated by degree one elements via the cup product. Then there is a representation $\xi\in\T{Hom}(\pi_1(L),\Lambda_t^\times)$ so that
$$H(CF(L,\Lambda_t),\nu^1_{\xi,\mc{W}x^\vee})\cong H^{Morse}(L,\Lambda_t).$$
\end{theorem}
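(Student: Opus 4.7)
The plan is to proceed in three stages: first, upgrade the critical point $\rho$ of the second-order potential to a genuine critical point $\xi$ of the full potential $\nu^0$; second, apply the weak divisor equation at $\xi$ to show that the deformed Floer differential $\nu^1_{\xi,\mathcal{W}(\xi)x^\vee}$ annihilates every degree one Morse cocycle; and third, propagate this vanishing to all cohomology classes via a spectral sequence argument, exploiting the generation-by-degree-one hypothesis.

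For the first stage, write $\nu^0_\eta = \mathcal{W}_L^{2}[t](\eta) + R(\eta)$, where $R(\eta) \in \Lambda_t \cdot x_M^M$ collects the terms omitted from the second-order potential, all of strictly higher $t$-valuation than $\mathcal{K}_x$. Since $\nu^0 \in \Lambda_t \cdot x_M^M$, the critical-point equation $\partial_\gamma \nu^0_\eta = 0$ reduces to a single scalar equation on $\T{Hom}(\pi_1(L),\Lambda_t^\times)$. The surjectivity of the Hessian $D^2_\rho \mathcal{W}_L^{2}[t]$ combined with the implicit function theorem in the $t$-adic topology then produces a unique representation $\xi$, close to $\rho$, solving $D_\xi \nu^0 = 0$; one constructs $\xi$ order-by-order in the Novikov valuation, the higher-order corrections from $R$ being inverted at each step by the non-degenerate Hessian.

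For the second stage, Proposition \ref{verticalmaslovprop} together with the minimal Maslov hypothesis on $L_F$ ensures (given a corresponding positivity assumption on $L_B$) that every non-constant disk in $(E,L)$ has positive Maslov index, so Lemma \ref{mclemma} applies and $b := \mathcal{W}(\xi) x^\vee$ is a weak Maurer-Cartan element; in particular $(\nu^1_{\xi,b})^2 = 0$. The weak divisor equation \eqref{wdequationmc} evaluated at $\xi$ then gives
\[
\nu^1_{\xi,b}(\gamma) = \partial_\gamma \nu^0_\xi = 0
\]
for every Morse 1-cocycle $\gamma$, so every degree one Morse cohomology class lifts to a class in the deformed Floer complex.

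For the third stage, filter $CF(L,\Lambda_t)$ by $t$-valuation to obtain a spectral sequence whose $E_0$-page is the Morse complex and whose $E_1$-page is $H^{\T{Morse}}(L,\Lambda_t)$. The deformed product $\nu^2_b$ induces a graded-commutative product on each $E_r$, coinciding with the classical cup product on $E_1$, and each differential $d_r$ ($r \geq 1$) is a derivation for this product. Step two supplies $d_r = 0$ on every degree one class at $E_r$; the derivation property, combined with the hypothesis that $H^*(L,\Lambda_t)$ is generated as a ring by degree one elements, inductively forces $d_r \equiv 0$ on $E_r$ for every $r \geq 1$. The spectral sequence therefore collapses at $E_1$, yielding $H(CF(L,\Lambda_t),\nu^1_{\xi,b}) \cong H^{\T{Morse}}(L,\Lambda_t)$.

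The main obstacle is the third stage: verifying the derivation property of the higher differentials and the identification of the $E_1$-product with the classical cup product. These rest on the $A_\infty$ relations for the deformed algebra, which because $b$ solves the Maurer-Cartan equation mod the unit reduce $\nu^1_b$ to a true chain-level derivation of $\nu^2_b$ up to terms supported on the image of the unit (and hence of strictly higher filtration). The analog of this filtered $A_\infty$ bookkeeping for rational Lagrangians appears in \cite{CW2}, and should transfer verbatim to the present setting using the coherent perturbation system of Theorem \ref{transversality}.
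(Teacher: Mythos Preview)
Your three-stage outline matches the paper's proof closely in the first two stages: the paper also upgrades $\rho$ to a genuine critical point $\xi$ of $\nu^0$ by an order-by-order implicit-function argument (citing \cite[Thm 10.4]{fooo3}), and then applies the weak divisor equation \eqref{wdequationmc} to conclude $\nu^1_{\xi,\mc{W}x^\vee}(\gamma)=0$ on Morse $1$-cocycles.

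The difference is in stage three. You invoke the multiplicative structure of the energy spectral sequence, asking that each $d_r$ be a derivation for the induced product, and you flag this verification as the main obstacle. The paper bypasses this: it works directly at chain level and proves that $\nu^1_{\xi,\mc{W}x^\vee}$ vanishes on every Morse cocycle via a double strong induction on energy $\beta$ and Morse degree. The inductive step is the $A_\infty$ relation
\[
\nu^1(\nu^2(\gamma_1\otimes\gamma_2))=\pm\nu^2(\nu^1(\gamma_1)\otimes\gamma_2)\pm\nu^2(\gamma_1\otimes\nu^1(\gamma_2))\pm\nu^3(\cdots\otimes\nu^0\otimes\cdots),
\]
where the $\nu^3$ terms vanish outright because $\nu^0$ is a multiple of the strict unit $x^\triangledown$ after the Maurer-Cartan deformation. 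Splitting by $t$-degree $\beta$ and using $\nu^2_0=\cup$, the paper expresses $\nu^1_\beta(\gamma_1\cup\gamma_2)$ in terms of strictly smaller $(\beta,\deg)$ and concludes. This is effectively a chain-level proof of the ``derivation property'' you need, so your obstacle is exactly the content of the paper's lemma. The paper's route is more self-contained (no appeal to filtered $A_\infty$ spectral-sequence machinery from \cite{CW2}); your route is conceptually cleaner but leaves that machinery as a black box.
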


\begin{proof}
Let $K$ in the minimal coupling form be large enough so that for all classes $[u]\in H_2(E,L)$ with holomorphic representatives and $\pi_*[u]\neq 0$ we have
$$\int_D u^*\omega_{H,K}>\lambda\Sigma_{L_F}$$
where $\lambda\Sigma_{L_F}$ is the energy quantization for $(L_F,\omega_F)$. Then $W^2_L[t]$ consists of the first and second order terms of $\nu^0_{L}$. Therefore, we can apply the induction argument from the proof of the strongly non-degenerate case of \cite[Thm 10.4, Lemma 10.16]{fooo3} to produce a critical point $\xi$ of $\nu^0$: here we use that the Hessian is non-degenerate and solve for $\xi$ order-by-order. By the weak divisor equation \eqref{wdequationmc} we have that the Floer differential $\nu_{\xi,\mc{W}x^\vee}^1$ vanishes on Morse 1-cocycles. 

To argue that $\nu_{\xi,\mc{W}x^\vee}^1$ vanishes on all Morse cocycles, we use an induction on cohomological degree and symplectic area, similar to \cite[Lem 13.1]{fooo3}, \cite[Prop 2.31]{CW2}. Recall the existence of a Morse-to-Floer spectral sequence that is induced by the energy filtration on $CF(L,\Lambda_t)$ \cite[Sec 8]{ohspec}. The $0^{th}$ differential is simply the Morse differential.
\begin{lemma}
$\nu_{\xi,\mc{W}x^\vee}^1(\gamma)=0$ for any Morse cocycle $\gamma$ on the first page of the Morse-to-Floer spectral sequence.
\end{lemma}
\begin{proof}
By the $A_\infty$ relations,
\begin{align*}
\nu^1(\nu^2(\gamma_1\otimes\gamma_2))&=\nu^2(\nu^1(\gamma_1)\otimes\gamma_2))\pm\nu^2(\gamma_q\otimes\nu^1(\gamma_2))\\ 
&\quad\pm \nu^3(\gamma_1\otimes\gamma_2,\nu^0)\pm \nu^3(\gamma_1\otimes\nu^0\otimes\gamma_2)\pm\nu^3(\nu^0\otimes\gamma_1\otimes\gamma_2)
\end{align*}
where we shorten notation $\nu^n_{\xi,\mc{W}x^\vee}=:\nu^n$. We deduce that the $\nu^3$ terms are zero due to the assumption $\mu^0$ is a multiple of the $x^\blacktriangledown$ and hence $\nu^0$ is a multiple of a strict unit by Lemma \ref{mclemma}. Let $\nu^n_\beta$ be the sum of terms in $\nu^n$ that are a $\bb{C}$ multiple of $t^\beta$ for $\beta\geq 0$. We have
\begin{align*}\sum_{\beta_1+\beta_2=\beta}\nu^1_{\beta_1}(\nu^2_{\beta_2}(\gamma_1\otimes\gamma_2))&=\sum_{\beta_1+\beta_2=\beta}\nu^2_{\beta_1}(\nu^1_{\beta_2}(\gamma_1)\otimes\gamma_2)\\
&\quad +\sum_{\beta_1+\beta_2=\beta}\nu^2_{\beta_1}(\gamma_1\otimes\nu^1_{\beta_2}(\gamma_2))
\end{align*}

Note that $\nu^2_0(\gamma_1\otimes\gamma_2)=\gamma_1\cup \gamma_2$ since $\mc{W}x^\vee$ is either zero or has positive $t$-valuation. Thus
\begin{align*}\nu^1_\beta(\gamma_1\cup \gamma_2)=&\sum_{\beta_1+\beta_2=\beta}\nu^2_{\beta_1}(\nu^1_{\beta_2}(\gamma_1)\otimes\gamma_2))+\sum_{\beta_1+\beta_2=\beta}\nu^2_{\beta_1}(\gamma_1\otimes\nu^1_{\beta_2}(\gamma_2))\\&+\sum_{\beta_1+\beta_2=\beta,\, \beta_2>0}\nu^1_{\beta_1}(\nu^2_{\beta_2}(\gamma_1\otimes\gamma_2)).
\end{align*}
Using this expression, we proceed by strong induction on Morse cohomological degree of $\gamma_1\cup\gamma_2$ and the energy $\beta$. For $d_i\in\mathbb{N}_0$ and $\alpha_i\in\R_{\geq 0}$, say that $(\alpha_1,d_1)\leq (\alpha_2,d_2)$ if $\alpha_1 <\alpha_2$ or $\alpha_1 =\alpha_2$ and $d_1<d_2$. The base step $(\beta,\deg\gamma_1\cup\gamma_2)=(0,d)$ is given, since the computation takes place on the first page of the Morse-to-Floer spectral sequence and the $\gamma_i$ are cocycles. In the induction step the first two terms on the right hand side vanish by the induction hypothesis, since $(\beta_2,\deg (\gamma_i))< (\beta,\deg (\gamma_1\cup \gamma_2))$. The last term on the right hand side also vanishes by the induction hypothesis since $\beta_1<\beta$. This proves the lemma.
\end{proof}

It follows that the Morse-to-Floer spectral sequence collapses after the first page, and 
$$H(CF(L,\Lambda_t),\nu^1_{\xi,\mc{W}x^\vee})\cong H^{Morse}(L,\Lambda_t).\vspace{-2em}$$
\end{proof}

\subsection{Hamiltonian isotopy invariance and obstruction to displaceability}\label{hamiltonianinvariancesection}
The main focus of this paper is to deduce when a Lagrangian in a certain class is Hamiltonian non-displaceable from itself. As such, we don't prove complete invariance of input data in the sense that a different perturbation data selection, Maurer-Cartan solution, and Rank 1 representation will give a quasi-isomorphic $A_\infty$ algebra. Any Hamiltonian perturbation of $L$ gives an isomorphism of the moduli spaces of Floer trajectories for the pullback perturbation. The pullback also preserves the symplectic area of each configuration. Thus our definition of $H(CF(L,\Lambda_t),\nu^1_{\xi,\mc{W}x^\vee})$ is a Hamiltonian isotopy invariant for each perturbation data and fibered divisor. More generally, the results in \cite{CW2} on invariance of input data up to $A_\infty$ homotopy equivalence, e.g. divisor (Theorem 5.12), perturbation data (Corollary 5.11), and Maurer-Cartan element (Lemma 4.42)  ``lift'' to our situation when we vary our input data across pullbacks of stabilizing divisors and fibered perturbation data, and for a general Maurer-Cartan element. To see that the non-vanishing of our fibration invariant over a Novikov field obstructs displaceability, we mimic the construction from \cite[Sec 6]{CW2} to sketch the definition of an $A_\infty$ bimodule structure on $CF(L,\phi(L),\Lambda_t)$ for a Hamiltonian diffeomorphism $\phi$ such that $L\cap\phi(L)$ \emph{cleanly} ($L\cap\phi(L)$ is a manifold with tangent space $TL\cap T\phi(L)$). We further sketch the fact that different choices of $\phi$ give quasi-isomorphic $A_\infty$ bimodule structures, and thus give isomorphisms on cohomology.

First, by the Theorems \ref{transversality}\ref{compactnessthm}, we have a coherent Baire set of domain dependent, tamed almost complex structures with which we can define the $A_\infty$ algebra of $\phi(L)$. Namely, the perturbation data for $\phi(L)$ is taken as pullback $\phi^*P_\g$ for any system from the aforementioned lemmas. It follows that the holomorphic curve counts for $(E,\phi(L),(\phi^*P_\g)_\gamma,\phi_*D)$ are in bijection with those of $(E,L,(P_\g)_\gamma,D)$. Thus, it makes sense to talk of $(CF(\phi(L),\Lambda_t),\mu^n_{\phi(L)})$, a Maurer-Cartan solution $\phi_*(\mc{W}x^\vee)$ from Lemma \ref{mclemma}, and thus the homology of this complex.

Next, assuming the clean intersection condition, one can put Morse-Smale functions $F$ on $L\cap \phi(L)$ and form the space of Floer cochains as

$$CF(L,\phi(L),\Lambda_t):=\bigoplus_{x\in \crit(F)}\Lambda_t\cdot x.$$
One can place an $A_\infty$ bimodule structure on this, that is, maps
$$\mu^{a\vert b}:CF(L,\Lambda_t)^{\otimes a}\otimes CF(L,\phi(L),\Lambda_t)\otimes CF(L,\Lambda_t)^{\otimes b}\rightarrow CF(L,\phi(L),\Lambda_t)$$
which satisfy
\begin{align*}
0&=\sum_{i,k}(-1)^{\dagger}\mu^{n-i\vert m}(a^0_1\otimes\cdots\mu^i_L(a^0_{k+1}\otimes\cdots a^0_{k+i})\\[-1ex]
&\hspace{8em}\otimes a^0_{k+i+1}\cdots \otimes m\otimes a^1_{1}\otimes\cdots  a^1_m)\\
&\quad +\sum_{j,l}(-1)^{\dagger}\mu^{n\vert m-j}(a^0_1\otimes\cdots a^0_{n} \otimes m\otimes a^1_{1}\otimes\cdots\\[-1ex]
&\hspace{9em} \otimes\mu^j_{\phi(L)}(a_{l+1}\otimes\cdots a_{l+j})\otimes a_{l+j+1}\otimes\cdots a^1_m)\\
&\quad+\sum_{j,k}(-1)^{\dagger}\mu^{k\vert m-j}(a^0_1\otimes\cdots\mu^{n-k\vert j}(a^0_{k+1}\otimes\cdots  a^0_{n} \otimes m\otimes a^1_{1}\otimes\cdots  a^1_{j})\\[-1ex]
&\hspace{9em}\otimes a^1_{j+l+1}\otimes\cdots a^1_m)
\end{align*}
where $\dagger$ follows the same sign convention as in \eqref{ainftyrelations}. Indeed, the maps $\mu_{\phi}^{0\vert 0}$ can be defined as follows: Let $\Sigma:=\bb{R}\times [0,1]$ as a surface with a complex structure obtained from identifying it with a piece of $\bb{C}$. Given a time dependent (zero-average) Hamiltonian $h_t$ with associated flow $\phi_t$ we define a connection 1-form on $\Sigma\times E$ by $H:=h_t dt$, where $t$ is the $[0,1]$ coordinate. Then $\sigma\times E$ has a minimal coupling form $\omega_E-dH$, and given any almost complex structure $J$ on $E$ we obtain a $J_\Sigma$ on $\Sigma\times E$ from Lemma \ref{connectionacs} that fiberwise agrees with $\phi_t^*J$. Moreover, we have a complex codimension $1$ hypersurface $D_\sigma:=(\phi_{t,*}(D))_t$. Then, $\mu^{0\vert 0}_\phi$ counts nodal treed strips, that consist of Morse flowlines on $L\cap \phi_1(L)$ together with $J_\Sigma$-holomorphic sections $\sigma$ of $\Sigma\times E$ up to translation in the $s$ variable such that for each $\sigma$ we have
\begin{gather*}
\sigma(s,0)\in L\\
\sigma(s,1)\in \phi_1(L)
\end{gather*}
and in addition on leaf resp. root strip components we have
\begin{gather*}
\lim_{s\rightarrow \infty}\in W^+_F(x)\\
\lim_{s\rightarrow -\infty}\in W^-_F(y)
\end{gather*}
We call such a thing a $0\vert 0$ treed strip. We can require $\sigma$ to be \emph{adapted} in the sense of the \emph{(non-constant spheres)} and \emph{(markings)} axioms from Definition~\ref{adapteddef}. More generally we can consider $n\vert m$ treed strips based on a connected nodal curve, as in \cite[Sec 6.2]{CW2}, that have strip components with a designated input and output and in addition consist of adapted holomorphic treed disks as in definition \ref{treeddiskdef} for $L$ resp. $\phi_1(L)$ which lie over $t=0$ resp. $t=1$ with $n$ resp. $m$ leaves. Transversality for treed disk subcomponents is taken care of by Theorem \ref{transversality}. Transversality on non-vertically constant strip components can be realized by taking a $t$ dependent almost complex structure $J_t$ on $E$ that agrees with $J$ resp. $\phi_1^* J$ for $t=0$ resp. $1$ that is adapted to $D_\Sigma$. Transversality for the vertically constant case is similar to the proof of Theorem \ref{liftconfigthm}, since $\Sigma\times E$ is trivializable by parallel transport.

Counting such isolated configurations gives rise to maps $\mu_{\phi}^{n\vert m}$ which satisfy the $A_\infty$ bimodule axioms, and the proof of this fact is similar to that of Theorem \ref{ainftythm} (see \cite[Thm 6.4]{CW2}).

In particular, for $\phi_t\equiv Id$, the bimodule structure on $CF(L,L,\Lambda_t)$ reduces to the $A_\infty$ structure on $CF(L,\Lambda)$.

Finally, given two Hamiltonian isotopies $\phi_t^i$ $i=0,1$, one shows that there is a map of $A_\infty$ bimodules $\Phi^{01}$ by counting treed strips equipped with a parameterization which limit toward a stable manifold on $L\cap \phi_1^1(L)$ in the $\infty$ direction and an unstable manifold on $L\cap \phi_1^0(L)$ in the $-\infty$ direction, as in \cite[Sec 6.5]{CW2}. One shows that $\Phi^{10}\circ\Phi^{01}$ is $A_\infty$ homotopic to the identity by counting treed strips with \emph{two} parametrizations. Thus, the non-vanishing of $HF(L,\Lambda_{nov,t})$ implies the existence of a element $x\in L\cap \phi_1(L)$ for any time $1$ map of a Hamiltonian diffeomorphism where the intersection is clean, and in particular shows that $L$ is non-displaceable.

\section{Applications}\label{applicationsection}

\subsection{A Floer-non-trivial Lagrangian in $\text{Flag}(\bb{C}^3)$}\label{flag3}

The symplectic topology of such a space is studied in \cite{nish}\cite{chokimoh}\cite{pabiniak} by using a toric degeneration of the $U(3)$ Gelfand-Cetlin system. In particular, \cite{nish} show that one can find a Floer non-trivial torus in the interior of the G.-C. polytope (as well as in other flags), and \cite{pabiniak} finds a whole line segment of non-displaceable tori in the monotone flag manifold by looking at a toric degeneration of the G.-C. system, and \cite{chokimoh} expand upon this by showing non-displaceability of Lagrangian non-torus fibers and drawing connections to mirror symmetry. We begin by showing that one can find at least one of these non-displaceable Lagrangians in the fibration context. In the next example we show that there is a whole family of non-displaceable Lagrangians after deforming the symplectic form.

Consider the moduli space of nested complex vector spaces $V_1\subset V_2\subset \C^3$. We can realize this as a symplectic fiber bundle $$\mathbb{P}^1\rightarrow \mathrm{Flag}(\mathbb{C}^3) \xrightarrow{\pi} \mathbb{P}^2,$$ with both the base and fiber monotone. Holomorphic (but not symplectic) trivializations for $\text{Flag}(\C^3)$ can be realized as follows. Start with a chain of subspaces $V_1\subset V_2\subset \C^3$ with $V_1\in \mathbb{P}^2$ represented as $[z_0,z_1,z_2]$ with $z_0\neq 0$. Using the reduced row echelon form, there is a unique point in $\mathbb{P}(V_2)$ with first coordinate zero, $[0,w_1,w_2]$. Let $U_0$ be the open set of $\mathbb{P}^2$ with non-zero first coordinate. We get a trivialization 
\begin{align*}&\Psi_0:\mathrm{Flag}(\mathbb{C}^3)\rightarrow U_0\times\mathbb{P}^1
\\&([z_0,z_1,z_2],V_2)\mapsto ([z_0,z_1,z_2],[w_1,w_2])
\end{align*}
Let $U_1\subset \bb{P}^2$ be the open set with $z_1\neq 0$. The transition map $U_0\times \mathbb{P}^1\rightarrow U_1\times\mathbb{P}^1$ is given by $$g_{01}([w_1,w_2])=[-\frac{z_0w_1}{z_1},w_2-\frac{z_2w_1}{z_1}]$$
which is a well defined element
$$\begin{bmatrix}
\frac{-z_0}{z_1} & 0 \\
\frac{-z_2}{z_1} & 1
\end{bmatrix}$$ in $PGL(2)$. A similar transition matrix works for the other trivializations.

There is a natural symplectic form that we could use given by viewing Flag$(\C^3)$ as a coadjoint orbit, called the Kostant-Kirillov symplectic form. Let $\xi\in SU(3)\cdot t\subset \mf{su}(3)^\vee$ be an element of the orbit and $X^\sharp,Y^\sharp$ be vector fields on the orbit generated by $X,Y\in \mf{su}(3)$. Let
$$\tilde{\omega}_\xi (X^\sharp,Y^\sharp) = \xi([X^\sharp,Y^\sharp]).$$
The kernel of $\tilde{\omega}_\xi$ is $\mf{stab}(\xi)$, the Lie algebra of the stabilizer of $\xi$ for the coadjoint action. Thus, $\tilde{\omega}_\xi$ descends to the quotient $SU(3)/\mf{stab}(\xi)$ as a non-degenerate skew-symmetric form $\omega_\xi$. Such a form is closed by the Jacobi identity (see e.g. \cite[Exercise 5.3.13]{ms1}).

To realize the symplectic fibration structure, one can see \cite[Thm 2.3.3]{guillemin} for instance. Briefly, the projection $\pi$ is equivariant with respect to the $SU(3)$ action, so the fibers $F_p$ inherit an action by $\mf{stab}(\pi(p))\cong SU(2)$ and we can realize $\text{Flag}(\bb{C}^3)$ as an associated bundle to a principal $SU(2)$-bundle $P$. Any $SU(2)$ connection $H$ on $P$ induces one on $\text{Flag}(\bb{C}^3)$, and such a connection is necessarily Hamiltonian. Thus, by Theorem \ref{coupling} there is a minimal coupling form $a_H$ associated to $H$ and we can choose a weak coupling form as
$$\omega_{H,K}:= a_H+K\omega_{\bb{P}^2}$$
where $K>>1$ and $\omega_{\bb{P}^2}$ is the Fubini-Study symplectic structure on $\bb{P}^2$.

\subsubsection{Constructing a fibered Lagrangian over a toric fiber}\label{laginflagssection}
First, we recall results about toric Lagrangians in the base: Any point in the interior of the moment polytope for the toric action on $\bb{P}^2$ corresponds to a Lagrangian torus. In \cite{fooo3}, the authors calculate the full potential for such a torus by developing the virtual fundamental cycle perturbation technique, and show that the barycenter torus (the Clifford torus) is Floer-non-trivial, recovering the result of \cite{cho}. With our perturbation system, one can take the divisor in the base to be the union of the edges of the polytope
$$D_B:=\bigcup_{i=1,2,3} \mc{D}_i.$$ Such a divisor is stabilizing for the standard almost complex structure in the sense of Definition \ref{stabilizingdivisordefinition} for any of these tori for topological reasons. Take $L_B$ of the form 
\begin{equation}\label{toruscoordinates}
[e^{i\theta_1}a,e^{i\theta_2}b,c]
\end{equation}
for some $a,b,c\in \bb{C}^\times$ and $\theta_i\in S^1$. From \cite{choohdiskinstantons} we have that the Maslov index two disks with boundary on $L_B$ in the standard complex structure are regular and given by
\begin{gather*}
u_1(z)= [za,b,b]\\
u_2(z)=[a,zb,c]\\
u_3(z)=[a,b,zc]
\end{gather*}
and up to a M\"obius transformation fixing the boundary of the unit disk in $\bb{C}$. Let $x_M$ denote the maximal index critical point of a Morse-Smale function on $L_B$. By \cite[Prop 9.1]{choohdiskinstantons} there is a natural spin structure on the each moment torus.  If the moment polytope of $\p^2$ given by
$$\mc{P}^2:=\lbrace (\eta_1,\eta_2)\in \bb{R}^2\vert 0\leq \eta_1,\eta_2, \eta_1+\eta_2\leq 1\rbrace,$$ then the full potential is given by
\begin{equation}\label{fooocliffpotential}
\mu^0_{L_B}(x_1,x_2)=\left[ x_1q^{\eta_1}+x_2q^{\eta_2}+\dfrac{1}{x_1x_2}q^{1-\eta_1-\eta_2}\right]x_M
\end{equation}
in the standard toric complex structure, where the exponents are directly proportional to the area of the corresponding disk.

To find a fibered Lagrangian we use a result from \cite[Sec 4.6]{guillemin} to obtain a symplectic trivialization above $\text{Cliff}(\bb{P}^2)$: 
\begin{theorem}(\cite{guillemin}, Theorem 4.6.3)\label{moment}
Let $F\rightarrow E\rightarrow B$ be a symplectic fibration with a $G$-action where the projection is equivariant. Denote $\psi$ as the moment map for the action of $G$ on $B$. Let $\Delta$ be an open set of the moment polytope, and set $U=\psi^{-1}(\Delta)$. After possibly shrinking $U$ there is a symplectic connection $H$ on $\pi^{-1}(U)$ such that the moment map for action on the weak coupling form $a_H+\pi^*\omega_U$ is $\psi\circ\pi$.
\end{theorem}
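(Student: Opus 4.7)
The strategy is to reformulate the conclusion as the pointwise condition $\iota_{X_E} a_H = 0$ for every $X\in\mathfrak{g}$, and then build $H$ by placing each $G$-orbit direction inside the horizontal distribution. For the weak coupling form $\tilde{\omega} := a_H + \pi^*\omega_U$, equivariance of $\pi$ together with $\iota_{X_B}\omega_B = d\psi^X$ gives $\iota_{X_E}\tilde{\omega} = \iota_{X_E} a_H + d(\psi^X\circ\pi)$, so $\psi\circ\pi$ is a moment map for $\tilde{\omega}$ precisely when $\iota_{X_E}a_H = 0$. Splitting $TE = TF \oplus H$ and using $a_H|_{TF\wedge TF} = \omega_F$ together with $a_H|_{TF\wedge H} = 0$, the vanishing decouples into two requirements: each fundamental vector field $X_E$ lies in $H$, and $a_H(X_E, Y^\sharp) = 0$ for every horizontal $Y^\sharp$.

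For the construction, first shrink $U$ so that $\Delta$ lies in the principal (free) stratum of the torus action on $B$, which is possible since this stratum is open and dense in the moment polytope. Then $G$ acts freely on $\pi^{-1}(U)$, so $H^G := \mathrm{span}\{X_E : X \in \mathfrak{g}\}$ is a rank-$\dim G$ subbundle transverse to $TF$ because $X_E$ projects to the nonzero $X_B$. Pick Marsden-Weinstein coordinates $U \cong \Delta\times G$ with $\omega_B = \sum_i d\psi_i \wedge d\theta_i$, and obtain any $G$-invariant symplectic connection $H_0$ on $\pi^{-1}(U)$ by averaging an initial choice over $G$. Let $(\partial_{\psi_i})^\sharp$ denote the $H_0$-horizontal lifts of the action vector fields, which inherit $G$-invariance from $H_0$, and set
\begin{equation*}
H := H^G \oplus \mathrm{span}\{(\partial_{\psi_i})^\sharp\}.
\end{equation*}
This is a $G$-invariant complement of $TF$ of rank $\dim B$, and the first requirement holds by construction.

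The second requirement then follows from the curvature identity \eqref{curvatureidentity}, which identifies $a_H(v^\sharp, w^\sharp)$ with the zero-average fiberwise Hamiltonian generating the vertical part of $[v^\sharp, w^\sharp]$. Taking $Y^\sharp = Z_E$ for another $Z \in \mathfrak{g}$ gives $[X_E, Z_E] = -[X, Z]_E = 0$ by abelianness of $G$. Taking $Y^\sharp = (\partial_{\psi_i})^\sharp$ gives $[X_E, Y^\sharp] = \mathcal{L}_{X_E} Y^\sharp = 0$ since both vector fields are $G$-invariant. In either case the vertical part vanishes, so $a_H(X_E, Y^\sharp) = 0$.

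The remaining task, and the principal obstacle, is to verify that $H$ is Hamiltonian so that Theorem \ref{coupling} applies and produces a closed $a_H$ satisfying the normalization \eqref{normalizationcondition}. Rigidly prescribing horizontal directions along $G$-orbits constrains the curvature, but the $G$-invariance of $H$ together with the Marsden-Weinstein normal form ensures that the curvature lands in zero-average fiberwise Hamiltonian vector fields; the normalization can then be enforced by a fiberwise Hamiltonian shift of the remaining horizontal lifts, possibly after shrinking $U$ once more so that the normal form extends equivariantly over $\pi^{-1}(U)$. Combined with the pointwise calculation above, this yields a coupling form $a_H$ with $\iota_{X_E} a_H = 0$ and hence the asserted moment map $\psi\circ\pi$.
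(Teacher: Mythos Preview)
The paper does not supply its own proof of this result; immediately after the statement it writes ``See Section 4 of \cite{guillemin} for a proof.'' So there is no in-paper argument to compare against, and your attempt must stand on its own.

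Your overall strategy is correct and natural: the reduction to $\iota_{X_E} a_H = 0$ is right, and forcing $X_E \in H$ while checking that the horizontal--horizontal pairing $a_H(X_E, Y^\sharp)$ vanishes via the curvature identity is the appropriate decomposition. The bracket computations $[X_E, Z_E] = 0$ (torus $G$) and $[X_E, (\partial_{\psi_i})^\sharp] = 0$ ($G$-invariance of both fields) are valid.

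The gap is exactly where you place it, but your final paragraph does not close it. Before invoking Theorem~\ref{coupling} to produce $a_H$ you must show that your distribution $H$ is a symplectic connection with Hamiltonian holonomy. This splits into: (a) parallel transport along $H$ preserves $\omega_F$, and (b) the curvature of $H$ is Hamiltonian-vector-field-valued. For (a), transport along $X_E$ is literally the $G$-action on $E$, which must be assumed to act by fiberwise symplectomorphisms (implicit in ``symplectic fibration with $G$-action''), while transport along $(\partial_{\psi_i})^\sharp$ inherits the property from $H_0$. For (b), you have already shown two of the three curvature components vanish; the remaining component $[(\partial_{\psi_i})^\sharp, (\partial_{\psi_j})^\sharp]^{\mathrm{vert}}$ equals the $H_0$-curvature in those directions and is Hamiltonian provided $H_0$ is. You also need that averaging a Hamiltonian connection over $G$ stays Hamiltonian --- cleanest by averaging the coupling \emph{form} rather than the distribution, using $g^*\omega_F = \omega_F$. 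Finally, your sentence about ``enforcing the normalization by a fiberwise Hamiltonian shift'' is misplaced: the normalization \eqref{normalizationcondition} is built into the conclusion of Theorem~\ref{coupling} and requires no further adjustment. With (a) and (b) written out, the argument is complete; as it stands, the closing paragraph asserts these points without verifying them.
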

See Section 4 of \cite{guillemin} for a proof.
We take $G=T^3/diag$, which acts on $\text{Flag}(\bb{C}^3)$ via
$$V_1\subset V_2\mapsto t\cdot V_1\subset t\cdot V_2.$$ Such an action projects to the standard torus action on $\bb{P}^2$. Theorem \ref{moment} gives us a connection $\g$  and an open subset $\Delta$ of the moment polytope for $\bb{P}^2$ containing $\psi(L_B)$ such that parallel transport around a moment fiber $T\subset \bb{P}^2$ is given precisely by the above torus action. Such a connection is flat when restricted to any moment torus $T$ and represents $\pi_1(T)$ trivially. Thus we have a symplectic trivialization of $\pi^{-1}(T)$. In lieu of the ability to change the connection on an open set (see the $G$-equivariant versions of Theorems \ref{connectionextensiontheorem} and \ref{uniquecouplingthm} in \cite[Thms 4.6.1, 4.6.2]{guillemin}), one can extend such a connection to all of $\text{Flag}(\bb{C}^3)$ and still have an isotopy to the original weak coupling form with small fibers.

In the connection given by Theorem \ref{moment} above, the weak coupling form looks like
$$\omega_F-d\delta$$
by a short argument as in \cite[Thm 1.6.1]{guillemin} since the fibers are simply connected. In particular $\delta:\bb{P}^1_p\rightarrow \mf{t}^\vee$ can be viewed as a moment map for the Hamiltonian $T^2$ action on the fiber. However, by construction the $T^2$ action is a subaction of a linear action. Thus it extends to a $PU(2)$ action and therefore must degenerate to an $S^1$ action on $\bb{P}^1$. We will take our candidate Lagrangian
$$L=\delta^{-1}(0)\times L_B.$$
Since $\delta$ has zero average, we can assume that $0$ is the barycenter of the moment polytope so the fiber is a Clifford torus.

\subsubsection{Floer cohomology}\label{flagexamplefloercohomology}
The Lagrangian $L$ is topologically trivial, so we can pick the three-way sum of $S^1$ height functions $f:=f_1+f_2+f_3$ as our Morse-Smale function and then perturb slightly as needed. Further, choose a pseudo-gradient $X_f$ that models the flow of $m$ and splits as a direct sum $$X_{f_3}\oplus X_{f_1+f_2}\in TL_F\oplus H_L.$$

We use the 2nd order potential from Theorem \ref{potthm} to compute the Floer cohomology of $L$. By the transversality in \cite[Thm 6.1]{choohdiskinstantons} and Theorem~\ref{liftconfigthm}, it suffices to use an almost complex structure on the total space that fiberwise agrees with the toric structure and is also a lift of the toric structure from the base to compute the vertically constant and horizontally constant holomorphic disks. By the classification of holomorphic disks in the standard structure on a toric fiber in $\bb{P}^2$, the lifted potential \eqref{liftpot} is
\begin{align*}\mathcal{L}\circ \mu^0_{L_B}(\theta)=\bigg(&\hol_\theta(\mc{L}u_1)q^{\eta_1} r^{e_v(\mathcal{L}u_1)}\\
& +\hol_\theta(\mc{L}u_2)q^{\eta_2} r^{e_v(\mathcal{L}u_2)} +\hol_\theta(\mc{L}u_3)q^{\eta_3} r^{e_v(\mathcal{L}u_3)}\bigg)x_M^M
\end{align*}
where $e_v(\mc{L}u_i)=-\int_{\partial D} \mc{L}u_i^*\delta$ by Corollary \ref{zeroenergycorollary} and $\eta_i$ as in \eqref{fooocliffpotential}. Let $y_i$ denote the evaluation of the holonomy $\hol$ on the $i^{th}$ generator, so that $\hol(u_i)=y_i$ for $i=1,2$.  We have $\hol(u_3)=-\dfrac{1}{y_1y_2}$. The lifts may have non-trivial vertical holonomy, so that
\begin{gather}
\label{holonomyrepflag3}\hol(\mc{L}u_1)=y_1y_3^k\\
\hol(\mc{L}u_2)=y_2y_3^l\\
\hol(\mc{L}u_3)=\dfrac{1}{y_1y_2y_3^j}
\end{gather}
for some integers $j,k,l$. The inclusion of the potential for $\mathrm{Cliff}(\mathbb{P}^1)$ in the fiber above $x_M$ is
\begin{equation}
i_{x_B*}\circ \mu^0_{\mathrm{Cliff}(\mathbb{P}^1)}(y_3)= \left(y_3 r^{e_v(v_1)}+y_3^{-1}r^{e_v(v_2)}\right)x_M
\end{equation}
similarly to \eqref{fooocliffpotential}, where $v_1(z)=[z,1]$, $v_2(z)=[1,z]$, and hence $e_v(v_1)=e_v(v_2)$. By Theorem \ref{potthm} the second order potential for the total Lagrangian is a subsum of
\begin{align}
\widetilde{\mathcal{W}}^2_L(y_1,y_2,y_3)&= y_3 r^{e_v(v_1)}+y_3^{-1}r^{e_v(v_2)}\\
& + y_1y_3^{k}q^{\eta_1} r^{e_v(\mathcal{L}u_1)} +y_2y_3^lq^{\eta_2} r^{e_v(\mathcal{L}u_2)} +\frac{1}{y_1y_2y_3^{j}}q^{\eta_3} r^{e_v(\mathcal{L}u_3)}\label{prepotflag3}
\end{align}
where the $\sim$ means that we have yet to ignore the terms with $\deg_q\neq 0$ of higher total degree. We change the basis of $\T{Hom}(\pi_1(L),\Lambda^{2\times})$ so that $y_1y_3^k\mapsto \tilde{y}_1$ and $y_2y_3\mapsto \tilde{y}_2$ and rename the new variables $y_1$,$y_2$:
\begin{align}\label{potflag3}
\widetilde{\mathcal{W}}_L^2(y_1,y_2,y_3)&= y_3 r^{e_v(v_1)}+y_3^{-1}r^{e_v(v_2)} \\
 \label{potflag32} &+ y_1q^{\eta_1} r^{e_v(\mathcal{L}u_1)} +y_2q^{\eta_2} r^{e_v(\mathcal{L}u_2)}  +\frac{1}{y_1y_2y_3^{m}}q^{\eta_3} r^{e_v(\mathcal{L}u_3)}
\end{align}
for some $m\in\Z$. Since the first part of this expression is symmetric in $y_3$, we can assume that $m\geq 0$ by a further change of basis $y_3^{-1}\mapsto \tilde{y}_3$.

It follows from the construction of $L$ and Corollary \ref{zeroenergycorollary} that the exponent of any $r$ variable in line \eqref{potflag32} is zero. Thus, all terms with $q^{\eta_i} r^{e_v(\mc{L}u_i)}=q^{\eta_i}$ are of the same degree and $\widetilde{\mc{W}}^2_L=\mc{W}^2_L$.

We now specialize to the case that $L_B=\text{Cliff}(\p^2)$, so that $\eta_i=\eta_j=:\eta$. In what follows, we take the derivative of \eqref{potflag3} and look for a critical point in order to apply Theorem \ref{crit}, which is a standard procedure. If the reader is satisfied with assuming the application of Theorem \ref{crit} is valid, then one can skip to the main result of the Subsection \eqref{9.1result}.

The partial derivatives of $\mathcal{W}^2_{L}$ are
\begin{align}
\label{y1}&\partial_{y_1}\mathcal{W}^2_{L}= q^\eta  - \frac{1}{y_1^2y_2y_3^{m}}q^\eta \\
\label{y2}& \partial_{y_2}\mathcal{W}^2_{L}= q^\eta -\frac{1}{y_1y_2^2y_3^{m}}q^\eta \\
\label{y3}& \partial_{y_3}\mathcal{W}^2_{L}=r^{e_v(v_1)}-y_3^{-2}r^{e_v(v_2)}-\dfrac{m}{y_1y_2y_3^{m+1}}q^\eta .
\end{align}

Setting expressions \eqref{y1} and \eqref{y2} equal to $0$ gives partial solutions $y_1=y_2=y^{-m/3}_3$. It remains to solve \eqref{y3}=0. Making the substitutions for $y_1$, $y_2$ and setting equal to zero gives us
\begin{equation*}
r^{e_v(v_1)}-y_3^{-2}r^{e_v(v_2)}-\dfrac{m}{y_3^{5m/3+1}}q^\eta=0.
\end{equation*}
We normalize via the transformation $y_3\mapsto y_3^3$
\begin{equation*}
r^{e_v(v_1)}-y_3^{-2}r^{e_v(v_2)}-\dfrac{m}{y_3^{5m+3}}q^\eta =0
\end{equation*}
and clear the denominator
\begin{equation*}
y_3^{5m+3}r^{e_v(v_1)}-y_3^{5m+1}r^{e_v(v_2)}-m q^\eta =0.
\end{equation*}
Dividing by the appropriate power of $r$ gives
\begin{equation}\label{solvethis}
y^{5m+3}-y_3^{5m+1}-m q^\eta r^{\chi}=0.
\end{equation}
While the power of $r$ is most likely negative, we can pick $K$ large enough in the weak coupling form so that $q^\eta r^{\chi}$ is in the ring $\Lambda^2$.

The ring $\Lambda_t$ becomes a $\Lambda^2$-algebra via the homomorphism $q,r\mapsto t$, so let us instead solve the equation
\begin{equation}\label{solvethist}
y_3^{5m+3}-y_3^{5m+1}-m t^\alpha=0
\end{equation}
with $\alpha>0$. 

The reduction of \eqref{solvethist} mod $t$ has $1$ and $-1$ as solutions. To see that we can find unital solutions in $\Lambda_t$ we apply Hensel's lemma:

\begin{theorem}\cite[Thm 7.3]{eisenbud}
Let $R$ be a ring which is complete with respect to an ideal $\mf{m}$ (in the sense that $R=\underset{\leftarrow}{\lim} R/\mf{m}^i$) and $f(x)\in R[x]$ a polynomial. If $a$ is an approximate root of $f$ in the sense that 
$$f(a)\equiv 0\,(\text{mod } f'(a)^2\mf{m}),$$
then there is a root $b$ of $f$ such that
$$f(b)=0 \text{ and } b\equiv a\,(\text{mod }f'(a)^2\mf{m}).$$
If $f'(a)$ is a nonzero-divisor in $R$, then $b$ is unique. 
\end{theorem}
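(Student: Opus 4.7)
The plan is to prove this via Newton's method / successive approximation, which is the classical approach to Hensel's Lemma. I would set $a_0 := a$ and iteratively define $a_{n+1} := a_n - f(a_n)/f'(a_n)$, provided each $f'(a_n)$ is suitably well-behaved. Showing the sequence is well-defined and converges in the $\mf{m}$-adic topology, with limit the desired root $b$, is the main content.

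The key computational input is the formal Taylor expansion, which holds in any commutative ring: for a polynomial $f \in R[x]$ there exist polynomials $g \in R[x,y]$ such that
\begin{equation*}
f(x+h) = f(x) + f'(x)\,h + h^2 g(x,h).
\end{equation*}
Writing $f(a_0) = f'(a_0)^2 c_0$ with $c_0 \in \mf{m}$ (this is the hypothesis), the Newton step $h_0 = -f'(a_0) c_0$ lies in $f'(a_0)\,\mf{m}$, and substitution gives $f(a_1) = f'(a_0)^2 c_0^2\,g(a_0,h_0) \in f'(a_0)^2 \mf{m}^2$. Moreover $f'(a_1) - f'(a_0) \in (a_1 - a_0) R \subset f'(a_0)\mf{m}$, so $f'(a_1) = f'(a_0)(1 + \varepsilon_1)$ with $\varepsilon_1 \in \mf{m}$. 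Completeness of $R$ with respect to $\mf{m}$ implies that $1 + \mf{m} \subset R^{\times}$ (via the geometric series $\sum (-\varepsilon)^k$), so $f'(a_1)$ divides $f'(a_0)$ in a controlled way and we may write $f(a_1) = f'(a_1)^2 c_1$ with $c_1 \in \mf{m}^2$. Inductively one obtains $f(a_n) = f'(a_n)^2 c_n$ with $c_n \in \mf{m}^{2^n}$ and $a_{n+1} - a_n \in f'(a)\,\mf{m}^{2^n}$.

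From the last estimate the sequence $(a_n)$ is Cauchy in the $\mf{m}$-adic topology, so by completeness it converges to some $b \in R$. Continuity of $f$ in the $\mf{m}$-adic topology forces $f(b) = \lim f(a_n) = 0$ since $f(a_n) \in \mf{m}^{2^n}$, and telescoping gives $b - a \in f'(a)^2\,\mf{m}$, the required congruence (the first step already places the increment in $f'(a)^2\mf{m}$, and subsequent steps stay in this ideal).

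For uniqueness, suppose $b_1,b_2$ are two roots with $b_i \equiv a \pmod{f'(a)^2\mf{m}}$. Then $b_1 - b_2 \in f'(a)^2 \mf{m}$, and the Taylor expansion
\begin{equation*}
0 = f(b_1) - f(b_2) = (b_1 - b_2)\bigl[f'(b_2) + (b_1 - b_2)\,h(b_2, b_1 - b_2)\bigr]
\end{equation*}
applies. Since $b_2 \equiv a \pmod{\mf{m}}$, we have $f'(b_2) = f'(a) + f'(a)\cdot(\text{element of }\mf{m})$, and the correction term $(b_1 - b_2)h$ lies in $f'(a)^2\mf{m}$; thus the bracketed quantity equals $f'(a)(1 + \eta)$ with $\eta \in \mf{m}$, which is $f'(a)$ times a unit. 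If $f'(a)$ is a nonzero-divisor, this forces $b_1 = b_2$. The main obstacle, and the one requiring the most care, is ensuring the quadratic convergence estimate $c_n \in \mf{m}^{2^n}$ survives the division by $f'(a_n)^2$ at each step — this is exactly where $\mf{m}$-adic completeness enters, via invertibility of elements of $1 + \mf{m}$.
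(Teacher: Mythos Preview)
The paper does not give its own proof of this statement; it is quoted from Eisenbud's \emph{Commutative Algebra} (Theorem 7.3) and then immediately applied. Your Newton--iteration outline is precisely the standard proof one finds there, so in that sense there is nothing to compare.

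One small slip to flag: you claim that ``the first step already places the increment in $f'(a)^2\mf{m}$'', but your own computation shows $h_0 = -f'(a_0)c_0 \in f'(a_0)\mf{m}$, not $f'(a_0)^2\mf{m}$. In fact the congruence one actually obtains from this argument is $b \equiv a \pmod{f'(a)\mf{m}}$, which is what Eisenbud states; the exponent $2$ in the paper's displayed congruence appears to be a transcription error from the hypothesis line. For the application in the paper this makes no difference, since there $f'(a)$ is a unit modulo $t$ and one only needs $b \equiv a \pmod{t}$. The rest of your outline (quadratic convergence via the Taylor remainder, invertibility of $1+\mf{m}$ by completeness, uniqueness when $f'(a)$ is a nonzero-divisor) is correct.
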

Completeness of $\Lambda_t$ with respect to $(t)$ is obvious from the construction of the completion as a subring of $\Pi_i \Lambda_t/(t^i)$. Thus, there are solutions $\mathfrak{y_1}\equiv 1\, (mod\, t)$ and $\mathfrak{y_2}\equiv -1\, (mod\, t)$ to \eqref{solvethist} in $\Lambda_t$.

By Proposition \ref{crit}, there is a representation $\xi\in \T{Hom}(\pi_1(L),\Lambda_t^{\times})$ such that
\begin{equation}
\label{9.1result}H(CF(L,\Lambda_t),\nu^1_{\xi,\mc{W}x^\vee})\cong H^*(L,\Lambda_t).
\end{equation}
According to the discussion in Section \ref{hamiltonianinvariancesection}, this shows that the Lagrangian is non-displaceable via Hamiltonian isotopy and recovers the result from \cite{nish}.

\begin{remark}\label{hirzebruchremark}
At first glance, it would seem that the case of a toric Lagrangian $L\subset \bb{P}(\mc{O}\oplus \mc{O}_n)\rightarrow \bb{P}^2$ which fibers over $\cl{\p^2}$ is intimately similar from the point of view of our theory, and this is due to the fact that the connection we used above is the locally the same as the toric one. Surely the higher degree terms differ between $\text{Flag}(\bb{C}^3)$ and $\bb{P}(\mc{O}\oplus \mc{O}_n)$, in a manner that is similar to the difference between the full potentials for $\bb{F}_1$ and $\bb{F}_2$ as in \cite{fooohirzebruchexample, aurouxspeciallagfibrations}.

\end{remark}
\subsection{Families of non-displaceable tori}\label{familiesexample}
In this section we prove Theorem \ref{familiestheorem}: We construct n dimensional families of Floer-non-trivial tori in 
$$\bb{P}^k\rightarrow \bb{P}(\mc{O}\oplus\cdots \mc{O}_{i_k})\rightarrow \bb{P}^n$$
where $\mc{O}_i$ $i\geq 0$ is some rank $1$ holomorphic line bundle with Chern class $i$ times the hyperplane class. The family projects to a toric neighbourhood of $\text{Cliff}(\bb{P}^n)$, which extends (in a certain sense) results in \cite{fooohirzebruchexample} and \cite{viannafamilies}. The idea is as follows: In a toric neighbourhood of $\text{Cliff}(\bb{P}^n)$ we introduce a Hamiltonian perturbation of the toric form given by Theorem~\ref{moment}:
$$\omega_F-d\delta+K\pi^*\omega_{\bb{P}^n}\mapsto \omega_F-d\delta-dh+K\pi^*\omega_{\bb{P}^n}.$$
Here, $h$ is chosen to change the vertical symplectic area of the lifted disks so that all of the terms in line \eqref{potflag32} have the same total degree above a more general toric fiber. Then we show the existence of a unital critical point of the second order potential and apply Theorem \ref{crit}.

First, we see how to put a symplectic connection and weak coupling form on $E:=\bb{P}(\mc{O}\oplus\cdots \mc{O}_{i_k})$. Choosing a Hermitian metric on $\mc{O}_j$ induces a Chern connection that is compatible with the complex structure. This induces a $U(1)\times\cdots U(1)$ connection $H$ on $\mc{O}\oplus\cdots \mc{O}_{i_k}$ and also on $E$. By Theorem \ref{coupling} we get minimal coupling form $a_H$ and a $K$ so that $a_H+K\pi^*\omega_{\bb{P}^n}$ is a symplectic form on $E$ and induces the connection $H$.

\subsubsection{Recall of potential for moment torus in $\bb{P}^n$} Let 
$$\psi([z_0,\dots, z_n])=\dfrac{1}{\sum_{i}\vert z_i\vert^2}(\vert z_0\vert^2,\dots, \vert z_n \vert^2)$$
denote the moment map for the $T^n$ action on $\bb{P}^n$. The moment map for the same action on $(\bb{P}^n,K\omega_{\bb{P}^n})$ is $K\psi$, with the Clifford torus sitting in $(K\psi)^{-1}(\frac{K}{n+1},\dots,\frac{K}{n+1})$. As in the above example, recall that in a toric symplectic manifold we have a classification of $J$-holomorphic disks with boundary in a moment fiber, where $J$ is the toric complex structure \cite[5.3]{choohdiskinstantons}. In $\bb{P}^n$, this looks like 
$$z\mapsto [\phi_0 a_0,\dots,\phi_n a_n]$$
where each of the $\phi_i$ are Blaschke products
$$\phi_i(z)=\Pi_{j=1}^k \dfrac{z-p_{i,j}}{1-\bar{p}_{i,j}z}$$
with $p_{i,j}\in D$ that are pairwise distinct for different $i$ and $[a_0,\dots, a_n]$ is in the moment fiber. As in example \ref{flag3}, the Maslov index $2$ disks up to parameterization are thus given by
\begin{align*}
v_1(z)&=[z a_0,\dots, a_n]\\
v_2(z)&=[a_0,z a_1,\dots, a_n]\\
&\cdots \\
v_{n+1}(z)&=[a_0,\dots, z a_n].
\end{align*}
Following \cite[Ex 5.2]{fooo3} the symplectic areas of these disks are given by the moment map coordinates of the Lagrangian
\begin{align}\label{basesymplecticarea}
\int_D v_1^*K\omega_{\bb{P}^2}&= K\frac{\vert a_0\vert^2}{\sum_{i}\vert a_i\vert^2}=:\alpha_1\\
\int_D v_2^*K\omega_{\bb{P}^2}&= K\frac{\vert a_1\vert^2}{\sum_{i}\vert a_i\vert^2}=:\alpha_2\\
&\cdots \\
\int_D v_{n+1}^*K\omega_{\bb{P}^2}&=K\left(1-\sum_j\frac{\vert a_j\vert^2}{\sum_{i}\vert a_i\vert^2}\right)=1-\sum_{i=1}^{n}\alpha_i.
\end{align}
By the work of \cite{choohdiskinstantons} and others, the potential for $T_{\alpha_1\dots \alpha_n}$ in the integrable toric complex structure with the standard spin structure is
\begin{equation}\label{cliffnpotential}
\mu^0_{\text{Cliff}(\mathbb{P}^n)}=\sum_{i=1}^n y_iq^{e(v_i)}+\frac{1}{y_1\cdots y_n}q^{e(v_{n+1})}.
\end{equation}

\subsubsection{Constructing the family} By the Arnold-Louville theorem (see for example \cite[Thm 1.2]{BolsinovFomenko}) we can take a $T^n$-equivariant symplectic chart of $U\cong  T^n\times B_r (0)$ which maps the torus $T_{\alpha_1\cdots \alpha_{n}}$ at $(\alpha_1,\dots, \alpha_{n})$ to the torus
$$\mc{P}_{\alpha_1,\dots, \alpha_{n}}:=T^n\times \left\lbrace \left(\alpha_1-\dfrac{K}{n+1},\dots, \alpha_{n}-\dfrac{K}{n+1}\right)\right\rbrace$$
in an equivariant way and such that the symplectic form on $U$ is the pullback of $\omega_0=\sum_i  d\alpha_i\wedge d\theta_i$. To trivialize $E$ over the neighbourhood $U$, we use some lift of the $T^n$ action over this neighbourhood. $\pi\vert_U$ is $T^n$-equivariant and we can apply Theorem \ref{moment} to obtain a toric connection over $U$. Next, take a generic trivialization $\Psi$ of this new symplectic form over a simply connected neighbourhood that intersects $\cl{n}$. By $T^k$ invariance of the connection and triviality of the connection around each torus factor, we can extend $\Psi$ to a toric neighbourhood of $\cl{n}$.

As in the above example, we can assume that parallel transport around any base torus $T^n$ is a linear torus action and thus reduces to a $T^k$ action. Since $\delta$ is the moment map for this $T^k$ action with zero average, we can assume that $\delta^{-1}(0)$ is $\cl{k}$ with respect to this action. Define the family of Lagrangians
$$L_{\alpha_1\dots \alpha_{n}}=\mc{P}_{\alpha_1\dots \alpha_{n}}\times \cl{k}$$ in the above trivialization. 

\subsubsection{Modifying the Hamiltonian connection} Let $h:\bb{P}^k\rightarrow \bb{R}$ be a zero average function that is preserved by the $T^k$ action and the level set of $\max \vert h \vert$ contains $\cl{k}$ with $h\vert_{\cl{k}}=\frac{1}{2\pi}$.

We define the function valued $1$-form
\begin{equation}
\bold{h}_{\alpha_1,\dots, \alpha_{n}}(\theta_1,\dots,\theta_n):=\sum_i \left(\alpha_{i}-\frac{K}{n+1}\right)h d\theta_i
\end{equation}
on a sub neighbourhood $U_0\subset U$ containing $\cl{n}$ and extend it to all of $\bb{P}^n$ via a cutoff function $\phi$ which depends on the $\alpha_i$, is $1$ on $U_0$, and vanishes outside $U$.
First we check:
\begin{lemma}\label{isasympformlemma}
$\omega_h:=a_H-d(\phi \bold{h})+K\pi^*\omega_{\bb{P}^n}$ is a symplectic form on $\pi^{-1}(U)$ for small enough $U$, where $K\geq1$ was chosen so that $a+K\pi^*\omega_{\bb{P}^n}$ is a symplectic form on $\bb{P}(\mc{O}\oplus\cdots \mc{O}_{i_k})$.
\end{lemma}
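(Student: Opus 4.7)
The plan is a clean two-part verification: closedness and non-degeneracy, the latter via a small-perturbation argument. Closedness of $\omega_h$ is immediate: $a_H$ is a minimal coupling form and hence closed by Theorem \ref{coupling}, the term $d(\phi\bold{h})$ is exact, and $K\pi^*\omega_{\bb{P}^n}$ is the pullback of a closed form.

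For non-degeneracy, I would rewrite the form as $\omega_h = \omega_{H,K} - d(\phi\bold{h})$, presenting it as a bounded exact perturbation of the global symplectic form $\omega_{H,K}=a_H+K\pi^*\omega_{\bb{P}^n}$, which is symplectic by hypothesis. Note that outside $\pi^{-1}(U)$ the perturbation vanishes identically, so $\omega_h$ coincides with $\omega_{H,K}$ there and in particular extends as a form on all of $E$; the content of the lemma is confined to the compact region $\overline{\pi^{-1}(U)}$. The key observation for non-degeneracy is that the coefficients $(\alpha_i-\tfrac{K}{n+1})$ in $\bold{h}$ are bounded by the radius of $U$ in the Arnold--Liouville action coordinates, while the function $h$, the angle forms $d\theta_i$, and the cutoff $\phi$ can all be chosen independently of the parameter $\alpha$ (in particular $\phi$ as a fixed bump function in the action coordinates). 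This yields a uniform estimate $\|d(\phi\bold{h})\|_{C^0}\leq Cr$ with a constant $C$ depending only on $h$ and $\phi$, where $r$ is the radius of $U$.

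Since $\omega_{H,K}$ is symplectic on the compact manifold $E$, non-degeneracy is a $C^0$-open condition on $2$-forms: there exists $\varepsilon>0$ so that every $2$-form within $C^0$-distance $\varepsilon$ of $\omega_{H,K}$ has its top wedge power nowhere vanishing on $E$. Shrinking $U$ so that $Cr<\varepsilon$ then gives non-degeneracy of $\omega_h$ everywhere on $E$, and in particular on $\pi^{-1}(U)$, completing the proof. The argument has no serious obstacle; the only small bookkeeping point is verifying that $\phi$ and $h$ may be chosen independently of $\alpha$, which is straightforward since $\phi$ is a fixed bump function in the action coordinates and $h$ was defined a priori on the fiber $\bb{P}^k$.
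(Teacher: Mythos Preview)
Your closedness argument is fine, but the non-degeneracy argument has a genuine gap: the estimate $\|d(\phi\bold{h})\|_{C^0}\leq Cr$ is false. Expanding,
\[
d(\phi\bold{h})=\phi\sum_i h\,d\alpha_i\wedge d\theta_i+\phi\,dh\wedge\sum_i(\alpha_i-\tfrac{K}{n+1})\,d\theta_i+d\phi\wedge\bold{h}.
\]
The first summand is of size $|h|$ and does \emph{not} shrink with the radius of $U$; only the coefficients of $\bold{h}$ itself carry the factor $(\alpha_i-\tfrac{K}{n+1})$, not those of $d\bold{h}$. Moreover, if $\phi$ is supported in $U$ then $|\nabla\phi|$ scales like the inverse of the radius, so $d\phi\wedge\bold{h}$ is also $O(1)$ rather than $O(r)$. (Your ``bookkeeping point'' that $\phi$ can be fixed independently of the size of $U$ is self-contradictory: $\phi$ must vanish outside $U$.) Thus $d(\phi\bold{h})$ is not a $C^0$-small perturbation of $\omega_{H,K}$, and the open-condition argument does not apply.

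The paper's proof confronts exactly this difficulty: it writes the matrix of $\omega_h$ in action--angle coordinates and observes that only the anti-diagonal blocks contribute to the determinant, reducing the question to $\det A$ for $A=\mathrm{diag}(-K-\phi h)-h(\boldsymbol{\alpha}-\tfrac{\bold K}{n+1})^t\cdot\nabla\phi$. The ``large'' term $\phi h\,d\alpha_i\wedge d\theta_i$ is absorbed harmlessly into the diagonal (since $|h|<\tfrac{1}{2\pi}<K$), and the rank-one correction from $d\phi\wedge\bold{h}$ is handled via the matrix determinant lemma; its contribution is controlled by the ratio $(r_2+\epsilon)/(r_2-r_1)$, which is made small by shrinking $r_1$ (equivalently $U_0$) relative to $r_2$. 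So the structure of the perturbation, not its size, is what saves non-degeneracy.
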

\begin{proof}
In the symplectic trivialization discussed above, we can assume that the weak coupling form looks like $\omega_{\bb{P}^k}+K\sum_i d\theta_i\wedge d\alpha_i$ and gives the trivial connection $T\bb{P}^k\oplus TU$. Inside $U$ we have 
\begin{equation}\label{dh}
d(\phi \bold{h})= \phi\left[\sum_i h d\alpha_i\wedge d\theta_i+dh\sum_i\left(\alpha_i-\frac{K}{3}\right) d\theta_i \right]+d\phi\wedge \bold{h}.
\end{equation} 
For example, set $k=1$ and $n=2$ and take the coordinates $\lbrace \partial_\rho,\partial_{\theta_1},\partial_{\theta_2},\linebreak \partial_\theta,\partial_{\alpha_1},\partial_{\alpha_2}\rbrace$ on $\pi^{-1}(U)$ where $\partial_\rho,\partial_\theta$ are action-angle coordinates on the sphere. The matrix of $\omega_h$ is the negative of
\begin{equation}
\resizebox{.91\textwidth}{!}{$
\begin{bmatrix}
0 & (\alpha_1-\frac{K}{3})\partial_\rho h  & (\alpha_2-\frac{K}{3})\partial_\rho h  & * & * & *\\
(\frac{K}{3}-\alpha_1)\partial_\rho h  & 0 & 0 & * & * & *\\
(\frac{K}{3}-\alpha_2)\partial_\rho h  & 0 & 0 & * & * & *\\
1 & 0 & 0 & 0 & 0 & 0\\
0 & K+\phi h-h(\alpha_1-\frac{K}{3})\partial_{\alpha_1}\phi  & h(\alpha_2-\frac{K}{3})\partial_{\alpha_1}\phi  & 0 & 0 & 0\\
0 & h(\alpha_1-\frac{K}{3})\partial_{\alpha_2}\phi  & K+\phi h+h(\alpha_2-\frac{K}{3})\partial_{\alpha_2}\phi  & 0 & 0 & 0
\end{bmatrix}$}
\end{equation}
where the $*$'s represent the skew-symmetric completion. In the general case, the matrix has a similar shape, with only the anti-diagonal blocks contributing to the determinant. The lower left $(n+k)\times (n+k)$ block has $k$ $1$'s along the diagonal followed by the matrix 
\begin{equation}\label{lowerblockmatrix}
A:=\text{diag}(-K-\phi h)-h\left(\boldsymbol{\alpha}-\frac{\bold{K}}{3}\right)^t\cdot \grad \phi
\end{equation}
where $\boldsymbol{\alpha}-\frac{\bold{K}}{3}$ is a row vector whose entries are $\alpha_i-\frac{K}{3}$. The determinant of the matrix of $\omega_h$ is equal to $\det^2 A$, and we have the general formula
$$\det(D+\bold{u}^t \bold{v})=(1+(D^{-1}\bold{u})\cdot\bold{v}^t)\det D.$$
Thus, we have 
$$\det A=(K+\phi h)^n\left(1+(K+\phi h)^{-1}h\sum_i \left(\alpha_i-\frac{K}{3}\right)\partial_i\phi\right).$$

By the assumption that $\vert h \vert<\frac{1}{2\pi}$ we have $\vert K+\phi h\vert >1-\frac{1}{2\pi}$. To bound the second product factor we can assume that $\phi$ is constructed as follows: Let $\phi'$ be a radial function on $U$ that is $1$ on $B_{r_1}(\frac{\bold{K}}{3})\subset U_0$ and $0$ outside $B_{r_2}(\frac{\bold{K}}{3})\subset U$ and further assume it to be piecewise linear in the radius with slope $\frac{1}{r_1-r_2}$ in the annulus. Let $\phi$ be a smoothing of this function such that $\vert \grad \phi\vert\leq \frac{1}{r_1-r_2}$ with non-zero derivative only on the annulus $B_{r_2+\epsilon}\setminus B_{r_1-\epsilon}$. By Cauchy-Schwarz and the above estimate we have
\begin{align}\label{symplecticformbound}
\vert (K+\phi h)^{-1}h\sum_i (\alpha_i-\frac{K}{3})\partial_i\phi \vert &\leq \frac{1}{2\pi(1-\frac{1}{2\pi})}\Vert \grad \phi \Vert (r_2+\epsilon)\\
\notag &\leq \frac{r_2+\epsilon}{(2\pi-1)(r_2-r_1)}.
\end{align}
If we let $r_1$, $\epsilon$ be small enough, then  we have $\frac{r_2+\epsilon}{r_2-r_1}<2\pi-1$. This bound shows that the determinant is non-zero for small $U_0$ and $\epsilon$. Hence, $\omega_h$ is non-degenerate. It is clearly closed, so the lemma follows.
\end{proof}

Also, we need that
\begin{lemma}
The product $L_{\alpha_1\dots \alpha_{n}}= \cl{k}\times\mc{P}_{\alpha_1\dots \alpha_{n}}$ is Lagrangian with respect to $a_H-d(\phi\bold{h})+K\pi^*\omega_{\bb{P}^n}$ whenever $(\alpha_1,\dots, \alpha_n)\in U$.
\end{lemma}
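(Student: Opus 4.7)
The plan is to verify the Lagrangian condition directly by decomposing tangent vectors to $L_{\alpha_1\dots\alpha_n}$ into vertical and horizontal parts and showing $\omega_h$ vanishes on each of the three types of pairs. A dimension count gives $\dim L = n+k = \tfrac{1}{2}\dim E$, so it suffices to verify that $\omega_h$ restricts to zero. Work in the action--angle trivialization on $U$ where tangent vectors to $\mc{P}_{\alpha_1\dots\alpha_n}$ are spanned by $\partial_{\theta_i}$ ($\alpha_i$ fixed), and let $v_F$ denote a vertical tangent to $\cl{k}$. By construction, parallel transport on $L$ factors through a $T^k$ action on the fiber whose moment map is $\delta$, and $\cl{k}=\delta^{-1}(0)$ is preserved by this action; consequently the horizontal lift $\partial_{\theta_i}^\sharp$ at any point of $L$ has its fiber component $-X_{\delta_{\partial_{\theta_i}}}$ tangent to $\cl{k}$. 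Note also that by hypothesis $h$ is $T^k$-invariant and $h\vert_{\cl{k}}$ is constant, so $dh(v) = 0$ for every $v$ tangent to $\cl{k}$.

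For the \emph{vertical--vertical} pair $(v_F,w_F)$: $a_H(v_F,w_F)=\omega_F(v_F,w_F)=0$ since $\cl{k}$ is Lagrangian in the fiber, and $\pi^*\omega_{\bb{P}^n}$ vanishes on vertical vectors. Every term of \eqref{dh} contains at least one of the base forms $d\alpha_i$, $d\theta_i$, $d\phi$, which annihilate $v_F,w_F$, except for the $dh$ factor—which pairs with $d\theta_j$ and hence needs a horizontal argument, so it also vanishes on $(v_F,w_F)$.

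For the \emph{horizontal--horizontal} pair $(\partial_{\theta_i}^\sharp,\partial_{\theta_j}^\sharp)$: the form $a_H$ vanishes because $L$ is Lagrangian with respect to the original weak coupling form $a_H+K\pi^*\omega_{\bb{P}^n}$ (Lemma~\ref{fiberedlagrangianlemma}) and $\pi^*\omega_{\bb{P}^n}$ vanishes on $\mc{P}_{\alpha_1\dots\alpha_n}$. For the remaining term, note that on horizontal lifts tangent to $L$ one has $d\alpha_k=0$ and $d\phi=0$ (since $\phi=\phi(\alpha)$), so only the $\phi\, dh\wedge\sum(\alpha_i-K/(n+1))d\theta_i$ piece could survive; evaluating shows it reduces to $dh$ applied to the fiber component of a horizontal lift, which is tangent to $\cl{k}$, and vanishes by the constancy of $h$ on $\cl{k}$.

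For the \emph{mixed} pair $(v_F,\partial_{\theta_i}^\sharp)$: $a_H(v_F,\partial_{\theta_i}^\sharp)=0$ by the very definition of the horizontal subspace $H_{a_H}=TF^{\perp a_H}$, and $\pi^*\omega_{\bb{P}^n}(v_F,\cdot)=0$. Each summand of \eqref{dh} is again killed: the $d\alpha_i\wedge d\theta_j$ and $d\phi\wedge d\theta_j$ pieces require an $\alpha$-component from one input, but neither $v_F$ nor $\partial_{\theta_i}^\sharp$ has one; and the $dh\wedge d\theta_j$ piece reduces to $dh(v_F)\,d\theta_j(\partial_{\theta_i}^\sharp)$, which vanishes because $dh$ kills $T\cl{k}$. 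Collecting these three cases proves $\omega_h\vert_L\equiv 0$.

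The main issue is the bookkeeping for $d(\phi\bold{h})$ in the mixed case; once one records that horizontal lifts along $L$ stay tangent to $\cl{k}$ in their fiber component, and that $h$ is locally constant on $\cl{k}$, every cross term vanishes for the same reason. No genuine obstacle arises, since all necessary invariance properties have already been arranged in the construction preceding the lemma.
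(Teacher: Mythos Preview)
Your proof is correct and uses the same three key observations as the paper: $h$ is constant on $\cl{k}$ so $dh\vert_{T\cl{k}}=0$, the cutoff $\phi$ depends only on the $\alpha_i$ so $d\phi$ vanishes on $TL$, and $L$ is already Lagrangian for the unperturbed product form $a_H+K\pi^*\omega_{\bb{P}^n}$. The paper organizes the check term-by-term in the expansion \eqref{dh}, while you organize it by tangent-vector pair type; one minor simplification is that in the symplectic trivialization set up before Lemma~\ref{isasympformlemma} the connection induced by $a_H$ is already trivial, so the horizontal lifts $\partial_{\theta_i}^\sharp$ are just $\partial_{\theta_i}$ with no fiber component, making your invocation of $-X_{\delta_{\partial_{\theta_i}}}$ tangent to $\cl{k}$ unnecessary (though harmless).
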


\begin{proof}
Each fiber of $L_{\alpha_1\dots \alpha_{n}}$ is contained in a level set of $h$, so the second to last term of \eqref{dh} vanishes on $L_{\alpha_1\dots \alpha_{n}}$. Since $\phi$ only depends on the $\alpha_i$, the last term vanishes as well. The first term is proportional to $\omega_{\bb{P}^n}$ and $L_{\alpha_1\dots \alpha_{n}}$ is Lagrangian for the product form, so the lemma follows.
\end{proof}

Denote the connection over $U$ constructed above by $H_h$. We would like a connection on the total space which extends $H_h$. In general one can apply the $T^n$ equivariant version of Theorem \ref{connectionextensiontheorem}, but we construct an explicit connection that agrees with the original Hermitian connection outside of a neighbourhood of $\cl{n}$:

\begin{lemma}[Extension of connection]\label{extensionofwierdconnectionlemma}
There is a Hamiltonian connection $\tilde{H}$ on $\bb{P}(V)$ that agrees with $H_h$ on $\pi^{-1}(U)$ and agrees with the original connection outside a neighbourhood of $\pi^{-1}(U)$ such that $a_{\tilde{H}}+K\pi^*\omega_{\bb{P}^n}$ is symplectic for large enough $K$.
\end{lemma}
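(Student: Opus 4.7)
The plan is to globalize the construction of $H_h$ by extending the perturbation one-form $\phi\bold{h}$ to all of $E$ by zero. Since $\phi$ was chosen in the previous lemma to be compactly supported inside $U$, the one-form $\phi\bold{h}$ extends smoothly across $\partial \pi^{-1}(U)$ and defines a global 1-form on $E$ that vanishes outside $\pi^{-1}(\mathrm{supp}\,\phi)$. I then set
\[
a_{\tilde{H}} := a_H - d(\phi\bold{h}),
\]
and take $\tilde{H} := TF^{\perp a_{\tilde{H}}}$. On $\pi^{-1}(U)$ this is exactly the formula defining $H_h$, while on the complement of $\pi^{-1}(\mathrm{supp}\,\phi)$ the correction vanishes so $\tilde{H}$ restricts to the original Hermitian connection.

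The first step is to verify that $a_{\tilde{H}}$ is a valid minimal coupling form in the sense of Theorem~\ref{coupling}. Closedness is immediate from $d a_H = 0$ and $d^2 = 0$. Each summand of $\phi\bold{h}$ carries a pullback covector $d\theta_i$ from the base, so $\iota_F^*(\phi\bold{h}) = 0$, which gives $a_{\tilde{H}}|_{F_p} = a_H|_{F_p} = \omega_F$. Because $h$ has zero fiberwise average by construction, $\phi\bold{h}$ takes values in $\ham(F,B)$; the Hamiltonian perturbation discussion of Section~\ref{hamperturbsection} and the normalization condition \eqref{normalizationcondition} then guarantee that $a_{\tilde{H}}$ is the minimal coupling form of a Hamiltonian connection whose holonomy differs from that of $H$ by the extra Hamiltonian flow generated by $\phi\bold{h}$.

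The second step is non-degeneracy of $a_{\tilde{H}} + K\pi^*\omega_{\bb{P}^n}$ for $K$ large enough, which I would handle via the open cover $E = (E\setminus \pi^{-1}(\mathrm{supp}\,\phi)) \cup \pi^{-1}(U)$. On the first piece the form reduces to $a_H + K\pi^*\omega_{\bb{P}^n}$, which is symplectic for any $K \geq K_0$, with $K_0$ the threshold from the original weak coupling construction. On the second piece the form coincides pointwise with $\omega_h$ from the previous lemma, whose non-degeneracy holds once $U$ is small enough and $K \geq K_1$ by the determinant estimate \eqref{symplecticformbound}. Taking $K \geq \max(K_0, K_1)$ then produces a globally symplectic form, and closedness on $E$ is immediate since $a_{\tilde{H}} + K\pi^*\omega_{\bb{P}^n}$ is a global sum of closed forms.

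The main obstacle I anticipate is a compatibility issue at the overlap: the local analysis of the previous lemma was performed in the toric trivialization supplied by Theorem~\ref{moment}, whereas the ambient $a_H$ is the minimal coupling form of the Hermitian connection and need not coincide with the toric one on $\pi^{-1}(U)$. By Theorem~\ref{uniquecouplingthm} the two connections are isotopic in the weak coupling limit, so their minimal coupling forms differ by $d\sigma'$ for some $\sigma' \in \ham(F,B)$; this discrepancy can be absorbed by replacing $\phi\bold{h}$ with $\phi(\bold{h}+\sigma')$ and re-running the determinant bound of the previous lemma, since $\sigma'$ and its derivatives are uniformly bounded in the region where $\nabla\phi \neq 0$ after possibly shrinking $U$ further. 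This modification preserves both the fiberwise-zero-average property and the support condition, so the argument above goes through unchanged.
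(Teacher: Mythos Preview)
Your diagnosis of the main obstacle is exactly right: $H_h$ was built from the \emph{toric} connection supplied by Theorem~\ref{moment}, not from the Hermitian $a_H$, so simply extending $\phi\bold{h}$ by zero and setting $a_{\tilde H}=a_H-d(\phi\bold{h})$ does not reproduce $H_h$ on $\pi^{-1}(U)$. However, your proposed repair---replacing $\phi\bold{h}$ by $\phi(\bold{h}+\sigma')$ where $d\sigma'=a_H-a_{\text{toric}}$---only yields $H_h$ on the set where $\phi\equiv 1$, i.e.\ on $U_0\subsetneq U$. On the annulus $U\setminus U_0$ one computes $a_H-d(\phi(\bold{h}+\sigma'))=a_{\text{toric}}-d(\phi\bold{h})+(1-\phi)\,d\sigma'-d\phi\wedge\sigma'$, which differs from $H_h$ by the last two terms. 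So as written you prove a weaker statement than the lemma claims (agreement on $U_0$ rather than $U$), and ``re-running the determinant bound'' is now a genuinely new estimate, since $\sigma'$ carries no smallness factor like $\alpha_i-\tfrac{K}{n+1}$.

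The paper avoids this by interpolating \emph{outside} $U$ rather than inside. It first extends the toric trivialization to a larger collar $V\supset U$, and then on $V\setminus U$ takes a partition-of-unity combination $\rho_1\gamma+\rho_2\tau$ of the Hermitian and toric connection one-forms. This leaves $H_h$ untouched on all of $\pi^{-1}(U)$ and hands the non-degeneracy question on the collar to the generic weak-coupling argument (take $K$ large), with no need to revisit Lemma~\ref{isasympformlemma}. Your approach can be salvaged in the same spirit: use a \emph{second} cutoff $\psi$ supported in such a $V$ with $\psi\equiv 1$ on $U$, and set $a_{\tilde H}=a_H-d(\psi\sigma')-d(\phi\bold{h})$; this is essentially a reformulation of the paper's construction. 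The moral is that the interpolation between the Hermitian and toric connections must happen on a region \emph{disjoint} from where $H_h$ is already specified.
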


\begin{proof}
Extend the symplectic trivialization discussed in the construction of $H_h$ to a larger collar neighbourhood $V$ with $U\subset V$. Denote by $\gamma$ be the zero-average connection $1$-form corresponding to the original connection on $\bb{P}(V)$ from the original construction as a $U(k+1)$ bundle and $\tau$ the connection $1$-form over $V\setminus U$ from the toric trivialization. Let $\rho_1, \rho_2$ be a partition of unity subordinate to the cover $\bb{P}^n\setminus U, V$. Form the connection $\tilde{H}$ over $\bb{P}^n\setminus U$ induced by $\rho_1 \gamma+\rho_2\tau$. Then $\tilde{H}$ is the trivial toric connection near $U$ and so it extends the connection $H_h$. Take $K$ so large that $a_{\tilde{H}}+K\pi^*\omega_{\bb{P}^n}$ is a symplectic form on $\bb{P}(\mc{O}\oplus\cdots \mc{O}_{i_k})$.
\end{proof}
\begin{remark}
The value of $K$ may need to be larger after extending the toric trivialization in Lemma \ref{extensionofwierdconnectionlemma}. However, one can then obtain the result of Lemma \ref{isasympformlemma} with this particular value of $K$.
\end{remark}

\subsubsection{Classification of lifts with $0$ vertical Maslov index}
In order to mimic Theorem \ref{liftconfigthm} and ultimately Theorem \ref{potthm} for $L_{\alpha_1\dots \alpha_n}$, we need a way to holomorphically trivialize $v_i^*(E,L)$ as in Theorem \ref{oka}. Let us work with $v_1$ WLOG. The theorem doesn't directly apply since the connection on $v_1^*(E,L)$ is not $T^k$-valued and does not preserve the fiber holomorphic structure within $\pi^{-1}(U)$, so we modify the argument:

We work with $v_1$ for notational simplicity and WLOG. Suppose that the cutoff function $\phi$ has support in some region $\tilde{V}\subset U$. By the classification of disks in $\bb{P}^n$ as in the beginning of this example, we can assume that $v_1$ is the restriction of a disk map $u_1$ where $u_1$ has boundary in the Lagrangian $\mc{P}_{\tilde{\alpha_1}\dots \alpha_{n}}$, $\tilde{\alpha_1}\geq \alpha_1$ with $(\alpha_1',\dots, \alpha_n)\in U\setminus \tilde{V}$ lying in the region where the connection is trivial.

\begin{figure}
\centering
\begin{minipage}{.5\textwidth}
  \centering

\def\svgwidth{\columnwidth}
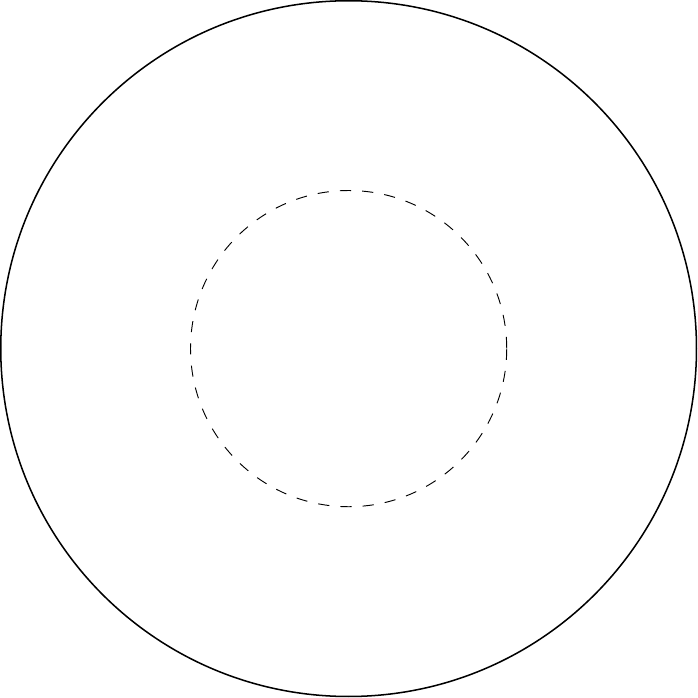
  \caption{Domain for $u_1$.}
  \label{fig:familydiskdomain}
\end{minipage}%
\begin{minipage}{.5\textwidth}
  \centering

\def\svgwidth{\columnwidth}
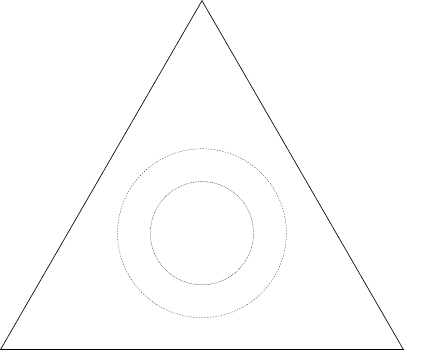
  \caption{Domain for $u_1$.}
  \label{fig:toricpricturefamily}
\end{minipage}
\end{figure}

\begin{lemma}[Trivialization]\label{okafamily}
There exists a biholomorphism $\Phi: (D\times \bb{P}^k,J_0)\rightarrow (u_1^*E,J_h)$ where $J_0$ is induced from the trivial connection and fiberwise agrees with $J_I$ outside an annular region. Moreover, $\Phi (L_{\alpha_1\dots \alpha_n})=L_{\alpha_1'\dots \alpha_n}$ for some $\alpha_1'$.
\end{lemma}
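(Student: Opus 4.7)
The approach is to apply Theorem \ref{oka} to the underlying Hermitian $T^k$-connection $H_{Herm}$ on $u_1^*E$—ignoring the Hamiltonian perturbation—and then express the perturbed complex structure $J_h$ in the resulting trivialization. The key geometric fact is that the perturbation $-d(\phi\mathbf{h})$ is supported strictly inside the annular region $u_1^{-1}(\T{supp}\,\phi)$: by the choice of $u_1$ whose boundary lies in $U\setminus\tilde{V}$, the connection near $\partial D$ reduces to the unperturbed Hermitian one, which is what will permit the Lagrangian identification.

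The Hermitian connection, descending from the Chern connection on $V=\mc{O}\oplus\dots\oplus\mc{O}_{i_k}$ (structure group $U(1)^{k+1}/\T{diag}\cong T^k$), pulls back to a $T^k$-connection on $u_1^*E$. The hypotheses of Theorem \ref{oka} are satisfied with $G=T^k$, $G_\bb{C}=(\bb{C}^*)^k$: the action is effective and $J_I$ is $(\bb{C}^*)^k$-invariant by the toric structure on $\bb{P}^k$. The theorem produces a gauge transformation $\mc{G}\in C^\infty(D,(\bb{C}^*)^k)$ that is $T^k$-valued along $\partial D$ and flattens $u_1^*H_{Herm}$; parallel transport in this flat connection then yields a holomorphic trivialization $\Phi:(D\times\bb{P}^k,J_I\times j)\to(u_1^*E,J_{Herm})$ in the sense of Proposition \ref{hologaugeprop}.

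Now define $J_0:=\Phi^* J_h$ and view $\Phi$ as a diffeomorphism $(D\times\bb{P}^k,J_0)\to(u_1^*E,J_h)$, which is tautologically a biholomorphism. Outside $u_1^{-1}(\T{supp}\,\phi)$ the perturbation vanishes, so $J_h=J_{Herm}$ there and hence $J_0=J_I\times j$; inside the annular region the off-diagonal correction in $J_h$ coming from $X_{\phi\mathbf{h}}$ (per Lemma \ref{connectionacs}) pulls back to a non-trivial modification of $J_0$, though the fiber part remains $J_I$ because $X_{\phi\mathbf{h}}$ is vertical. For the Lagrangian identification, the fact that $\partial D$ maps outside $\T{supp}\,\phi$ means $\mc{G}|_{\partial D}$ is $T^k$-valued, and since $\T{Cliff}(\bb{P}^k)$ is a $T^k$-orbit it is preserved; hence the $\T{Cliff}(\bb{P}^k)$-subbundle $u_1^*L_{\tilde\alpha_1,\alpha_2,\dots,\alpha_n}$ becomes the product $\partial D\times\T{Cliff}(\bb{P}^k)$ under $\Phi^{-1}$. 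Setting $\alpha_1':=\tilde\alpha_1$ yields the claimed identification.

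The main obstacle is confirming that $\Phi$ genuinely intertwines the standard product structure with $J_{Herm}$ (not just with $J_h$) on the unperturbed region—this relies on the fact that the $(\bb{C}^*)^k$-action on $\bb{P}^k$ is $J_I$-holomorphic so that the flat trivialization is automatically a product trivialization of the complex structure there. The subsequent applications to the potential will also require carefully tracking how the shift $\alpha_1\mapsto\alpha_1'$ changes the vertical symplectic area via Corollary \ref{zeroenergycorollary}, which is what ultimately forces the equality of lifted disk areas needed for Theorem \ref{familiestheorem}.
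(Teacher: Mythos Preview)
Your approach has a genuine gap: the $J_0$ you construct is \emph{not} induced from the trivial connection, contrary to the lemma's requirement. The point is that the gauge transformation $\mc{G}$ from Theorem~\ref{oka} is $T^k$-valued not just on $\partial D$ but throughout $u_1^{-1}(U)$ (because the heat flow in Theorem~\ref{heat} is stationary where the initial metric is already flat). Since the perturbation $\phi\bold{h}$ is $T^k$-invariant and supported inside $u_1^{-1}(U)$, it satisfies $\mc{G}^*\phi\bold{h}=\phi\bold{h}$ and therefore \emph{survives} your trivialization intact. Your $J_0=\Phi^*J_h$ then carries a nonzero off-diagonal term $J_I\circ X_{\phi\bold{h}}-X_{\phi\bold{h}}\circ j$ in the annulus, so it is the almost complex structure of the \emph{perturbed} connection, not the trivial one. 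This matters for the downstream Corollary~\ref{uniqueliftfamily}: constant sections $z\mapsto p$ are holomorphic for $J_0$ only when the connection is trivial, and without that the classification of lifts by $(w_1,w_2)$ fails.

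The paper fixes this with an additional step you omit: after applying $\mc{G}_\bb{C}$, it runs a Moser argument on the family $\omega_{th}=(1-t)\omega_0+t\omega_h$ (pulled back to the trivialized bundle), producing an isotopy $\Phi_1$ with $\Phi_1^*\omega_h=\omega_0$. The Moser vector field $\bold{v}_t$ is proportional to $\partial_{\alpha_1}$, hence vanishes outside $u_1^{-1}(\t{supp}\,\phi)$ and moves $u_1^*L_{\alpha_1\dots\alpha_n}$ (which sits over an \emph{interior} concentric circle, not $\partial D$) to an equatorial section over a nearby concentric circle---this is the meaning of $\alpha_1\mapsto\alpha_1'$. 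Your Lagrangian identification is also off: you track $u_1^*L_{\tilde\alpha_1,\dots,\alpha_n}$ on $\partial D$, but the lemma concerns $u_1^*L_{\alpha_1,\dots,\alpha_n}$, which lies over the interior circle $v_1(\partial D)\subset u_1(D)$ where $\phi=1$. The composite $\Phi:=\mc{G}_\bb{C}^{-1}\circ\Phi_1$ then gives a $J_0$ whose associated connection is genuinely flat, with fiberwise structure $J_I$ outside $u_1^{-1}(U)$ and $\Phi^*J^G$ inside.
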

\begin{proof}
As above, let $H$ denote the connection on $u_1^*E$ induced from the original construction of $E$ and the toric trivialization (Theorem \ref{moment}) in the neighbourhood $U$. By Theorem \ref{oka} there is a $(\bb{C}^\times)^{k}$ valued gauge transformation $\mc{G}_\bb{C}$ so that $\mc{G}_\bb{C}(H)$ is a trivial connection. 
A key property of $\mc{G}_\bb{C}$ is that it is $T^k$-valued in the region $u_1^{-1}(U)$: Indeed, the proof of Donaldson's Theorem \ref{heat} involves looking at the evolution equation
\begin{equation}\label{hymtemp}
g^{-1}\frac{\partial g}{\partial t}=-2i\Lambda F_g
\end{equation}
on the bundle of metrics for the associated bundle $\mc{V}$ with the appropriate boundary condition. However, the initial metric $g_H$ induced by $H$ in the region $u_1^{-1}(U)$ is already flat, so the flow is independent of $t$ here. Thus, the solution to \eqref{hymtemp} differs from $g_H$ in this region by a unitary transformation on each line bundle factor, so the claim follows.

Since $h$ is invariant under the $T^k$ action we have $\mc{G}_\bb{C}^*\phi\bold{h}= \phi\bold{h}$ as a function valued $1$-form, and the connection $\mc{G}_{\bb{C}}(\tilde{H})$ is induced by the weak coupling form $\omega_F-d\phi\bold{h}+Kd\alpha_1\wedge d\theta_1$ in the region $U$. Expanding, we have
$$u_1^*\omega_h=\omega_{\bb{P}^k}-\phi hd\alpha_1\wedge d\theta_1- d\phi\wedge \bold{h}-\phi dh\wedge \frac{\bold{h}}{h} +K d\alpha_1\wedge d\theta_1$$
where polar coordinates on the disk are given by $(\alpha_1,\theta_1)$. To save notation, also denote the above form by $\omega_h$. If we follow the reasoning of the proof of Lemma \ref{isasympformlemma}, the two-form $(1-t)\omega_0+t\omega_h=\omega_{th}$ is symplectic for $t\in [0,1]$. We have $\omega_{h}-\omega_0=-d(\phi \bold{h})$, so we can apply a Moser type argument to obtain an isotopy $\Phi_t$ such that $\Phi_t^*\omega_{th}=\omega_0$. The isotopy is the flow of a time dependent vector field $\bold{v}_t$ given by $\phi \bold{h}=\iota_{\bold{v}_t}\omega_{th}$:
$$\bold{v}_t=\frac{(\alpha_1-\frac{K}{n+1})\phi h}{K-t\phi h-th(\alpha_1-\frac{K}{n+1})\partial_{\alpha_1}\phi}\partial_{\alpha_1}$$
on $u_1^*E$. Notably, this vanishes outside $u_1^{-1}(U)$ and exchanges $u^*L_{\alpha_1\dots \alpha_n}$ with some other equatorial section over a concentric circle. We take the map $\Phi:=\mc{G}_\bb{C}^{-1}\circ \Phi_1 $, and $\Phi^{-1}(u_1^*E)$ can still be realized as topological a product via the projection $\tilde{\pi}:=\Phi^*\pi$. The connection $\Phi^{-1}(\tilde{H})$ is flat and is prescribed by the pullback of the weak coupling form on $u_1^{-1}(U)$.  We define $J_0:=\Phi^*J_h$, which is the unique a.c.s. from Lemma \ref{connectionacs} for the connection $\Phi^{-1}(\tilde{H})$ that fiberwise agrees with the $J_I$ outside of $u^{-1}(U)$ and $\Phi^*J^G$ inside this region.
\end{proof}

Obviously the lemma remains true for the other disks $u_i$ and $v_i$.
\begin{corollary}[Vertically constant lifts, their homology classes, and transversality]\label{uniqueliftfamily}
There exists a vertically constant lift $\hat{v}_i$ of $v_i$ to $(E,L_{\alpha_1\dots \alpha_n})$ which is Fredholm regular, and after fixing a point $\hat{v}_i=p\in \pi^{-1}( v_i(1))\cap L_{\alpha_1\dots \alpha_n}$ the lift is unique in its homology class. Moreover, any non-vertically constant lift has positive vertical Maslov index.
\end{corollary}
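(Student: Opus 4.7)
The plan is to run the three-part argument of Theorem \ref{liftconfigthm} and Lemma \ref{onlyrepinclasslemma}, but with the biholomorphism $\Phi$ of Lemma \ref{okafamily} replacing the Donaldson gauge transformation of Theorem \ref{oka}. For existence, I would take the constant section $z\mapsto (z,\Phi^{-1}(p))$ of the trivial bundle $D\times \bb{P}^k$, which is covariantly constant for the flat connection $\Phi^{-1}(\tilde{H})$ and is $J_0$-holomorphic. Setting $\hat v_i:=\Phi\circ(z,\Phi^{-1}(p))$ produces a $J_h$-holomorphic section of $v_i^*E$ through $p$, and the boundary condition $\hat v_i(\partial D)\subset v_i^*L_{\alpha_1\dots\alpha_n}$ follows from the final assertion of Lemma \ref{okafamily} together with the fact that the Moser flow in its proof moves only among concentric equatorial sections over circles; in particular, after reparametrising the loop of Lagrangians by the Moser isotopy, the fiberwise Clifford torus subbundle $D\times \cl{k}$ is sent precisely to $v_i^*L_{\alpha_1\dots\alpha_n}$.

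For Fredholm regularity I would transcribe verbatim the argument in the second half of the proof of Theorem \ref{liftconfigthm}: the analogues of the commutative squares \eqref{gaugeiso} and \eqref{tangiso} (with $\Phi$ in place of $\mc{G}$, using $\bar{\partial}_{J_0,J_h}\Phi=0$, which is \eqref{holomorphicityofgaugetransformation} for the biholomorphism $\Phi$) reduce surjectivity of $D_{\hat v_i}\bar{\partial}_{J_h,j}$ to surjectivity of the standard $\bar{\partial}$-operator at a constant section of a trivial bundle over $D$ with totally real boundary conditions in $\cl{k}$. The usual Schwarz reflection and Riemann-Roch computation then give surjectivity. Surjectivity for the full total-space linearization is deduced exactly as in \eqref{linearizedoperatortotalspace}--\eqref{basevariation}, using that $v_i$ is a regular holomorphic disk in $(\bb{P}^n,\mc{P}_{\alpha_1\dots\alpha_n})$ in the standard toric complex structure by \cite{choohdiskinstantons}.

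For uniqueness in the homology class, suppose $\hat v_i'$ is another $J_h$-holomorphic section through $p$ in the class $[\hat v_i]$. Its push through $\Phi^{-1}$ yields a $J_0$-holomorphic section of $D\times\bb{P}^k$ through $\Phi^{-1}(p)$ with boundary in the Clifford subbundle and in a class with zero vertical symplectic area. By the energy identity \eqref{energyequation} applied in the flat trivialisation, the covariant energy vanishes, so $\Phi^{-1}\circ\hat v_i'$ is covariantly constant; monotonicity of $\cl{k}$ with minimal Maslov index $\geq 2$ together with the point constraint then forces $\Phi^{-1}\circ\hat v_i'=\Phi^{-1}(p)$, reproducing the rigidity of Lemma \ref{onlyrepinclasslemma}.

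Finally, for the Maslov inequality: Proposition \ref{verticalmaslovprop} gives $I_F(u)\geq 0$ for any $J_h$-holomorphic lift $u$, since the ambiently trivial Hamiltonian structure over $L_{\alpha_1\dots\alpha_n}$ survives the perturbation $\bold h$ (the perturbation is $T^k$-invariant and preserves the Clifford subbundle, so the trivializing argument of \cite[Lem 3.2]{salamonakveld} applies). If equality $I_F(u)=0$ held, monotonicity of $\cl{k}$ would give $\int u^*\omega_F=0$, the energy identity \eqref{energyequation} would force $e_\delta(u)=0$, and $u$ would be vertically constant, contradicting the hypothesis. The main obstacle I expect is exactly this last step: verifying that Proposition \ref{verticalmaslovprop} still applies when the connection has been modified by $-d\bold{h}$, which requires a careful inspection that the Hofer-norm bound on $R_\delta$ (and the vanishing of $\Vert\delta\Vert_{\partial D}$ after the fiberwise Hamiltonian trivialisation) remain in force under the $T^k$-equivariant perturbation.
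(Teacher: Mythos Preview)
Your existence, regularity, and uniqueness arguments track the paper's proof closely and are correct in outline. One setup detail to fix: $\Phi$ in Lemma \ref{okafamily} trivializes $u_1^*E$ over the \emph{larger} disk, not $v_i^*E$; the paper takes the constant section $\hat u_i(z)=p$ over all of $D$ and then restricts to the smaller concentric disk $D_0\subset D$ whose boundary lands in $L_{\alpha_1\dots\alpha_n}$ under $\Phi$. Your $\hat v_i:=\Phi\circ(z,\Phi^{-1}(p))$ should be read as this restriction.

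The substantive difference is in the Maslov inequality, and your worry about Proposition \ref{verticalmaslovprop} is exactly the right worry --- but the paper avoids it rather than resolves it. You route the argument through the abstract curvature/Hofer-norm estimate of Proposition \ref{verticalmaslovprop} and then the energy identity; this would require checking that $\Vert R_\delta\Vert_D<\lambda\Sigma_{L_F}$ survives both the $-d\bold h$ modification inside $U_0$ and the partition-of-unity patching of Lemma \ref{extensionofwierdconnectionlemma}, which is not obvious (and the paper never claims it). Instead, the paper stays inside the explicit trivialization furnished by $\Phi^{-1}$: any $J_h$-holomorphic lift $\tilde v_i$ becomes a $J_0$-holomorphic section of $D_0\times\bb{P}^k$, hence a product $w_1\times w_2$ with $w_2:(D_0,\partial D_0)\to(\bb{P}^k,\cl{k})$ holomorphic for some domain-dependent tamed structure. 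The vertical Maslov index of $\tilde v_i$ is then literally the Maslov index of $w_2$, and $\mu(w_2)>0$ for $w_2$ non-constant follows directly from monotonicity of $\cl{k}$ (positive energy $\Rightarrow$ positive area $\Rightarrow$ positive Maslov). No curvature estimate is needed, and the concern you flagged simply does not arise. The uniqueness clause is then the special case $\mu(w_2)=0\Rightarrow w_2$ constant, which is your Lemma \ref{onlyrepinclasslemma} argument in different clothing.
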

\begin{proof}
After holomorphically trivializing $u^*_i\bb{P}(\mc{O}\oplus\cdots \mc{O}_{i_k})$ via $\Phi^{-1}$, we can take $\hat{v}_i$ as the restriction of the constant section $\hat{u}_i(z)=p$ to a smaller disk $D_0\subset D$ whose boundary lies at $\hat{u}_i^{-1}(L_{\alpha_1\dots\alpha_n})$. Other holomorphic lifts $\tilde{v}_i$ other holomorphic lifts are given by maps $w_1\times w_2$ where $[w_1]$ generates $H_2(D,\pi(L_{\alpha_1\dots\alpha_n}))$ and $w_2:D_0\rightarrow (\bb{P}^k,\cl{k})$ is a holomorphic disk with respect to some (domain dependent) tamed almost complex structure. Since it is constant in the $\bb{P}^k$ direction, it follows that $\hat{v}_i$ is the unique holomorphic representative of its homology class. Moreover it follows that $\mu(w_2)>0$ if $w_2$ is non-constant.

Transversality at the class $[\hat{v}_i]$ follows for every vertical almost complex structure exactly as in the proof of Theorem \ref{liftconfigthm}. 
\end{proof}

In the spirit of the rest of the paper, we denote $\hat{v}_i=:\mc{L}v_i$ as this Donaldson lift of $v_i$. The holonomy of the fibration around the boundary of each $\mc{L}v_i$ is prescribed precisely by $\phi\bold{h}$. By corollary \ref{zeroenergycorollary} and the definition of $h$, the vertical symplectic areas of the $\mc{L}v_i$ with respect to $\omega_h$ on $\bb{P}(\mc{O}\oplus\cdots\mc{O}_{i_k})$ are:
\begin{align}\label{verticaltwistedarea}
e_v(\mc{L}v_1)&=-\int_{\theta_1\in[0,2\pi]} \bold{h}=\frac{K}{n+1}-\alpha_1\\
e_v(\mc{L}v_2)&=-\int_{\theta_2\in [0,2\pi]}\bold{h}=\frac{K}{n+1}-\alpha_2\\
&\cdots\\
e_v(\mc{L}v_{n+1})&=-\int_{\Delta}\bold{h}=\sum_i\alpha_i-\frac{nK}{n+1}
\end{align}
where in the last integral $\Delta\subset T_{\alpha_1\dots \alpha_{n}} $ is along the diagonal $\lbrace\theta_i=\theta_j\rbrace$ and has a differing sign due to reversed orientation (since $v_n(z)=[\frac{a_0}{z},\frac{a_1}{z},\dots, a_n]$).

Similar to paragraph \ref{flagexamplefloercohomology} we can derive the second order leading potential for $L_{\alpha_1\dots\alpha_n}$. Let $y_i$ represent the holonomy of a representation $\rho\in\text{Hom} (\pi_1(L_{\alpha_1\dots \alpha_n}),\Lambda^{2,\times})$ around the $i^{th}$ torus factor of $\mc{P}(\alpha_1,\dots,\alpha_n)$, and $x_j$ is the holonomy around the $j^{th}$ torus factor of the fiber $\cl{k}$. We use the integrable toric complex structure in the base resp. fiber to write down the vertically constant disks resp. horizontally constant disks. Let $w_i$ denote the disks with boundary in $\cl{k}$ as in the classification in the beginning of this example. Lemma \ref{okafamily} provides a substitute to Theorem \ref{oka} for the holomorphic trivialization in the absence of a $G$-connection, and Corollary \ref{uniqueliftfamily} provides a replacement for Theorem \ref{liftconfigthm} so that the expression of the second order potential still has the form \eqref{peq}. Thus, we have
\begin{align}\label{potfamily}
\mathcal{W}^2_{L_{\alpha_1\dots\alpha_n}}[q,r]&= \sum_j x_j r^{e_v(w_1)}+\frac{1}{x_1\cdots x_k}r^{e_v(w_{k+1})}+\sum_i y_i r^{e_v(\mc{L}v_i)}q^{e(v_i)} \\
 \label{potfamily2} &\quad +\frac{1}{y_1\cdots y_n x_1^{\beta_1}\cdots x_k^{\beta_k}}r^{e_v(\mc{L}v_{n+1})}q^{e(v_{n+1})}.
\end{align}
To avoid clutter we absorb the result of lifting the $v_i$ on the holonomy representation into a change of basis of the form $y_ix_1^*\cdots x_k^*\mapsto y_i$ (which is reflected in the exponents $\beta_1,\dots, \beta_k$). We observe that $e_v(w_i)=e_v(w_j)$ since $L_F=\cl{k}$ and the form $d\bf{h}$ vanishes on the fibers (it is verti-zontal). Moreover, all of the terms involving both $q$ and $r$ have the same degree by the calculations \eqref{verticaltwistedarea} and \eqref{basesymplecticarea}. As in example \ref{flag3} beginning at line \eqref{potflag3}, we can tensor this expression with $t$:
\begin{align}\label{potfamilyt}
\mathcal{W}^2_{L_{\alpha_1\dots\alpha_n}}[t]&= \sum_j x_j t^{e_v(w_1)}+\frac{1}{x_1\cdots x_k}t^{e_v(w_{k+1})}+\sum_i y_i t^{e(\mc{L}v_i)}\\ 
\label{potfamilyt2} &\quad +\frac{1}{y_1\cdots y_n x_1^{\beta_1}\cdots x_k^{\beta_k}}t^{e(\mc{L}v_{n+1})}
\end{align}
where $e(\mc{L}v_i)=e(\mc{L}v_j)$ $\forall i,j$. Similar to the process in the Flag example starting at \eqref{y1}, we can take the partial derivatives of this expression with respect to the $x_i$ and $y_i$ variables, apply the multivariate Hensel's Lemma \ref{henselsmulti} to find a unital nondegenerate critical point. The calculation is completely analogous to one in the next example (Subsection~\ref{L_2 computation}). Finally, we apply Proposition \ref{crit} to see that there is a representation $\eta\in \text{Hom}(\pi_1(L_{\alpha_1\dots\alpha_n}),\Lambda_t)$ so that
$$H(CF(L_{\alpha_1\dots\alpha_n},\Lambda_t),\nu^1_{\eta,\mc{W}x^\vee})\cong H^*(L_{\alpha_1\dots\alpha_n},\Lambda_t).$$
This holds for every $(\alpha_1,\dots,\alpha_n)\in U_0$, so it follows that the entire family of Lagrangians $\pi^{-1}(U_0)$ is Floer non-trivial and thus Hamiltonian non-displaceable by the discussion in Section \ref{hamiltonianinvariancesection}.

\subsection{Initially Complete Flags}\label{fullflags}

The construction  of the Floer-non-trivial three torus in $\T{Flag}(\C^3)$ naturally gives a way to construct non-displaceable tori in higher dimensional flag manifolds. We carry this out here, with a warning that the indices in the potential become quite complicated.

Let $F_n^k$ be the manifold of initially complete flags $V_1\subset\cdots \subset V_k\subset \C^{n+1}$ where $\dim_\C V_i=i$. Identify this homogeneous $U(n)$ space with some coadjoint orbit $G\cdot \xi$ equipped with the $U(n)$-equivariant Kostant-Kirillov symplectic form $\omega_\xi(X^\sharp,Y^\sharp)=\langle \xi, [X,Y]\rangle$. The corresponding weak coupling form will be scaled at each step in the fibration $\p^{n-k}\rightarrow F_n^{k+1}\rightarrow F_n^k$. We construct a Floer-non-trivial Lagrangian in $F_n^k$ by induction on $k$.

\begin{theorem}
There is a Lagrangian torus $L_k\subset F_n^k$ that is unobstructed and Floer-non-trivial.
\end{theorem}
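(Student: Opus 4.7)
The plan is to prove this by induction on $k$. For the base case $k=1$ we have $F_n^1=\p^n$ and $L_1:=\cl{n}$, the Clifford torus, whose potential is \eqref{cliffnpotential}; setting partial derivatives equal to zero and applying Hensel's lemma modulo $t$ produces a unital nondegenerate critical point, and Theorem \ref{crit} gives non-triviality and unobstructedness. For the inductive step, assume $L_k\subset F_n^k$ is a trivially fibered, unobstructed Lagrangian torus admitting a nondegenerate critical point $\eta$ of $\mathcal{W}^2_{L_k}[t]$ with $H(CF(L_k,\Lambda_t),\nu^1_{\eta,\mathcal{W}x^\vee})\cong H^*(L_k,\Lambda_t)$. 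Consider the fibration $\p^{n-k}\rightarrow F_n^{k+1}\rightarrow F_n^k$ obtained by forgetting $V_{k+1}$; the fiber is $\p(\C^{n+1}/V_k)$ with its Fubini--Study K\"ahler structure, and a reduction of the structure group to a compact subgroup of $SU(n-k+1)$ yields a symplectic K\"ahler fibration in the sense of Definition \ref{sympkahlerdefintro}.

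To build $L_{k+1}$ I would mimic the construction in Subsection \ref{laginflagssection}. The residual $T^{n}/\T{diag}$ action on $F_n^{k+1}$ projects to the action on $F_n^k$ under which $L_k$ sits as a Gelfand--Cetlin fiber; hence Theorem \ref{moment} provides a symplectic trivialization of $\pi^{-1}(U)$ for a neighborhood $U\supset L_k$ in which parallel transport around $L_k$ is $T$-valued, and the weak coupling form restricts on fibers as $\omega_F-d\delta$ for a fiberwise moment map $\delta$. Since the resulting $T$-action on $\p^{n-k}$ lies inside the Fubini--Study one, $\delta^{-1}(0)=\cl{n-k}$ after recentering. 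Setting
\[L_{k+1}:=\cl{n-k}\times L_k\]
in the trivialization, extending the connection to $F_n^{k+1}$ via Theorem \ref{connectionextensiontheorem}, and applying Theorem \ref{uniquecouplingthm} to identify the new weak coupling form with the coadjoint-orbit one for $K\gg 1$, yields a trivially fibered Lagrangian in the sense of Definition \ref{trivfiberedlagrangiandefintro}.

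Unobstructedness of $L_{k+1}$ is immediate from Corollary \ref{unobstructed} together with the inductive hypothesis. For Floer non-triviality, I would apply Theorem \ref{potthm} to write
\[\mathcal{W}^2_{L_{k+1}}[q,r]=\bigl[\mathcal{L}\circ\mu^0_{L_k}\bigr]_{\deg_{qr}=\mathcal{K}}+i_{x_M*}\circ\mu^0_{\cl{n-k}},\]
pass to a single variable via the homomorphism \eqref{2to1variablemap}, and then as in Examples \ref{flag3} and \ref{familiesexample} absorb the fiber-holonomy monomials $\prod_j x_j^{m_{i,j}}$ acquired by each lifted base disk (coming from the Chern connection on the tautological quotient $\C^{n+1}/V_k$) via a linear change of basis $y_i\mapsto y_i\prod_j x_j^{m_{i,j}}$ on $\hom(\pi_1(L_{k+1}),\Lambda_t^\times)$. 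This decouples $\mathcal{W}^2_{L_{k+1}}$ into a sum of the rewritten inductive potential and the Clifford fiber potential; combining $\eta$ with the critical point of the latter and applying the multivariate Hensel lemma (as in the derivation culminating in \eqref{solvethist}) produces a unital nondegenerate critical point $\xi$ of $\mathcal{W}^2_{L_{k+1}}[t]$. Theorem \ref{crit} and the discussion of Subsection \ref{hamiltonianinvariancesection} then yield Floer non-triviality and Hamiltonian non-displaceability.

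The hard part will be controlling the holonomy exponents $m_{i,j}$ so that the change of basis is an isomorphism of the holonomy lattice (i.e., that the associated integer matrix is unimodular on the fiber directions) and that the resulting decoupled potential genuinely splits as a sum; this requires tracking the Chern classes of the tautological quotients along the tower $F_n^{k+1}\rightarrow F_n^k\rightarrow\cdots\rightarrow\p^n$ and showing by induction that the monomial factors introduced at each step are compatible with those absorbed at previous steps. A secondary issue is that if the lifted and fiber energies happen to coincide at low order in $t$, second-order terms alone may be degenerate and one must instead invoke the higher-order potential $\mathcal{W}_L^k$ introduced after Definition \ref{2ndpotdef}, together with an order-by-order Hensel-style bootstrap.
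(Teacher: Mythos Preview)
Your inductive strategy and the construction of $L_{k+1}$ match the paper's. However, two points you flag as peripheral are in fact where the argument lives, and your main line does not go through as written.

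First, the decoupling you expect after the change of basis does not occur. At level $i$ the Clifford factor sits in $\p^{n-i+1}$, contributing $n-i+2$ Maslov-two disks but only $n-i+1$ generators $y_{i\vert j}$; the substitution $y_{i\vert j}\mapsto y_{i\vert j}\prod x^{m_{i,j}}$ can absorb the fiber holonomy on at most $n-i+1$ of them, so the remaining ``diagonal'' disk $u_{i\vert n-i+2}$ always retains a factor $x^{\nu}$ coupling base and fiber (compare \eqref{potflag32} and the system \eqref{basestepsystem}). The potential therefore never splits as a sum, and you cannot simply pair the inductive critical point $\eta$ with the fiber Clifford critical point. The paper instead solves the full coupled polynomial system: modulo $t$ the coupling terms vanish and the equations are solved by consistent roots of unity in each block; one checks the Jacobian is a unit there and applies the multivariate Hensel lemma (Theorem \ref{henselsmulti}) directly, rather than assembling a critical point from pieces.

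Second, the degeneracy you mention is not a possible coincidence but is forced. Taking the fiber small at each step gives $e(u_{i\vert l})>e(u_{j\vert m})$ for $i<j$, so $\mathcal{W}^2_{L_{k+1}}$ sees only the fiber disks and the lifts from the bottom $\p^n$, omitting all intermediate levels; it is degenerate in every direction $x_{i\vert j}$ with $2\le i\le k$. Accordingly the paper's induction hypothesis is about the $k$-th order potential $\mathcal{W}^k_{L_k}$ (equation \eqref{baseflagpot}), and the step produces a nondegenerate critical point of $\mathcal{W}^{k+1}_{L_{k+1}}$, to which the order-by-order argument behind Theorem \ref{crit} is then applied.
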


\subsubsection*{Constructing and computing Floer theory for $L_2\subset F_n^2$:}\label{L_2 computation} We illustrate the step from $k=1$ to $k=2$ to derive a model for the higher order cases. $F_n^1=\p^n$, so let us take $L_1=\text{Cliff}(\mathbb{P}^n)$. There is a fibration $\pi_2:F_n^2\rightarrow\p^n$ with fiber $\p^{n-1}$, so by Theorem \ref{moment}, there is a $T^{n}$-invariant open neighbourhood $\mathcal{U}_1$ of $\text{Cliff}(\mathbb{P}^n)$ and a symplectic connection on $F_n^2$ such that the moment map for the action of $T^{n}\subset PU(n+1)$ on $\pi_2^{-1}(\mathcal{U}_1)$ is $\phi\circ \pi_2$, where $\phi:\p^n\rightarrow \mathfrak{t}^{n\vee}$ is the moment map for $\p^n$.

At the level set of $\phi\circ \pi_2$ that lies above $\text{Cliff}(\mathbb{P}^n)$, we have a symplectic trivialization $\phi\circ \pi_2^{-1}(0)\cong\p^{n-1}\times \text{Cliff}(\mathbb{P}^n)$. Thus, pick the Lagrangian $L_2=\delta^{-1}(0)\times\text{Cliff}(\mathbb{P}^n)\subset F_n^2$ where $\delta$ the moment map for parallel transport around $\cl{n}$, as in the construction of $L$ in Example \ref{flag3}. Thus, the fiber is a Clifford torus.

Next, we calculate the second order potential for $L_2$ using Theorem \ref{potthm} and show that it has a critical point. Let $y_i$ be the evaluation of the local system $\rho$ on the first $n$ factors, and $x_j$ be the evaluation on the last $n-1$ factors. The potential for $\text{Cliff}(\mathbb{P}^n)$ is as in \eqref{cliffnpotential} where $e(u_i)=e(u_j)$ $\forall i,j$. The lifted potential for $L_2$ in $F_n^2$ is
\begin{equation}
\mathcal{L}\circ\mu^0_{\text{Cliff}(\mathbb{P}^n)}[q,r]=\sum_{i=1}^n y_ix^{\nu^i}q^{e(u_i)}+\frac{1}{y_1\cdots y_nx^{\nu}}q^{e(u_{n+1})}
\end{equation}
where $\nu^i$ and $\nu$ are multindices with $x^{\nu^i}:=x_1^{\nu_{1}^i}\cdots x_{n-1}^{\nu_{n-1}^i}$ given by the holonomy of the connection around the boundary of $\mathcal{L}u_i$. By changing the basis of $\pi_1(L)$, we can assume that $\nu^i=(0,\dots,0)$. Moreover, there are no $r$ terms since the integral of the holonomy $1$-form around the boundary of each disk is $0$ by Corollary \ref{zeroenergycorollary} and construction of $L_2$. By Theorem \ref{potthm} the second order potential of $L_2$ is
\begin{align*}
\mathcal{W}^2_{L_2}[q,r]&=\sum_{i=1}^n y_iq^{e(u_i)}+\frac{1}{y_1\cdots y_n x^{\nu}}q^{e(u_{n+1})}\\ &\quad+ \sum_{i=1}^{n-1} x_ir^{e_v(w_i)}+\frac{1}{x_1\cdots x_{n-1}}r^{e_v(w_{n})}.
\end{align*}

The search for a critical point yields the equations
\begin{equation}\label{yvar}
\partial_{y_i}\mathcal{W}^2_{L_2}= q^{e(u_i)}+\frac{-1}{y_i\cdot y_1\cdots y_n x^{\nu}}q^{e(u_{n+1})}=0
\end{equation}
and
\begin{align}\label{zvar}
\partial_{x_j}\mathcal{W}^2_{L_2}=&\frac{-\nu_j}{x_jy_1\cdots y_n x^{\nu}}q^{e(u_{n+1})} \\& \label{zvar2} +  r^{e_v(w_j)}+\frac{-1}{x_j\cdot x_1\cdots x_{n-1}}r^{e_v(w_{n})}=0.
\end{align}
Since the $u_i$ resp. $w_i$ are Maslov index $2$ disks that are bounded by $\text{Cliff}(\mathbb{P}^n)$ resp. $\text{Cliff}(\mathbb{P}^{n-1})$, we have that $$e(u_i)=e(u_j),\, \forall i,j$$
and
$$e_v(w_i)=e_v(w_j),\, \forall i,j.$$

With these assumptions, the system \eqref{yvar} gives
\begin{equation}\label{substitution}
g_i:=y^{\mathfrak{e}_i}x^\nu-1=0
\end{equation}
for every $i$, where $y^{\mathfrak{e}_i}=y_iy_1\cdots y_n$. Notably this forces $y_i=y_j$, which we now denote as $y$.

Next, we replace the formal variables $q,r$ with $t$ in \eqref{zvar} and \eqref{zvar2}:
$$\frac{-\nu_j}{x_jy^n x^{\nu}}t^\alpha +1+\frac{-1}{x_j\cdot x_1\cdots x_{n-1}}=0$$
for some $\alpha>0$. Let $\mathfrak{e}=(1,\dots,1)$ and $\mathfrak{e}_j=(1,\dots,\overset{j}{2},1,\dots,1)$, and multiply the above equation by $x^{\nu+\mf{e}_j}$:
\begin{equation}\label{basestepsystem}
f_j:=x^{\nu + \mathfrak{e}_j}-x^\nu-\frac{\nu_j x^{\mathfrak{e}}}{y^n}t^\alpha=0.
\end{equation}
We are now in position to apply the multivariate version of Hensel's lemma to solve the system $(f_1,\dots, f_{n-1},g_1,\dots, g_{n})=0$:

\begin{theorem}\cite[Exercise 7.26]{eisenbud}\label{henselsmulti}
Let $R$ be a ring which is complete with respect to an ideal $\mf{m}$ and $\bold{h}:=(h_1,\dots, h_n)\in R[x_1,\dots, x_n]$ a system of polynomials. If $\bold{a}:=(a_1,\dots, a_n)$ solves
$$\bold{h}(\bold{a})\equiv 0\, (\text{mod }\mf{m}^{\oplus n}),$$
and the Jacobian $\text{Jac}_{\bold{a}}\bold{h}$ is a unit in $R$, then there is a root $\bold{b}$ 
$$\bold{h}(\bold{b})=0 \text{ and } \bold{b}\equiv \bold{a}\,(\text{mod } \mf{m}^{\oplus n}).$$
\end{theorem}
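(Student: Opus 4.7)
The plan is to use Newton's method (multivariate Newton iteration) in the $\mf{m}$-adic topology and exploit completeness of $R$ to pass to a limit. Define a sequence $\{\bold{a}_k\}_{k\geq 0}$ by setting $\bold{a}_0 := \bold{a}$ and
$$\bold{a}_{k+1} := \bold{a}_k - (\mathrm{Jac}_{\bold{a}_k}\bold{h})^{-1}\,\bold{h}(\bold{a}_k).$$
I first need to verify that $\mathrm{Jac}_{\bold{a}_k}\bold{h}$ is indeed invertible at every step. Since the entries of $\mathrm{Jac}_{\bold{a}}\bold{h} - \mathrm{Jac}_{\bold{a}_k}\bold{h}$ lie in $\mf{m}$ (by induction $\bold{a}_k \equiv \bold{a} \pmod{\mf{m}^{\oplus n}}$, together with the fact that polynomial expressions of congruent elements remain congruent), and invertibility is preserved under $\mf{m}$-adic perturbation of a unit, the Jacobian stays a unit throughout the iteration.

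Next, I would establish quadratic convergence via the multivariate Taylor formula. For each polynomial $h_i$, write
$$h_i(\bold{a}_k + \bold{\delta}) = h_i(\bold{a}_k) + \sum_j \partial_j h_i(\bold{a}_k)\,\delta_j + Q_i(\bold{a}_k,\bold{\delta}),$$
where $Q_i$ is a polynomial in $\bold{\delta}$ with coefficients in $R$ and with no terms of degree less than two in $\bold{\delta}$. Plugging in the correction $\bold{\delta} = -(\mathrm{Jac}_{\bold{a}_k}\bold{h})^{-1}\bold{h}(\bold{a}_k)$, the linear and constant terms cancel, leaving $\bold{h}(\bold{a}_{k+1}) = \bold{Q}(\bold{a}_k,\bold{\delta})$. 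Since each component of $\bold{\delta}$ lies in the same ideal as $\bold{h}(\bold{a}_k)$ (the Jacobian inverse acts by a matrix with entries in $R$), we obtain the estimate: if $\bold{h}(\bold{a}_k) \in \mf{m}^{N_k\oplus n}$, then $\bold{h}(\bold{a}_{k+1}) \in \mf{m}^{2N_k\oplus n}$. Starting from $N_0 = 1$ yields $N_k \geq 2^k$, so $\bold{h}(\bold{a}_k) \to 0$ in the $\mf{m}$-adic topology, and simultaneously $\bold{a}_{k+1}-\bold{a}_k \in \mf{m}^{N_k\oplus n}$ shows $\{\bold{a}_k\}$ is Cauchy.

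By the completeness hypothesis $R = \varprojlim R/\mf{m}^i$, the sequence $\{\bold{a}_k\}$ converges to some $\bold{b} \in R^{\oplus n}$, and continuity of polynomial evaluation in the $\mf{m}$-adic topology gives $\bold{h}(\bold{b}) = 0$. The congruence $\bold{b} \equiv \bold{a} \pmod{\mf{m}^{\oplus n}}$ follows because every increment $\bold{a}_{k+1} - \bold{a}_k$ lies in $\mf{m}^{\oplus n}$, which is closed in the $\mf{m}$-adic topology.

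The main obstacle I anticipate is bookkeeping the quadratic convergence estimate on the Taylor remainder: one must justify that $Q_i(\bold{a}_k,\bold{\delta})$ genuinely lies in the square of the ideal generated by the components of $\bold{\delta}$, which requires treating $\bold{h}$ as a polynomial (so the Taylor expansion terminates) and tracking that coefficients of higher degree monomials in $\bold{\delta}$ are evaluated at $\bold{a}_k$ and hence remain in $R$. Once this is set up cleanly, the rest is standard Newton iteration in a complete local ring.
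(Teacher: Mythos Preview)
Your Newton iteration argument is correct and is the standard proof of the multivariate Hensel lemma. Note, however, that the paper does not actually prove this statement: it is quoted as \cite[Exercise 7.26]{eisenbud} and used as a black box to solve the system \eqref{substitution}, \eqref{basestepsystem}. So there is no ``paper's own proof'' to compare against here; you have supplied what the paper omits. One small point worth tightening: when you say ``invertibility is preserved under $\mf{m}$-adic perturbation of a unit,'' make explicit that the hypothesis is that $\det(\mathrm{Jac}_{\bold{a}}\bold{h})$ is a unit in $R$, so that $\det(\mathrm{Jac}_{\bold{a}_k}\bold{h}) \equiv \det(\mathrm{Jac}_{\bold{a}}\bold{h}) \pmod{\mf{m}}$ is again a unit (a unit plus an element of $\mf{m}$ is invertible by the geometric series, using completeness). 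With that clarification the argument goes through as written.
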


 First, we notice that 
$$x_1=\cdots=x_{n-1}=\aleph_k:=e^{\frac{k2\pi i}{n}}$$
and
$$y_1=\cdots y_n=\beth_{l,k}:=e^{\frac{l2\pi i}{n+1}-\frac{k\vert\nu\vert 2\pi i}{n(n+1)}}$$ gives a solution to \eqref{substitution}, \eqref{basestepsystem} mod $t^\alpha$ for $l,k\in \bb{Z}$. In order to apply Hensel's lemma, we need to show that the Jacobian of the system at this solution is a unit. Let $\mathfrak{f}_i=(0,\dots,\overset{i}{1},0,\dots, 0)$.
\begin{align*}
\frac{\partial f_j}{\partial x_i}&=(\nu_i+1+\delta_{ij})x^{\nu+\mathfrak{e}_j-\mathfrak{f}_i}-\nu_i x^{\nu-\mathfrak{f}_i}-\frac{\nu_j x^{\mathfrak{e}-\mf{f}_i}}{y^n}t^\alpha\\
\frac{\partial f_j}{\partial y_i}&=\frac{n\nu_j x^{\mathfrak{e}}}{y^{n+1}}t^\alpha \\
\frac{\partial g_j}{\partial x_i}&= \nu_i y^{\mathfrak{e}_j}x^{\nu-\mathfrak{f}_i} \; \text{if $\nu_i\neq 0$ or $0$ o/w}\\
\frac{\partial g_j}{\partial y_i}&= (1+\delta_{ij}) y^{\mathfrak{e}_j-\mathfrak{f}_i} x^{\nu}
\end{align*}

For the mapping $\Upsilon(x_1,\dots,x_{n-1},y_1,\dots,y_n)=(f_1,\dots, f_{n-1},g_1,\dots,g_n)$
the matrix $$D_{(\overset{1}{\aleph_k},\dots,\overset{n-1}{\aleph_k},\overset{n}{\beth_{l,k}},\dots,\overset{2n-1}{\beth_{l,k}})}\Upsilon$$ has entries that are listed above. This is invertible for $k=0$ (among other values). Therefore, by Hensel's lemma there is a solution 
$$(\tilde{\aleph}_1,\dots, \tilde{\aleph}_{n-1},\tilde{\beth}_1,\dots,\tilde{\beth}_n)$$
 to \eqref{substitution}, \eqref{basestepsystem} with 
\begin{align*}
\tilde{\aleph}_i &\equiv \aleph_k\; \T{mod} \,t^\alpha\\
\tilde{\beth}_{i} &\equiv \beth_{l,k} \; \T{mod} \,t^\alpha
\end{align*}
for each $i$ and fixed $l,k$. These solutions solve the system \eqref{yvar}, \eqref{zvar}.

By Theorem \ref{crit}, there is a local system $\xi$ such that $$H(CF(L_2,\Lambda_t),\nu^1_{\xi,\mc{W}x^\vee})\cong H^{*}(L_2,\Lambda(t))\neq 0.$$

\subsubsection*{Constructing $L_{k+1}\subset F_n^{k+1}$:} 
Assume we have constructed a Floer-non-trivial Lagrangian $L_l\subset F_n^l$ for each $l\leq k<n$ that fibers over $\text{Cliff}(\mathbb{P}^n)$. The two parts of the induction step are the construction of $L_{k+1}$ and the computation of the disk potential.

For the construction, we first view $F_n^k$ as a fibration $$F_{n-1}^{k-1}\rightarrow F_n^k\xrightarrow{\pi_k} \p^n.$$
There is a Hamiltonian $T^{n}$ action on $F_n^k$ given by 
$$V_1\subset\cdots \subset V_k\subset \C^{n+1}\mapsto t\cdot V_1\subset\cdots \subset t\cdot V_k\subset \C^{n+1}.$$
By Theorem \ref{moment} there is a $T^n$ equivariant subset $V$ with $\text{Cliff}(\mathbb{P}^n)\subset V\subset \p^n$ and a symplectic connection $\g'$ so that the moment map for this action on $a_{\g'}+\pi_k^*\omega_V$ is $\psi\circ \pi_k$, where $\psi$ is the moment map for the standard $T^n$ action on $\p^n$.

Now, we apply the same result \ref{moment} to the fibration $$\p^{n-k}\rightarrow F_n^{k+1}\xrightarrow{\pi} F_n^k$$ where $F^k_n$ is equipped with the adapted symplectic form as above: There is an open set $\pi_k^{-1}(\cl{n})\subset U\subset F_n^k$ and a symplectic connection $\g$ on $\pi^{-1}(U)$ for which the moment map for the $T^n$ action is $\psi\circ \pi_k\circ \pi$. Symplectically, we have that $(\pi_k\circ \pi)^{-1}(\cl{n})\cong \p^{n-k}\times \pi_k^{-1}(\cl{n})$, so we set $L_{k+1}=\text{Cliff}(\mathbb{P}^{n-k})\times L_k$ as above. Particularly, $L_k$ is a product of $k$ Clifford tori with the $i^{th}$ factor having dimension $n-i+1$.

\subsubsection*{Disk potential for $L_{k+1}$} Let $\omega_k=a_{\g'}+K_1\pi_k^*\omega_{FS}$ with $e(u)$ as the energy of a disk with respect to $\omega_k$. The induction hypothesis includes assuming that $L_k$ is unobstructed and Floer-non-trivial in $F_n^k$. Some notation: Let $u_{i\vert j}$ denote the $j^{th}$ holomorphic disk with boundary in the fiber of $L_i$ (the $i^{th}$ Clifford torus) according to the classification by \cite{cho}, and let $x_{i\vert j}$ be the corresponding evaluation of the local system on the boundary of $u_{i\vert j}$. Let $x_{i\vert}^\mathfrak{f}:=x_{i\vert 1}\cdots x_{i\vert n-i+1}$. We assume that the induction hypothesis gives $\mc{W}^{k}_{L_k}[t]$, the $k^{th}$ order terms of $\mu^0_{L_K}$, as
\begin{gather}\label{baseflagpot}
\mc{W}^{k}_{L_k}[t]=\Bigg[\sum_{\substack{1\leq i\leq k\\ 1\leq j\leq n-i+1}}x_{i\vert j} t^{e(u_{i\vert j})}+\sum_{ 1\leq i\leq k}\frac{1}{x_{i\vert}^\mathfrak{f} x_{i+1\vert}^{\nu_{i\vert i+1}}\cdots x_{k\vert}^{\nu_{i\vert k}}}t^{e(u_{i\vert n+2-i})}\Bigg]x_M.
\end{gather}

The notation of the lifting operator is suppressed, i.e. $u_{i\vert j}$ denotes\linebreak $\mathcal{L}^{k-i}u_{i\vert j}$ where $\mathcal{L}$ is the lift of a disk from $(F_n^{l-1},L_{l-1})$ to $(F_n^l,L_l)$. Here, $\nu_{i\vert j}$ are the exponents corresponding to the representation evaluated on the lift $\mathcal{L}^{k-i}u_{i\vert j}$ from $(F_{n}^{i},L_{i})$ to $(F_n^{k},L_{k})$ (and the result of the change of basis that absorbs any result of lifting $u_{i\vert m}$ for $m\leq n+1-i$). Since $L_k$ is constructed from Clifford tori using Theorem \ref{moment} we can assume that any vertical symplectic area vanishes and $e(u_{i\vert m})=e(u_{i\vert l})$. If the fiberwise symplectic form is small enough at each step, we have $e(u_{i\vert l})>e(u_{j\vert m})$ for $i<j$. With this in mind, $\mc{W}^2_{L_{k+1}}[t]$ will not involve all of the variables $x_{i\vert j}$, so a critical point will necessarily be degenerate. Therefore, we must use $\mc{W}_{L_{k+1}}^{k+1}$ to compute the Floer theory of $L_{k+1}$.

Let $\omega_{k+1}=a_{\g}\!+\!K_2\pi^*\omega_k$. For a $J$-holomorphic disk $u:D\!\rightarrow\! (F_n^{k+1},L_{k+1})$, let $e_v(u)$ denote the energy with respect to $a_\g$, and $e(u)$ the energy with respect to $K_2\pi^*\omega_k$. After a suitable change of basis for the $x_{i\vert j}$ $j\leq n+1-i$, the lift of \eqref{baseflagpot} is by definition
\begin{align}
&\mathcal{L}\circ \mc{W}_{L_k}^k[q,r]=\sum_{\substack{1\leq i\leq k\\ 1\leq j\leq n+1-i}}x_{i\vert j} q^{e(u_{i\vert j})}r^{e_v(\mathcal{L}u_{i\vert j})}\\
&\qquad\qquad+\sum_{ 1\leq i\leq k}\frac{1}{x_{i\vert}^\mathfrak{f}x_{i+1\vert}^{\nu_{i\vert i+1}}\cdots x_{k\vert}^{\nu_{i\vert k}}x_{k+1\vert}^{\nu_{i\vert k+1}}}q^{e(u_{i\vert n+2-i})}r^{e_v(\mathcal{L}u_{i\vert n+2-i})}.
\end{align}
By the classification of disks with boundary in $\text{Cliff}(\mathbb{P}^{n-k})$, the argument in Theorem \ref{potthm}, and by the induction assumption, the $(k+1)^{st}$ order potential for $L_{k+1}$ is
\begin{align}
&\mc{W}^{L_{k+1}}_{k+1}[q,r]=\sum_{\substack{1\leq i\leq k\\ 1\leq j\leq n+1-i}}x_{i\vert j} q^{e(u_{i\vert j})}r^{e_v(\mathcal{L}u_{i\vert j})}\\
&\qquad\qquad +\sum_{ 1\leq i\leq k}\frac{1}{x_{i\vert}^\mathfrak{f}x_{i+1\vert}^{\nu_{i\vert i+1}}\cdots x_{k\vert}^{\nu_{i\vert k}}x_{k+1\vert}^{\nu_{i\vert k+1}}}q^{e(u_{i\vert n+2-i})}r^{e_v(\mathcal{L}u_{i\vert n+2-i})}\\ 
&\qquad\qquad + \sum_{i=i}^{n-k}x_{k+1\vert i}r^{e_v(u_{k+1\vert i})}+\frac{1}{x_{k+1\vert}^\mathfrak{f}}r^{e_v(u_{k+1\vert n-k+1})}.
\end{align}

The extension of the argument from Theorem \ref{potthm} that applies to our case can be summed up as follows: The term in the energy involving the connection $1$-form vanishes by the construction of $L$, so energy of the lifted disks is preserved. By the proof of the theorem there are no other disks that lift from $L_k$ and contribute the the $(k+1)^{st}$ order potential, so we are only left with the first order terms from the fiber.

The partial derivative of this potential with respect to the variables $x_{k+1\vert j}$ (after tensoring with $t$ and factoring) is 
\begin{gather}\label{inductionequation}
\partial_{x_{k+1\vert j}} \mc{W}_{L_{k+1}}^{k+1}[t]=
\sum_{1\leq i\leq k}\frac{-\nu^j_{i\vert k+1}}{x_{i\vert}^\mathfrak{f}\cdots x_{k\vert}^{\nu_{i\vert k}}x_{k+1\vert}^{\nu_{i\vert k+1}+\mathfrak{f}_j}}t^\alpha
+ 1+\frac{-1}{x_{k+1\vert j}\cdot x_{k+1\vert}^\mathfrak{f}}.
\end{gather}

Setting $x_{k+1\vert l}=e^{\frac{2\pi i m}{n-k+1}}$ for all $l$ gives a solution to $\eqref{inductionequation} = 0\; (\T{mod}\, t)$. Similarly, setting $x_{i\vert j}$ to some consistent root of unity gives a solution to 
$$D \mc{W}^{k+1}_{L_{k+1}} = 0 \; (\T{mod}\, t).$$ Thus, an application of the multivariate Hensel's lemma gives us a solution to $$D \mc{W}^{k+1}_{L_{k+1}} = 0.$$

By Theorem \ref{crit}, there is a representation $\xi \in \T{Hom}(\pi_1(L_{k+1}),\Lambda_t)$ so that the Floer cohomology at this representation with a suitable Maurer-Cartan solution is isomorphic to singular cohomology with Novikov coefficients. Thus $L_{k+1}$ is Hamiltonian non-displaceable.

\printbibliography

\end{document}